\colorlet{darkgreen}{green!50!black}
\definecolor{blueperso}{RGB}{30,144,255} 
\definecolor{orangeperso}{RGB}{244,164,96} 
\definecolor{redperso}{RGB}{255,128,128} 
\renewcommand{\geq}{\geqslant}
\renewcommand{\leq}{\leqslant}
\newtheorem{theorem}{Theorem}
\newtheorem*{theorem*}{Theorem}
\newtheorem{lemma}[theorem]{Lemma}
\newtheorem{proposition}[theorem]{Proposition}
\newtheorem{corollary}[theorem]{Corollary}
\theoremstyle{definition}
\newtheorem{remark}[theorem]{Remark}
\newtheorem*{remark*}{Remark}
\newtheorem{example}[theorem]{Example}
\newtheorem{definition}[theorem]{Definition}
\newcommand{\x}{\boldsymbol{\mathrm{x}}}
\newcommand{\y}{\boldsymbol{\mathrm{y}}}
\newcommand{\w}{\boldsymbol{\mathrm{w}}_2}
\newcommand{\wb}{\boldsymbol{\mathrm{w}}_1}
\author[$1$]{Thomas Dreyfus}
\author[$2$]{Jules Flin}
\author[$2$]{Sandro Franceschi}
\affil[$1$]{Université Bourgogne Europe, CNRS, IMB UMR 5584, F-21000 Dijon, France}
\affil[$2$]{T\'el\'ecom SudParis, Institut Polytechnique de Paris}
\title{Degenerate systems of three Brownian particles with\\ asymmetric collisions: invariant measure of gaps}
\date{}
\begin{document}

\maketitle
\thispagestyle{empty}

\abstract{We consider a degenerate system of three Brownian particles undergoing asymmetric collisions.
We study the gap process of this system and focus on its invariant measure. The gap process is described as an obliquely reflected degenerate Brownian motion in a quadrant.
For all possible parameter cases, we compute the Laplace transform of the invariant measure, and fully characterise the conditions under which it belongs to the following classes: rational, algebraic, differentially finite, or differentially algebraic. We also derive explicit formulas for the invariant measure on the boundary of the quadrant, expressed in terms of a Theta-like function, to which we apply a polynomial differential operator.

In this study, we introduce a new parameter called $\gamma$ (along with two additional parameters $\gamma_1$ and $\gamma_2$) which governs many properties of the degenerate process. This parameter is reminiscent of the famous parameter $\alpha$ introduced by Varadhan and Williams \cite{Varadhan1984} (and the two parameters $\alpha_1$ and $\alpha_2$ recently introduced by Bousquet-Mélou et al. \cite{BoMe-El-Fr-Ha-Ra}) to study nondegenerate reflected Brownian motion in a wedge.

To establish our main results we start from a kernel functional equation characterizing the Laplace transform of the invariant measure. By an analytic approach, we establish a finite difference equation satisfied by the Laplace transform. Then, using certain so-called decoupling functions, we apply Tutte's invariant approach to solve the equation via conformal gluing functions. Finally, difference Galois theory and exhaustive study allows us to find necessary and sufficient conditions for the Laplace transform to belong to the specified function hierarchy.}

\tableofcontents

\section{Introduction and main results}\label{sec:intro}

\subsection{Degenerate three-particle systems with asymmetric collisions}\label{subsec:3part}


Systems of Brownian particles on the real line, interacting through their ranks, have been widely studied in stochastic portfolio theory to model large equity markets, see the books by Fernholz and Karatzas \cite{Fernholz2002,Fernholz2009}. The ordered processes of such systems are typically represented by independent Brownian motions that undergo collisions and reflect symmetrically off each other.
This article focuses on particles that interact asymmetrically by splitting the local times of collisions unevenly between them.
Such processes were introduced by Warren~\cite{Warren2007} and further explored by Karatzas, Pal, and Shkolnikov~\cite{Karatzas_Pal_Shk}.

More specifically, we examine a three-particle system with degeneracy recently studied by Ichiba and Karatzas~\cite{ichiba_karatzas_degenerate_22} and consider its generalization to the case of asymmetric collisions. In this system the leader and laggard particles follow ballistic trajectories (\textit{i.e.}, with zero diffusion coefficients), while the middle particle exhibits diffusive behavior (\textit{i.e.}, with a positive diffusion coefficient).

We begin by introducing rank-based diffusions, describing both symmetric and asymmetric collisions and their connection to local times. We then introduce the gap process, which measures the distances between neighboring particles. This process can be modeled as a reflected Brownian motion in an orthant. Finally, we focus on the degenerate case which is the main purpose of this article.

\paragraph{Rank based diffusion}

For
${\mathbf{X}(t)=\left(X_1(t), \ldots, X_n(t)\right)}$ an $n$-dimensional process we denote by $\mathbf{R}(t)=(R_1(t),\dots , R_n(t))$ the associated ranked process.
This means that for $i\in\{ 1,\dots , n\}$, ${X_i(t)}$ is the {$i$}-th coordinate of the process, {$i$ is referred to as the \emph{name} of the particle},
and for $k\in\{1,\dots , n\}$, ${R_k(t)}$ is the {$k$}-th largest coordinate, {$k$ is said to be the \emph{rank} of the particle}. We then have
$$\max_{i=1,\dots,n}X_i(t)={R_1(t)} \geq \ldots \geq {R_n(t)}=\min_{i=1,\dots,n}X_i(t)
.$$ 
Let us consider the diffusion described by
$$
{\mathrm{d} X_i(t)}=\sum_{{k}=1}^n {b_k} \cdot \mathds1_{\left\{{X_i(t)}={R_k(t)}\right\}} \mathrm{d} t+\sum_{{k}=1}^n {a_k} \cdot \mathds1_{\left\{{X_i(t)}={R_k(t)}\right\}} {\mathrm{d} B_i(t)}
$$
where the drift
${b_k} \in \mathbb{R}$ depends on the {rank}, the diffusion coefficient
${a_k}\in\mathbb{R}_{\geqslant 0}$ also depends on the {rank} and
${\mathbf{B}(\cdot)=\left(B_1(\cdot), \ldots, B_n(\cdot)\right)}$ is an $n$-dimensional Brownian motion.
We consider solutions with the nonstickiness conditions, that is
$$
\int_0^{\infty} 1_{\left\{{R_k(t)}={R_{k+1}(t)}\right\}} \mathrm{d} t=0, \quad k=1,2, \ldots, n-1 .
$$
In the nondegenerate case, that is, when $a_k>0$ for all $k$, many studies have investigated the fundamental properties of the existence of this process. The existence of weak solutions can be established via the martingale problem theory of Stroock and Varadhan. Bass and Pardoux~\cite{Bass1987} studied the uniqueness in distribution. Prokaj~\cite{Prokaj2013}, along with Ichiba, Karatzas and Shkolnikov~\cite{Ichiba2013}, and Fernholz et al.~\cite{fernholz2013planar}, demonstrated that pathwise uniqueness holds up to the time of a triple collision. The positive recurrence property was investigated by Pal and Pitman~\cite{PalPitman2008} and later by Dembo and Tsai~\cite{DemboTsai2017}. Pathwise differentiability was analyzed by Lipshutz and Ramanan~\cite{Lipshutz19}. Finally, Ichiba et al.~\cite{Ichiba2013}, and 
Sarantsev et al.~\cite{Sarantsev2015,IchibaSarantsev2017} showed that a strong solution exists without triple collisions when a concavity condition on the diffusion coefficients holds.

A rich body of literature exists on these rank-based diffusions, addressing many of their key properties. As $n\to\infty$ the limiting particle density is determined by the unique solution of a McKean-Vlasov equation, with the cumulative distribution function evolving according to a porous medium equation, see the work of Dembo, Shkolnikov, Varadhan and Zeitouni~\cite{Dembo2016}. The convergence to this limit is exponentially fast, with fluctuations around it described by a Gaussian process governed by a stochastic partial differential equation, see the study by Kolli and Shkolnikov~\cite{Kolli2018}. This framework also has significant applications in financial markets, as demonstrated by the work of Banner, Fernholz and Karatzas~\cite{Banner2005}, Ichiba et al.~\cite{Ichiba2011}, Jourdain and Reygner~\cite{Jourdain2015}, or Banerjee and Budhiraja~\cite{BanerjeeBudhiraja} about Atlas models.

\paragraph{Ranked process and local time}
For $n$-semimartingales ${\mathbf{X}(t)=\left(X_1(t), \ldots, X_n(t)\right)}$,  Banner and Ghomrasni~\cite{Banner2008}
derive a semimartingale decomposition of
the corresponding ranked process $\mathbf{R}(t)=(R_1(t),\dots , R_n(t))$.
Assuming that there is no triple collision, that is, 
$$\mathbb{P} (\exists t\geqslant 0 , 1\leqslant i <j<k\leqslant n : R_i=R_j=R_k)=0,$$ their result (see Corollary 2.6. in \cite{Banner2008}) states that
\begin{equation}
{\mathrm{d} R_k(t)}={b_k} \mathrm{d} t + {a_k} 
\underbrace{\sum_{{i}=1}^n \mathds1_{\left\{{X_i(t)}={R_k(t)}\right\}} {\mathrm{d} B_i(t)}}_{\displaystyle=:{\mathrm{d}W_k(t)}} - \frac{1}{2} {\mathrm{d}\Lambda^{(k-1,k)}(t)} + \frac{1}{2} {\mathrm{d}\Lambda^{(k,k+1)}(t)}
\label{eq:banner_ghom}
\end{equation}
where $W_k(\cdot)$ is a Brownian motion which depends on the {rank $k$}
and ${\Lambda^{k,k+1}(t)}=L^{{G}_k}(t)$ is the local time accumulated 
at the origin before time $t$ by the semimartingale ${G}_k(t):={R_{k}(t) - R_{k+1}(t)}$. Note that this formula can be extended to the case of multiple collisions.
Heuristically, this decomposition, and more particularly the coeficients $1/2$ in front of the local times, highlight the \emph{symmetric collisions} that occur between the interacting particles of the ranked process. 

\paragraph{Asymmetric collisions} 
Warren \cite{Warren2007} and then Karatzas, Pal and Shkolnikov \cite{Karatzas_Pal_Shk} recently introduced a system of Brownian particles which collide asymmetrically.
The dynamics of this process is given by ${\mathbf{R}(t)=\left(R_1(t), \ldots, R_n(t)\right)}$ such that
\begin{equation}
{\mathrm{d} R_k(t)}={b_k} \mathrm{d} t + {a_k} 
\mathrm{d}{{W_k(t)}} - {q_k^+} \mathrm{d}\Lambda^{(k-1,k)}(t) + {q_k^-} \mathrm{d}\Lambda^{(k,k+1)}(t)
\label{eq:ranledoblique}
\end{equation}
where the parameters $q_k\in(0,1)$ satisfy the relation
$$
{q_k^-+q_{k+1}^+=1}.
$$
Whereas in the formula~\eqref{eq:banner_ghom} the local times were split equally between two colliding particles, this time the local times are divided unequally with coefficients $q_k\in(0,1)$ depending on the rank $k$. 
This process takes his values in the Weyl
chamber $\mathbb{W}^n=\{(r_1,\dots,r_n): r_1 \geqslant r_2\geqslant \dots \geqslant r_n \}$ and can be seen as the ranked process of the process $\mathbf{X}$ defined by
\begin{align*}
{\mathrm{d} X_i(t)}
&=\sum_{{k}=1}^n {b_k} \cdot \mathds1_{\left\{{X_i(t)}={R_k(t)}\right\}} \mathrm{d} t+\sum_{{k}=1}^n {a_k} \cdot \mathds1_{\left\{{X_i(t)}={R_k(t)}\right\}} {\mathrm{d} B_i(t)}
\\ &+
\sum_{{k}=1}^n (q_k^--\tfrac{1}{2}) \cdot \mathds1_{\left\{{X_i(t)}={R_k(t)}\right\}} \mathrm{d} \Lambda^{(k,k+1)}(t)
-\sum_{{k}=1}^n (q_k^+-\tfrac{1}{2}) \cdot \mathds1_{\left\{{X_i(t)}={R_k(t)}\right\}} \mathrm{d} \Lambda^{(k,k-1)}(t) .
\end{align*}
This process can model the logarithmic capitalizations in large equity markets. Karatzas et al. explain it is a generalization of the first-order models of stochastic portfolio theory. Of course, taking $q_k^\pm=\frac{1}{2}$ for all $k$ leads to symmetric collisons. If we allow to take $q_k^-=0$ and $q_k^+=1$ for all $k$, the process $\mathbf{R}(t)$ is the continuous version of the Totally Asymmetric Simple Exclusion Process (TASEP) \cite{Gorin2015,Warren2007,Karatzas_Pal_Shk,OCOr-14}.

\paragraph{Gap process and reflected Brownian motion}

The gap process $\mathbf{G}(t)=({G}_1(t),\dots , {G}_{n-1}(t))$ is defined as the distance between two consecutive particles, \textit{i.e.}, for all $k=1,\dots, n-1$ we have
\begin{equation}\label{eq:defgap}
{G}_k(t) =R_{k}(t)-R_{k+1}(t) .
\end{equation}
If we denote $-\mu_k=b_k-b_{k+1}$ and $\widetilde{W}_k(t)=a_kW_k(t)-a_{k+1}W_{k+1}(t)$ we obtain
$$
{G}_k(t)= {G}_k(0)-\mu_k t + \widetilde{W}_k(t)  - {q_k^+} L^{{G}_{k-1}}(t) +  L^{{G}_k}(t) - q_{k+1}^- L^{{G}_{k+1}}(t).
$$
remembering that ${\Lambda^{k,k+1}(t)}=L^{{G}_k}(t)$ and where by convention we consider that $L^{{G}_{-1}}=L^{{G}_{n}}=0$.
The Gap process is then a reflected Brownian motion (RBM) in the nonnegative orthant $\mathbb{R}_+^{n-1}$ with drift vector
$-\bm{\mu}=(b_1-b_2,\dots , b_{n-1}-b_n)$, covariance matrix 
$$\mathcal{A}=\left(\begin{array}{cccc}a_1^2+a_2^2 & -a_2^2 & 0 & 0 \\ -a_2^2 & a_2^2+a_3^2 & -a_3^2 & 0 \\ 0 & \ddots & \ddots & \ddots \\ 0 & 0 & -a_{n-1}^2 & a_{n-1}^2+a_n^2\end{array}\right)$$
and reflection matrix
$$\mathcal{R}=\mathbf{I}_{n-1}-\mathcal{Q}, \quad\text{ where }\quad \mathcal{Q}:=\left(\begin{array}{cccc}0 & q_2^{-} & 0 & 0 \\ q_2^{+} & 0 & q_3^{-} & 0 \\ 0 & \ddots & \ddots & \ddots \\ 0 & 0 & q_{n-1}^{+} & 0\end{array}\right) .$$
If we denote $\mathbf{L}(t)=(L^{{G}_1}(t),\dots , L^{{G}_{n-1}}(t))$ the local time vector of $\mathbf{G}$ and $\widetilde{\mathbf{W}}(t)=(\widetilde{W}_1(t),\dots ,\widetilde{W}_{n-1}(t))$ the Brownian motion of covariance matrix $\mathcal{A}$, 
we then have
$$
\mathbf{G}(t)=\mathbf{G}(0)- \bm{\mu} t + \widetilde{\mathbf{W}}(t) + \mathbf{L}(t) \mathcal{R}^\top 
$$
which is the standard way of writing a semimartingale reflecting Brownian motion (SRBM).
The reader may consult the works of Harrison and Reiman \cite{Harrison1981} and Williams \cite{Williams1995} for an overview of SRBM in orthants.

\paragraph{Degenerate case}
We now consider degenerate cases by allowing
some of $a_k$ to be zero in~\eqref{eq:ranledoblique}.
The degenerate case has been recently studied.
The case of two particles $n=2$ was studied by Fernholz, Ichiba, Karatzas and Prokaj~\cite{fernholz2013planar} and later by Ichiba, Karatzas, Prokaj and Yan~\cite{Ichiba2018}. The case of three particles $n=3$ was recently investigated in depth by Ichiba and Karatzas~\cite{ichiba_karatzas_degenerate_22} in two configurations. 
\begin{enumerate}[label=(\roman*)]
\item\label{casi} When $a_1>0$, $a_3>0$ and $a_2 = 0$, the leader and laggard particles exhibit diffusive behavior, while the middle follow ballistic trajectorie. In this case, they demonstrated the existence of a weak solution that is unique in distribution.
\item\label{casii} In contrast, when $a_1 = a_3 = 0$ and $a_2 > 0$, the leader and laggard particles follow ballistic trajectories, while the middle particle exhibits diffusive behavior. In this case, they established the existence of a pathwise unique, strong solution with no triple collisions. See Figure~\ref{fig:simul} which draws a path for such a process. 
\end{enumerate}
In the latter case, Franceschi, Ichiba, Karatzas and Raschel~\cite{FIKR23+} examined in details the invariant measure of the gaps of this degenerate systems with \emph{symmetric collisions}.
The focus of this article is on degenerate three-particle systems in the case~\ref{casii}, generalised to \emph{asymmetric collisions}. The study of the invariant measure of the gaps, considering asymmetric collisions, reveals many different and original behaviors that differ from the symmetric case. The following paragraph sets out the notations used in the rest of the article and establishes the conditions necessary for the study of this process.

\paragraph{Degenerate reflected Brownian motion in the quadrant}

We set the covariance matrix, the reflection matrix and the drift
\begin{equation}\label{eq:defparam}
\mathcal{A}=
\begin{pmatrix}
\sigma_1 & -\sqrt{\sigma_1\sigma_2}
\\
-\sqrt{\sigma_1\sigma_2} & \sigma_{2}
\end{pmatrix}
,\quad
\mathcal{R}=
\begin{pmatrix}
1 & r_2
\\
r_1 & 1
\end{pmatrix}
\quad\text{and}\quad
-\bm{\mu}=(-\mu_1,-\mu_2)
\end{equation}
and we consider the associated degenerate reflected Brownian motion $\mathbf{G}(t)=({G}_1(t),{G}_2(t))$ which is a continuous semimartingale defined by the Skorokhod-type decomposition
\begin{equation}
\begin{cases}
{G}_1(t) = {G}_1(0)+\mu_1 t +  \widetilde{W}_1(t)   +  L^{{G}_1}(t) +r_2 L^{{G}_{2}}(t),
\\
{G}_2(t) = {G}_2(0)+\mu_2 t +   \widetilde{W}_2(t)  +r_1 L^{{G}_{1}}(t) +  L^{{G}_2}(t) .
\end{cases}
\label{def:G}
\end{equation}
If we take $\sigma_1=\sigma_2=a_2^2$, $r_1=- {q_2^+}$, $r_2=- q_{2}^-$, $\mu_1=b_2-b_1$ and $\mu_2=b_3-b_1$ we find the case~\ref{casii} (where $a_1=a_3=0$ and $a_2>0$) and $G$ is then the gap process of a degenerate three-particle system with asymmetric collisions. The set of parameters considered in this paper to study $\mathbf{G}$ is slightly more general than in the case~\ref{casii}, we allow $\sigma_1$ to be different from $\sigma_2$ and we do not impose a sign condition on $r_1$ and $r_2$.
The covariance matrix $\mathcal{A}$ is symmetric and nonnegative-definite (\textit{i.e.}, positive semi-definite) and $\sigma_1>0$ and $\sigma_2>0$. The degeneracy comes from the fact that $$\det \mathcal{A}=0.$$
To ensure that the process $\mathbf{G}(t)$ is well defined and recurrent, we need to work under the following assumptions.
First, there is the existence condition of the process 
\begin{equation}\tag{$\textsc{h}_1$}
1-r_1r_2>0 \text{ or } \{r_1>0\text{ and }r_2>0\}
\label{eq:H1}
\end{equation}
which follows from the classical conditions for the existence of an SRBM \cite{Reiman1988,TaWi-93} and from \cite{Harrison1981} which allows the covariance matrix to be degenerate (\textit{i.e.}, not necessarily positive-definite). The work of Ichiba and Karatzas~\cite{ichiba_karatzas_degenerate_22} establishes the existence of a pathwise unique strong solution of the associated particle system $\mathbf{X}$ for the values $r_1=r_2=-1/2$. Appendix~A in~\cite{ichiba_karatzas_degenerate_22} establishes the existence of a weak solution for $r_1=-1/2$ and $r_2=-3/2$ and this result can be generalised to all oblique reflections satisfying~\eqref{eq:H1}. 
Second, there is the recurrence condition of the process
\begin{equation}\tag{$\textsc{h}_2$}
\mu_1-r_2\mu_2>0 \text{ and } \mu_2-r_1\mu_1>0 .
\label{eq:H2}
\end{equation}
In this case, the process is positive recurrent, has a unique invariant
measure $\bm\pi$. Historically, this recurrence condition comes from Hobson and Rogers~\cite{Hobson1993} and has recently been extended to the degenerate case in~\cite[\S 2.3.]{ichiba_karatzas_degenerate_22}.
Finally, we restrict ourselves to the case of negative drift $-\bm\mu$, \textit{i.e.},
\begin{equation}\tag{$\textsc{h}_3$}
\mu_1>0 \text{ and } \mu_2>0,
\label{eq:H3}
\end{equation}
which is equivalent to $b_1<b_2<b_3$.
The same study could be generalised to all types of drift, but in order to limit the number of cases to handle we make this common assumption, which is also present in~\cite{ichiba_karatzas_degenerate_22,BoMe-El-Fr-Ha-Ra} and many other papers.

The following section sets out the equations characterising the invariant measure and presents the main results of the article.

\begin{figure}[t]
\begin{center}
\includegraphics[width=0.9\textwidth]{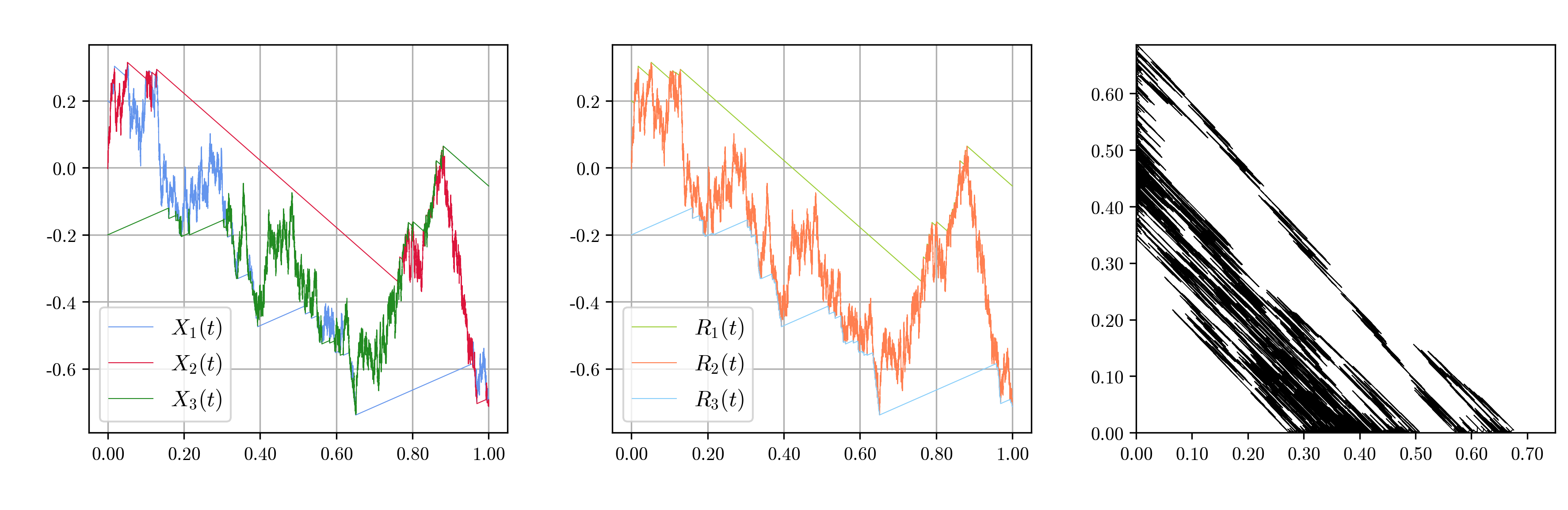}
\end{center}
\caption{paths of the interacting Brownian particles, colored by \textbf{name} (left), colored by \textbf{rank} (middle) and the gap process $({G}_1,{G}_2)$ (right).}
\label{fig:simul}
\end{figure}

\subsection{Kernel functional equation}
\label{subsec:FE}

We consider the gap process $\mathbf{G}=(G_1,G_2)$ defined in~\eqref{def:G} and we work under hypotheses~\eqref{eq:H1}, \eqref{eq:H2} and~\eqref{eq:H3}, \textit{i.e.}, the existence and recurrence conditions and the negativity of drift. Let $\bm\pi$ be the invariant probability measure of the gap process $\mathbf{G}$ and let us define the lateral invariant measures
\begin{equation}\label{eq:deflateral}
{\bm \nu_1}(A)=\mathbb{E}_{\bm\pi}\int_0^2 \mathds1_A({G}_2(t))\mathrm{d}L^{{G}_1}(t)
\quad\text{and}\quad
{\bm \nu_2}(A)=\mathbb{E}_{\bm\pi}\int_0^2 \mathds1_A({G}_1(t))\mathrm{d}L^{{G}_2}(t)
\end{equation}
for all $A\in\mathcal{B}\left((0,\infty)\right)$.
For all $x,y\in\mathbb C$ such that $\mathfrak{Re}(x)\leqslant 0$ and $\mathfrak{Re}(y)\leqslant 0$, we define the Laplace transforms
\begin{align}
\phi(x,y)&=\mathbb{E}_{\bm\pi} e^{x{G}_1(t)+y{G}_2(t)}=\int\!\!\!\!\!\int_{(0,\infty)^2} e^{x g_1+y g_2} \bm\pi(\mathrm{d}g_1,\mathrm{d}g_2),\label{eq:defphi}\\
\phi_1(y)&= \mathbb{E}_{\bm\pi} \int_0^2 e^{y{G}_2(t)} \mathrm{d}L^{{G}_1}(t)=\int_{(0,\infty)} e^{y g_2} \boldsymbol{\nu}_1(\mathrm{d}g_2),\label{eq:defphi1}\\
\phi_2(x)&=
\mathbb{E}_{\bm\pi} \int_0^2 e^{x{G}_1(t)} \mathrm{d}L^{{G}_2}(t)=\int_{(0,\infty)} e^{x g_1} \boldsymbol{\nu}_2(\mathrm{d}g_1).\label{eq:defphi2}
\end{align}

\begin{remark}
The notations $\phi_1(y)$ and $\phi_2(x)$ may seem unfortunate, but they are standard in the literature and arise naturally from a generalization in higher dimensions: the \emph{first} Laplace transform $\phi_1$ is associated with the lateral measure on the hyperplane where the \emph{first} coordinate vanishes.
\end{remark}
The invariant probability measure $\boldsymbol{\pi}$ is characterized by the well-known Basic Adjoint Relationship (BAR) \cite{Harrison1987b,KuSt-01}. In the now classic way~\cite{FIKR23+,Dai2011}, the BAR can be rewritten as a functional equation, which in this case is the following
\begin{proposition}[Functional equation] For $\mathfrak{Re}(x)\leqslant 0$ and $\mathfrak{Re}(y)\leqslant 0$ we have
\begin{equation}\label{eq:FE}
-K(x,y)\phi(x,y)=k_1(x,y)\phi_1(y)+k_2(x,y)\phi_2(x)
\end{equation}
where the kernel is defined by
\begin{equation}\label{eq:noyau}
K(x,y)= ({\sigma_1}x-{\sigma_2}y)^2 - 2\mu_1 x - 2\mu_2 y 
\end{equation}
and
\begin{equation}\label{eq:k12}
k_1(x,y)=x+r_1 y ,
\quad
k_2(x,y)=y+ r_2 x.
\end{equation}
\end{proposition}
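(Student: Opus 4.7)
The plan is to specialize the Basic Adjoint Relationship (BAR) to a family of exponential test functions, following the now-classical recipe for reflected Brownian motion in the quadrant, cf.\ \cite{Dai2011,FIKR23+}. First I would record the BAR in the form
\begin{equation*}
\int_{(0,\infty)^2}\mathcal{L}f\,\mathrm{d}\bm\pi\;+\;\int_{(0,\infty)}D_1 f\,\mathrm{d}\bm\nu_1\;+\;\int_{(0,\infty)}D_2 f\,\mathrm{d}\bm\nu_2\;=\;0,
\end{equation*}
valid for every $f\in C^2_b([0,\infty)^2)$, where $\mathcal{L}$ is the generator of $\mathbf{G}$ in the interior (encoded by $\mathcal{A}$ and $-\bm\mu$) and $D_j=R_j\!\cdot\!\nabla$ is the directional derivative along the $j$-th reflection vector, namely $R_1=(1,r_1)^{\!\top}$ and $R_2=(r_2,1)^{\!\top}$. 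The most direct route to this identity is to apply It\^o's formula to $f(\mathbf{G}(t))$ via the Skorokhod decomposition~\eqref{def:G}, take expectation under the stationary law $\bm\pi$, and divide by the time window. Stationarity collapses the bulk term to $\int\mathcal{L}f\,\mathrm{d}\bm\pi$, while the terms carried by the local times $L^{G_1}, L^{G_2}$ produce the two boundary integrals; the length-$2$ window chosen in~\eqref{eq:deflateral} is what absorbs the factor $\tfrac12$ coming from the diffusive part of $\mathcal{L}$ and makes the polynomials $K,k_1,k_2$ come out with integer coefficients.

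Second, I would specialize the BAR to the family of exponentials $f_{x,y}(g_1,g_2)=e^{xg_1+yg_2}$ with $\mathfrak{Re}(x)\leq 0$ and $\mathfrak{Re}(y)\leq 0$, which guarantees $|f_{x,y}|\leq 1$ on the closed quadrant. Since $f_{x,y}$ is an eigenfunction of every constant-coefficient differential operator involved, a one-line computation gives
\begin{equation*}
\mathcal{L}f_{x,y}=\tfrac12 K(x,y)\,f_{x,y},\qquad D_1 f_{x,y}\big|_{g_1=0}=k_1(x,y)\,e^{yg_2},\qquad D_2 f_{x,y}\big|_{g_2=0}=k_2(x,y)\,e^{xg_1},
\end{equation*}
with exactly the polynomials $K,k_1,k_2$ of~\eqref{eq:noyau}--\eqref{eq:k12}. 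Substituting into the BAR and rewriting in terms of the Laplace transforms~\eqref{eq:defphi}--\eqref{eq:defphi2} yields $\tfrac12 K(x,y)\phi(x,y)+\tfrac12 k_1(x,y)\phi_1(y)+\tfrac12 k_2(x,y)\phi_2(x)=0$, which is~\eqref{eq:FE} after clearing the common factor.

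The main technical difficulty is the justification of the BAR in the degenerate regime ($\det\mathcal{A}=0$), since the standard proofs use H\"ormander-type regularity or approximation by nondegenerate SRBMs. For this I would appeal to the framework of~\cite{ichiba_karatzas_degenerate_22}, which establishes the strong Markov property, pathwise uniqueness and the nonstickiness of $\mathbf{G}$ at both faces in precisely this degenerate setting; once these ingredients are in hand, the It\^o-based derivation above produces the BAR directly, bypassing the need for smoothing. The secondary point concerning admissibility of the bounded but non-compactly supported test functions $f_{x,y}$ is straightforward: $|f_{x,y}|\leq 1$, $|\mathcal{L}f_{x,y}|$ and $|D_j f_{x,y}|$ are bounded, and the recurrence assumption~\eqref{eq:H2} makes $\bm\pi,\bm\nu_1,\bm\nu_2$ finite measures, so the three integrals are absolutely convergent and the identity extends from $C^2_c$ to $f_{x,y}$ by dominated convergence.
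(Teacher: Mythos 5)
Your proposal reproduces the standard BAR\,$\to$\,exponential-test-function derivation that the paper delegates to \cite{FIKR23+,Dai2011}, correctly computing the action of the generator and of the reflection directional derivatives on $e^{xg_1+yg_2}$ and appropriately flagging that the BAR itself needs justification in the degenerate regime via \cite{ichiba_karatzas_degenerate_22}. One bookkeeping slip: the stationary BAR you write down has no $\tfrac12$ in front of the boundary integrals, while the displayed identity after substitution carries $\tfrac12 k_1\phi_1+\tfrac12 k_2\phi_2$; with the paper's time-$[0,2]$ convention in \eqref{eq:deflateral} the self-consistent version is $2\int\mathcal{L}f\,\mathrm{d}\bm\pi+\int D_1f\,\mathrm{d}\bm\nu_1+\int D_2f\,\mathrm{d}\bm\nu_2=0$, which together with $\mathcal{L}f_{x,y}=\tfrac12K(x,y)f_{x,y}$ and $D_jf_{x,y}=k_j(x,y)f_{x,y}$ yields \eqref{eq:FE} directly with no leftover factors.
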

In this paper, we fully solve the functional equation under the assumptions~\eqref{eq:H1}, \eqref{eq:H2}, \eqref{eq:H3} by determining the unique Laplace transform $\phi$ of a probability measure satisfying~\eqref{eq:FE}. We now give a few preliminar results which we can simply deduce from this functional equation.

We note the densities with respect to suitable Lebesgue measures $\lambda^{(d)}$ on $\mathbb R_+^d$:
\begin{equation}
\pi:=\frac{\mathrm{d}\boldsymbol{\pi}}{\mathrm{d}\lambda^{(2)}},\quad\quad \nu_1:=\frac{\mathrm{d}\boldsymbol{\nu}_1}{\mathrm{d}\lambda^{(1)}},\quad\quad \nu_2:=\frac{\mathrm{d}\boldsymbol{\nu}_2}{\mathrm{d}\lambda^{(1)}}.
\label{eq:densitedef}
\end{equation}
\begin{lemma}[Relation between $\pi$, $\nu_1$ and $\nu_2$] The densities $\pi$, $\nu_1$ and $\nu_2$ satisfy the following relations
\begin{equation}
    \nu_1(v)=\sigma_1^2 \pi(0,v),\quad\quad \nu_2(u)=\sigma_2^2 \pi(u,0)
\end{equation}
\end{lemma}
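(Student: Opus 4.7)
The plan is to derive both identities from the functional equation \eqref{eq:FE} by an Abelian asymptotic analysis along the negative real axis. The key observation is that the kernel $K(x,y)$ has leading coefficients $\sigma_1^2$ in $x^2$ and $\sigma_2^2$ in $y^2$, so matching the $x^1$ coefficients on both sides of \eqref{eq:FE} as $x\to-\infty$ (resp.\ the $y^1$ coefficients as $y\to-\infty$) should isolate the constant $\sigma_1^2$ (resp.\ $\sigma_2^2$) linking $\boldsymbol{\nu}_1$ to the trace of $\boldsymbol{\pi}$ on the face $\{G_1=0\}$ (resp.\ $\boldsymbol{\nu}_2$ to the trace on $\{G_2=0\}$).

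Concretely, I would fix $y\in\mathbb{C}$ with $\mathfrak{Re}(y)\leqslant 0$ and view $\phi(\cdot,y)$ as the one-sided Laplace transform of
\begin{equation*}
F_y(g_1):=\int_0^\infty e^{yg_2}\pi(g_1,g_2)\,\mathrm{d}g_2,
\end{equation*}
and $\phi_2$ as the Laplace transform of $\nu_2$. The classical Abelian theorem (Watson's lemma) then yields, as $x\to-\infty$ along the real line, $\phi(x,y)=-F_y(0)/x+o(1/x)$ and $\phi_2(x)=-\nu_2(0^+)/x+o(1/x)$, with $F_y(0)=\int_0^\infty e^{yg_2}\pi(0,g_2)\,\mathrm{d}g_2$. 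Substituting these into \eqref{eq:FE} and collecting the $x^1$ terms: the left-hand side contributes $\sigma_1^2 x\, F_y(0)$; on the right, $k_1(x,y)\phi_1(y)$ contributes $x\phi_1(y)$, while $k_2(x,y)\phi_2(x)=(y+r_2 x)\phi_2(x)$ stays $O(1)$ since the diverging factor $r_2 x$ is absorbed by the $1/x$-decay of $\phi_2$. Matching the $x$-coefficients gives $\phi_1(y)=\sigma_1^2\int_0^\infty e^{yg_2}\pi(0,g_2)\,\mathrm{d}g_2$, and uniqueness of the Laplace transform on $[0,\infty)$ delivers $\nu_1(v)=\sigma_1^2\pi(0,v)$. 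The relation $\nu_2(u)=\sigma_2^2\pi(u,0)$ follows by the entirely symmetric computation with $y\to-\infty$.

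The principal technical hurdle is the rigorous justification of the Abelian expansions, which requires continuity and local boundedness of $F_y$ at $g_1=0$ and of $\nu_2$ near $0^+$. Such boundary regularity for invariant densities of recurrent reflected diffusions is standard, and is in any case a posteriori immediate from the explicit formulas for $\pi$, $\nu_1$ and $\nu_2$ derived later in the paper via the conformal-gluing approach. Modulo this regularity input, the argument is entirely elementary.
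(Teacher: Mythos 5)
Your proposal is correct and takes essentially the same route as the paper: both extract the leading asymptotics of $x\phi(x,y)$ as $x\to-\infty$ via the Abelian/initial-value theorem for the one-sided Laplace transform, substitute into the functional equation divided by $x$, and conclude by uniqueness of Laplace transforms. One small redundancy: you invoke the $-\nu_2(0^+)/x$ asymptotics of $\phi_2$ to control the $k_2(x,y)\phi_2(x)$ term, but after dividing by $x$ that term is $(y/x+r_2)\phi_2(x)$ and only $\phi_2(x)\to 0$ (automatic for the Laplace transform of a finite measure) is needed, so no boundary regularity of $\nu_2$ is required.
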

\begin{proof}
Since $\pi$ is a probability density, it is nonnegative and integrable, so the dominated convergence theorem justifies exchanging the limit with the integral:
    $$\lim_{x\to -\infty} x\phi(x,y)=\lim_{x\to-\infty} x\int\!\!\!\!\!\int_{(0,\infty)^2}e^{xu+yv}\pi(u,v)\mathrm{d}u\mathrm{d}v=\int_0^{+\infty}\!\!e^{yv}\left(\lim_{x\to-\infty}\int_{0}^{+\infty}e^{xu}\pi(u,v)\mathrm{d}u\right)\mathrm{d}v.$$
    The limit inside the parentheses is, according to the initial value theorem, equal to $-\pi(0,v)$. Dividing the functional equation by $x$ and taking the limit as $x$ approaches $-\infty$ yields the first relation. By a similar argument, we obtain the second one. 
\end{proof}
\begin{lemma}[Value of Laplace transforms at $0$] 
We have
\begin{equation}
\label{eq:normcst}
\Big(\phi_1(0),\phi_2(0)\Big)=\left(2\frac{\mu_1-r_2\mu_2}{1-r_1r_2},2\frac{\mu_2-r_1\mu_1}{1-r_1r_2}\right).
\end{equation}
\end{lemma}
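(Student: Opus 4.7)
The plan is to specialize the functional equation~\eqref{eq:FE} to each coordinate axis and take a suitable limit as the remaining variable tends to $0$, which will produce two linear relations between $\phi_1(0)$ and $\phi_2(0)$.

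Setting $y=0$ in~\eqref{eq:FE}, one has $K(x,0)=\sigma_1^{2}x^{2}-2\mu_1 x$, $k_1(x,0)=x$ and $k_2(x,0)=r_2 x$, so the equation becomes
\[
-\bigl(\sigma_1^{2}x^{2}-2\mu_1 x\bigr)\phi(x,0)=x\,\phi_1(0)+r_2\,x\,\phi_2(x).
\]
Both sides vanish to order at least one at $x=0$, so I divide through by $x$ and let $x\to 0^{-}$ along the real axis. The continuity of $\phi(\cdot,0)$ and of $\phi_2$ at $0$ follows from dominated convergence applied to the Laplace integrals of the finite positive measures $\bm\pi$ and $\bm\nu_2$ (finiteness of $\bm\nu_2$ comes from positive recurrence, which ensures that the mean local time on $[0,2]$ is finite). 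Together with the normalization $\phi(0,0)=1$, this yields
\[
2\mu_1=\phi_1(0)+r_2\,\phi_2(0).
\]

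By the symmetric manipulation (setting $x=0$, dividing by $y$, letting $y\to 0^{-}$) one obtains $2\mu_2=r_1\,\phi_1(0)+\phi_2(0)$. These two relations form a $2\times 2$ linear system in the unknowns $\phi_1(0),\phi_2(0)$ whose determinant equals $1-r_1r_2$. A short check shows that the assumptions force $1-r_1r_2>0$: this is immediate from~\eqref{eq:H1} when $r_1,r_2$ are not both positive, while if $r_1,r_2>0$ then~\eqref{eq:H2} and~\eqref{eq:H3} give $\mu_1>r_2\mu_2>r_1r_2\mu_1$ and hence $1-r_1r_2>0$ after dividing by $\mu_1>0$. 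Inverting the linear system therefore yields exactly the formula~\eqref{eq:normcst}. No genuine obstacle is expected; the only step requiring a word of justification is the passage to the limit, which is entirely routine since $\phi,\phi_1,\phi_2$ are Laplace transforms of finite positive measures and are in particular continuous on the closed half-planes on which they are defined.
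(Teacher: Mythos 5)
Your proof is correct and follows the same route as the paper: specialize the functional equation~\eqref{eq:FE} to each axis, divide by the vanishing factor and pass to the limit to get the two linear relations $2\mu_1=\phi_1(0)+r_2\phi_2(0)$ and $2\mu_2=r_1\phi_1(0)+\phi_2(0)$, then invert the $2\times 2$ system after checking that the determinant is nonzero. The only (inessential) variation is in the determinant step, where the paper argues by contradiction that $r_1r_2-1\neq 0$ while you prove the slightly stronger $1-r_1r_2>0$ directly from~\eqref{eq:H1}--\eqref{eq:H3}; both arguments are valid.
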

\begin{proof}
Taking $x=0$ in the functional equation~\eqref{eq:FE}, we obtain
$$-y\Big(\sigma_2^2y-2\mu_2\Big)\phi(0,y)=r_1y\phi_1(y)+y\phi_2(0).$$
Recalling that $\phi(0,0)=\boldsymbol{\pi}(\mathbb R_+^2)=1$, one can now divide by $y$, and take $y=0$:
$$2\mu_2
=r_1\phi_1(0)+\phi_2(0).$$
Symmetrically, 
$$2\mu_1=\phi_1(0)+r_2\phi_2(0).$$
The resulting system of linear equations admits a unique solution given by~\eqref{eq:normcst}, since hypotheses~\eqref{eq:H1} and~\eqref{eq:H2} ensure that the determinant $r_1 r_2 -1$ of the system is nonzero (by~\eqref{eq:H1}, if $r_1r_2-1 = 0$, then $r_2=1/r_1>0$, and multiplying~\eqref{eq:H2} by this quantity yields the contradiction $0<\mu_1-r_2\mu_2<0$).
\end{proof}

With a simple change of variables in the functional equation given in Appendix~\ref{sec:homogeneity}, one can always assume that
\begin{equation}\tag{$\textsc{h}_4$}
\sigma_1=\sigma_2=1 \text{ and } \mu_1+\mu_2=1.
\label{eq:H4}
\end{equation}
As a result, in the rest of the article, we will work under this assumption.

\subsection{Main results}\label{subsec:mainres}
\noindent\textbf{Notation.} We denote by $\mathbb{N} = \{1, 2, 3, \dots\}$ the set of positive integers, and by $\mathbb{N}_0 = \mathbb{N} \cup \{0\}$ the set of non-negative integers. This convention is fixed throughout the article.\\

The following results will all be stated under the assumptions~\eqref{eq:H1}, \eqref{eq:H2}, \eqref{eq:H3}, \eqref{eq:H4} and we will also assume~\eqref{eq:H5} where 
\begin{equation}\tag{$\textsc{h}_5$}
r_1\ne -1\quad\text{and}\quad r_2\ne -1.
\label{eq:H5}
\end{equation}
These cases where~\eqref{eq:H5} is not satisfied, \textit{i.e.}, $r_1=-1$ or $r_2=-1$, are treated separately in Appendix~\ref{sec:r1r2}.
The generalization of our results to cases where hypothesis~\eqref{eq:H4} is not satisfied is done in Appendix~\ref{sec:homogeneity}.

We have three main types of results: the explicit calculation of Laplace transforms, their differential properties and the explicit calculation of the gap process invariant densities.

\paragraph{Explicit expression of the Laplace transforms}

To state our results we need to define
$$
s_1:=\frac{r_1\mu_1-\mu_2}{1+r_1} \quad\text{and}\quad s_2:=\frac{\mu_1-r_2\mu_2}{1+r_2}
$$
and the function
\begin{equation}
\label{eq:Ddec}
D(s):=\frac{\Gamma(s-s_1)\,\Gamma(s+s_2+\mu_2)}{\Gamma(s-s_2)\,\Gamma(s+s_1+\mu_2)} 
\end{equation}
where $\Gamma$ is the gamma function.
\renewcommand{\thetheorem}{A}
\begin{theorem}[Explicit expression of $\phi_1$]
\label{thm:main}
There exist a rational function $R\in\mathbb{C}(X)$ such that the Laplace transform $\phi_1$ satisfies
$$
\phi_1(y)=D\left(\frac{1}{2}\Big(\mu_1+\sqrt{2y+\mu_1^2}\Big)\right)R\left(\tan\Big(\frac{\pi}{2} \sqrt{2y+\mu_1^2}\Big)\right).
$$
An explicit expression depending on the parameters is given in Theorems~\ref{thm:N}, \ref{thm:-N}, \ref{thm:notN} and \ref{thm:general}.
To be more precise, setting 
$s=\frac{1}{2}\Big(\mu_1+\sqrt{2y+\mu_1^2}\Big)$, one obtains from Proposition~\ref{prop:consistency} that 
\begin{equation}
\phi_1(y)\propto
D(s)\times \begin{cases}
\dfrac{1}{\sin\bigl(\pi(s+s_1+\mu_2)\bigr)\,\sin\bigl(\pi(s-s_2)\bigr)},
& \text{if $s_1<0$ and $s_2>0$,} \\[1.2em]
\dfrac{\sin\bigl(\pi(s+s_2+\mu_2)\bigr)}{\sin\bigl(\pi(s+s_1+\mu_2)\bigr)},
& \text{if $s_1<0$ and $s_2<0$,} \\[1.2em]
\dfrac{\sin\bigl(\pi(s-s_1)\bigr)}{\sin\bigl(\pi(s-s_2)\bigr)},
& \text{if $s_1>0$ and $s_2>0$.}
\end{cases}
\end{equation}
where $\propto$ is the proportionality symbol.
\end{theorem}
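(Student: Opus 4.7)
The strategy is to reduce the kernel functional equation~\eqref{eq:FE} to a first-order finite difference equation for $\phi_1(y(s))$ along a rational uniformization of the kernel curve, and to solve it canonically via $D(s)$. I would first parameterize $\{K=0\}$ by $x(s)=2s(s+\mu_2)$ and $y(s)=2s(s-\mu_1)$, consistent with $s=\tfrac12(\mu_1+\sqrt{2y+\mu_1^2})$. The two Galois involutions of the curve are $\iota_1\colon s\mapsto \mu_1-s$ (preserving $y$) and $\iota_2\colon s\mapsto -\mu_2-s$ (preserving $x$); thanks to $\mu_1+\mu_2=1$ under~\eqref{eq:H4}, their composition is the integer shift $s\mapsto s\pm 1$. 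This is what yields the finite-difference (rather than $q$-difference or elliptic) structure announced in the abstract, and is an artefact of the degeneracy $\det\mathcal A=0$.

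Evaluating~\eqref{eq:FE} on the kernel curve kills $\phi(x,y)$ and, after factoring $k_1(x(s),y(s))=2s(1+r_1)(s-s_1)$ and $k_2(x(s),y(s))=2s(1+r_2)(s-s_2)$, yields
\[
(1+r_1)(s-s_1)\,\phi_1(y(s))+(1+r_2)(s-s_2)\,\phi_2(x(s))=0.
\]
I then apply this identity at both $s$ and $\iota_2(s)$ (both give the same $x$, hence the same value of $\phi_2$), eliminate $\phi_2(x(s))$ and use $y(\iota_2(s))=y(s+1)$ to obtain the first-order recurrence
\[
\frac{\phi_1(y(s+1))}{\phi_1(y(s))}=\frac{(s-s_1)(s+s_2+\mu_2)}{(s-s_2)(s+s_1+\mu_2)}.
\]
The function $D(s)$ of~\eqref{eq:Ddec} is designed precisely so that $D(s+1)/D(s)$ equals this rational function. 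Consequently $D(s)$ is a canonical particular solution, and every meromorphic solution writes $\phi_1(y(s))=D(s)\,P(s)$ with $P(s+1)=P(s)$.

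Finally I would pin down $P$. The map $s\mapsto \tan(\pi s-\pi\mu_1/2)=\tan(\tfrac\pi2\sqrt{2y+\mu_1^2})$ is $1$-periodic and realises the cylinder $\mathbb C/\mathbb Z$ (minus its poles) as $\mathbb P^1(\mathbb C)$; any $1$-periodic meromorphic function with finitely many poles modulo $\mathbb Z$ and polynomial growth at the cusps is therefore a rational function of this tangent. The analyticity of $\phi_1$ on $\{\mathfrak{Re}(y)\leqslant 0\}$ together with the integrability of the invariant measure provide exactly the required bounds, yielding the announced form $\phi_1(y)=D(s)\,R(\tan(\tfrac\pi2\sqrt{2y+\mu_1^2}))$. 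Substituting the explicit $R$ produced, case by case, in Theorems~\ref{thm:N}, \ref{thm:-N}, \ref{thm:notN} and~\ref{thm:general}, and simplifying the Gamma ratios via Euler's reflection $\Gamma(z)\Gamma(1-z)=\pi/\sin(\pi z)$---exactly the content of Proposition~\ref{prop:consistency}---produces the three compact proportional expressions displayed at the end of the statement. The main obstacle is precisely this identification of $R$: it must cancel every pole of $D(s)$ falling in the physical region, its degree is fixed by the normalisation~\eqref{eq:normcst}, and which poles are present depends sensitively on the signs of $s_1$ and $s_2$, producing the three regimes $s_1<0<s_2$, $s_1,s_2<0$ and $s_1,s_2>0$. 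This is the step carried out via Tutte's invariant approach and the conformal gluing functions, following the methodology of~\cite{BoMe-El-Fr-Ha-Ra}.
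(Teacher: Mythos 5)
Your proposal follows essentially the same route as the paper: uniformize the kernel curve, use the two Galois involutions whose composition is the unit shift to derive the first-order difference equation $\varphi_1(s+1)=G(s)\varphi_1(s)$, observe that $D$ is a canonical particular solution so that $\varphi_1/D$ is $1$-periodic, and invoke a periodic-invariant lemma (the paper's Proposition~\ref{lemma:periodic}, via the conformal gluing function $\wb$) to conclude that $\varphi_1/D$ is a rational function of a tangent. The one small misattribution: the three-case display comes directly from Theorem~\ref{thm:general}, and Proposition~\ref{prop:consistency} only verifies that those formulas reproduce the rational-decoupling cases of Theorems~\ref{thm:N}--\ref{thm:notN} via product-to-sum trigonometric identities --- Euler's reflection formula is not used there, and $D(s)$ remains as a ratio of Gamma functions in the final expression rather than being simplified away.
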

\addtocounter{theorem}{-1}
\renewcommand{\thetheorem}{\arabic{theorem}}
A symmetrical expression holds for $\phi_2(x)$ and with the functional equation~\eqref{eq:FE} we directly obtain a formula for the bivariate Laplace transform $\phi(x,y)$.
Even if it is not obvious because of the square root, this Laplace transform is of course analytic for $y$ of negative real part and even admits a meromorphic extension to the whole of $\mathbb{C}$, see Remark~\ref{rem:sinc} for more details.

\paragraph{Differential properties of the Laplace transforms}

We introduce the parameters
$$
\gamma:=\frac{1-r_1r_2}{(1+r_1)(1+r_2)}=\frac{1}{1+r_1}+\frac{1}{1+r_2}-1,\quad \gamma_1:=\frac{1+\mu_2-r_1\mu_1}{1+r_1},\quad \gamma_2:=\frac{1+\mu_1-r_2\mu_2}{1+r_2}.
$$
These parameters are reminiscent of the famous parameter $\alpha$ introduced by Varadhan and Williams \cite{Varadhan1984} and the two parameters $\alpha_1$ and $\alpha_2$ recently introduced by Bousquet-Mélou et al. \cite{BoMe-El-Fr-Ha-Ra} in the nondegenerate case. We refer to the Appendix~\ref{sec:C} for more details on the links with these parameters.

The function $D$, which will be referred below as the decoupling function, simplifies to a rational function given in Proposition~\ref{prop:decoupling} if and only if
$$
\gamma\in \mathbb Z\text{ or }\{\gamma_1,\gamma_2\}\subset \mathbb Z.
$$
Using these parameters, we also determine the differential nature of the Laplace transform. More precisely, we identify necessary and sufficient conditions on these parameters under which the Laplace transform is:
\begin{itemize}
    \item \emph{rational}, \textit{i.e.}, the quotient of two polynomials, 
    \item \emph{algebraic}, \textit{i.e.}, it satisfies a non-trivial polynomial equation with coefficients in $\mathbb{R}(X)$, the field of rational functions over~$\mathbb{R}$,
    \item \emph{differentially finite} or \emph{D-finite} or \emph{holonomic}, \textit{i.e}., it satisfies a non-trivial linear differential equation with coefficients in $\mathbb{R}(X)$,
    \item \emph{differentially algebraic} or \emph{D-algebraic}, \textit{i.e.}, it satisfies a non-trivial polynomial differential equation in $\mathbb{R}(X)$,
    \item \emph{differentially transcendental} or \emph{D-transcendental} if it is not \emph{differentially algebraic}. 
   
\end{itemize}
See Definition~\ref{def:hierarchy} for subtleties concerning these definitions in the context of bivariate functions. We talk about function hierarchy because of the following inclusions
\begin{equation*}
\text{rational}\subset\text{algebraic}\subset\text{D-finite}\subset\text{D-algebraic}.
\end{equation*}
\renewcommand{\thetheorem}{B}
\begin{theorem}[Algebraic and differential nature]
Table~\ref{tab:classification} gives necessary and sufficient condition for $\phi_1$, $\phi_1$ and $\phi$ to belongs to this differential hierarchy. The classification made in this table is proven in Section~\ref{sec:hierarchy}.
\end{theorem}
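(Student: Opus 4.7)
The plan is to reduce Theorem~B to a classification of the auxiliary function $F(s):=D(s)\cdot T(s)$ in the variable $s = \tfrac{1}{2}(\mu_1 + \sqrt{2y+\mu_1^2})$, where $T(s)$ is the trigonometric/rational-in-$\tan$ factor given by Theorem~A, and then to transfer the result back to the variable $y$. The substitution $y \mapsto s$ is a degree-two algebraic map whose Galois involution is $\sigma : s \mapsto \mu_1 - s$; since each of the classes algebraic, D-finite, D-algebraic is closed under algebraic substitution, one has $\phi_1 \in \mathcal{F} \iff F \in \mathcal{F}$ for each such class, while \emph{rationality} of $\phi_1$ in $y$ requires the additional $\sigma$-invariance $F(\mu_1-s)=F(s)$. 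A symmetric reduction applies to $\phi_2$ in the variable $t=\tfrac{1}{2}(\mu_2+\sqrt{2x+\mu_2^2})$.

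\textbf{Core classification of $F(s)$.} I would use Euler's reflection formula $\Gamma(z)\Gamma(1-z)=\pi/\sin(\pi z)$ to convert every trigonometric factor of $T(s)$ into a ratio of Gammas, thereby writing $F(s)=C(s)\prod_i \Gamma(s+\alpha_i)^{\epsilon_i}$ with $C(s)$ rational and the shifts $\alpha_i$ (and signs $\epsilon_i$) explicit affine expressions in $\gamma, \gamma_1,\gamma_2$. Because $\Gamma$ is hypertranscendental (H\"older's theorem) and its only elementary cancellations arise through integer-shift collapses $\Gamma(s+n)/\Gamma(s)\in\mathbb{C}[s]$, a balanced product of this shape is rational (equivalently algebraic, equivalently D-finite) if and only if the $\alpha_i$ can be matched into $(+,-)$ pairs with integer differences. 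This translates directly into the integrality conditions of Proposition~\ref{prop:decoupling} on $\gamma$ and $\{\gamma_1,\gamma_2\}$, supplemented by the integrality contributed by the sine factors of $T$, yielding the collapse of the first three rows of Table~\ref{tab:classification}. For D-algebraicity I would exploit the first-order difference equation $F(s+1)/F(s)=R_F(s)$ with $R_F \in \mathbb{C}(s)$ explicit, and apply the Hardouin--Singer criterion (in the form used in \cite{BoMe-El-Fr-Ha-Ra}): $F$ is D-algebraic if and only if a certain telescopic decomposition of $R_F$ exists over $\mathbb{C}(s)$, a condition that case by case reduces to explicit arithmetic relations between $\gamma,\gamma_1,\gamma_2$.

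\textbf{Bivariate step and main obstacle.} Once $\phi_1$ and $\phi_2$ are classified, the bivariate Laplace transform $\phi(x,y)$ is handled through the functional equation~\eqref{eq:FE}: since $K$, $k_1$, $k_2$ are polynomials and each class in the hierarchy is closed under $\mathbb{C}(x,y)$-operations, $\phi$ lies in $\mathcal{F}$ if and only if both $\phi_1$ and $\phi_2$ do, the converse implication following by restriction to the coordinate axes in the spirit of the lemmas of Section~\ref{subsec:FE}. Running this through the three sign regimes of Theorem~A under hypothesis~\eqref{eq:H5} produces Table~\ref{tab:classification}. The hard step will be the D-algebraic/D-transcendent cut: the positive side requires constructing an explicit algebraic ODE from the telescoping of $R_F$, while the negative side requires an effective computation of a difference-Galois group showing that no such telescoping can exist. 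The bookkeeping is further complicated by the three sub-cases in Theorem~A and by the need to track the additional $\sigma$-invariance condition in the rationality row.
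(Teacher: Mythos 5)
Your overall strategy — read off the differential nature from the explicit formulas of Theorem~\ref{thm:main} and invoke a Hardouin--Singer type criterion on the first-order difference equation $\varphi_1(s+1)=G(s)\varphi_1(s)$ for the D-algebraic/D-transcendental cut — is exactly the paper's, so the skeleton is sound. However, several points are either overcomplicated or rest on claims that do not quite hold as stated.

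The paper does not take the Gamma-product/H\"older route for the rational/algebraic/D-finite collapse. Instead it argues directly from the explicit closed forms: when $\gamma\in-\mathbb{N}$ (Theorem~\ref{thm:N}) the transform is a reciprocal polynomial, hence rational; in all other rational-decoupling cases (Theorems~\ref{thm:-N}, \ref{thm:notN}) one gets $\phi_1 = (P/Q)\,g$ where $g$ is an elementary transcendental function with \emph{infinitely many singularities} (Lemma~\ref{lem:wDADF}), and D-finite functions have only finitely many singularities, so $\phi_1$ cannot be D-finite. This single observation forces the collapse of the first three rows and is much lighter than the balanced-$\Gamma$ argument you propose. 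Your H\"older-theorem argument, taken literally, would assert that an unbalanced Gamma product is D-transcendental, which would prove too much: there are cases (e.g.\ $\gamma\in\mathbb{N}$) where $\phi_1$ involves an ``unbalanced'' trigonometric factor but is nevertheless D-algebraic. The balancing criterion decides rationality of $D$, not of $\phi_1$; the trigonometric factor $g$ in the theorems above is always present when $\gamma\notin-\mathbb{N}$ and must be classified separately.

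Two smaller issues. First, your worry about an ``additional $\sigma$-invariance condition in the rationality row'' is moot: $\varphi_1(s)=\phi_1(\y(s))$ is $\eta$-invariant by construction ($\y(\eta s)=\y(s)$, and $\eta(s)=\mu_1-s$ is your $\sigma$), so $F(s)=D(s)T(s)$ is automatically $\sigma$-invariant and no extra bookkeeping is needed. Second, for the positive side of D-algebraicity you do not need to ``construct an explicit algebraic ODE from the telescoping'': once $\phi_1$ is exhibited as a product of a rational function and a composition of algebraic and elementary D-algebraic functions, closure of the D-algebraic class under field operations and algebraic substitution gives the result immediately (this is Proposition~\ref{prop:CSDA}). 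The negative side (D-transcendence when $\gamma\notin\mathbb{Z}$ and $\{\gamma_1,\gamma_2\}\not\subset\mathbb{Z}$) is handled exactly as you propose via Proposition~\ref{cor3.4} and the rational-decoupling criterion of Proposition~\ref{prop1}; there the paths coincide. Finally, for the bivariate $\phi$, the paper only proves the sufficiency direction (if $\phi_1,\phi_2$ are in a class then so is $\phi$, using that each class is a $\mathbb{C}(x,y)$-vector space); your suggested ``restriction to the coordinate axes'' for the converse would need more care, since $\phi_1$ is not literally a coordinate restriction of $\phi$ but a boundary quantity related through the lateral measures.
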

\addtocounter{theorem}{-1}
\renewcommand{\thetheorem}{\arabic{theorem}}
\begin{table}[!htbp]
\centering
\begin{tabular}{|c|c|c|c|c|c|}
\hline 
\begin{tabular}{@{}c@{}}\textbf{nature of} $\boldsymbol{\phi_1}$\\  $\boldsymbol{\phi_2}$ \textbf{and} $\boldsymbol{\phi}$ \end{tabular} & rational & algebraic & D-finite & D-algebraic & D-transcendental \\ 
\hline 
\textbf{condition} & \multicolumn{3}{c|}{$\gamma\in-\mathbb{N}$} & \begin{tabular}{@{}c@{}}$\gamma\in\mathbb{Z}$ or \\  $\{ \gamma_1,\gamma_2\}\subset \mathbb{Z}$\end{tabular}   & \begin{tabular}{@{}c@{}}$\gamma\notin\mathbb{Z}$ and \\  $\{ \gamma_1,\gamma_2\}\not\subset \mathbb{Z}$ \end{tabular}    \\ 
\hline 
\end{tabular} 
\caption{Differential and algebraic nature of the Laplace transforms.}
\label{tab:classification}
\end{table}

This differential structure of the Laplace transform has various implications for the invariant measure itself. For instance, if the transform is rational, the invariant measure density can be written as a linear combination of exponentials multiplied by polynomials. If it is D-algebraic, it yields a recurrence relation for the moments of the invariant measure and this recurrence relation is linear if it is D-finite. We refer the reader to the introduction of Bousquet-Mélou et al. \cite{BoMe-El-Fr-Ha-Ra}, which provides further insights into the relevance of this classification within this hierarchy.

\paragraph{Explicit expression of the densities}

Inverting the Laplace transform $\phi$ to recover the density $\pi$ of the invariant measure is, in general, a difficult task. Nonetheless, in some cases we are able to invert the univariate Laplace transform $\phi_1$ to recover the density $\nu_1$ of the lateral invariant measure $\bm\nu_1$: if $\phi_1$ is rational, \textit{i.e.} if $\gamma\in -\mathbb{N}$, then $\nu_1$ can be expressed as a \emph{sum of exponentials}; and if $\phi_1$ is D-algebraic and the key parameter $\gamma$ is positive, \textit{i.e.} if $\gamma\in \mathbb{N}$ or  $\{\gamma_1,\gamma_2 \}\subset \mathbb{N}$, then we describe a polynomial differential operator $\boldsymbol{\mathrm{P}}^*$ and a function $\theta$ (related to the well-known Jacobi theta functions) such that 
$$\nu_1(v)\propto \boldsymbol{\mathrm{P}}^*\theta(e^{-v}).$$ 
These relations lead to a series expansion of $\nu_1$
. The following theorem summarizes these results.

\renewcommand{\thetheorem}{C}
\begin{theorem}[Explicit expression of $\nu_1$] The density of the lateral invariant measure is given by:
\begin{itemize}
\item When $\gamma\in -\mathbb{N}$, then $\nu_1$ can be expressed as a \emph{sum of exponentials}, see Theorem~\ref{thm:densityrat} for an explicit expression.
\item When $\gamma\in \mathbb N$, then 
$$\nu_1(v)\propto\sum_{n\in\mathbb Z}\left(n+\frac{\gamma_1}{2}\right)P\left(\frac{(2n+\gamma_1)^2-\mu_1^2}{2}\right)\exp\left(-v\frac{(2n+\gamma_1)^2-\mu_1^2}{2}\right),$$
where $P$ is an explicit polynomial given in Theorem~\ref{thm:densitygamma}.
\item When $\{\gamma_1,\gamma_2\}\subset \mathbb{N}$ and $\gamma\notin\mathbb{N}$, then
$$\nu_1(v)\propto \sum_{n\in\mathbb Z}(-1)^nn^2P\left(\frac{n^2-\mu_1^2}{2}\right)\exp\left(-v\frac{n^2-\mu_1^2}{2}\right),$$
where $P$ is an explicit polynomial given in Theorem~\ref{thm:densitygamma12}.
\end{itemize}
\end{theorem}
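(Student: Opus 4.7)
The plan is to invert the Laplace transform $\phi_1$ in each of the three cases, starting from the closed-form expression provided by Theorem~A and the parameter-dependent simplifications of the decoupling function $D$.

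\textbf{Case $\gamma\in -\mathbb{N}$ (rational case).} By Theorem~B the Laplace transform $\phi_1$ is rational in $y$. I would first rewrite $\phi_1$ as a genuine rational fraction of $y$: this requires checking that not only $D(s)$ collapses to a rational function of $s$, but also that the compensations between the zeros of $D(s)$ and the sines in Theorem~A kill the $\sqrt{2y+\mu_1^2}$-dependence. Once $\phi_1(y)$ is written as $P(y)/Q(y)$, a partial fraction decomposition gives
\[
\phi_1(y)=\sum_{j}\frac{c_j}{\lambda_j+y},\qquad c_j=\mathrm{Res}_{y=-\lambda_j}\phi_1,
\]
provided the poles are simple. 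Simplicity will follow from the fact that $\phi_1$ is the Laplace transform of a positive measure, and so cannot have poles on $\{\mathfrak{Re}\, y=0\}$, combined with a zero/pole count on $D(s)$. Term-by-term inverse Laplace transform then yields $\nu_1(v)=\sum_j c_j e^{-\lambda_j v}$, positivity of the $\lambda_j$ following from positive recurrence. This is the form stated in Theorem~\ref{thm:densityrat}.

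\textbf{Cases $\gamma\in\mathbb{N}$ and $\{\gamma_1,\gamma_2\}\subset\mathbb{N}$, $\gamma\notin\mathbb{N}$.} In both cases $D(s)$ simplifies to a rational function of $s$, while the trigonometric factor in Theorem~A contributes an infinite sequence of simple poles, producing a meromorphic $\phi_1$ on $\mathbb{C}$ whose pole set forms an arithmetic progression on the real axis. I would proceed in three substeps. First, I locate the poles: the zeros of $\sin(\pi(s+s_1+\mu_2))$ or $\sin(\pi(s-s_2))$ lie on shifted integers, and after identifying which of them are cancelled by zeros/poles of the rational $D(s)$, the surviving poles of $\phi_1$ in $y=2s(s-\mu_1)$ are exactly $y_n=\bigl((2n+\gamma_1)^2-\mu_1^2\bigr)/2$ in the first case and $y_n=(n^2-\mu_1^2)/2$ in the second. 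The quadratic change of variables contributes the Jacobian $\sqrt{2y_n+\mu_1^2}$, which is precisely the factor $2n+\gamma_1$ or $n$ producing the linear prefactor in the series, while the polynomial $P$ is the rational residue of $D(s)$ evaluated at these $s$-values (rational in $s$ collapses to polynomial in $y_n$). Second, I apply the Bromwich inversion
\[
\nu_1(v)=\frac{1}{2\pi i}\int_{c-i\infty}^{c+i\infty} e^{yv}\phi_1(y)\,\mathrm{d}y,\qquad c<0,
\]
and shift the contour to the right, collecting residues at the $y_n$'s; the decay needed to annihilate the pushed-out contour comes from Stirling asymptotics for the Gamma quotient in $D(s)$ and the lower bound $|\sin(\pi z)|\gtrsim e^{\pi|\mathfrak{Im}\, z|}$. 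Third, I compute the residues explicitly: the sine factors produce residues proportional to $(n+\gamma_1/2)$ when $\gamma\in\mathbb{N}$, whereas in the case $\{\gamma_1,\gamma_2\}\subset\mathbb{N}$ an additional cancellation from the Gamma ratio and $\sin'(\pi n)=\pi(-1)^n$ turns the prefactor into $(-1)^n n^2$. These collapse to the announced series.

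\textbf{Main obstacle.} The hardest step is the justification of the contour shift in the infinite-pole cases: one needs precise two-sided asymptotic estimates showing that the Gamma quotient $\Gamma(s-s_1)\Gamma(s+s_2+\mu_2)/\bigl(\Gamma(s-s_2)\Gamma(s+s_1+\mu_2)\bigr)$, pulled back through $s=\tfrac{1}{2}(\mu_1+\sqrt{2y+\mu_1^2})$, grows slowly enough against the trigonometric factor for the Bromwich integrand to decay on a sequence of vertical segments pushed to $+\infty$ while avoiding the poles. Once this uniform decay is secured, the residue computation and the identification of the polynomial~$P$ reduce to bookkeeping with the Gamma reflection formula, and the positivity of $\nu_1$ serves as an end-of-computation sanity check.
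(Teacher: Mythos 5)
Your outline reproduces the paper's argument for $\gamma\in-\mathbb{N}$ (rational $\phi_1$, simple roots of $Q$, partial fractions), but for the two theta-type cases it takes a genuinely different route. The paper does not run a Bromwich contour inversion on $\phi_1$: it writes $\phi_1(y)=P(y)\,g(y)$ with $P$ the explicit polynomial from the rational decoupling $D$ and $g$ one of a small family of trigonometric fractions, expands $g$ via a Mittag--Leffler sum over its poles (Lemma~\ref{lemma:mittag}), and recognises $g$ as the Laplace transform of an explicit Jacobi--Theta-like function $\theta_\star(e^{-v})$ (Proposition~\ref{prop:TLtheta}). The polynomial factor is then transferred to the density side by an adjoint differential operator, $\mathcal{L}\{\boldsymbol{\mathrm{P}}^*\theta_\star\}=P\cdot\mathcal{L}\{\theta_\star\}$ with $\boldsymbol{\mathrm{P}}^*=P(-\partial_v)$, the integration by parts being licensed by flatness of $v\mapsto\theta_\star(e^{-v})$ at $v=0$ and exponential control at $v=\infty$, both proved via Poisson summation (Proposition~\ref{prop:jacobi}, Corollaries~\ref{cor:flat} and~\ref{cor:growth}). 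Your route is conceptually more direct and your residue bookkeeping would indeed reproduce the announced prefactors $(n+\gamma_1/2)$ and $(-1)^n n^2$ (the Jacobian $\sqrt{2y_n+\mu_1^2}$ and the $\sin$ derivative combine exactly as you describe), but it buys this at the cost of the very contour-decay estimate you flag as the main obstacle; the paper replaces that uniform two-sided decay along a dodging sequence of vertical lines by two pointwise lemmas (flatness at $0$, Tauberian growth at $\infty$) which are far easier to check. Two small corrections to your setup: (i) with the paper's convention $\phi_1(y)=\int_0^{\infty}e^{yv}\nu_1(v)\,\mathrm{d}v$, the Bromwich kernel should be $e^{-yv}$, not $e^{yv}$, if you intend to shift the contour to the right where the poles $y_n\geqslant-\mu_1^2/2$ live; (ii) in these D-algebraic cases the Gamma quotient has already collapsed to a rational $D$ of degree $2\gamma$ (Lemma~\ref{lemma:asymptotic}), so the growth you must beat is purely the polynomial $P(y)$, not a transcendental Gamma ratio, and the decay you harvest is the hyperbolic decay of $1/\cos$ or $1/\sin$ off the real axis.
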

\addtocounter{theorem}{-1}
\renewcommand{\thetheorem}{\arabic{theorem}}

Recovering the bivariate density $\pi$ would require a more in-depth analysis, possibly using the compensation approach~\cite{FIKR23+}. Since this article is already quite long, we leave this investigation for future work.

\subsection{Strategy of proof and structure of the paper}\label{subsec:strat}

The paper investigates the invariant measure of the gap process associated with a degenerate system of three Brownian particles undergoing asymmetric collisions. This gap process is modeled as an obliquely reflected Brownian motion in a quadrant with a degenerate diffusion matrix (Section~\ref{subsec:3part}). The strategy relies on transforming the problem into a functional equation satisfied by the Laplace transform of the invariant measure (Section~\ref{subsec:FE}). The main objective is to provide an explicit expression for the invariant measure and to classify the nature of its Laplace transform according to its differential properties: whether it is rational, algebraic, D-finite, or D-algebraic (Section~\ref{subsec:mainres}). The aim of the rest of this article is to solve the functional equation~\eqref{eq:FE} under the assumptions~\eqref{eq:H1}, \eqref{eq:H2}, \eqref{eq:H3}, \eqref{eq:H4} and \eqref{eq:H5}. There may be several functions that satisfy this functional equation, but this article as a whole shows that only one is the Laplace transform of a probability measure. The structure of the article reflects the strategy of proof. The key steps are as follows:

\begin{itemize}
    \item Section~\ref{sec:FuncEqToDiffEq} introduces a Riemann surface $\mathcal{S}$ on which the kernel $K$ introduced in~\eqref{eq:noyau} vanishes, along with an explicit elementary parametrization $(\x(s),\y(s))=(2s(s+\mu_2),2s(s-\mu_1))$, \textit{i.e.}
    $$\mathcal{S}=\{(x,y)\in\mathbb C^2 : K(x,y)=0\}=\left\{ (\x(s),\y(s)) : s\in\mathbb{C} \right\}.$$ 
We also introduce symmetries
$
\zeta s:=-s-\mu_2$ and $\eta s :=-s+\mu_1
$
which are automorphisms of the surface $\mathcal{S}$ leaving invariant $\x$ and $\y$ respectively, \textit{i.e.}
$\x(\zeta s)=\x (s)$ and $\y(\eta s)=\y(s)$.
On the Riemann surface $\mathcal{S}$, the functional equation takes the simplified form
$$
0 = k_1(s)\varphi_1(s) + k_2(s)\varphi_2(s)
$$
where we note $\varphi_1(s):=\phi_1(\y (s))$ and $\varphi_2(s):=\phi_2(\x (s))$.
The Laplace transform $\varphi_1$ is then meromorphically continued to the whole complex plane, and its poles and its behavior near infinity are analyzed. From the functional equation, a difference equation of the form
$$
\varphi_1(s+1) = G(s)\varphi_1(s)
$$
is derived, where 
$$G(s)=\frac{(s-s_1)(s+s_2+\mu_2)}{(s-s_2)(s+s_1+\mu_2)}$$
is a simple rational function. The subsequent sections are devoted to solving this difference equation which has several solutions but only one with the desired poles and the right behavior at infinity.
    \item Section~\ref{subsec:decoupl} introduces the decoupling function $D$ already defined in~\eqref{eq:Ddec} using $\Gamma$ functions. This function simplify the structure of the problem since it is a particular solution of the difference equation: 
    $$D(s+1)=G(s)D(s).$$ 
In particular, this implies that $\varphi_1 / D$ is a $1$-periodic function:
$$
\frac{\varphi_1(s+1)}{D(s+1)}=\frac{\varphi_1(s)}{D(s)}.
$$  
    The section then determines the necessary and sufficient conditions for $D$ to be a rational fraction, namely:
    $$
\gamma\in \mathbb Z\text{ or }\{\gamma_1,\gamma_2\}\subset \mathbb Z.
$$
    \item Section~\ref{subsec:Tutte} defines two types of Tutte invariants: 
\begin{itemize}
\item Type~I invariant: the meromorphic $1$-periodic functions on $\mathbb{C}$,
\item Type~II invariant: the meromorphic functions on $\mathbb{C}$ invariant by $\eta$ and $\zeta$.
\end{itemize}    
Thanks to the decoupling function \( D \) studied in the previous section, we observe that \( D/\varphi_1 \) is $1$-periodic and is then a Type~I invariant. Moreover, in the cases where \( D \) is a rational function, either \( D/\varphi_1 \) or \( (D/\varphi_1)^2 \) turns out to be a Type~II invariant. These are called \emph{unknown invariants}, because $\varphi_1$ is the function we are looking for.
For each type of invariant, a conformal gluing function which is in a sense a \emph{canonical invariant}, is introduced, namely:
$$
\wb(s):=\tan\big(\pi(s+\mu_2-1/2)\big)
\quad\text{and}\quad
\w(s):= \cos\big(2\pi(s+\mu_2/2)\big).
$$
The main idea behind Tutte’s invariant method is to use invariant lemmas stating that, under certain growth conditions at infinity, all invariants can be expressed as rational functions of the canonical invariants.
    \item Section~\ref{sec:Laplace} provides the explicit expression of the Laplace transform $\varphi_1$ in terms of $\wb$ or $\w$ using the invariant lemmas proved in the previous section. The proof of Theorem~\ref{thm:main} is obtained by distinguishing several cases, depending on the values of $\gamma$, $\gamma_1$, and $\gamma_2$, as detailed in Theorems~\ref{thm:N}, \ref{thm:-N}, \ref{thm:notN}, and~\ref{thm:general}.

    \item Section~\ref{sec:hierarchy} analyzes the differential hierarchy satisfied by the Laplace transform and proves the necessary and sufficient condition stated  in Table~\ref{tab:classification}.
    The explicit formulas for the Laplace transforms determined in the previous section using Tutte's invariant method give us the sufficient conditions. To prove that they are necessary, we use a result from Galois difference theory.
    \item Section~\ref{sec:density} presents an explicit expression of the invariant measure density $\nu_1$ introduced in~\eqref{eq:densitedef}. To do this, we invert the Laplace transforms using Mittag-Leffler expansions and some polynomial differential operator. 
    \item Appendix~\ref{sec:r1r2} discusses special cases where~\eqref{eq:H5} is not satisfied, \textit{i.e.} $r_1=-1$ or $r_2=-1$. The proof strategy is identical to the general case, so we will just give the main steps in the reasoning and the results.
    \item Appendix~\ref{sec:homogeneity} derives homogeneity relations satisfied by the Laplace transform and the invariant measure density. Thanks to these change-of-variable formulas, we can explicitly generalize our results without relying on assumption~\eqref{eq:H4}.
    \item Appendix~\ref{sec:C} draws connections with nondegenerate systems. In particular, it highlights the connection between the parameter $\gamma$ introduced in this article and the well-known parameter $\alpha$ introduced by Varadhan and Williams in the study of reflected Brownian motion in cones.
    \item Appendix~\ref{sec:notations} is an index of all the notations introduced throughout the paper, provided as a reference for the reader.
\end{itemize}

\begin{remark}[Overview of Tutte's invariant method]
Between the 1970s and 1990s, Tutte developed an algebraic approach based on invariants to solve a functional equation arising in the enumeration of colored triangulations~\cite{tutte_chromatic_1995}. More recently, an analytic counterpart of Tutte’s method has emerged, refining and extending the classical analytic approach based on boundary value problems. This refined method has been successfully applied to the enumeration of planar maps~\cite{albenque_menard_schaeffer,BeBM-11} and to lattice walks confined to the quarter plane~\cite{bernardi2021counting}. This analytic-invariant approach has also proven effective for the study of continuous-time stochastic processes~\cite{BoMe-El-Fr-Ha-Ra,FIKR23+}. 
\end{remark}
\begin{remark}[Overview of difference Galois theory]
\label{rem:galois}
Much like classical Galois theory, difference Galois theory establishes a correspondence between the algebraic relations satisfied by the solutions of a linear functional equation and the algebraic dependencies among its coefficients. This theory provides powerful tools for analyzing the differential properties of solutions to such equations. For an accessible introduction to the subject, see~\cite{vdPS}. In recent years, difference Galois theory has been successfully applied to the enumeration of discrete walks in the quadrant~\cite{DreyfHardtderiv,dreyfus2018nature,DHRS0}.
Recently, difference Galois theory has been applied for the first time to continuous random process such as the reflected Brownian motion in~\cite{BoMe-El-Fr-Ha-Ra}.
\end{remark}

\section{Meromorphic continuation and difference equation}\label{sec:FuncEqToDiffEq}

\subsection{Uniformization and Galois automorphisms}\label{subsec:unif}

Recall that we defined in \eqref{eq:noyau} the kernel which under the hypothesis~\eqref{eq:H4} is equal to $K(x,y)= (x-y)^2 - 2\mu_1 x - 2\mu_2 y $. The goal of this first subsection is to study the set of points that cancel the kernel, \textit{i.e.} the set
\begin{equation}\label{eq:parabola}
\mathcal{S}:=\{(x,y)\in\mathbb C^2 : K(x,y)=0\}.
\end{equation}
\begin{proposition}[Uniformization]\label{prop:unif} The set $\mathcal{S}$ is a Riemann surface uniformized by 
\begin{equation}\label{eq:unif}
\Big(\x(s),\y(s)\Big)=\Big(2s(s+\mu_2),2s(s-\mu_1)\Big)
\end{equation}
\textit{i.e.} $\mathcal{S}$ is conformally equivalent to the complex plane through the map $(\x,\y): \mathbb{C}\to \mathcal{S}$, we then have
$$\mathcal{S}=\left\{ \Big(\x(s),\y(s)\Big) : s\in\mathbb{C} \right\} .$$
\end{proposition}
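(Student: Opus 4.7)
The plan is to verify the parametrization step by step: (i) the map lands in $\mathcal{S}$, (ii) it is bijective, (iii) $\mathcal{S}$ admits a canonical Riemann surface structure, and (iv) the map is holomorphic with non-vanishing derivative, so it is a conformal equivalence. All of this exploits the very convenient identity $\mu_1+\mu_2=1$ coming from hypothesis~\eqref{eq:H4}.

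First I would simply plug in. Using $\mu_1+\mu_2=1$, one computes $\x(s)-\y(s)=2s(\mu_1+\mu_2)=2s$, whence $(\x(s)-\y(s))^2=4s^2$; on the other hand,
\begin{equation*}
2\mu_1\x(s)+2\mu_2\y(s)=4s\bigl(\mu_1(s+\mu_2)+\mu_2(s-\mu_1)\bigr)=4s^2(\mu_1+\mu_2)=4s^2,
\end{equation*}
so $K(\x(s),\y(s))=0$ and the map sends $\mathbb{C}$ into $\mathcal{S}$.

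Next I would establish bijectivity. Injectivity is immediate because the difference $\x(s)-\y(s)=2s$ already determines $s$. For surjectivity, given $(x,y)\in\mathcal{S}$, set $s:=(x-y)/2$; using $K(x,y)=0$ one gets $(x-y)^2=2\mu_1 x+2\mu_2 y$, and a short manipulation (substituting $\mu_1=1-\mu_2$) shows that $2s(s+\mu_2)=x$ and $2s(s-\mu_1)=y$. Hence $(\x(s),\y(s))=(x,y)$.

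Then I would endow $\mathcal{S}$ with its natural complex structure. Since the polynomial $K$ is irreducible in $\mathbb{C}[x,y]$ (viewed as a polynomial of degree $2$ in $x$, its discriminant in $\mathbb{C}(y)$ is $8y+4\mu_1^2$, which is not a square), the affine curve $\mathcal{S}$ is smooth provided $\partial_xK$ and $\partial_yK$ do not vanish simultaneously anywhere on $\mathcal{S}$. The equations $\partial_xK=2(x-y)-2\mu_1=0$ and $\partial_yK=-2(x-y)-2\mu_2=0$ would force $\mu_1+\mu_2=0$, contradicting~\eqref{eq:H4}. Therefore $\mathcal{S}$ is a smooth affine complex curve and hence a Riemann surface.

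Finally I would check conformality. The map $(\x,\y):\mathbb{C}\to\mathcal{S}$ is clearly holomorphic, and its derivative vector $(\x'(s),\y'(s))=(4s+2\mu_2,\,4s-2\mu_1)$ never vanishes simultaneously: if $4s+2\mu_2=0$ then $\y'(s)=-2(\mu_1+\mu_2)=-2\neq 0$. Combined with bijectivity, this makes it a biholomorphism between $\mathbb{C}$ and $\mathcal{S}$. The only mildly delicate point is step (iii)—being careful that the parametrization is a true conformal equivalence rather than a branched cover—but the verification that the gradient of $K$ never vanishes on $\mathcal{S}$ disposes of this, so the proof is essentially a sequence of direct checks.
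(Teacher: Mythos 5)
Your proof is correct and follows essentially the same route as the paper's: plug the parametrization into the kernel, verify $K(\x(s),\y(s))=0$, and use the identity $\x(s)-\y(s)=2s$ both for injectivity and (via $s:=(x-y)/2$) for surjectivity. The difference is one of thoroughness: the paper stops at the set-theoretic bijection and leaves the ``Riemann surface'' and ``conformal equivalence'' aspects implicit, whereas you explicitly verify that $\mathcal{S}$ is a smooth affine curve (gradient of $K$ never vanishes on $\mathcal{S}$ since $\mu_1+\mu_2\ne 0$) and that $(\x,\y)$ is an immersion ($\x'$ and $\y'$ cannot vanish simultaneously). Your extra checks do cost a little: the irreducibility argument for $K$ is not strictly needed, since the nonvanishing of $\nabla K$ along $\mathcal{S}$ already rules out any repeated component, and the bijective holomorphic immersion is automatically a biholomorphism. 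But these additions buy rigor where the paper relies on standard folklore, so the proposal is a more careful version of the same argument rather than a genuinely different one.
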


\begin{proof}
Recall that by \eqref{eq:H4} we have $\sigma_1=\sigma_2=1$ and $\mu_1+\mu_2=1$. One can easily check that
$$
K(\x(s),\y(s))=(2s(s+\mu_2)-2s(s-\mu_1))^2 - 4\mu_1 s(s+\mu_2) - 4\mu_2 s(s-\mu_1)=0$$
so that $(\x,\y)(\mathbb C)\subseteq \mathcal{S}$. Conversely, if $(a,b)\in\mathcal{S}$ then $(a-b)^2={2}\mu_1a+{2}\mu_2b$ and
$$\x\left(\frac{a-b}{2}\right)=2\left(\frac{a-b}{2}\right)\left(\frac{a-b}{2}+\mu_2\right)=a(\mu_1+\mu_2)=a.$$
Similarly, $\y(\frac{a-b}{2})=b$ and we obtain $\mathcal{S} \subseteq (\x,\y)(\mathbb C)$ which makes $(\x,\y):\mathbb C\to \mathcal{S}$ surjective. The relation $\frac{\x(s)-\y(s)}{2}=s$ immediately implies that $s\mapsto (\x (s),\y (s))$ is injective.
\end{proof}

We now define key automorphisms of this Riemann surface that will be useful in our study.
\begin{definition}[Galois automorphisms and branch points]\label{rem1}
Let $\zeta$ and $\eta$ be the automorphisms of the surface $\mathcal{S}$ (through the uniformization \eqref{eq:unif}) defined by
\begin{equation}\label{eq:etazeta}
\zeta s:=-s-\mu_2=-s+2s_-\text{ and }\eta s :=-s+\mu_1=-s+2s_+
\end{equation}
where we refer to the fixed points of these automorphisms as
\begin{equation}\label{eq:s+s-}
s_-:=\frac{-\mu_2}{2}<0\text{ and }s_+:=\frac{\mu_1}{2}>0 .
\end{equation} 
The automorphism $\zeta$ (\textit{resp.} $\eta$) is the central symmetry around $s_-$ (\textit{resp.} $s_+$). The points $s_-$ and $s_+$ are branch points. See Figure~\ref{fig:fundamental} for an illustration.
\end{definition}

\begin{proposition}[Fundamental properties of $\eta$ and $\zeta$]
The functions $\x$ and $\y$ satisfy the fundamental invariance properties
\begin{equation}\label{eq:inv}
\x(\zeta s)=\x (s)\text{ and }\y(\eta s)=\y(s).
\end{equation}
These automorphisms satisfy the key identities
\begin{equation}\label{eq:etacirczeta}
\zeta \circ \eta (s)=s-1\text{ and }\eta \circ \zeta (s)=s+1 .
\end{equation}
\end{proposition}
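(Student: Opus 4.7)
The plan is essentially a direct verification using the definitions in~\eqref{eq:unif} and~\eqref{eq:etazeta}, together with the normalization $\mu_1+\mu_2=1$ from~\eqref{eq:H4}. This is not a conceptual statement but an elementary computation checking that the two symmetries $\zeta$ and $\eta$ fix $\x$ and $\y$ respectively, and that their composition generates the shift by $\pm 1$.

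First, I would establish the invariance properties. For $\x(\zeta s) = \x(s)$, substitute $\zeta s = -s-\mu_2$ into $\x(s) = 2s(s+\mu_2)$; the expression $2(-s-\mu_2)(-s-\mu_2+\mu_2)$ collapses immediately because the second factor becomes $-s$, recovering $2s(s+\mu_2)$. An analogous one-line computation gives $\y(\eta s) = \y(s)$: plugging $\eta s = -s+\mu_1$ into $\y(s) = 2s(s-\mu_1)$, the shift is designed precisely so that the second factor becomes $-s$ again. Heuristically, $\zeta$ is the reflection about the vertex $s_- = -\mu_2/2$ of the parabola $s\mapsto 2s(s+\mu_2)$, and $\eta$ is the reflection about the vertex $s_+ = \mu_1/2$ of $s\mapsto 2s(s-\mu_1)$, so these invariances are essentially tautological.

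Next, for the composition identities, I would compute
\[
\zeta\circ\eta(s) = \zeta(-s+\mu_1) = -(-s+\mu_1)-\mu_2 = s-(\mu_1+\mu_2) = s-1,
\]
using $\mu_1+\mu_2=1$, and symmetrically $\eta\circ\zeta(s) = -(-s-\mu_2)+\mu_1 = s+1$. Each identity is a two-line affine calculation, the key (and only nontrivial) input being the normalization~\eqref{eq:H4}.

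There is no real obstacle here: the proposition is a direct bookkeeping check that makes transparent \emph{why} the choice of uniformization~\eqref{eq:unif} is well-adapted to the kernel $K$. The only pitfall is ensuring that~\eqref{eq:H4} is in force so that $\mu_1+\mu_2=1$; without it the composition would produce a shift by $\mu_1+\mu_2$ rather than $1$, which would then need to be absorbed by a rescaling of the variable $s$. With this normalization already assumed throughout the section, the proof reduces to the four substitutions above.
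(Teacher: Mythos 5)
Your proof is correct and matches the paper's approach: both amount to the elementary substitutions $\x(\zeta s)=2(-s-\mu_2)(-s)=\x(s)$, $\y(\eta s)=2(-s+\mu_1)(-s)=\y(s)$, and the affine computation of the compositions using $\mu_1+\mu_2=1$. The paper phrases this more compactly by first rewriting $(\x(s),\y(s))=(2s(s-2s_-),\,2s(s-2s_+))$ and observing that $\zeta$ and $\eta$ are the central symmetries about $s_-$ and $s_+$, which is precisely the ``reflection about the vertex'' heuristic you give.
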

\begin{proof}
Let us note that
\begin{equation}
\Big(\x(s),\y(s)\Big)=\Big(2s(s-2s_-),2s(s-2s_+)\Big)
\label{eq:xyspm}
\end{equation}
and remember that $\mu_1+\mu_2=1$ from which we deduce the proposition directly.
\end{proof}
\noindent The last equality of the previous proposition, which gives the value of $\eta \circ \zeta$, will produce the  finite-difference equation in Section~\ref{subsec:prolongement}.

\subsection{Analytical continuation and finite difference equation}\label{subsec:prolongement}
Let us set 
\begin{equation}\label{eq:defvarphi}
\varphi_1(s):=\phi_1(\y (s))
\quad\text{and}\quad
\varphi_2(s):=\phi_2(\x (s))
\end{equation}
where $\phi_1$ and $\phi_2$ are the Laplace transforms previously defined. 

\begin{remark}[From $\varphi_1(s)$ to $\phi_1(y)$] To obtain $\phi_1(y)$, it is sufficient to compute $\varphi_1(s)$. Since
\begin{equation}\label{eq:sys}
(2s-\mu_1)^2=2\y(s)+\mu_1^2
\end{equation}
we obtain for $y=\y(s)=\y(-s+\mu_1)$, 
the following relation between $\varphi_1(s)$ and $\phi_1(y)$:
\begin{equation}\label{eq:varphitophi}
\phi_1(y)=\varphi_1\left(\frac{1}{2}\left(\mu_1\pm\sqrt{2y+\mu_1^2}\right)\right).
\end{equation}
\end{remark}

The functions $\varphi_1$ and $\varphi_2$ are both defined and analytic on their respective domains
\begin{equation}\label{eq:domains}
    \Delta_\mathrm{y}:=\{s\in\mathbb C : \mathfrak{Re}(\y (s))<0\}\text{ and } \Delta_\mathrm{x}:=\{s\in\mathbb C : \mathfrak{Re}(\x (s))<0\}.
\end{equation}
These sets are open sets. See Figure~\ref{fig:fundamental} for an illustration. The goal of this subsection is to continue meromorphically $\varphi_1$ and $\varphi_2$ to the whole complex plane $\mathbb{C}$. In doing so, we will establish a finite-difference equation that will be central to our study.\\
\begin{figure}
    \centering
    \includegraphics[width=8.2cm]{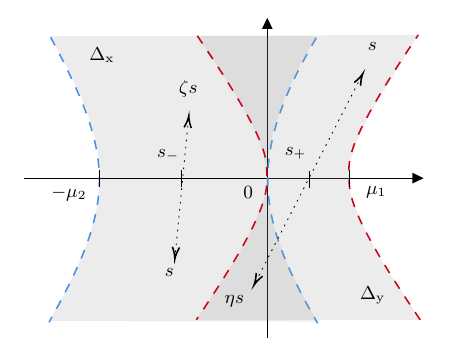}\\
    \caption{The regions of convergence of the Laplace transforms $\phi_1$ (bounded by the red curves) and $\phi_2$ (bounded by the blue curves), seen through the uniformization~\ref{eq:unif}, and their respective Galois automorphisms.} 
    \label{fig:fundamental}
\end{figure}

To write the first step of this meromorphic extension, recall that in~\eqref{eq:k12} we have defined $k_1$ and $k_2$ which are coefficients of the functional equation~\eqref{eq:FE}.
For the sake of readability and conciseness, we will write $k_1(s)$ for $k_1(\x(s),\y(s))$ and $k_2(s)$ for $k_2(\x(s),\y(s))$, so that 
\begin{equation}\label{eq:defk12s}
k_1(s)=\x(s)+r_1\y(s)
=2s(s+\mu_2+r_1s -r_1\mu_1)={2}(1+r_1)s(s-s_1),
\end{equation}
and
$$
k_2(s)=\y(s)+r_2\x(s)
=2s(s-\mu_1+r_2s+r_2\mu_2)
={2}(1+r_2)s(s-s_2)
$$
where we recall that we defined
\begin{equation}\label{eq:k2}
s_1:=\frac{r_1\mu_1-\mu_2}{1+r_1} \quad\text{and}\quad s_2:=\frac{\mu_1-r_2\mu_2}{1+r_2}
\end{equation}
which are points that cancel $k_1$ and $k_2$ (which are both well defined, as $r_1\ne-1$ and $r_2\ne -1$). We now state a small technical lemma used in the rest of the article on the values that $s_1$ and $s_2$ cannot take under our assumptions.
\begin{lemma}[Constraints on $s_1$ and $s_2$]\label{lemma:s1s2} Under the hypothesis~\eqref{eq:H1}-\eqref{eq:H4} the following holds \begin{itemize}
\item[$\mathrm{(a)}$]$s_1\ne s_2$,
\item [$\mathrm{(b)}$]$s_1\notin [0,\mu_1]$,
\item [$\mathrm{(c)}$] $s_2\notin [-\mu_2,0]$.
\end{itemize}
\end{lemma}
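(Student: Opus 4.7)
The plan is to verify each item by a direct computation using the specific forms of $s_1$ and $s_2$ given in~\eqref{eq:k2}, together with the hypotheses~\eqref{eq:H1}--\eqref{eq:H4}. There is no conceptual obstacle here: the three statements reduce to elementary sign analyses, and the only subtlety is that the sign of the denominator $1+r_i$ may be negative, so several cases must be distinguished.

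For item~(b), the strategy is to study separately the sign of the numerator $r_1\mu_1-\mu_2$ and of the denominator $1+r_1$. The numerator is strictly negative: this follows immediately from~\eqref{eq:H2} if $r_1>0$ (which gives $\mu_2>r_1\mu_1$), and from~\eqref{eq:H3} alone if $r_1\leq 0$ (since then $r_1\mu_1\leq 0<\mu_2$). If $1+r_1>0$, then $s_1<0$, so $s_1\notin[0,\mu_1]$ by~\eqref{eq:H3}. If $1+r_1<0$, one shows $s_1>\mu_1$ by checking that $s_1>\mu_1$ is equivalent, after multiplying by the negative quantity $1+r_1$, to $\mu_1+\mu_2>0$, which holds under~\eqref{eq:H3} (and is even $=1$ under~\eqref{eq:H4}). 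Item~(c) is proved by the completely symmetric argument, with the roles of $(s_1,r_1,\mu_1)$ and $(s_2,r_2,\mu_2)$ interchanged.

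For item~(a), the plan is to argue by contradiction. Assuming $s_1=s_2$ and cross-multiplying, one obtains
\[
(r_1\mu_1-\mu_2)(1+r_2)=(\mu_1-r_2\mu_2)(1+r_1).
\]
Expanding and cancelling the terms $r_1\mu_1$ and $-r_2\mu_2$ on both sides, the equation collapses to $r_1r_2(\mu_1+\mu_2)=\mu_1+\mu_2$, and thus to $r_1r_2=1$ since $\mu_1+\mu_2>0$. It then remains to rule this out using~\eqref{eq:H1} and~\eqref{eq:H2}: the first alternative in~\eqref{eq:H1} is violated, so one must have $r_1>0$ and $r_2>0$ with $r_1r_2=1$; but then multiplying the inequality $\mu_2-r_1\mu_1>0$ from~\eqref{eq:H2} by $r_2>0$ and using $r_1r_2=1$ yields $r_2\mu_2-\mu_1>0$, contradicting the other half of~\eqref{eq:H2}. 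This contradiction proves $s_1\neq s_2$.

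Since each part is a short case analysis, no part is really a main obstacle; the only care needed is to track the sign of $1+r_i$ in the denominators and to invoke the correct half of~\eqref{eq:H1}--\eqref{eq:H2} in the final contradiction for~(a).
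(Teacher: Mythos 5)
Your proof is correct and follows essentially the same approach as the paper's: for (a), cross-multiply, simplify to $r_1r_2=1$, invoke (H1) to get $r_1,r_2>0$, and derive a contradiction from (H2); for (b) and (c), track the sign of the denominator $1+r_i$ and show that $s_1$ is negative when $1+r_1>0$ and exceeds $\mu_1$ when $1+r_1<0$. One minor redundancy: you split into cases on the sign of $r_1$ to conclude the numerator $r_1\mu_1-\mu_2$ is negative, but (H2) gives $\mu_2-r_1\mu_1>0$ directly for all $r_1$, which is what the paper uses; this does not affect the validity of your argument.
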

\begin{proof} Recall that we have assumed throughout this article (except in Appendix~\ref{sec:r1r2}) that $r_1\neq-1$ and $r_2\neq -1$.
\begin{itemize}
\item[(a)] Assuming that $s_1=s_2$, we obtain $(1+r_2)(r_1\mu_1-\mu_2)=(1+r_1)(\mu_1-r_2\mu_2),$ and then $1-r_1r_2=0$ (or equivalently, $r_1=1/r_2$). According to~\eqref{eq:H1}, both $r_1$ and $r_2$ are therefore positive. Replacing $r_1$ by $1/r_2$ in~\eqref{eq:H2} yields 
$$\mu_1-r_2\mu_2>0\text{ and }\mu_2-\mu_1/r_2>0,$$
and multiplying the second inequality by $r_2>0$ leads to 
$\mu_1-r_2\mu_2>0\text{ and }\mu_1-r_2\mu_2<0$
which cannot be true.
\item[(b)] By \eqref{eq:H2}, $r_1\mu_1-\mu_2\neq 0$ so $s_1\neq 0$. Notice that 
 $$s_1:=\frac{r_1\mu_1-\mu_2}{1+r_1}>0$$
 implies that $r_1\mu_1-\mu_2$ and $1+r_1$ share the same sign. But $r_1\mu_1-\mu_2<0$ by~\eqref{eq:H2}, so that 
 $1+r_1<0$. Now, consider the second inequality:
 $$\frac{r_1\mu_1-\mu_2}{1+r_1}\leqslant \mu_1.$$
Mutiplying both sides by $1+r_1<0$ yields $r_1\mu_1-\mu_2\geqslant \mu_1(1+r_1)$, \textit{i.e.}, $-\mu_2\geqslant \mu_1$ which is always false by~\eqref{eq:H3}.
\item[(c)] Similarly to $\mathrm{(b)}$, $s_1\neq 0$ and 
 $$s_2:=\frac{\mu_1-r_2\mu_2}{1+r_2}<0$$
 implies that $\mu_1-r_2\mu_2$ and $1+r_2$ have opposite signs, yet $\mu_1-r_2\mu_2>0$ by~\eqref{eq:H2}, so $1+r_2<0$. Mutiplying both sides by $1+r_2<0$ in $-\mu_2\leqslant s_2$ yields $-\mu_2(1+r_2)\geqslant \mu_1-r_2\mu_2$, \textit{i.e.}, $-\mu_2\geqslant \mu_1$ which never holds by~\eqref{eq:H3}.
\end{itemize}
\end{proof}
We now establish the invariance properties of $\varphi_1$ and $\varphi_2$.
\begin{lemma}[Invariance on $\mathcal{S}$]\label{lem:abc}
The functions $\varphi_1$ and $\varphi_2$ satisfy the following relations:
\begin{equation}\tag{$\mathfrak{a}$}\label{eq:eqfunc}
    \forall s \in \Delta_\mathrm{x}\cap\Delta_\mathrm{y},\quad  k_1(s)\varphi_1(s)+k_2(s)\varphi_2(s)=0,
\end{equation}
\begin{equation}\tag{$\mathfrak{b}$}\label{eq:inv1}
    \forall s\in \Delta_\mathrm{y},\quad
    \varphi_1(\eta s)=\varphi_1(s),
\end{equation}
\begin{equation}\tag{$\mathfrak{c}$}\label{eq:inv2}
     \forall s \in \Delta_\mathrm{x},\quad   \varphi_2(\zeta s)=\varphi_2(s).
\end{equation}
\end{lemma}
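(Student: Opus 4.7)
All three statements follow by pulling the functional equation~\eqref{eq:FE} and the invariance identities~\eqref{eq:inv} back through the uniformization $(\x,\y)$, so the proof is essentially a matter of unpacking definitions. I will treat the three items in the order \eqref{eq:eqfunc}, \eqref{eq:inv1}, \eqref{eq:inv2}.

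For \eqref{eq:eqfunc}, the idea is to specialize the functional equation to points of the kernel surface $\mathcal{S}$. Fix $s \in \Delta_{\mathrm{x}} \cap \Delta_{\mathrm{y}}$; then by definition $\mathfrak{Re}(\x(s)) < 0$ and $\mathfrak{Re}(\y(s)) < 0$, so the Laplace transforms $\phi(\x(s),\y(s))$, $\phi_1(\y(s))$ and $\phi_2(\x(s))$ all converge and~\eqref{eq:FE} applies at $(x,y) = (\x(s),\y(s))$. By Proposition~\ref{prop:unif}, $K(\x(s),\y(s)) = 0$, so the left-hand side of~\eqref{eq:FE} vanishes and we are left with
\[
0 = k_1(\x(s),\y(s))\,\phi_1(\y(s)) + k_2(\x(s),\y(s))\,\phi_2(\x(s)) = k_1(s)\varphi_1(s) + k_2(s)\varphi_2(s),
\]
using the shorthand~\eqref{eq:defk12s} and the definitions~\eqref{eq:defvarphi}.

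For \eqref{eq:inv1}, fix $s \in \Delta_{\mathrm{y}}$. The key input is the invariance $\y(\eta s) = \y(s)$ from~\eqref{eq:inv}, which immediately gives $\mathfrak{Re}(\y(\eta s)) = \mathfrak{Re}(\y(s)) < 0$, so $\eta s$ also lies in $\Delta_{\mathrm{y}}$ and $\varphi_1(\eta s) = \phi_1(\y(\eta s))$ is well defined. Then
\[
\varphi_1(\eta s) = \phi_1(\y(\eta s)) = \phi_1(\y(s)) = \varphi_1(s).
\]
Statement~\eqref{eq:inv2} follows by an entirely symmetric argument using $\x(\zeta s) = \x(s)$ from~\eqref{eq:inv}: for $s \in \Delta_{\mathrm{x}}$, the point $\zeta s$ lies in $\Delta_{\mathrm{x}}$ and $\varphi_2(\zeta s) = \phi_2(\x(\zeta s)) = \phi_2(\x(s)) = \varphi_2(s)$.

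\textbf{Main obstacle.} There is no real obstacle here; the lemma is a direct translation of the ingredients assembled in Sections~\ref{subsec:FE} and~\ref{subsec:unif}. The only point worth being careful about is ensuring that $\eta s$ (resp.\ $\zeta s$) stays inside the domain of analyticity $\Delta_{\mathrm{y}}$ (resp.\ $\Delta_{\mathrm{x}}$) so that the right-hand sides in~\eqref{eq:inv1} and~\eqref{eq:inv2} are meaningful before any meromorphic continuation is invoked; as noted, this is automatic from~\eqref{eq:inv}.
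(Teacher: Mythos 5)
Your proof is correct and follows exactly the paper's argument: for~\eqref{eq:eqfunc} one evaluates the functional equation on the kernel surface where $K$ vanishes, and for~\eqref{eq:inv1}–\eqref{eq:inv2} one uses the invariance $\y(\eta s)=\y(s)$ (resp.\ $\x(\zeta s)=\x(s)$) after checking that $\eta s$ (resp.\ $\zeta s$) stays in the relevant domain. (Minor point in your favour: the paper's proof has a typo, writing $\mathfrak{Im}$ where $\mathfrak{Re}$ is meant; you correctly use the real part.)
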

\begin{proof}
Since $K(\x(s),\y(s))=0$, equation~\eqref{eq:eqfunc} is just the functional equation~\eqref{eq:FE} evaluated at $(\x(s),\y(s))\in\mathcal{S}$. To establish Equation~\eqref{eq:inv1}, we must first note that the domain $\Delta_\mathrm{y}$ defined in~\eqref{eq:domains} remains invariant under $\eta$ (indeed for all $s\in\Delta_\mathrm{y}$, $\mathfrak{Im}(\y(\eta s))=\mathfrak{Im}(\y(s))<0$, hence $\eta \Delta_\mathrm{y}\subset \Delta_\mathrm{y}$ and the reverse inclusion is due to $\eta^2=\text{Id}_\mathbb C$), see Figure~\ref{fig:fundamental}. The invariance property~\eqref{eq:inv1} for $\varphi_1$ follows directly from the invariance property~\eqref{eq:inv} satisfied by $\y$. The proof for~\eqref{eq:inv2} proceeds in the same way.
\end{proof}
Now, we progressively extend $\varphi_1$, $\varphi_2$ and the relations~\eqref{eq:eqfunc}, \eqref{eq:inv1} and \eqref{eq:inv2} to the complex plane. First we would like to understand what is the domain in the $s$-plane corresponding to $\Delta_\mathrm{x}$ and $\Delta_\mathrm{y}$.  
To state the next lemma, we introduce four new functions $f^\pm,g^\pm : \mathbb R\to\mathbb R$ defined by 
\begin{equation}
f^\pm (b):=s_-\pm\sqrt{s_-^2+b^2}\text{ and }g^\pm (b):=s_+\pm \sqrt{s_+^2+b^2} .
\label{eq:fgpm}
\end{equation}
\begin{lemma}[Domains $\Delta_\mathrm{x}$ and $\Delta_\mathrm{y}$]\label{lemma:DxDy}
The domains $\Delta_\mathrm{x}$ and $\Delta_\mathrm{y}$ satisfy the following equalities:
\begin{equation}\label{eq:DxDy}
   \Delta_\mathrm{x}=\{a+ib : f^-(b)<a<f^+(b)\}, \quad \Delta_\mathrm{y}=\{a+ib : g^-(b)<a<g^+(b)\},
\end{equation} 
In particular, $\Delta_\mathrm{x}$ and $\Delta_\mathrm{y}$ are open and connected.
\label{lem:DxDy}
\end{lemma}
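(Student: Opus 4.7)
The proof is essentially a direct computation. I would write $s = a + ib$ and expand $\x(s) = 2s(s - 2s_-)$ and $\y(s) = 2s(s - 2s_+)$ explicitly.

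For $\x$, using $s_- = -\mu_2/2$, a short calculation gives
\[
\mathfrak{Re}(\x(s)) = 2a^2 - 2b^2 + 2\mu_2 a = 2\bigl((a - s_-)^2 - s_-^2 - b^2\bigr).
\]
Thus $\mathfrak{Re}(\x(s)) < 0$ is equivalent to $(a - s_-)^2 < s_-^2 + b^2$, i.e.\ $|a - s_-| < \sqrt{s_-^2 + b^2}$, which unfolds to $f^-(b) < a < f^+(b)$. The analogous computation for $\y(s) = 2s(s - 2s_+)$ yields $\mathfrak{Re}(\y(s)) = 2((a - s_+)^2 - s_+^2 - b^2)$, giving the claimed characterization of $\Delta_\mathrm{y}$.

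For openness, I would note that $\Delta_\mathrm{x}$ and $\Delta_\mathrm{y}$ are preimages of the open half-plane $\{z : \mathfrak{Re}(z) < 0\}$ under the polynomial (hence continuous) maps $\x$ and $\y$.

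For connectedness, I would observe from the explicit form that the boundary $\mathfrak{Re}(\x(s)) = 0$ is a hyperbola in the $(a,b)$-plane with branches $a = f^\pm(b)$, and that the vertical line $\{s_- + ib : b \in \mathbb{R}\}$ lies in $\Delta_\mathrm{x}$ since $f^-(b) < s_- < f^+(b)$ for every $b \in \mathbb{R}$. Every horizontal slice at fixed $b$ is the open interval $(f^-(b), f^+(b))$, which contains $s_-$, so any point of $\Delta_\mathrm{x}$ can be connected to the point $s_-$ by a horizontal segment followed by a vertical segment, both lying in $\Delta_\mathrm{x}$. The same argument applies to $\Delta_\mathrm{y}$ using $s_+$. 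There is no real obstacle here; the only thing worth being careful about is correctly reading off the inequality $(a - s_\pm)^2 - s_\pm^2 - b^2 < 0$ so that it corresponds to the \emph{interior} of the hyperbola (the region containing the vertical axis $a = s_\pm$) rather than its complement.
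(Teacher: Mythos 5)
Your proof is correct and takes essentially the same approach as the paper: both compute $\mathfrak{Re}(\x(s))$ and $\mathfrak{Re}(\y(s))$ explicitly for $s=a+ib$ and read off the inequality, the only cosmetic difference being that you complete the square to get $|a-s_\pm|<\sqrt{s_\pm^2+b^2}$, while the paper factors the quadratic in $a$ directly as $2(a-f^+(b))(a-f^-(b))$ and $2(a-g^+(b))(a-g^-(b))$. The paper leaves openness and connectedness as immediate consequences of the characterization; your short arguments for them (preimage of an open half-plane under a continuous map, and connecting any point to $s_\pm$ by a horizontal then a vertical segment) are both sound.
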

\begin{proof}
For $s=a+ib$, we have by~\eqref{eq:unif} that
\begin{equation}\label{eq:Reys}
\mathfrak{Re}(\x(s))=2a^2-4as_--2b^2
\quad\text{and}\quad
\mathfrak{Re}(\y(s))=2a^2-4as_+-2b^2.
\end{equation}
By the definition of $f^\pm$ and $g^\pm$ given in~\eqref{eq:fgpm} we have
$$
\mathfrak{Re}(\x(s))=2(a-f^+(b))(a-f^-(b))
\quad\text{and}\quad
\mathfrak{Re}(\y(s))=2(a-g^+(b))(a-g^-(b))
$$ 
and then by the definition~\eqref{eq:domains} of the domains $\Delta_\mathrm{x}$ and $\Delta_\mathrm{y}$ we obtain~\eqref{eq:DxDy}.
\end{proof}

This previous lemma and the following are illustrated on Figure \ref{fig:1}.

\begin{lemma}[Some inequalities]\label{lemma:ineq}
For all $b\in\mathbb R$, the following holds
\begin{equation}
    -g^+(b)-\mu_2<f^-(b)< g^-(b)\leqslant f^+(b)<g^+(b)<-f^-(b)+\mu_1
\label{eq:ineqlem}
\end{equation}
and $g^-(b)=f^+(b)$ if and only if $b=0$. 
\label{lem:fginegalite}
\end{lemma}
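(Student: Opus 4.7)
The plan is to introduce the auxiliary quantities $u(b):=\sqrt{s_+^2+b^2}$ and $v(b):=\sqrt{s_-^2+b^2}$, so that $f^\pm(b)=s_-\pm v(b)$ and $g^\pm(b)=s_+\pm u(b)$. Combined with the identities $s_+-s_-=\tfrac{1}{2}(\mu_1+\mu_2)=\tfrac{1}{2}$ and $s_-<0<s_+$ (hence $|s_+|+|s_-|=s_+-s_-=\tfrac{1}{2}$), each of the five inequalities appearing in~\eqref{eq:ineqlem} will reduce, by a short algebraic manipulation, to one of two basic statements: $(A)$ $u(b)+v(b)\geq \tfrac{1}{2}$ for all $b\in\mathbb R$, with equality iff $b=0$; and $(B)$ $|u(b)-v(b)|<\tfrac{1}{2}$ for all $b\in\mathbb R$.

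For the reduction step, I would simply unfold each inequality. The outer one $-g^+(b)-\mu_2<f^-(b)$ becomes $v-u<s_++s_-+\mu_2$, and $s_++s_-+\mu_2=\tfrac{\mu_1-\mu_2}{2}+\mu_2=\tfrac{1}{2}$, so this is $v-u<\tfrac{1}{2}$. Similarly $g^+(b)<-f^-(b)+\mu_1$ becomes $u-v<\mu_1-(s_++s_-)=\tfrac{1}{2}$. The three middle inequalities $f^-<g^-$, $g^-\leq f^+$, and $f^+<g^+$ unfold to $u-v<\tfrac{1}{2}$, $u+v\geq \tfrac{1}{2}$, and $v-u<\tfrac{1}{2}$ respectively. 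Thus the chain~\eqref{eq:ineqlem} is equivalent to the conjunction of $(A)$ and $(B)$, and the equality case $g^-(b)=f^+(b)$ corresponds exactly to the equality case of $(A)$.

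Statement $(A)$ is immediate: $u(b)\geq |s_+|$ and $v(b)\geq |s_-|$, each with equality iff $b=0$, so $u(b)+v(b)\geq |s_+|+|s_-|=\tfrac{1}{2}$ with equality iff $b=0$. For $(B)$ I would rationalize and use $(A)$:
\[
|u(b)-v(b)|=\frac{|u(b)^2-v(b)^2|}{u(b)+v(b)}=\frac{(s_+-s_-)\,|s_++s_-|}{u(b)+v(b)}\leq |s_++s_-|=\tfrac{|\mu_1-\mu_2|}{2}<\tfrac{1}{2},
\]
where the last strict inequality is $|\mu_1-\mu_2|<\mu_1+\mu_2=1$, valid since $\mu_1,\mu_2>0$ by~\eqref{eq:H3}.

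The argument is essentially routine arithmetic; the only mildly delicate point is the bookkeeping of signs when unfolding the two outermost inequalities involving $\mu_1$ and $\mu_2$, and observing that thanks to $\mu_1+\mu_2=1$ both simplifications collapse to the same threshold $\tfrac{1}{2}=s_+-s_-$. No deep analytic input is needed beyond the elementary bound $\sqrt{s^2+b^2}\geq|s|$.
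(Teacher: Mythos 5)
Your proof is correct, and it takes a genuinely different route from the paper's. The paper proves the outermost inequality $-g^+(b)-\mu_2<f^-(b)$ by direct algebraic manipulation (moving everything to one side, squaring, and simplifying until an obviously positive quantity appears), then asserts that the remaining inequalities follow ``in the same way,'' and finally treats the equality case $g^-(b)=f^+(b)$ by an explicit computation of the difference. Your approach is more structural: by setting $u(b)=\sqrt{s_+^2+b^2}$, $v(b)=\sqrt{s_-^2+b^2}$ and exploiting $s_+-s_-=\tfrac12$ and $|s_+|+|s_-|=\tfrac12$, you reduce all five inequalities to the two clean statements $u+v\geq\tfrac12$ (with equality iff $b=0$) and $|u-v|<\tfrac12$, the latter of which you deduce from the former via the rationalization $|u-v|=|u^2-v^2|/(u+v)$. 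This collapses the ``similar'' cases into a single argument, makes the appearance of the threshold $\tfrac12$ transparent, and handles the equality characterization automatically rather than as a separate verification. The bookkeeping you flag as the delicate point (the sign simplifications in the two outer inequalities both giving $\tfrac12$) is exactly right and is in fact hidden in the paper's ``same way'' remark; your version makes it explicit. Both proofs use only \eqref{eq:H3} and \eqref{eq:H4}, and the elementary fact $\sqrt{s^2+b^2}\geq|s|$, so neither is deeper; yours is simply tighter and more self-contained.
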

\begin{proof}
Recall that $\mu_1>0$, $\mu_2>0$ and $s_-=-\mu_2/2$, $s_+=\mu_1/2$. Using the definitions of $f^\pm$ and $g^\pm$ in~\eqref{eq:fgpm} the first inequality of~\eqref{eq:ineqlem} is equivalent to
\begin{align*}
& f^-(b)+\mu_2+g^+(b)>0 \Longleftrightarrow  \frac{\mu_1}{2}+\frac{\mu_2}{2}
+\sqrt{\frac{\mu_1^2}{4}+b^2} > \sqrt{\frac{\mu_2^2}{4}+b^2}
\\
\Longleftrightarrow \ &
\left( \mu_1+\mu_2
+\sqrt{\mu_1^2+4 b^2} \right)^2 > \mu_2^2+4b^2
\Longleftrightarrow 
 2\mu_1^2+2\mu_1\mu_2  +2(\mu_1+\mu_2)\sqrt{\mu_1^2+4 b^2}> 0
\end{align*}
and the last inequality is obviously true. The other inequalities of~\eqref{eq:ineqlem} can be proved in the same way. For the equality case, note that
$$f^+(b)-g^-(b)=-\frac{\mu_2}{2}+\sqrt{\frac{\mu_2^2}{4}+b^2}-\frac{\mu_1}{2} +\sqrt{\frac{\mu_1^2}{4}+b^2}.$$
cancels out only for $b=0$.
\end{proof}
We now adopt the notations 
\begin{equation}\label{eq:Delta}
\delta:=\Delta_\mathrm{x}\cap\Delta_\mathrm{y},\quad\quad \Delta:=\Delta_\mathrm{x}\cup \Delta_\mathrm{y}\cup\{0\},\quad\quad \widetilde{\Delta}:=\Delta\cup \eta \Delta\cup \zeta \Delta.
\end{equation}
\begin{lemma}[Domains $\delta$, $\Delta$ and $\widetilde{\Delta}$]\label{lemma:domains}
The domains $\delta$, $\Delta$ and $\widetilde{\Delta}$ satisfy the following equalities:
\begin{equation}
 \delta = \{a+ib : g^-(b)<a<f^+(b)\},
 \label{eq:deltagf}
\end{equation}
\begin{equation}
 \Delta = \{a+ib : f^-(b)<a<g^+(b)\},\,\,
 \label{eq:Deltagf}
\end{equation}%
\begin{equation}
\quad\quad\quad\quad\quad\,\,\widetilde{\Delta}= \{a+ib : -g^+(b)-\mu_2<a<-f^-(b)+\mu_1\}.
\label{eq:etazetaDxDy}
\end{equation}
In particular,
\begin{itemize}
    \item $\delta$ is a nonempty open set,
    \item $\Delta$ and $\widetilde{\Delta}$ are open and connected.
\end{itemize}
\label{lem:deltaDelta}
\end{lemma}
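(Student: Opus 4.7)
The plan is a horizontal-slice analysis: for each fixed $b \in \mathbb{R}$, describe the cross-section $\{a\in\mathbb{R} : a+ib \in E\}$ for $E \in \{\delta,\Delta,\widetilde{\Delta}\}$, and then reassemble. The two previous lemmas feed directly into this: Lemma~\ref{lem:DxDy} gives the cross-sections $(f^-(b), f^+(b))$ and $(g^-(b), g^+(b))$ of $\Delta_\mathrm{x}$ and $\Delta_\mathrm{y}$, and Lemma~\ref{lem:fginegalite} orders the endpoints as $f^-(b) < g^-(b) \leqslant f^+(b) < g^+(b)$, with equality only when $b = 0$.

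\textbf{Slices of $\delta$ and $\Delta$.} The intersection of the two intervals at height $b$ is the open interval $(g^-(b), f^+(b))$, which is (\ref{eq:deltagf}); it is nonempty for every $b \neq 0$, so $\delta$ is nonempty (openness is automatic as an intersection of opens). Their union is $(f^-(b), g^+(b))$ whenever $b \neq 0$; at $b = 0$ the two intervals meet at only a single point, which a direct computation from (\ref{eq:fgpm}) identifies as $0$ (indeed $f^+(0) = s_- + |s_-| = 0 = s_+ - |s_+| = g^-(0)$), and the explicit $\{0\}$ in definition (\ref{eq:Delta}) fills this gap, yielding (\ref{eq:Deltagf}).

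\textbf{Slice of $\widetilde{\Delta}$.} Since $f^\pm$ and $g^\pm$ depend on $b$ only through $b^2$, the actions of $\eta : s \mapsto \mu_1 - s$ and $\zeta : s \mapsto -\mu_2 - s$ transform the slice of $\Delta$ at height $b$ into $(\mu_1 - g^+(b), \mu_1 - f^-(b))$ and $(-\mu_2 - g^+(b), -\mu_2 - f^-(b))$, respectively. Using $s_- = -\mu_2/2$ and $s_+ = \mu_1/2$, a direct substitution into (\ref{eq:fgpm}) yields the boundary-matching identities
$$
-\mu_2 - f^-(b) = f^+(b), \qquad \mu_1 - g^+(b) = g^-(b).
$$
Thus $\zeta\Delta$ ends exactly where $\Delta_\mathrm{x}$ ends, and $\eta\Delta$ starts exactly where $\Delta_\mathrm{y}$ starts. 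Combined with the ordering from Lemma~\ref{lem:fginegalite}, the three slices overlap pairwise and coalesce into the single open interval $(-\mu_2 - g^+(b),\, \mu_1 - f^-(b))$, proving (\ref{eq:etazetaDxDy}).

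\textbf{Topology and main obstacle.} Both $\Delta$ and $\widetilde{\Delta}$ end up of the form $\{a+ib : \alpha(b) < a < \beta(b)\}$ with $\alpha, \beta$ continuous and $\alpha < \beta$ everywhere on $\mathbb{R}$, so they are open and path-connected (move horizontally to reach the correct real part, then vertically between the graphs of $\alpha$ and $\beta$). The only genuinely delicate point, and the main obstacle of the proof, is the behaviour at $b = 0$ for $\Delta$: there the slices of $\Delta_\mathrm{x}$ and $\Delta_\mathrm{y}$ touch only at the single point $0$, and the explicit $\{0\}$ in (\ref{eq:Delta}) is precisely what is needed to make $\Delta$ open at the origin and to keep $\widetilde{\Delta}$ connected across the real axis. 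Once this gluing is understood, the remainder reduces to elementary bookkeeping of open intervals.
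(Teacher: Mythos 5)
Your proof is correct and follows essentially the same route as the paper: horizontal slices from Lemma~\ref{lem:DxDy}, the endpoint ordering from Lemma~\ref{lem:fginegalite}, and explicit computation of the $\zeta$- and $\eta$-images of $\Delta$ together with the boundary-matching identities (your $-\mu_2-f^-(b)=f^+(b)$ and $\mu_1-g^+(b)=g^-(b)$ are the paper's $f^-=-f^+-\mu_2$ and $g^-=-g^++\mu_1$ rearranged). You just spell out the $b=0$ gluing via the extra point $\{0\}$ in more detail than the paper does; also note the phrase "overlap pairwise" is slightly too strong at $b=0$ (where the $\zeta\Delta$ and $\eta\Delta$ slices are disjoint), though your actual chain-of-overlaps argument through $\Delta$ is what matters and is sound.
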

\begin{proof}
This lemma derives from Lemmas~\ref{lem:DxDy} and~\ref{lem:fginegalite}.
By \eqref{eq:DxDy} and the definition of $\delta$ and $\Delta$, we then find~\eqref{eq:deltagf} and~\eqref{eq:Deltagf}. 
Since $\zeta s:=-s-\mu_2$ and $\eta s :=-s+\mu_1$ we deduce 
$$\zeta  \Delta= \{a+ib : -g^+(b)-\mu_2 <a<-f^-(b)-\mu_2\}$$
and
 $$\eta  \Delta= \{a+ib : -g^+(b)+\mu_1 <a<-f^-(b)+\mu_1\}.$$
Since $\mu_1+\mu_2=1$ we have $f^-=-f^+-\mu_2$ and $g^-=-g^++\mu_1$. Using these relations and Lemma~\ref{lemma:ineq}, one can immediately deduce that $-f^++\mu_1\leqslant g^+$ and $f^-\leqslant -g^--\mu_2$. These inequalities describe how $\Delta$, $\eta\Delta$ and $\zeta\Delta$ overlap, leading to \eqref{eq:etazetaDxDy}.
\end{proof}
 
\begin{figure}
    \centering
    \includegraphics[width=17cm]{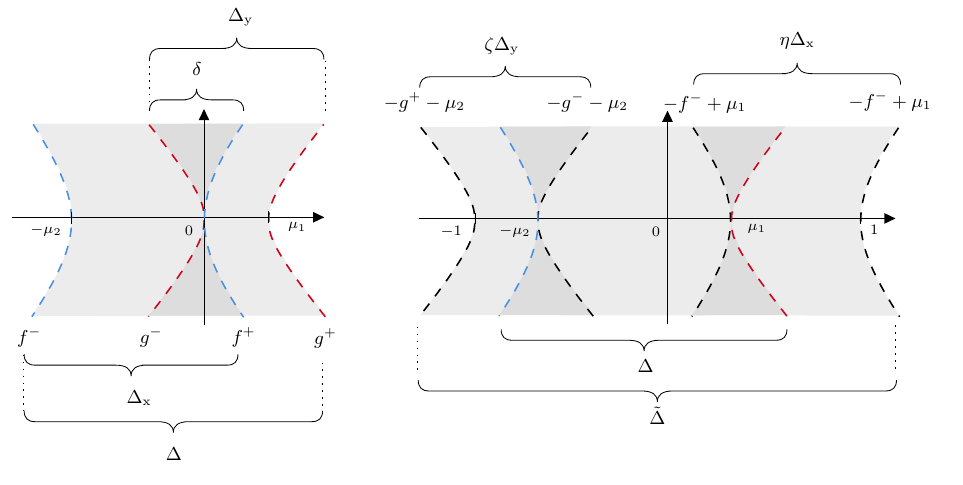}\\
    \caption{The sets to which the Laplace transforms are successively extended. Each curve is labelled with a function $h$ such that the curve is defined by $\{h(y)+iy : y\in\mathbb R\}$. Colors are chosen as in Figure~\ref{fig:fundamental}.} 
    \label{fig:1}
\end{figure}

Recall that $\varphi_1$ and $\varphi_2$ are respectively defined and holomorphic  on $\Delta_\mathrm{y}$ and  $\Delta_\mathrm{x}$. The next step is to extend them on  $\Delta$.
\begin{proposition}[Meromorphic continuation to $\Delta$]\label{prop:conttoDelta} The functions $\varphi_1$ and $\varphi_2$ can be meromorphically continued to $\Delta$ and the functional equation~\eqref{eq:eqfunc} remains valid on $\Delta$.
\end{proposition}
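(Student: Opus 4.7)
The plan is to use the functional equation $(\mathfrak{a})$, which links $\varphi_1$ and $\varphi_2$ on the small overlap $\delta=\Delta_\mathrm{x}\cap\Delta_\mathrm{y}$, as a bridge: it lets us express $\varphi_1$ in terms of the already-known $\varphi_2$ on $\Delta_\mathrm{x}$, and vice versa. First I would rewrite $(\mathfrak{a})$ in reduced form. By~\eqref{eq:defk12s}, the kernels $k_1(s)=2(1+r_1)s(s-s_1)$ and $k_2(s)=2(1+r_2)s(s-s_2)$ share the common factor $2s$, which is nonzero on $\delta$, so on $\delta$ the equation $(\mathfrak{a})$ is equivalent to
$$(1+r_1)(s-s_1)\,\varphi_1(s) + (1+r_2)(s-s_2)\,\varphi_2(s) = 0.$$
By Lemma~\ref{lemma:s1s2}, $s_1\neq 0$ and $s_2\neq 0$; this will be the key point when extending across the origin.

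Next I would define on $\Delta_\mathrm{x}$ the candidate extension
$$\widetilde{\varphi}_1(s) := -\frac{(1+r_2)(s-s_2)}{(1+r_1)(s-s_1)}\,\varphi_2(s),$$
which is meromorphic on $\Delta_\mathrm{x}$ with a unique potential pole at $s=s_1$ and, crucially, holomorphic at $s=0$. Since $\delta$ is a nonempty open set by Lemma~\ref{lemma:domains}, and since $\Delta_\mathrm{y}$ is connected, the identity principle guarantees that $\widetilde{\varphi}_1$ coincides with $\varphi_1$ on $\delta$; the two functions therefore glue into a meromorphic function on the open set $\Delta_\mathrm{x}\cup\Delta_\mathrm{y}$. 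A symmetric construction using $-(1+r_1)(s-s_1)/[(1+r_2)(s-s_2)]\cdot \varphi_1(s)$ extends $\varphi_2$ across $\Delta_\mathrm{y}$.

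It remains to handle the point $s=0$ that was adjoined to form $\Delta$. A direct verification based on the formulas for $\mathfrak{Re}(\x(s))$ and $\mathfrak{Re}(\y(s))$ derived in Lemma~\ref{lem:DxDy} shows that a punctured neighborhood of $0$ lies entirely in $\Delta_\mathrm{x}\cup\Delta_\mathrm{y}$, so the glued function is already meromorphic in such a punctured neighborhood. The explicit formula for $\widetilde{\varphi}_1$ shows it is bounded near $0$, and Riemann's theorem on removable singularities produces the holomorphic extension to $s=0$. The functional equation then propagates to all of $\Delta$ by analytic continuation, since both sides of $(\mathfrak{a})$ are meromorphic on the connected open set $\Delta$ (Lemma~\ref{lemma:domains}) and coincide on the nonempty open subset $\delta$.

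The only subtle point is this patching at $s=0$, where the naive formula $-(k_2/k_1)\varphi_2$ is of the indeterminate type $0/0$. The cancellation of the shared factor $2s$ combined with Lemma~\ref{lemma:s1s2} resolves this indeterminacy and yields a genuinely holomorphic value at the origin; everything else is a routine application of the identity principle and Riemann's removable singularity theorem.
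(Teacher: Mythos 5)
Your proposal is correct and follows essentially the same route as the paper: extend $\varphi_1$ onto $\Delta_\mathrm{x}$ via the functional equation $(\mathfrak{a})$, glue on the overlap $\delta$, and treat $s=0$ as a removable singularity. The only stylistic difference is that you cancel the common factor $2s$ explicitly and invoke Riemann's theorem via boundedness, whereas the paper leaves the cancellation implicit and argues via the two one-sided limits (using continuity of Laplace transforms of finite measures at $0$ — a fact you should state explicitly, since boundedness of $\varphi_2$ near $0$ is what your removable-singularity step actually requires); the two arguments are equivalent.
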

\begin{proof}
Recall that $\varphi_1$ is initially defined on $\Delta_\mathrm{y}$ and $\varphi_2$ on $\Delta_\mathrm{x}$, see~\eqref{eq:domains}. The functional equation~\eqref{eq:eqfunc} of Lemma~\ref{lem:abc} implies that
\begin{equation}\label{eq:pr1}
    \varphi_1(s)=-\frac{k_2(s)}{k_1(s)}\varphi_2(s),\quad\quad \forall s\in\delta=\Delta_\mathrm{x}\cap\Delta_\mathrm{y} \ne \varnothing .
\end{equation}
By Lemma~\ref{lem:deltaDelta}, $\delta=\Delta_\mathrm{x}\cap\Delta_\mathrm{y}$ is a nonempty open set, then
the principle of analytical extension allows us to extend $\varphi_1$ to $\Delta_{\mathrm{x}}$ by the formula~\eqref{eq:pr1} setting 
$$
\varphi_1(s)=-\frac{k_2(s)}{k_1(s)}\varphi_2(s),\quad\quad \forall s\in\Delta_{\mathrm{x}}.
$$ We have thus extended $\varphi_1$ to the connected domain $\Delta_\mathrm{x}\cup\Delta_\mathrm{y}=\Delta\setminus\{0\}\supset \delta$. The same method works to extend $\varphi_2$ to $\Delta\setminus\{0\}$. We need to show that $0$ is a removable singularity for $\varphi_1$ and $\varphi_2$. As is the case for all Laplace transforms of finite measure, $\phi_1$ and $\phi_2$ are continuous at $0$ and
\begin{equation}\label{eq:contat0}
\lim_{\substack{y\to 0\\ \mathfrak{Re}(y)<0}}\phi_1(y)=\phi_1(0)=\boldsymbol{\nu}_1(\mathbb R_+),
\quad 
\lim_{\substack{x\to 0\\ \mathfrak{Re}(x)<0}}\phi_2(x)=\phi_2(0)=\boldsymbol{\nu}_2(\mathbb R_+).
\end{equation}
Recalling the definition of $\Delta_\mathrm{y}$ given in~\eqref{eq:domains} we have, on the one hand
     $$\lim_{\substack{s\to 0 \\ s\in \Delta_{\mathrm{y}}}}\varphi_1(s)=\lim_{\substack{y\to 0\\ \mathfrak{Re}(y)<0}}\phi_1(y)=\boldsymbol{\nu}_1(\mathbb R_+)$$
     and
     $$\lim_{\substack{s\to 0 \\ s\in \Delta_{\mathrm{x}}}}\varphi_1(s)=\lim_{\substack{s\to 0 \\ s\in \Delta_{\mathrm{x}}}} -\frac{k_2(s)}{k_1(s)}\varphi_2(s)=\frac{\mu_2r_2-\mu_1}{\mu_1 r_1-\mu_2}\boldsymbol{\nu}_2(\mathbb R_+).$$
    on the other hand. Hence $\varphi_1$ is bounded near $0$ since $\Delta_{\mathrm{x}}\cup \Delta_{\mathrm{y}}\cup \{0\} =\Delta$ is a neighborhood of $0$ (which follows from the fact that we have seen in Lemma~\ref{lem:deltaDelta} that $\Delta$ is open). This proves that $0$ is a removable singularity for $\varphi_1$.    
    Note that $\varphi_1$ is then continuous and the above limits must coincide, which shows that
    $$\boldsymbol{\nu}_1(\mathbb R_+)=\frac{\mu_2r_2-\mu_1}{\mu_1r_1-\mu_2}\boldsymbol{\nu}_2(\mathbb R_+).$$
The same way, we show that $0$ is a removable singularity for $\varphi_2$ which concludes the proof.
\end{proof}

\begin{remark}[Invariances in $\Delta_\mathrm{x}$ and $\Delta_\mathrm{y}$]
\label{rem:invDxy}
At this stage of the argument, we meromorphically extended $\varphi_1$ to the set $\Delta$ (Proposition~\ref{prop:conttoDelta}) and by the principle of analytic continuation the invariance properties \eqref{eq:inv1} and \eqref{eq:inv2} are then valid on the biggest subset of $\Delta$ stable by $\eta$ or $\zeta$ respectively, \textit{i.e.}, \eqref{eq:inv1} holds on
$$\Delta\cap\eta \Delta=\{a+ib : -g^+(b)+\mu_1 <a<g^+(b)\}=\{a+ib : g^-(b) <a<g^+(b)\}=\Delta_\mathrm{y},$$ and similarly, \eqref{eq:inv2} holds on $$\Delta\cap\zeta \Delta=\{a+ib : f^-(b) <a<-f^-(b)-\mu_2\}=\{a+ib : f^-(b) <a<f^+(b)\}=\Delta_\mathrm{x}.$$
This shows that, at this stage, the domains on which the invariance properties hold are the same as in Lemma~\ref{lem:abc} and have therefore not yet been extended.
 \end{remark}

Let us now resume the task of meromorphically extending $\varphi_1$ and $\varphi_2$ to the complex plane. Recall that we already extended them to $\Delta$. The next step is to continue them on  $\widetilde{\Delta}$.
\begin{proposition}[Meromorphic continuation to $\widetilde{\Delta}$] \label{prop:continuationDeltatilde}
The functions $\varphi_1$ and $\varphi_2$ can be meromorphically continued to $\widetilde{\Delta}=\Delta\cup \eta \Delta\cup \zeta \Delta$. The functional equation~\eqref{eq:eqfunc} remains valid on $\widetilde{\Delta}$, the invariance propertie~\eqref{eq:inv1} remains valid on $\widetilde{\Delta} \cap \eta\widetilde{\Delta}$ and~\eqref{eq:inv2} remains valid on $\widetilde{\Delta} \cap \zeta\widetilde{\Delta}$.
\end{proposition}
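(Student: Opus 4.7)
The strategy is to extend $\varphi_1$ and $\varphi_2$ from $\Delta$ (where Proposition~\ref{prop:conttoDelta} has just placed them) to each of the two adjacent strips $\eta\Delta$ and $\zeta\Delta$ whose union with $\Delta$ fills up $\widetilde{\Delta}$. Two tools are available at this stage: the invariances~\eqref{eq:inv1} and~\eqref{eq:inv2}, which by Remark~\ref{rem:invDxy} are already known to hold on $\Delta\cap\eta\Delta=\Delta_\mathrm{y}$ and $\Delta\cap\zeta\Delta=\Delta_\mathrm{x}$ respectively, and the functional equation~\eqref{eq:eqfunc} on~$\Delta$. The guiding principle is that each invariance transports its own function to its neighbouring strip, while the functional equation transports the other function across that same strip.

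Concretely, I would first set $\varphi_1(s):=\varphi_1(\eta s)$ for $s\in\eta\Delta$ and $\varphi_2(s):=\varphi_2(\zeta s)$ for $s\in\zeta\Delta$; both definitions are meromorphic on their target strips since $\eta$ and $\zeta$ are biholomorphic involutions mapping these strips onto $\Delta$, and they are consistent with the existing definitions on $\Delta_\mathrm{y}$ and $\Delta_\mathrm{x}$ respectively thanks to~\eqref{eq:inv1} and~\eqref{eq:inv2}. I would then exploit the functional equation to extend the other function across the same strip, setting
$$
\varphi_1(s):=-\frac{k_2(s)}{k_1(s)}\,\varphi_2(s)\quad\text{on }\zeta\Delta,
\qquad
\varphi_2(s):=-\frac{k_1(s)}{k_2(s)}\,\varphi_1(s)\quad\text{on }\eta\Delta,
$$
which is meromorphic on its target strip and coincides, by virtue of~\eqref{eq:eqfunc} on $\Delta$, with the already constructed $\varphi_1$ on $\Delta_\mathrm{x}$ and $\varphi_2$ on $\Delta_\mathrm{y}$.

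To glue these local pieces into bona fide meromorphic functions on $\widetilde{\Delta}$, I would use Lemmas~\ref{lem:DxDy} and~\ref{lem:deltaDelta} to observe that $\eta\Delta\setminus\Delta\subset\{a+ib:a\ge g^+(b)\}$ whereas $\zeta\Delta\setminus\Delta\subset\{a+ib:a\le f^-(b)\}$; since $f^-(b)<g^+(b)$ for every $b\in\mathbb R$ by Lemma~\ref{lem:fginegalite}, these two regions are disjoint, so no point outside $\Delta$ is ever assigned two competing values. The functional equation~\eqref{eq:eqfunc} on $\widetilde{\Delta}$ then holds by construction on each of the three pieces. For the invariance~\eqref{eq:inv1}, the identity $\varphi_1(\eta s)=\varphi_1(s)$ is tautological on $\eta\Delta$ (both sides equal the value of $\varphi_1$ at the unique point of $\Delta$ used to define the extension); together with its previously known validity on $\Delta_\mathrm{y}$, the principle of analytic continuation for the meromorphic identity $\varphi_1(\eta s)-\varphi_1(s)=0$ propagates it throughout $\widetilde{\Delta}\cap\eta\widetilde{\Delta}$, once one has checked, via the explicit description~\eqref{eq:etazetaDxDy} of $\widetilde{\Delta}$, that each connected component of that set meets $\eta\Delta\cup\Delta_\mathrm{y}$. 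The argument for~\eqref{eq:inv2} on $\widetilde{\Delta}\cap\zeta\widetilde{\Delta}$ is completely symmetric.

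The main obstacle here is not conceptual but a matter of geometric bookkeeping: one must verify that the four local definitions of $\varphi_1$ and $\varphi_2$ never disagree and that the invariances survive all the way to the full claimed domains. Both requirements reduce to elementary inequalities on the boundary functions $f^\pm$ and $g^\pm$ supplied by Lemma~\ref{lem:fginegalite}, together with a connectedness analysis of $\widetilde{\Delta}\cap\eta\widetilde{\Delta}$ and $\widetilde{\Delta}\cap\zeta\widetilde{\Delta}$ based on~\eqref{eq:etazetaDxDy}.
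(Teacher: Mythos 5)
Your proposal is correct and follows the same route as the paper: extend $\varphi_1$ to $\eta\Delta$ and $\varphi_2$ to $\zeta\Delta$ via the respective invariances (using consistency on $\Delta_\mathrm{y}=\Delta\cap\eta\Delta$ and $\Delta_\mathrm{x}=\Delta\cap\zeta\Delta$), then carry each to the remaining strip via the functional equation, and finally propagate all three relations by analytic continuation. The paper's proof is terser — it simply invokes the analytic continuation principle where you verify the consistency explicitly through the $f^\pm,g^\pm$ inequalities — but the argument is the same; your more careful bookkeeping of why the two local definitions of $\varphi_1$ (on $\eta\Delta$ and on $\zeta\Delta$) can never clash outside $\Delta$ is a welcome addition and is correctly justified by Lemma~\ref{lem:fginegalite}.
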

\begin{proof}
First, let us see how to extend $\varphi_1$ to $\Delta\cup\eta\Delta$ via the relation~\eqref{eq:inv1}. We set for $s\in \eta \Delta$ (and then $\eta s\in \Delta$)
$$
\varphi_1(s)=\varphi_1(\eta s) .
$$ 
Since $\Delta \cap \eta \Delta=\Delta_\mathrm{y} \neq \varnothing$ (see Remark~\ref{rem:invDxy}), the principle of analytic continuation can be used to extend $\varphi_1$ to the open connected set $\Delta\cup\eta \Delta$. We apply the same method to extend $\varphi_2$ to $\Delta\cup \zeta\Delta$ via \eqref{eq:inv2}. We extend $\varphi_1$ to $\zeta\Delta$ and $\varphi_2$ to $\eta \Delta$ thanks to \eqref{eq:eqfunc} in the same way as Proposition~\ref{prop:conttoDelta}. The relations \eqref{eq:eqfunc}, \eqref{eq:inv1} and \eqref{eq:inv2} stay valid by the analytic continuation principle.
\end{proof}

\begin{lemma}[Covering of the complex plane]\label{lemma:cover}
 The inclusion $\{s\in\mathbb C : \mathfrak{Re}(s)\in (-1,1)\}\subseteq \widetilde{\Delta}$ holds. As a result,
 \begin{equation}\label{eq:cover}
 \bigcup_{k\in \mathbb Z} (\widetilde{\Delta}+k)=\mathbb C.
 \end{equation}
\end{lemma}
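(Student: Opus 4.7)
The plan is to use the explicit description of $\widetilde{\Delta}$ obtained in Lemma~\ref{lem:deltaDelta}, namely
\[
\widetilde{\Delta}= \{a+ib : -g^+(b)-\mu_2<a<-f^-(b)+\mu_1\},
\]
and to show that the boundary curves of $\widetilde{\Delta}$ lie outside the strip $\{-1 < \mathfrak{Re}(s) < 1\}$. Concretely, using $s_-=-\mu_2/2$, $s_+=\mu_1/2$ and $\mu_1+\mu_2=1$, it is enough to check the two inequalities
\[
-g^+(b)-\mu_2 \leq -1 \quad\text{and}\quad -f^-(b)+\mu_1 \geq 1 \qquad \text{for every } b\in\mathbb{R}.
\]

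First I would rewrite each inequality using the definitions~\eqref{eq:fgpm}. The first becomes $\sqrt{\mu_1^2/4+b^2}\geq 1-\mu_1/2-\mu_2=\mu_1/2$, which is immediate. The second becomes $\sqrt{\mu_2^2/4+b^2}\geq 1-\mu_1-\mu_2/2=\mu_2/2$, which is equally immediate. Hence, for every $s=a+ib$ with $a\in(-1,1)$, one has $-g^+(b)-\mu_2<a<-f^-(b)+\mu_1$, so $s\in\widetilde{\Delta}$, proving the inclusion.

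Finally, to deduce the covering~\eqref{eq:cover}, I would observe that for any $k\in\mathbb{Z}$,
\[
\widetilde{\Delta}+k \;\supseteq\; \{s\in\mathbb{C} : \mathfrak{Re}(s)\in (k-1,k+1)\},
\]
and since the open intervals $(k-1,k+1)$ for $k\in\mathbb{Z}$ clearly cover $\mathbb{R}$, the union of their preimages covers $\mathbb{C}$, which gives the desired equality.

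There is no real obstacle here: the proof reduces to elementary square-root estimates. The only point that requires care is bookkeeping the sign conventions (the opposite of $g^+$, the opposite of $f^-$) and using $\mu_1+\mu_2=1$ consistently; once this is done, both bounds are essentially the trivial observation that $\sqrt{c^2+b^2}\geq c$ for $c\geq 0$.
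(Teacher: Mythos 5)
Your proof is correct and follows essentially the same approach as the paper: both use the explicit description of $\widetilde{\Delta}$ from Lemma~\ref{lem:deltaDelta} and reduce to the elementary estimate $\sqrt{c^2+b^2}\geq c$, then conclude the covering by noting that integer translates of a strip of width $2$ cover $\mathbb{C}$. The only (immaterial) difference is that you verify the open strip is contained, whereas the paper evaluates at $b=0$ and asserts containment of the closed strip, which is slightly loose since its own inequalities are only non-strict at $b=0$.
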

\begin{proof}
Recall~\eqref{eq:etazetaDxDy} which gives a precise description of the set $\widetilde{\Delta}$.  We have  $s=a+ib\in \widetilde{\Delta}$ if and only if
\begin{equation}\label{eq:widetildeDelta}
 -g^+(b)-\mu_2<a<-f^-(b)+\mu_1.
\end{equation}   
From the definitions~\eqref{eq:fgpm} of $f^-$ and $g^+$, it is easy to see that
$$-g^+(b)-\mu_2<-g^+(0)-\mu_2=-1\text{ and }-f^-(b)+\mu_1>-f^-(0)+\mu_1=1,$$
which implies that 
    $$\{a+ib\in\mathbb C : -1\leq a\leq 1\}\subseteq\widetilde{\Delta}$$
 as stated. This strip is two unit-large so that the union of all its integer translations covers the whole complex plane.
\end{proof}

Recall that $\varphi_1$ is defined on $\widetilde{\Delta}$.
The same way we needed to be able to evaluate $\varphi_1$ at $s$ and $\eta s$ simultaneously to prove Lemma~\ref{lem:abc}, for the proof of Theorem~\ref{thm:prolong}, we will need to evaluate $\varphi_1$ at $s$, $\zeta s$ and $s+1$. The following Lemma shows that we are indeed allowed to do so.
\begin{lemma}[A nonempty intersection]\label{lemma:evalphi1phi2}
The intersection $\widetilde{\Delta}\cap \zeta\widetilde{\Delta} \cap (\zeta\circ \eta) \widetilde{\Delta}$ is open and nonempty. 
\end{lemma}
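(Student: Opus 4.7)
The plan is to exhibit a real interval contained in all three sets, then conclude by openness. First, I would verify that each of the three sets is open. By Lemma~\ref{lemma:domains}, $\widetilde{\Delta}$ is open. Since $\zeta \colon s \mapsto -s - \mu_2$ and $\zeta \circ \eta \colon s \mapsto s - 1$ (using~\eqref{eq:etacirczeta}) are affine bijections of $\mathbb{C}$, hence homeomorphisms, the images $\zeta\widetilde{\Delta}$ and $(\zeta\circ\eta)\widetilde{\Delta}=\widetilde{\Delta}-1$ are also open. A finite intersection of open sets is open, so the entire burden reduces to nonemptiness.

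Next, I would extract the key inclusion from Lemma~\ref{lemma:cover}: the open vertical strip $\{s\in\mathbb{C} : \mathfrak{Re}(s)\in(-1,1)\}$ lies inside $\widetilde{\Delta}$. Translating by $-1$ gives that $\{s : \mathfrak{Re}(s)\in(-2,0)\}\subseteq\widetilde{\Delta}-1=(\zeta\circ\eta)\widetilde{\Delta}$. For the remaining set, since $\zeta$ is an involution, a point $s$ belongs to $\zeta\widetilde{\Delta}$ if and only if $\zeta s = -s-\mu_2$ belongs to $\widetilde{\Delta}$.

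Finally, I would test the real interval $(-1,0)$. Pick any real $a\in(-1,0)$. Then:
\begin{itemize}
    \item $a\in\widetilde{\Delta}$ by Lemma~\ref{lemma:cover}, since $a\in(-1,1)$;
    \item $a\in(\zeta\circ\eta)\widetilde{\Delta}$ because $a+1\in(0,1)\subseteq\widetilde{\Delta}$;
    \item $a\in\zeta\widetilde{\Delta}$ because $-a-\mu_2\in(-\mu_2,\,1-\mu_2)=(-\mu_2,\mu_1)\subseteq(-1,1)\subseteq\widetilde{\Delta}$, where the last inclusion uses $\mu_1+\mu_2=1$ and $\mu_1,\mu_2>0$ from~\eqref{eq:H3} and~\eqref{eq:H4}.
\end{itemize}
Thus $(-1,0)\subseteq\widetilde{\Delta}\cap\zeta\widetilde{\Delta}\cap(\zeta\circ\eta)\widetilde{\Delta}$, which proves the intersection is nonempty. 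Combined with the opening observation, this proves the lemma.

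I do not foresee a real obstacle: the affine identity $\zeta\circ\eta=s-1$ and the explicit strip inclusion from Lemma~\ref{lemma:cover} carry essentially all the content, and no further analysis of the curves $f^{\pm}, g^{\pm}$ is needed for this statement.
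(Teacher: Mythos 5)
Your proof is correct and follows essentially the same approach as the paper. The paper's proof shows the single point $-\tfrac{1}{2}$ lies in the intersection by checking $-\tfrac{1}{2}$, $\zeta^{-1}(-\tfrac{1}{2})=\tfrac{1}{2}-\mu_2$, and $(\zeta\circ\eta)^{-1}(-\tfrac{1}{2})=\tfrac{1}{2}$ all lie in $(-1,1)\subset\widetilde{\Delta}$; you show the whole interval $(-1,0)$ lies in the intersection by the same preimage computations and the same appeal to Lemma~\ref{lemma:cover}, which is a harmless slight generalization.
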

\begin{proof} Let us show that $-\frac{1}{2}$ belongs to this set. Recall from Lemma~\ref{lemma:cover} that $(-1,1)\subset \widetilde{\Delta}$, which immediately implies that $-\frac{1}{2}\in \widetilde{\Delta}$. Moreover, both $\zeta^{-1}(-\frac{1}{2})=\frac{1}{2}-\mu_2$ and $(\zeta \circ \eta)^{-1}(-\frac{1}{2})=\frac{1}{2}$ are elements of $(-1,1)$, leading to $-\frac{1}{2}\in \widetilde{\Delta}\cap \zeta\widetilde{\Delta} \cap (\zeta\circ \eta) \widetilde{\Delta}$. This set is open as the intersection of open sets.
\end{proof}

\begin{theorem}[Meromorphic continuation to $\mathbb C$ and difference equation] \label{thm:prolong}
The function $\varphi_1$ can be meromorphically continued to $\mathbb C$, through the difference equation
\begin{equation}\label{eq:qdiff}
    \varphi_1(s+1)=G(s)\varphi_1(s)
\end{equation}
where
\begin{equation}
    G(s):=\frac{k_1(s)k_2(\zeta s)}{k_2(s)k_1(\zeta s)}=\frac{(s-s_1)(s+s_2+\mu_2)}{(s-s_2)(s+s_1+\mu_2)}.
    \label{eq:Gdef}
\end{equation}
The functional equation~\eqref{eq:eqfunc} and the invariance properties~\eqref{eq:inv1} and~\eqref{eq:inv2} remain valid on the whole complex plane $\mathbb{C}$.
\end{theorem}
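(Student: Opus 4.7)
The plan is to first derive the difference equation algebraically on a small open set where all needed identities simultaneously make sense, and then bootstrap it to extend $\varphi_1$ meromorphically to all of $\mathbb{C}$. The key algebraic input is the identity $\eta\circ\zeta(s)=s+1$ from~\eqref{eq:etacirczeta}. Applying the invariance $\varphi_1(\eta u)=\varphi_1(u)$ with $u=\zeta s$ gives $\varphi_1(s+1)=\varphi_1(\eta(\zeta s))=\varphi_1(\zeta s)$. To relate $\varphi_1(\zeta s)$ back to $\varphi_1(s)$, I would write the functional equation~\eqref{eq:eqfunc} at the point $\zeta s$,
\begin{equation*}
\varphi_1(\zeta s)=-\frac{k_2(\zeta s)}{k_1(\zeta s)}\varphi_2(\zeta s),
\end{equation*}
then use the invariance $\varphi_2(\zeta s)=\varphi_2(s)$ from~\eqref{eq:inv2}, and finally substitute $\varphi_2(s)=-\frac{k_1(s)}{k_2(s)}\varphi_1(s)$ from~\eqref{eq:eqfunc} at $s$. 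The two minus signs cancel and one obtains $\varphi_1(s+1)=G(s)\varphi_1(s)$ with $G$ as in~\eqref{eq:Gdef}. The explicit rational expression $G(s)=\frac{(s-s_1)(s+s_2+\mu_2)}{(s-s_2)(s+s_1+\mu_2)}$ then follows by plugging in the factorizations $k_i(s)=2(1+r_i)s(s-s_i)$ from~\eqref{eq:defk12s} and the formula $\zeta s=-s-\mu_2$, after simplification of the common factors.

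The first delicate point is that the computation above chains together three relations (the functional equation at $s$, the $\zeta$-invariance relating $\varphi_2(\zeta s)$ to $\varphi_2(s)$, and the $\eta$-invariance relating $\varphi_1(s+1)=\varphi_1(\eta\zeta s)$ to $\varphi_1(\zeta s)$), and by Proposition~\ref{prop:continuationDeltatilde} each of these three relations is only guaranteed on a proper subset of $\widetilde{\Delta}$. Tracking the constraints shows that all three hold simultaneously precisely when $s\in\widetilde{\Delta}\cap\zeta\widetilde{\Delta}\cap(\zeta\circ\eta)\widetilde{\Delta}$, which is open and nonempty by Lemma~\ref{lemma:evalphi1phi2}. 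Hence~\eqref{eq:qdiff} holds on a nonempty open set of $\widetilde{\Delta}$ where $\varphi_1$ is already defined on both sides. This part is the main obstacle, but the lemma makes it routine once stated.

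To conclude, I would use~\eqref{eq:qdiff} itself as a continuation tool. Since $G$ is a rational function, the formulas $\varphi_1(s+1):=G(s)\varphi_1(s)$ and, where $G(s)\neq 0$, $\varphi_1(s):=\varphi_1(s+1)/G(s)$ meromorphically extend $\varphi_1$ from $\widetilde{\Delta}$ to $\widetilde{\Delta}+1$ and $\widetilde{\Delta}-1$ respectively. Iterating produces a meromorphic function on every translate $\widetilde{\Delta}+k$, $k\in\mathbb{Z}$, and Lemma~\ref{lemma:cover} ensures these translates cover $\mathbb{C}$. Consistency on overlapping translates follows from the principle of analytic continuation, since the extensions coincide on the nonempty open set established above. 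The same principle propagates~\eqref{eq:qdiff}, the functional equation~\eqref{eq:eqfunc}, and the invariances~\eqref{eq:inv1}--\eqref{eq:inv2} from their initial domains of validity to the whole of $\mathbb{C}$, as identities between meromorphic functions agreeing on open subsets.
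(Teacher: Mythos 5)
Your proposal is correct and follows essentially the same approach as the paper: derive the difference equation on the nonempty open set $\widetilde{\Delta}\cap\zeta\widetilde{\Delta}\cap(\zeta\circ\eta)\widetilde{\Delta}$ (using Lemma~\ref{lemma:evalphi1phi2}), then iterate it together with Lemma~\ref{lemma:cover} and analytic continuation to cover $\mathbb{C}$. The only cosmetic difference is that you derive $G$ by chaining the four substitutions directly ($\eta$-invariance, functional equation at $\zeta s$, $\zeta$-invariance, functional equation at $s$), whereas the paper writes the functional equation at $s$ and $\zeta s$ as a linear system $(\mathrm{L}_1)$, $(\mathrm{L}_2')$ and eliminates $\varphi_2(s)$; the algebra is the same.
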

\begin{proof} In Proposition~\ref{prop:continuationDeltatilde}, we have extended $\varphi_1$ and $\varphi_2$ to $\widetilde{\Delta}$.
   We now extend $\varphi_1$ to 
   $$\widetilde{\Delta}\cup (\widetilde{\Delta}-1)=\widetilde{\Delta}\cup (\zeta \circ \eta )\widetilde{\Delta}.$$ For $s\in \widetilde{\Delta}\cap \zeta \widetilde{\Delta}\cap (\widetilde{\Delta}-1)$ which is nonempty by Lemma~\ref{lemma:evalphi1phi2}, we can evaluate \eqref{eq:eqfunc} at $s$ and $\zeta s$ to get to following system:
$$\left\{\begin{array}{lr}
         0 = k_1(s)\varphi_1(s)+k_2(s)\varphi_2(s) &\quad (\text{L}_1)\\[0.2cm]
         0 = k_1(\zeta s)\varphi_1(\zeta s) + k_2(\zeta s)\varphi_2(\zeta s)& (\text{L}_2)
    \end{array}\right.$$
The invariance properties \eqref{eq:inv1} and \eqref{eq:inv2} yields
    $$\varphi_1(\zeta s)=\varphi_1(\eta \circ \zeta s)=\varphi_1(s+1)\text{ and }\varphi_2(\zeta s)=\varphi_2(s),$$
    hence 
    $$\left\{\begin{array}{lr}
         0 = k_1(s)\varphi_1(s)+k_2(s)\varphi_2(s) &\quad (\text{L}_1)\\[0.2cm]
         0 = k_1(\zeta s)\varphi_1( s+1) + k_2(\zeta s)\varphi_2(s)& (\text{L}'_2)
    \end{array}\right.$$
    We can therefore eliminate $\varphi_2(s)$ by considering $(\text{L}_1)-(\text{L}'_2)k_2(s)/k_2(\zeta s)$ and obtain 
    \begin{equation*}
    \varphi_1(s+1)=G(s)\varphi_1(s).
    \end{equation*} 
    The left-hand side of this equation is meromorphic on $\widetilde{\Delta}-1$, and the right-hand side is meromophic on $\widetilde{\Delta}$. The intersection of these domains is nonempty, since, for instance, $-\frac{1}{2}$ belongs to it (see the proof of Lemma~\ref{lemma:evalphi1phi2}), and their union is connected. The analytic continuation theorem allows us to extend $\varphi_1$ to $\widetilde{\Delta}\cup (\widetilde{\Delta}-1)$ using the formula $\varphi_1(s+1)=G(s)\varphi_1(s)$. The functional equation~\eqref{eq:eqfunc} and the invariance properties~\eqref{eq:inv1} and~\eqref{eq:inv2} remain valid. By induction, we can extend $\varphi_1$ to the domain $\bigcup_{k\leqslant 0}(\widetilde{\Delta}+k)$. Using the same argument, one can extend $\varphi_1$ from $\widetilde{\Delta}$ to $\widetilde{\Delta}+1$ and by recursion to $\bigcup_{k\geqslant 0}(\widetilde{\Delta}+k)$. According to Lemma~\ref{lemma:cover}, $\varphi_1$ is now (meromorphically) extended to the whole complex plane.
\end{proof}
\begin{remark}[Difference equation for $\varphi_2$]
A similar  method can be applied to extend $\varphi_2$ to $\mathbb C$. We have 
$$\left\{\begin{array}{lr}
         0 = k_1(s)\varphi_1(s)+k_2(s)\varphi_2(s), &\\[0.2cm]
         0 = k_1(\eta s)\varphi_1(s) + k_2( \eta s)\varphi_2(s-1),
    \end{array}\right.$$
    and therefore, 
    $$\varphi_2(s-1)=\frac{k_2(s)k_1(\eta s)}{k_1( s)k_2(\eta s)}\varphi_2(s)=\frac{(s-s_2)(s+s_1-\mu_1)}{(s-s_1)(s+s_2-\mu_1)}\varphi_2(s) .$$
\end{remark}

\subsection{Zeros and poles of $\varphi_1$ in two fundamental strips}

Now that we have meromorphically extended $\varphi_1$ to the entire complex plane $\mathbb{C}$, we study its zeros and poles in two fundamental strips, which will be useful later on. We introduce the sets
\begin{equation}\label{eq3}
\mathfrak{B}_1:=\{s\in\mathbb C : -\mu_2<\mathfrak{Re}(s)<\mu_1\}\text{ and }\mathfrak{B}_2:=\{s\in\mathbb C : s_-<\mathfrak{Re}(s)<s_+\},
\end{equation}
which are vertical strips delimited by two lines that we will denote by $d_1$ and $d_2$ for $\mathfrak{B}_1$ and $d_-$ and $d_+$ for $\mathfrak{B}_2$: 
\begin{equation}\label{eq2}
d_1:=\{s\in\mathbb C : \mathfrak{Re}(s)=\mu_1\}, \quad\quad d_2:=\{s\in\mathbb C : \mathfrak{Re}(s)=-\mu_2\},
\end{equation}
\begin{equation*}
d_-:=\{s\in\mathbb C: \mathfrak{Re}(s)=s_-\},\quad\; d_+:=\{s\in\mathbb C: \mathfrak{Re}(s)=s_+\}.\;\;\;
\end{equation*}

Note that $d_-$ and $d_+$ are respectively $\zeta$-stable and $\eta$-stable and that $d_2+1=d_1$. These domains will play a key role in defining the notion of invariant. We have the inclusions
\begin{equation}\label{eq:inclusionBdelta}
{\mathfrak{B}}_2 \subset {\mathfrak{B}}_1 \subset \Delta
\end{equation}
which are depicted in Figure~\ref{fig:inclusion}. 
The previous inclusions derives from~\eqref{eq:s+s-} and~\eqref{eq:Deltagf}. For later use, we now study the zeros and poles of $\varphi_1$ in these strips and its behavior at infinity.

\begin{figure}
\centering 
\includegraphics[width=7.1cm]{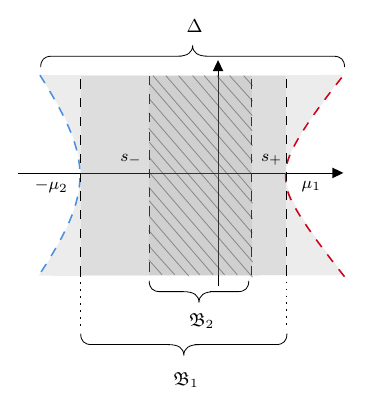}\\
\caption{The strips $\mathfrak{B}_2\subset \mathfrak{B}_1$ are subsets of the domain $\Delta$. Colors correspond to those used in Figures~\ref{fig:fundamental} and~\ref{fig:1}.}\label{fig:inclusion}
\end{figure}

\begin{lemma}[Zeros and poles in the strips]\label{lemma:stripe} The Laplace transform $\varphi_1$ satisfies the following properties:
\begin{enumerate}
\item  $\varphi_1$ has no real zeros in $\Delta$. In particular, $\varphi_1$ has no zeros in $\overline{\mathfrak{B}}_2\cap\mathbb R$, or in $\overline{\mathfrak{B}}_1\cap\mathbb R$.
\item $\varphi_1$ has at most one pole in $\overline{\mathfrak{B}}_1$. More precisely, $\varphi_1$ has a (simple) pole at $s_1$ if and only if 
\begin{equation}\label{eq:condpole}
 -\mu_2 \leqslant s_1< 0,\text{ \textit{i.e.} }s_1\in\overline{\mathfrak{B}}_1\cap \Delta_\mathrm{x}.
\end{equation}
In particular, this simple pole is in $\overline{\mathfrak{B}}_2\subset \overline{\mathfrak{B}}_1$ if and only if $s_-\leqslant s_1 <0$.
\item As $s$ stays in $\mathfrak{B}_1$,
\begin{equation}
\label{eq:lim0B}
\lim_{|s|\to \infty \atop s\in \mathfrak{B}_1}\varphi_1(s)=0.
\end{equation}
\end{enumerate}
Symmetric results hold for $\varphi_2$.
\end{lemma}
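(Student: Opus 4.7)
The plan is to exploit the two local representations of $\varphi_1$ established in Proposition~\ref{prop:conttoDelta}: $\varphi_1(s)=\phi_1(\y(s))$ on $\Delta_{\mathrm{y}}$ and $\varphi_1(s)=-\tfrac{k_2(s)}{k_1(s)}\varphi_2(s)$ on $\Delta_{\mathrm{x}}$. My first step would be to observe that $\overline{\mathfrak{B}}_1\subset\overline{\Delta_{\mathrm{x}}}\cup\overline{\Delta_{\mathrm{y}}}$, which follows from Lemma~\ref{lem:DxDy} and~\eqref{eq:fgpm} since $f^-(b)\leqslant-\mu_2<\mu_1\leqslant g^+(b)$ for every $b\in\mathbb{R}$. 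Thus $\overline{\mathfrak{B}}_1$ splits into a right half contained in $\overline{\Delta_{\mathrm{y}}}$ and a left half contained in $\overline{\Delta_{\mathrm{x}}}$, and each half can be handled using the appropriate representation.

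For~(1), on $\Delta_{\mathrm{y}}\cap\mathbb{R}=(0,\mu_1)$ one has $\y(s)<0$, so $\varphi_1(s)=\int_0^{\infty}e^{\y(s)g_2}\,\bm\nu_1(\mathrm{d}g_2)>0$ as the Laplace transform of a nonzero positive measure at a negative real. By the symmetric statement $\varphi_2>0$ on $\Delta_{\mathrm{x}}\cap\mathbb{R}=(-\mu_2,0)$, so on that interval the relation $\varphi_1=-\tfrac{k_2}{k_1}\varphi_2$ reduces the problem to the vanishing of the ratio $-\tfrac{(1+r_2)(s-s_2)}{(1+r_1)(s-s_1)}$ (after cancelling the common factor $s$). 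Its only real zero is $s=s_2$, which by Lemma~\ref{lemma:s1s2}(c) does not lie in $[-\mu_2,0]$. Finally, at $s=0$ the value $\varphi_1(0)=\phi_1(0)$ is nonzero by~\eqref{eq:normcst} combined with~\eqref{eq:H2}.

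For~(2), I would argue that poles inside $\overline{\mathfrak{B}}_1$ can only arise from the $\Delta_{\mathrm{x}}$-side. On $\overline{\Delta_{\mathrm{y}}}\cap\overline{\mathfrak{B}}_1$ the continuity of $\phi_1$ on the closed left half-plane (since $\bm\nu_1$ is finite) shows that $\varphi_1=\phi_1\circ\y$ is bounded, hence pole-free. On $\overline{\Delta_{\mathrm{x}}}\cap\overline{\mathfrak{B}}_1$ the representation $\varphi_1=-\tfrac{k_2}{k_1}\varphi_2$ with holomorphic $\varphi_2$ allows poles only at the zeros of $k_1$, namely $\{0,s_1\}$. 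The point $0$ is a removable singularity by Proposition~\ref{prop:conttoDelta}, and Lemma~\ref{lemma:s1s2}(a)--(b) together with the symmetric version of~(1) give $k_2(s_1)\neq 0$ and $\varphi_2(s_1)\neq 0$, so $s_1$ is a simple pole. This pole belongs to $\overline{\mathfrak{B}}_1$ exactly when $s_1\in\overline{\mathfrak{B}}_1\cap\overline{\Delta_{\mathrm{x}}}=[-\mu_2,0]$, and Lemma~\ref{lemma:s1s2}(b) rules out $s_1=0$, yielding the stated condition $-\mu_2\leqslant s_1<0$.

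For~(3), since $\mathfrak{B}_1$ is a vertical strip, $|s|\to\infty$ inside $\mathfrak{B}_1$ forces $|\mathfrak{Im}(s)|\to\infty$. Writing $s=a+ib$, formula~\eqref{eq:Reys} gives $\mathfrak{Re}(\y(s))=2a^2-4as_+-2b^2\to-\infty$ uniformly in $a\in[-\mu_2,\mu_1]$. I would then apply dominated convergence to $\phi_1(y)=\int e^{yg_2}\bm\nu_1(\mathrm{d}g_2)$—the integrand is bounded by $1$ on the closed left half-plane, tends to $0$ pointwise on $(0,\infty)$, and $\bm\nu_1$ is atom-free since it admits a density—to conclude $\varphi_1(s)\to 0$ on the $\Delta_{\mathrm{y}}$-side. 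On the $\Delta_{\mathrm{x}}$-side, the analogous argument for $\varphi_2$ combined with $k_2(s)/k_1(s)=O(1)$ at infinity finishes the proof. The most delicate point I anticipate is part~(2): one must carefully justify that the symmetric version of~(1) still yields $\varphi_2(s_1)\neq 0$ in the boundary case $s_1=-\mu_2$ (where $\x(s_1)=0$), and verify that no spurious pole appears from the $\Delta_{\mathrm{y}}$-representation near the imaginary axis where the two representations must agree.
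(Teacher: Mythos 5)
Your proof is correct and follows essentially the same strategy as the paper's: positivity of the Laplace transforms for zeros, the relation $\varphi_1=-\tfrac{k_2}{k_1}\varphi_2$ together with Lemma~\ref{lemma:s1s2} for poles, and the behavior of $\mathfrak{Re}(\y(s))$ for the limit at infinity. The only cosmetic difference is that your two-sided split of $\overline{\mathfrak{B}}_1$ in part~(3) is redundant (for $|\mathfrak{Im}(s)|$ large, all of $\mathfrak{B}_1$ eventually lies inside $\Delta_{\mathrm{y}}$ since $g^\pm(b)\to\pm\infty$), and the boundary case $s_1=-\mu_2$ you flag is actually harmless because $\x(-\mu_2)=0$ and $\varphi_2(-\mu_2)=\phi_2(0)>0$ by~\eqref{eq:normcst}.
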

\begin{proof}
\begin{enumerate}
\item For all $y\in \mathbb R_{\leqslant 0}$ we have $\phi_1(y)\neq 0$ since it is the (convergent) integral of a nonzero positive function. By the formula defining $\y$ we have $s\in \mathbb{R}$ implies $\y(s)\in \mathbb{R}$. Then, $\varphi_1(s)=\phi_1(\y(s))$
yields that $\varphi_1(s)$ has no zeros in $\Delta_\mathrm{y}\cap \mathbb R$. 
By the same argument, $\varphi_2(s)$  has no zeros  on $\Delta_\mathrm{x}\cap\mathbb R$. 
By the relation~\eqref{eq:pr1} of Proposition~\ref{prop:conttoDelta}, for $s\in\Delta$ we have 
\begin{equation}
\varphi_1(s)=-\frac{k_2(s)}{k_1(s)}\varphi_2(s),
\label{eq:phi1kpole}
\end{equation}
so to study the zeros  of $\varphi_1(s)$ in $\Delta$ it suffices to study the zeros  of
\begin{equation}
\frac{k_2(s)}{k_1(s)}=\frac{(1+r_1)(s-s_2)}{(1+r_2)(s-s_1)}
\label{eq:k1:k2}
\end{equation}
and the poles of $\varphi_2$ in  $\Delta_\mathrm{x}$. The function $\varphi_2$ cannot have any pole in $\Delta_{\mathrm{x}}$ as it would imply the existence of a pole for the Laplace transform $\phi_2$ in its region of convergence $\{x\in \mathbb C : \mathfrak{Re}(x)<0\}$. Since $s_1\neq s_2$ by Lemma~\ref{lemma:s1s2}, the only potential  zero  of $\varphi_1$  in $\Delta$ is $s_2\in\mathbb R$ (and it would be a simple zero). Note that by the description of $\Delta_\mathrm{x}$ given in Lemma~\ref{lemma:DxDy}, we have 
$$\mathbb{R}\cap\Delta_\mathrm{x}=(f^{-}(0),f^{+}(0))=(-\mu_2,0).$$
Hence the only possible zero $s_2\in \Delta_\mathrm{x}$ if and only if $-\mu_2 < s_2<0$, which never holds according to Lemma~\ref{lemma:s1s2}. The proof for $\varphi_2$ is similar.
\item
As already mentioned for $\phi_2$, the Laplace transform $\phi_1$ is analytic on $\{\mathfrak{Re}(y)<0\}$, hence $\varphi_1$ has no pole on $\Delta_\mathrm{y}$. Similarly, $\varphi_2$ has no pole on $\Delta_\mathrm{x}$. Then, by~\eqref{eq:phi1kpole} the only potential poles of $\varphi_1$ in $\Delta$ are the poles of $k_2(s)/k_1(s)$ in  $\Delta$. By~\eqref{eq:k1:k2} we see that the only such pole is $s_1$ and is simple (since $s_1\neq s_2$ by Lemma~\ref{lemma:s1s2}). We obtain, that $s_1$ is the only possible pole of $\varphi_1$ in $\Delta$ and is of order $1$. It is in $\overline{\mathfrak{B}}_2$ (\textit{resp} $\overline{\mathfrak{B}}_1$) if and only if $s_1\in \overline{\mathfrak{B}}_2\cap \Delta_\mathrm{x}=[s_-,0)$ (\textit{resp.} $s_1\in \overline{\mathfrak{B}}_1\cap \Delta=[-\mu_2,0)$). 
\item 
For $s=a+ib\in \mathfrak{B}_1$, when $|s|\to +\infty$, $a$ stays bounded (by definition of $\mathfrak{B}_1$, see~\eqref{eq3}), $b\to \pm \infty$ and by \eqref{eq:Reys}, 
$$\mathfrak{Re}(\y(s))=2a^2-4as_+-2b^2 \to -\infty.$$ 
Since the Laplace transfom tend to zero when the real part of its argument tends to $-\infty$,  $\varphi_1(s)$ must tends to $0$.
\end{enumerate}
\end{proof}

\section{Decoupling functions}\label{subsec:decoupl}

\subsection{Explicit decoupling and key parameters}
To solve the difference equation~\eqref{eq:qdiff} satisfied by $\varphi_1$,
\begin{equation*}
    \varphi_1(s+1)=G(s)\varphi_1(s),
\end{equation*}
we introduce the notion of decoupling function.
\begin{definition}[Decoupling function]\label{def:decoupling} Given a rational function $g\in\mathbb{C}(s)$, we say that a nonzero {meromorphic} function $d\in\mathbb{C}(s)$ is a \emph{decoupling function} of $g$ if 
\begin{equation}
{g(s)=\frac{d(s+1)}{d(s)}.}
\end{equation}
\end{definition}
We recall that in~\eqref{eq:Gdef} we defined $G$ as
$$G(s):=\frac{k_1(s)k_2(\zeta s)}{k_2(s)k_1(\zeta s)}=\frac{(s-s_1)(s+s_2+\mu_2)}{(s-s_2)(s+s_1+\mu_2)}.
$$
Therefore, we can easily see that $G$ always admits a meromorphic invariant $D$ defined by
\begin{equation}\label{eq:decouplageGamma}
D(s):=\frac{\Gamma(s-s_1)\,\Gamma(s+s_2+\mu_2)}{\Gamma(s-s_2)\,\Gamma(s+s_1+\mu_2)}
\end{equation}
where $\Gamma$ is the well-known gamma function. This function will play a crucial role in the following. We recall that $\Gamma$ satisfies $\Gamma(s+1)=s\Gamma(s)$ which directly leads to 
$$G(s)=\frac{D(s+1)}{D(s)}.$$
It is known that the function $\Gamma$ has poles on $-\mathbb{N}$. It is then possible that some poles of the numerator and the denominator of the decoupling function $D$ in~\eqref{eq:decouplageGamma} cancel each other out. To study this, we introduce new constants that will play a key role in the following:
\begin{equation}\label{eq:gammai}
    \gamma:=s_2-s_1,\quad \gamma_1:=\mu_1-2s_1,
    \quad\gamma_2:=\mu_2+2s_2.
\end{equation}
We will see bellow in Proposition~\ref{prop:decoupling} that when $\gamma \in \mathbb{Z}$, or $\{\gamma_1,\gamma_2\}\subset \mathbb Z$ the decoupling function $D$ is in fact rational. Replacing $s_1$ and $s_2$ by their definitions (see \eqref{eq:k2}), we get the formulas given in the introduction:
\begin{equation}\label{eq:gamma}
\gamma=\frac{1-r_1r_2}{(1+r_1)(1+r_2)}=\frac{1}{1+r_1}+\frac{1}{1+r_2}-1,\quad \gamma_1=\frac{1+\mu_2-r_1\mu_1}{1+r_1},\quad \gamma_2=\frac{1+\mu_1-r_2\mu_2}{1+r_2}.
\end{equation}
Moreover, one should observe that $\gamma$ does not depend on the drift $\mu$. Note that with~\eqref{eq:H4}, one can easily check the following relation between the three parameters 
\begin{equation}\label{eq4}
    \gamma_1+\gamma_2= 2\gamma+1.
\end{equation}

\begin{remark}[Common parity of $\gamma_1$ and $\gamma_2$]\label{rem:gamma} 
From the previous equation we obtain that if $\gamma\notin \mathbb Z$ and $\{\gamma_1,\gamma_2\}\subset \mathbb Z$ then $\gamma\in\frac{1}{2}+\mathbb{Z}$, \textit{i.e.}, $2\gamma$ is an odd integer. We deduce that in this case, $\gamma_1$ and $\gamma_2$ are either both odd or both even.
\end{remark}

\begin{lemma}[Impossible cases]\label{lemma:impossible} The existence and recurrence conditions~\eqref{eq:H1}-\eqref{eq:H2} prevent the following cases from happening:
\begin{enumerate}
\item $\gamma_1=0$ or $\gamma_2=0$,
\item $\gamma_1< 0$ and $\gamma_2< 0$,
\item $\gamma_1< 0$, $\gamma_2> 0$ and $\gamma>0$,
\item $\gamma_1> 0$, $\gamma_2< 0$ and $\gamma>0$,
\item $\gamma=0$.
\end{enumerate}
\end{lemma}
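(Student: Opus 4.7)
The plan is to translate every condition on the $\gamma$'s into a sign condition on $1+r_1$ and $1+r_2$, and then derive a contradiction from the existence and recurrence hypotheses~\eqref{eq:H1}--\eqref{eq:H2}. The algebraic engine is the pair of elementary identities
\begin{equation*}
s_1-\mu_1=\frac{-1}{1+r_1},\qquad s_2+\mu_2=\frac{1}{1+r_2},
\end{equation*}
which follow by direct computation from the definitions~\eqref{eq:k2} of $s_1, s_2$ together with $\mu_1+\mu_2=1$ from~\eqref{eq:H4}.

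Combined with Lemma~\ref{lemma:s1s2} (which asserts $s_1\notin[0,\mu_1]$ and $s_2\notin[-\mu_2,0]$), and recalling that $r_1,r_2\neq -1$ by~\eqref{eq:H5}, these identities yield the following dictionary. If $r_1>-1$ then $s_1<\mu_1$, so the lemma forces $s_1<0<\mu_1/2$, giving $\gamma_1=\mu_1-2s_1>0$; whereas if $r_1<-1$, then $s_1>\mu_1>\mu_1/2$, giving $\gamma_1<0$. A symmetric dichotomy holds between the sign of $\gamma_2$ and the position of $r_2$ relative to $-1$. In particular $\gamma_1\neq 0$ and $\gamma_2\neq 0$, which disposes of case~(1), and case~(5) is immediate from Lemma~\ref{lemma:s1s2}(a) since $\gamma=s_2-s_1\neq 0$.

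For case~(2), $\gamma_1<0$ and $\gamma_2<0$ force $r_1<-1$ and $r_2<-1$; then $r_1r_2>1$ so $1-r_1r_2<0$, and since $r_1,r_2<0$ the branch $\{r_1>0,r_2>0\}$ of~\eqref{eq:H1} is also unavailable, violating~\eqref{eq:H1}. For case~(3), $\gamma_1<0$ together with $\gamma_2>0$ give $r_1<-1<r_2$; since $r_1<0$ the second branch of~\eqref{eq:H1} is ruled out, so~\eqref{eq:H1} forces $1-r_1r_2>0$. Plugging this into the formula $\gamma=(1-r_1r_2)/((1+r_1)(1+r_2))$, the numerator is positive and the denominator is negative (because $1+r_1<0<1+r_2$), hence $\gamma<0$, contradicting $\gamma>0$. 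Case~(4) is obtained by exchanging the roles of $r_1$ and $r_2$.

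The argument is essentially routine once the two algebraic identities are in hand; there is no real obstacle. The only points deserving attention are the systematic use of Lemma~\ref{lemma:s1s2} to upgrade open-interval exclusions for $s_1, s_2$ into strict sign conclusions for the $\gamma_i$, and the careful bookkeeping of the two branches of~\eqref{eq:H1} in each subcase.
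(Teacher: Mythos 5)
Your proof is correct and reaches the same conclusions by the same overall plan (establish a sign dichotomy between $\gamma_i$ and $1+r_i$, then analyze the two branches of~\eqref{eq:H1}). The one genuine difference is in how you derive that dichotomy: you work through the tidy identities $s_1-\mu_1=-1/(1+r_1)$, $s_2+\mu_2=1/(1+r_2)$ and then invoke Lemma~\ref{lemma:s1s2} to pin down the position of $s_1,s_2$, whereas the paper reads the sign of $\gamma_i$ directly off the formula $\gamma_1=\frac{1+\mu_2-r_1\mu_1}{1+r_1}$ using~\eqref{eq:H2} (the numerator is positive by recurrence, so $\operatorname{sgn}\gamma_1=\operatorname{sgn}(1+r_1)$). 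Your route is a touch more indirect — it leans on Lemma~\ref{lemma:s1s2}, which is itself proved from~\eqref{eq:H1}--\eqref{eq:H3} — but it pays off by giving a single clean dictionary used uniformly for cases (1)--(4), while the paper treats case (1) by a separate direct computation; both are equally valid. Cases (2), (3), (4), (5) are then handled identically in both arguments.
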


\begin{proof}
\begin{enumerate}
\item If $\gamma_1=0$ then by~\eqref{eq:gamma} $\mu_2-r_1\mu_1=-1$ which would contradict~\eqref{eq:H2}. Similarly, if $\gamma_2=0$, $\mu_1-r_2\mu_2=-1$ which would be again inconsistent with \eqref{eq:H2}.
\item Let us prove that either $r_1>-1$ or $r_2>-1$. Indeed, if $r_1,r_2\leq -1$, then we would neither have $\{r_1>0\text{ and }r_2>0\}$ (trivial), nor $1-r_1r_2>0$ (the product $r_1r_2$ is greater than $1$), which is the negation of~\eqref{eq:H1}. Suppose $r_1>-1$. Then by~\eqref{eq:gamma} the sign of $\gamma_1$ is the same as that of $1+\mu_2-r_1\mu_1$, which is positive by~\eqref{eq:H2}. If $r_2>-1$, the same reasoning shows that $\gamma_2>0$. Hence, at least one of the $\gamma_i$ is positive. 
\item By the same line of argument, if $\gamma_1<0$ and $\gamma_2>0$ then $r_1<-1$ and $r_2>-1$. We then distinguish two cases according to the sign of $1-r_1r_2$: if $1-r_1r_2<0$ then by~\eqref{eq:H1}, $r_1>0$ and $r_2>0$ which immediately contradicts $r_1<-1$. Otherwise, $1-r_1r_2>0$, combined with $r_1<-1$ and $r_2>-1$ in~\eqref{eq:gammai} yields $\gamma<0$.
\item The proof of this fourth item is similar to that of the third.
\item $\gamma=0$ if and only if $s_1=s_2$ which is impossible according to Lemma~\ref{lemma:s1s2}.
\end{enumerate}
\end{proof}

We now look at the decoupling function $D$ in the case $\gamma \in \mathbb{Z}$, or $\{\gamma_1,\gamma_2\}\subset \mathbb Z$. 

\begin{proposition}[Rationnality of $D$]\label{prop:decoupling}
The decoupling function $D$ defined in~\eqref{eq:decouplageGamma} satisfies
$$
D(s):=\frac{\Gamma(s-s_1)\,\Gamma(s+s_2+\mu_2)}{\Gamma(s-s_2)\,\Gamma(s+s_1+\mu_2)}
\quad\text{and}\quad
G(s)=\frac{D(s+1)}{D(s)}
. 
$$
When
\begin{equation}\label{eq:decouplcond}
\gamma\in \mathbb Z\text{ or }\{\gamma_1,\gamma_2\}\subset \mathbb Z
\end{equation}
the decoupling function $D$ is a rational function, \textit{i.e.} $D\in\mathbb{C}(s)$.
In this case, there exist polynomials $P(y)$ and $Q(y)$ and an integer $\varepsilon\in\{-1,0,1\}$ such that $D$ is given by the following degree $2\gamma$ rational function  

\begin{equation}\label{eq:formedecoupl}
D(s)=2^{-\gamma}\frac{P(\y(s))}{Q(\y(s))}\left[\sqrt{2}\left(s-s_+\right)\right]^\varepsilon.
\end{equation}
More precisely,
\begin{enumerate}[label=$\mathrm{(\alph*)}$]
\item When $\gamma\in-\mathbb N$, one can choose $\varepsilon:=0$, $P(y):=1$ and
\begin{equation}
Q(y):=\prod_{k=0}^{-\gamma-1}\Big(y-\y(s_1-k)\Big) .
\end{equation}
\item When $\gamma \in \mathbb N$, one can choose $\varepsilon:=0$, $Q(y):=1$ and
\begin{equation}\label{eq:decouplageN}
P(y):=\prod_{k=1}^{\gamma}\Big(y-\y(s_1+k)\Big) .
\end{equation}
\item When $\gamma\notin\mathbb Z$, if $\gamma_1\in-2\mathbb N$ and $\gamma_2\in 2\mathbb N$ then one can choose $\varepsilon:=-1$,
\begin{equation}\label{eq:decouplage2}
P(y):=\prod_{k=0}^{\frac{\gamma_2}{2}-1}\Big(y-\y(s_2-k)\Big)\text{ and }Q(y):=\prod_{k=0}^{-\frac{\gamma_1}{2}-1}\Big(y-\y(s_1-k)\Big),
\end{equation}
and if $\gamma_1\in -2\mathbb N+1$ and $\gamma_2\in 2\mathbb N-1$ then one can choose $\varepsilon:=1$,
\begin{equation}
P(y):=\prod_{k=0}^{\frac{\gamma_2-1}{2}-1}\Big(y-\y(s_2-k)\Big) \text{ and }Q(y):=\prod_{k=0}^{-\frac{\gamma_1+1}{2}}\Big(y-\y(s_1-k)\Big).
\end{equation}
Whatever the common parity of $\gamma_1$ and $\gamma_2$ is, the resulting decoupling function is 
\begin{equation}\label{eq:resulting1}
D(s) = \frac{\prod_{k=0}^{\gamma_2-1}\Big(s- (s_2-k)\Big)}{\prod_{k=0}^{-\gamma_1}\Big(s-(s_1-k)\Big)}.
\end{equation}
\item When $\gamma\notin \mathbb Z$, if $\gamma_1\in 2\mathbb N$ and $\gamma_2\in-2\mathbb N$ then one can choose $\varepsilon:=1$,
\begin{equation}
P(y):=\prod_{k=1}^{\frac{\gamma_1}{2}-1}\Big(y-\y(s_1+k)\Big) \text{ and }Q(y):=\prod_{k=1}^{-\frac{\gamma_2}{2}} \Big(y-\y(s_2+k)\Big),
\end{equation}
and if $\gamma_1\in 2\mathbb N-1$ and $\gamma_2\in -2\mathbb N+1$ then one can choose $\varepsilon:=-1$,
\begin{equation}
P(y):=\prod_{k=1}^{\frac{\gamma_1-1}{2}}\Big(y-\y(s_1+k)\Big)\text{ and }Q(y):=\prod_{k=1}^{-\frac{\gamma_2+1}{2}}\Big(y-\y(s_2+k)\Big).
\end{equation}
Whatever the common parity of $\gamma_1$ and $\gamma_2$ is, the resulting decoupling function is 
\begin{equation}\label{eq:resulting2}
D(s)=\frac{\prod_{k=1}^{\gamma_1-1}\Big(s-(s_1+k)\Big)}{\prod_{k=1}^{-\gamma_2}\Big(s-(s_2+k)\Big)}.
\end{equation}
\item When $\gamma\notin \mathbb Z$, $\gamma_1\in \mathbb N$ and $\gamma_2\in \mathbb N$, one can choose $\varepsilon:=1$, $Q(y)=1$ and
\begin{equation}
P(y):=\prod_{k=1}^{\lfloor\frac{\gamma_1-1}{2}\rfloor} \Big(y-\y(s_1+k)\Big)\prod_{k=0}^{\lfloor\frac{\gamma_2}{2}-1\rfloor} \Big(y-\y(s_2-k)\Big) .
\end{equation}
The resulting decoupling function is 
\begin{equation}\label{eq:resulting3}
D(s)=\prod_{k=1}^{\gamma_1-1}\Big(s-(s_1+k)\Big)\prod_{k=0}^{\gamma_2-1}\Big(s-(s_2-k)\Big).
\end{equation}
\end{enumerate}
\end{proposition}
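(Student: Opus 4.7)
The whole proposition is really an exercise in careful bookkeeping once one observes that everything reduces to the functional equation $\Gamma(z+1)=z\Gamma(z)$. The plan is as follows.

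First, I would verify $D(s+1)=G(s)D(s)$ by a direct computation: applying $\Gamma(z+1)=z\Gamma(z)$ to each of the four $\Gamma$-factors yields
$$\frac{D(s+1)}{D(s)}=\frac{(s-s_1)(s+s_2+\mu_2)}{(s-s_2)(s+s_1+\mu_2)}=G(s).$$
Next, for the rationality claim, I would use the elementary fact that $\Gamma(s+\alpha)/\Gamma(s+\beta)$ is a rational function of $s$ if and only if $\alpha-\beta\in\mathbb Z$, in which case it is (up to sign) a product or reciprocal of Pochhammer-type linear factors. The four shifts appearing in $D$ are $-s_1$, $s_2+\mu_2$, $-s_2$, $s_1+\mu_2$, and using $\mu_1+\mu_2=1$ one computes the relevant differences
$$(-s_1)-(-s_2)=\gamma,\quad (s_2+\mu_2)-(s_1+\mu_2)=\gamma,\quad (-s_1)-(s_1+\mu_2)=\gamma_1-1,\quad (s_2+\mu_2)-(-s_2)=\gamma_2.$$
Hence, when $\gamma\in\mathbb Z$ the pairing $(\Gamma(s-s_1),\Gamma(s-s_2))$ together with $(\Gamma(s+s_2+\mu_2),\Gamma(s+s_1+\mu_2))$ gives a rational function, while when $\{\gamma_1,\gamma_2\}\subset\mathbb Z$ the alternative pairing $(\Gamma(s-s_1),\Gamma(s+s_1+\mu_2))$ together with $(\Gamma(s+s_2+\mu_2),\Gamma(s-s_2))$ does the job.

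Expanding each Pochhammer symbol via $\Gamma(z+n)/\Gamma(z)=\prod_{k=0}^{n-1}(z+k)$ for $n\in\mathbb N$ (and its inverse for $n\in-\mathbb N$) yields $D(s)$ as an explicit product of linear factors in $s$; cases \emph{(a)}--\emph{(e)} correspond to the six sign/parity configurations for $(\gamma,\gamma_1,\gamma_2)$ permitted by Lemma~\ref{lemma:impossible} and Remark~\ref{rem:gamma}. In each case, a short index shift of the form $j\leftrightarrow k+1$ (using $-\mu_1=\mu_2-1$) matches my expansion to the products of $(s-(s_i\pm k))$ displayed in formulas \eqref{eq:resulting1}, \eqref{eq:resulting2}, \eqref{eq:resulting3}.

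To rewrite these linear products in the factored form $2^{-\gamma}P(\y(s))/Q(\y(s))[\sqrt{2}(s-s_+)]^\varepsilon$, I would use the key identity
$$\y(s)-\y(a)=2s(s-\mu_1)-2a(a-\mu_1)=2(s-a)(s-\mu_1+a)=2(s-a)(s-\eta a),$$
which groups two linear factors whose roots are $\eta$-conjugate into a single quadratic factor of $\y(s)$. Each such pairing produces a factor of $2$ in the denominator, which collected together yield the prefactor $2^{-\gamma}$ (of which the degree count $\deg D=2\gamma$ is a direct consequence). The only potential difficulty is that the fixed point $s_+=\mu_1/2$ of $\eta$ cannot be paired with anything else, so when the total number of linear factors is odd (precisely the half-integer subcases of \emph{(c)}, \emph{(d)}, \emph{(e)}) exactly one factor is $(s-s_+)$ and it remains alone, producing the $[\sqrt{2}(s-s_+)]^{\varepsilon}$ term with $\varepsilon=\pm 1$ determined by whether this orphan factor sits in the numerator or denominator.

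The main obstacle is the half-integer case: one must carefully track which extremal factor in each of the two products $\prod(s-s_1-k)\prod(s-s_1-\mu_2+k+1)$ (or their \emph{(d)}, \emph{(e)} analogues) is the unpaired one at $s_+$, and the answer depends on the common parity of $\gamma_1$ and $\gamma_2$; this is what forces the split of cases \emph{(c)} and \emph{(d)} into two subcases and fixes the sign of $\varepsilon$. Everything else is a mechanical shift of summation indices.
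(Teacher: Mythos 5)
Your proposal is correct and follows essentially the same route as the paper: verify the decoupling relation from $\Gamma(z+1)=z\Gamma(z)$; expand each ratio of Gamma functions with integer difference of arguments into a product of linear factors via $\Gamma(z+n)/\Gamma(z)=\prod_{k=0}^{n-1}(z+k)$; and then fold pairs of $\eta$-conjugate factors into factors of $\y(s)$ via the identity $\y(s)-\y(a)=2(s-a)(s-\eta a)$, with the orphan factor at $s_+$ accounting for $\varepsilon=\pm1$ in the half-integer cases. The only difference is that the paper carries out each of the five cases by hand rather than presenting the unifying mechanism you describe, and you could tighten the bookkeeping of the prefactor slightly (each pairing in the numerator contributes $\tfrac12$ and each pairing in the denominator contributes $2$, so the exponent is the signed count of pairs, which together with the orphan yields $2^{-\gamma}[\sqrt{2}]^\varepsilon$), but the argument is sound and matches the paper.
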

\noindent According to Lemma~\ref{lemma:impossible}, if $\gamma\in\mathbb Z$ or $\{\gamma_1,\gamma_2\}\subset \mathbb Z$, then one of the conditions (a)-(e) in the above proposition is satisfied.

\begin{proof}
The functional relation $\Gamma(x+1)=x\Gamma(x)$ can be easily generalized to 
\begin{equation}\label{eq:Gammarec}
\frac{\Gamma(x+n)}{\Gamma(x)}=\prod_{k=0}^{n-1}(x+k)
\end{equation}
for any $n\in\mathbb N$. This observation will be central to the proof of the proposition.
\begin{itemize}
\item[(a)] If $\gamma\in-\mathbb N$, then we replace $s_2$ by $s_1+\gamma$ in the definition of $D$, yielding
$$
\frac{1}{D(s)}=\frac{\Gamma(s-s_1-\gamma)\,\Gamma(s+s_2-\gamma+\mu_2)}{\Gamma(s-s_1)\,\Gamma(s+s_2+\mu_2)}=\prod_{k=0}^{-\gamma-1}\Big(s-(s_1-k)\Big)\prod_{k=0}^{-\gamma-1}\Big(s-(-s_2-\mu_2-k)\Big).
$$
Reindexing the second product by setting $j=-\gamma-k-1$, we obtain
$$\prod_{k=0}^{-\gamma-1}\Big(s-(-s_2-\mu_2-k)\Big)=\prod_{j=0}^{-\gamma-1}\Big(s-(-s_1+\mu_1+j)\Big)=\prod_{j=0}^{-\gamma-1}\Big(s-\eta(s_1-j)\Big),$$
hence since by \eqref{eq:unif}, $\y(s):=2s(s-\mu_1)$,
$$\frac{1}{D(s)}=\prod_{k=0}^{-\gamma-1}\Big(s-(s_1-k)\Big)\Big(s-\eta(s_1-k)\Big)=2^\gamma\prod_{k=0}^{-\gamma-1}\Big(\y(s)-\y(s_1-k)\Big).$$
\item[(b)] Assume that  $\gamma >0$. Let $\widetilde{G}$ be the function $G$ where we have switched $s_1 \leftrightarrow s_2$. Then, $\widetilde{G}^{-1}(s)=G(s)$.
Note that if we switch  $s_1\leftrightarrow s_2$, then $s_2 -\gamma=s_1$ has to be rewritten as $s_1 +\gamma=s_2$. Hence, the same reasoning as above (with interchange of $s_1 \leftrightarrow s_2$, $\gamma\leftrightarrow  -\gamma$ and  $k \leftrightarrow -k$) proves the following expression for the decoupling function
 $$D(s)=P(\y(s))=2^{-\gamma}\prod_{k=1}^{\gamma}\Big(\y(s)-\y(s_1 +k)\Big).$$
\item[(c)] If $\gamma\notin\mathbb Z$, $\gamma_1\in-\mathbb N$ and $\gamma_2\in\mathbb N$, then we replace $s_1+\mu_2$ and $s_2+\mu_2$ by $-s_1-\gamma_1+1$ and $-s_2-\gamma_2$ respectively to get
$$D(s)=\frac{\Gamma(s-s_1)\,\Gamma(s-s_2+\gamma_2)}{\Gamma(s-s_1-\gamma_1+1)\,\Gamma(s-s_2)}=\prod_{k=0}^{-\gamma_1}\Big(s-(s_1-k)\Big)^{-1}\prod_{k=0}^{\gamma_2-1}\Big(s-(s_2-k)\Big).$$
Now we distinguish two cases according to the parity of $\gamma_1$. If $\gamma_1$ is even (and so is $\gamma_2$), then we split the first product into three parts, and second into two:
\begin{align*}
\prod_{k=0}^{-\gamma_1}\Big(s-(s_1-k)\Big)&=(s-s_+)\prod_{k=0}^{-\frac{\gamma_1}{2}-1}\Big(s-(s_1-k)\Big)\prod_{k=-\frac{\gamma_1}{2}+1}^{-\gamma_1}\Big(s-(s_1-k)\Big),\\
\prod_{k=0}^{\gamma_2-1}\Big(s-(s_2-k)\Big)&=\prod_{k=0}^{\frac{\gamma_2}{2}-1}\Big(s-(s_2-k)\Big)\prod_{k=\frac{\gamma_2}{2}}^{\gamma_2-1}\Big(s-(s_2-k)\Big).
\end{align*}
Now, observe that $s_1-k=\eta\Big(s_1-(-\gamma_1-k)\Big)$ and $s_2-k=\eta\Big(s_2-(\gamma_2-k-1)\Big)$, so that we can rewrite both products in the following way
\begin{align*}
\prod_{k=0}^{-\gamma_1}\Big(s-(s_1-k)\Big)&=(s-s_+)\prod_{k=0}^{-\frac{\gamma_1}{2}-1}\Big(s-(s_1-k)\Big)\Big(s-\eta(s_1-k)\Big),\\
\prod_{k=0}^{\gamma_2-1}\Big(s-(s_2-k)\Big)&=\prod_{k=0}^{\frac{\gamma_2}{2}-1}\Big(s-(s_2-k)\Big)\Big(s-\eta(s_2-k)\Big),
\end{align*}
and finally
$$D(s)=2^{-\gamma-\frac{1}{2}}(s-s_+)^{-1}\prod_{k=0}^{-\frac{\gamma_1}{2}-1}\Big(\y(s)-\y(s_1-k)\Big)^{-1}\prod_{k=0}^{\frac{\gamma_2}{2}-1}\Big(\y(s)-\y(s_2-k)\Big)$$
which match the claimed result in this case. If $\gamma_1$ is odd, then the exact same argument applies, but this time $s-s_+$ will be factored out of the second product. 
\item[(d)] Assume that $\gamma_1$ is positive and $\gamma_2$ is negative. If we switch $s_1$ and $s_2$ then the quantities $\gamma_1=\mu_1-2s_1$ and $\gamma_2=\mu_2+2s_2$ are replaced respectively by $\mu_1-2s_2=1-\mu_2-2s_2=1-\gamma_2$ and $\mu_2+2s_1=1-\mu_1+2s_1=1-\gamma_1$. Then the new quantities satisfies $1-\gamma_2\in \mathbb{N}$ and $1-\gamma_1\in -\mathbb{N}$. Therefore the switch $(s_1,s_2)\leftrightarrow (s_2,s_1)$ induces the switches $(\gamma_1,\gamma_2) \leftrightarrow (1-\gamma_2,1-\gamma_1)$ and $D\leftrightarrow D^{-1}$ and allows us to reduce to the preceding case.
\item[(e)] If both $\gamma_1$ and $\gamma_2$ are positive integers, then 
$$\frac{\Gamma(s-s_1)}{\Gamma(s+s_1+\mu_2)}=\frac{\Gamma(s-s_1)}{\Gamma(s-(s_1+\gamma_1-1))}=\prod_{k=1}^{\gamma_1-1}\Big(s-(s_1+k)\Big)$$
and
$$\frac{\Gamma(s+s_2+\mu_2)}{\Gamma(s-s_2)}=\frac{\Gamma(s-(s_2-\gamma_2))}{\Gamma(s-s_2)}=\prod_{k=0}^{\gamma_2-1}\Big(s-(s_2-k)\Big),$$
hence
$$D(s)=\frac{\Gamma(s-s_1)}{\Gamma(s+s_1+\mu_2)}\cdot \frac{\Gamma(s+s_2+\mu_2)}{\Gamma(s-s_2)}=\prod_{k=1}^{\gamma_1-1}\Big(s-(s_1+k)\Big)\prod_{k=0}^{\gamma_2-1}\Big(s-(s_2-k)\Big).$$
When $ \gamma_1 $ is even, the first product has an even number of factors, which we can group in pairs --- as in the preceding cases --- to express it in terms of $ \y(s) $. Although the second product then has an odd number of factors, we can isolate the central one (namely $ s - s_+ $) and pair the rest. When $ \gamma_1 $ is odd, the roles are reversed, and the same argument applies.
\end{itemize}
\end{proof}

\begin{remark}[Geometric condition for rational decoupling]
One can interpret the decoupling conditions~\eqref{eq:decouplcond} in the following way:
\begin{itemize}
\item if $\gamma\in\mathbb Z$, we can apply several times the identity $\eta\circ \zeta (s)=s+1$
to get
$$s_2=(\eta\circ \zeta)^\gamma(s_1) = s_1+\gamma .$$
This equality can be visualized on the parabola $\mathcal{S}\cap\mathbb R^2$ of Figure~\ref{fig:geomint2} (see Equation~\eqref{eq:parabola}) meaning that $s_1$ and $s_2$ are in the same orbit under the action of $\langle\eta\circ\zeta\rangle$. The parameter $\gamma$ being the number of step between the points $s_1$ and $s_2$, and
the dychotomy $\gamma \in \mathbb{N}$ and $-\gamma\in \mathbb{N}$ is the condition that describe the relative positions of the two points.
\item if $\{\gamma_1,\gamma_2\}\subset \mathbb Z$ then when $\gamma_1$ and $\gamma_2$ are both even we have 
$$s_+=(\eta\circ \zeta)^{\frac{\gamma_1}{2}}(s_1)=s_1+\frac{\gamma_1}{2}\text{ and }s_-=(\zeta\circ \eta)^{\frac{\gamma_2}{2}}(s_2)=s_2-\frac{\gamma_2}{2}$$
and when they are both odd we have 

$$s_-=(\eta\circ \zeta)^{\frac{\gamma_1-1}{2}}(s_1)=s_1+\frac{\gamma_1-1}{2}\text{ and }s_+=(\zeta\circ \eta)^{\frac{\gamma_2-1}{2}}(s_2)=s_2-\frac{\gamma_2-1}{2} .$$
This condition characterizes the relative positions of $s_1$ and $s_2$ with $s_-$ and $s_+$ and can also be visualized geometrically on the parabola of Figure~\ref{fig:geomint2}.
\end{itemize} 
\end{remark}
According to Proposition~\ref{prop:decoupling}, when $\gamma \in \mathbb{Z}$ or $\{ \gamma_1, \gamma_2 \} \subset \mathbb{Z}$, the function $D$ is a rational function of degree $2\gamma$ which behaves asymptotically when $s\to\infty$ as $s^{2\gamma}$. The following lemma establishes that this asymptotic holds in all cases.
\begin{lemma}[Asymptotic behavior of $D$]\label{lemma:asymptotic} The following asymptotic holds
\begin{equation}\label{eq:asymptotic}
D(s)\sim s^{2\gamma},\text{ when }|s|\to+\infty\text{ with }|\mathrm{arg}(s)|<\pi.
\end{equation}
\end{lemma}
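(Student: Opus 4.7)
The plan is to derive the asymptotic directly from the classical asymptotic formula for ratios of Gamma functions. Recall that for any complex constants $a$ and $b$, one has the standard consequence of Stirling's formula
\[
\frac{\Gamma(s+a)}{\Gamma(s+b)} \sim s^{a-b} \qquad \text{as } |s|\to\infty \text{ with } |\arg(s)|<\pi,
\]
valid uniformly in any sector $|\arg(s)| \leqslant \pi - \varepsilon$ (see, e.g., any standard reference on the Gamma function). Since $D(s)$ is a product of two such ratios, the idea is to apply this formula twice and then sum the resulting exponents.

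More precisely, I would first apply the asymptotic to the pair involving $s_1$ and $s_2$ in the first factors, obtaining
\[
\frac{\Gamma(s-s_1)}{\Gamma(s-s_2)} \sim s^{-s_1-(-s_2)} = s^{s_2-s_1} = s^{\gamma},
\]
using the definition $\gamma = s_2 - s_1$ from~\eqref{eq:gammai}. Then I would apply the same formula to the second pair of Gamma factors:
\[
\frac{\Gamma(s+s_2+\mu_2)}{\Gamma(s+s_1+\mu_2)} \sim s^{(s_2+\mu_2)-(s_1+\mu_2)} = s^{s_2-s_1} = s^{\gamma}.
\]
Multiplying the two equivalents yields
\[
D(s) = \frac{\Gamma(s-s_1)}{\Gamma(s-s_2)} \cdot \frac{\Gamma(s+s_2+\mu_2)}{\Gamma(s+s_1+\mu_2)} \sim s^{\gamma}\cdot s^{\gamma} = s^{2\gamma},
\]
which is the desired asymptotic.

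There is essentially no obstacle here: the only point requiring a word of caution is that the Stirling asymptotic requires $|\arg(s+c)| < \pi - \varepsilon$ for any fixed shift $c$. This is automatic for $|s|$ large enough whenever $|\arg(s)| < \pi - \varepsilon'$, because the constant translation by $c\in\{-s_1,-s_2,s_1+\mu_2,s_2+\mu_2\}$ becomes negligible as $|s|\to\infty$ and the argument of $s+c$ converges to the argument of $s$. Hence the asymptotic passes to the product, and the statement follows.
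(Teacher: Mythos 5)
Your proposal is correct and takes essentially the same approach as the paper: both split $D$ into a product of two Gamma-function ratios and apply the asymptotic $\Gamma(s+a)/\Gamma(s+b)\sim s^{a-b}$ to each factor, yielding $s^{\gamma}\cdot s^{\gamma}=s^{2\gamma}$. The only difference is presentational — the paper re-derives the ratio asymptotic from Stirling's formula, whereas you cite it as a known result and add a short remark on sector uniformity.
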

\begin{proof}
Let us consider a ratio of two Gamma functions of the form $\Gamma(z+a)/\Gamma(z+b)$. The full asymptotic for such a ratio can be found in~\cite{ratiogamma}. Here, we only derive it up to the first order term. Recall the Stirling approximation $\Gamma(z)\sim \sqrt{2\pi}z^{z-1/2}e^{-z}$ (which holds for $|\text{arg}(z)|<\pi$) and apply it to both the numerator and the denominator:
$$\frac{\Gamma(z+a)}{\Gamma(z+b)}\sim e^{b-a}\left(\frac{z+a}{z+b}\right)^{z+a-1/2}(z+b)^{a-b}.$$
To analyse the second factor, we consider its logarithm:
$$\ln\left(\frac{z+a}{z+b}\right)=\ln\left(1+\frac{a-b}{z+b}\right)=\frac{a-b}{z}+o\left(\frac{1}{z}\right),$$
hence
\begin{align*}
\left(\frac{z+a}{z+b}\right)^{z+a-1/2}&=\exp\left(a-b+\frac{(a-1/2)(a-b)}{z}+o(1)\right)\\
&=e^{a-b}\left(1+\frac{(a-1/2)(a-b)}{z}+o\left(\frac1z\right)\right).
\end{align*}
Plugging it back in the Stirling approximation of the ratio yields
\begin{equation}\label{eq:ratiogamma}
\frac{\Gamma(z+a)}{\Gamma(z+b)}\sim z^{a-b}.
\end{equation}
The function $D$ is the product of two such ratios, hence 
$$D(s):=\frac{\Gamma(s-s_1)\Gamma(s+s_2+\mu_2)}{\Gamma(s-s_2)\Gamma(s+s_1+\mu_2)}\sim s^{-s_1+s_2}s^{s_2-s_1}=s^{2(s_2-s_1)}=s^{2\gamma}.$$
\end{proof}

\begin{center}
\begin{figure}[h]
\centering
\includegraphics[width=15.3cm]{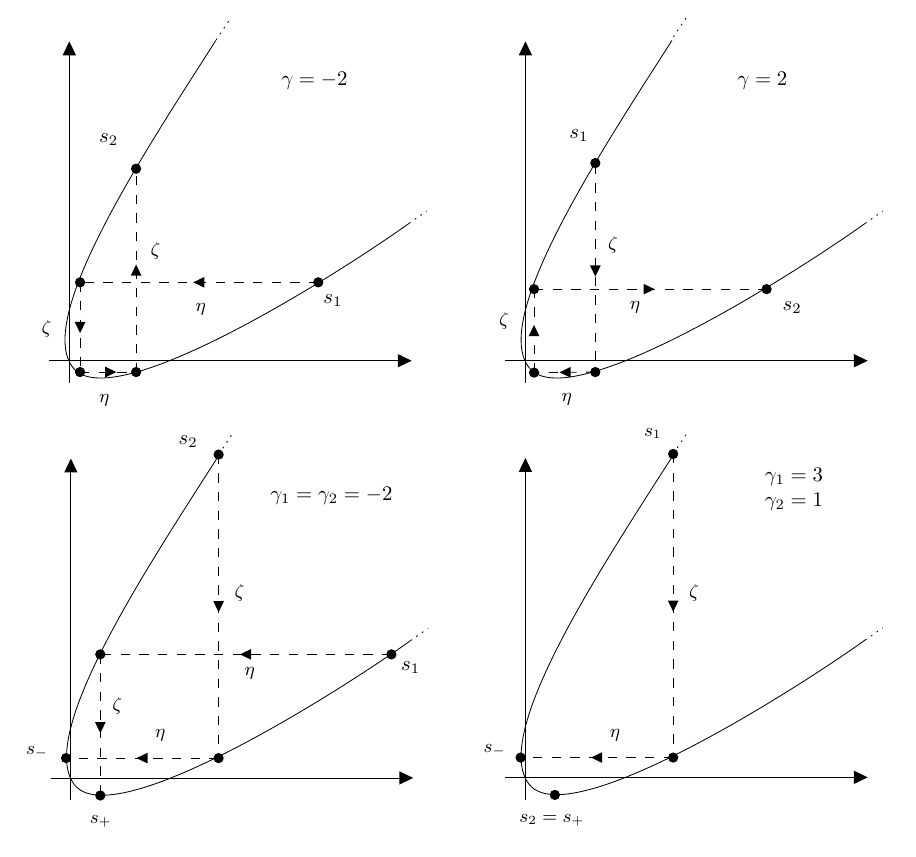}
\caption{Geometric interpretation for the rational decoupling condition~\eqref{eq:decouplcond}. Every point on the parabola $K(x,y)=0$ is labelled with its unique preimage by $(\x,\y)$, \textit{i.e.} $(\x(s),\y(s))$ is labelled by $s$.}
\label{fig:geomint2}
\end{figure}
\end{center}

\subsection{Necessary and sufficient condition for rational decoupling}
We have already seen in Proposition~\ref{prop:decoupling} that $\gamma \in \mathbb{Z}$, or $\{\gamma_1,\gamma_2\}\subset \mathbb Z$ is a sufficient condition to have a rational decoupling. In this section we show that it is also a necessary condition. A necessary and sufficient conditions for $g\in \mathbb{C}(s)$ to have nonzero rational decoupling function $d\in \mathbb{C}(s)$ can be found in the reference book on Galois Theory of Difference Equations by Put and Singer \cite[Section 2.1]{vdPS}. Let us make a quick overview. We first introduce the notion of divisor.

\begin{definition}[Divisor]\label{def:div}
The divisor $\operatorname{div}(g)$ of a rational function $g \in \mathbb{C}(s)^*$ is given by the finite formal sum
\[
\operatorname{div}(g) := \sum_a \operatorname{ord}_a(g)[a]
\]
where $\operatorname{ord}_a(f)$ denotes the order of $f$ at point~$a$, \textit{i.e.}, for $a\in\mathbb{C}$ it is the integer $n\in \mathbb{Z}$ such that $(s-a)^{-n}g(s)$ is holomorphic and nonzero at $a$, and for $a=\infty$ it is the interger $n\in \mathbb{Z}$ such that $s^{n}g(s)$ has a nonzero finite limit at $\infty$. The sum that defines the divisor is taken over all the $a \in \mathbb{C} \cup \{\infty\}$ and is in fact a finite sum since it involves a finite number of terms such that $\operatorname{ord}_a(g)\neq 0$. We call this finite set of terms the support of the divisor.
\end{definition}

\begin{example}[Typical divisor]
If $g(s)=\displaystyle \prod_{i=1}^{k}(s-a_i)^{n_i}\in \mathbb{C}(s)$ with $a_i\in \mathbb{C}$ and $n_i \in \mathbb{Z}$, then $$\mathrm{div}(g)=\sum_{i=1}^{k} n_i [a_i] -\left(\sum_j n_j \right)[\infty].$$
\end{example}

Let us now give a necessary and sufficient condition for the existence of a decoupling function. 
The following lemma is taken from Lemma 2.1 of the book by Put and Singer \cite{vdPS}. Note that the assumption $\infty$ is not in the support of $\mathrm{div}(g)$ in \cite{vdPS} is not necessary since $\lim_{s\to \infty }g(s)=1$ implies directly the latter assumption.
\begin{lemma}[Rational decoupling criterion \cite{vdPS}]\label{lem1}
Let $g\in \mathbb{C}(s)^{*}$, then there exists a rational decoupling function of $g$ if and only if the following properties hold:
\begin{enumerate}
\item The limit of $g(s)$ when $s\to \infty$ is $1$.
\item For every $\mathbb{Z}$-orbit $E$, \textit{i.e.} every subset of $\mathbb{C}$ of the form $e + \mathbb{Z}$ with $e\in \mathbb{C}$, one has $$\sum_{a\in E} \mathrm{ord}_{a}(g)=0.$$
\end{enumerate}
\end{lemma}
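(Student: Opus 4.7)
The plan is to prove the two implications separately, both by directly exploiting the formula $\mathrm{ord}_a(d(\cdot+1))=\mathrm{ord}_{a+1}(d)$ that relates the divisor of $d$ to that of the shifted function.

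For the necessity, suppose that $d\in\mathbb{C}(s)^{*}$ satisfies $d(s+1)/d(s)=g(s)$. First, since $d$ is a nonzero rational function, it is asymptotic at infinity to $cs^{n}$ for some $c\in\mathbb{C}^{*}$ and $n\in\mathbb{Z}$, so that $d(s+1)/d(s)\sim (1+1/s)^{n}\to 1$, which gives condition~(1). For condition~(2), I would observe that if $d(s)=(s-b)^{m}h(s)$ with $h$ holomorphic and nonzero at $b$, then $d(s+1)=(s-(b-1))^{m}h(s+1)$ has the same order at $b-1$; hence $\mathrm{ord}_{a}(g)=\mathrm{ord}_{a+1}(d)-\mathrm{ord}_{a}(d)$. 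Summing this identity over a $\mathbb{Z}$-orbit $E=e+\mathbb{Z}$ gives a telescoping sum which vanishes, because only finitely many terms $\mathrm{ord}_{a}(d)$ are nonzero.

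For the sufficiency, I would construct $d$ orbit by orbit. For each $\mathbb{Z}$-orbit $E=e+\mathbb{Z}$, set $m_{k}:=\mathrm{ord}_{e+k}(g)\in\mathbb{Z}$; by condition~(2), $\sum_{k\in\mathbb{Z}}m_{k}=0$, and only finitely many $m_{k}$ are nonzero. Define the sequence $n_{k}:=\sum_{j<k}m_{j}$, which has finite support by the vanishing total sum, and set
\[
d_{E}(s):=\prod_{k\in\mathbb{Z}}(s-(e+k))^{n_{k}},
\]
which is a finite product and hence a nonzero rational function. A direct reindexing shows that
\[
\frac{d_{E}(s+1)}{d_{E}(s)}=\prod_{k\in\mathbb{Z}}(s-(e+k))^{n_{k+1}-n_{k}}=\prod_{k\in\mathbb{Z}}(s-(e+k))^{m_{k}}.
\]
Taking $d:=\prod_{E}d_{E}$ over the (finitely many) orbits supporting $\mathrm{div}(g)$, the rational function $d(s+1)/d(s)$ then has the same divisor at every finite point as $g$, so these two functions differ by a multiplicative constant $c'\in\mathbb{C}^{*}$. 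Finally, since $d(s+1)/d(s)\to 1$ by the same asymptotic argument as in necessity, and $g(s)\to 1$ by condition~(1), we conclude $c'=1$, whence $d(s+1)/d(s)=g(s)$.

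The bookkeeping around infinity is the only real subtlety: one must be careful that condition~(1) is exactly what is needed to avoid a spurious multiplicative constant, and that the constructed $d$ automatically satisfies $d(s+1)/d(s)\to 1$ regardless of whether $\mathrm{ord}_{\infty}(d)$ is zero. The rest of the argument is essentially the telescoping identity $\mathrm{ord}_{a}(g)=\mathrm{ord}_{a+1}(d)-\mathrm{ord}_{a}(d)$ read in two directions.
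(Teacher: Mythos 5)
Your proof is correct and self-contained. Note that the paper itself does not prove this lemma: it simply cites it as Lemma~2.1 of van der Put and Singer~\cite{vdPS}, adding only the remark that condition~(1) makes the hypothesis ``$\infty$ not in the support of $\operatorname{div}(g)$'' from that reference automatic. So there is no proof in the paper to compare against; what you have written is a standalone verification of the cited result.

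Both directions of your argument hinge, as you observe, on the identity $\operatorname{ord}_{a}\bigl(d(\cdot+1)\bigr)=\operatorname{ord}_{a+1}(d)$, which gives $\operatorname{ord}_{a}(g)=\operatorname{ord}_{a+1}(d)-\operatorname{ord}_{a}(d)$ and hence the telescoping vanishing over each $\mathbb{Z}$-orbit. For sufficiency, the orbit-by-orbit construction $d_{E}(s)=\prod_{k}(s-(e+k))^{n_{k}}$ with $n_{k}=\sum_{j<k}m_{j}$ is the right idea; since $\sum_{j}m_{j}=0$ the sequence $(n_{k})$ has finite support (it is zero both for $k$ below the support of $(m_k)$ and for $k$ above it), so each $d_{E}$ is indeed a genuine rational function. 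The reindexing then yields $d_{E}(s+1)/d_{E}(s)=\prod_{k}(s-(e+k))^{m_{k}}$, and multiplying over the finitely many relevant orbits reproduces the finite divisor of $g$. Your observation that the constructed $d$ automatically satisfies $d(s+1)/d(s)\to 1$ at infinity (independently of $\operatorname{ord}_{\infty}(d)$) is exactly what is needed to identify the remaining multiplicative constant with $1$ via condition~(1); this is also precisely the role played by the paper's remark that condition~(1) subsumes the behaviour of $\operatorname{div}(g)$ at infinity.
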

For example, $g(s)=\frac{s-1}{s-1/2}$ admits no decoupling function since it satisfies the first property but not the second.
Let us now apply Lemma \ref{lem1} to our problem.

\begin{proposition}[Necessary and sufficient rational decoupling condition of $G$]\label{prop1}
The function $G$ admits a rational decoupling function if and only if 
$$\gamma \in \mathbb{Z}
\text{ or }
\{\gamma_1,\gamma_2\}\subset \mathbb Z.$$
\end{proposition}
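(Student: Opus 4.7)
The plan is to invoke Lemma~\ref{lem1} applied to $g = G$. The sufficient direction already follows from the explicit decouplings given in Proposition~\ref{prop:decoupling}, so only the necessary direction requires new work. Condition~(1) of Lemma~\ref{lem1} is immediate since
$$G(s) = \frac{(s-s_1)(s+s_2+\mu_2)}{(s-s_2)(s+s_1+\mu_2)}$$
is a quotient of two monic polynomials of the same degree, so $\lim_{s\to\infty} G(s) = 1$.

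Next I would identify $\mathrm{div}(G)$: the zeros are $s_1$ and $\zeta s_2 = -s_2-\mu_2$, the poles are $s_2$ and $\zeta s_1 = -s_1-\mu_2$, each with multiplicity $\pm 1$ generically; Lemma~\ref{lemma:s1s2}(a) ensures $s_1 \neq s_2$. Condition~(2) of Lemma~\ref{lem1} then demands that in each $\mathbb{Z}$-orbit the multiplicities sum to zero, i.e.\ each orbit meeting the support of $\mathrm{div}(G)$ must contain as many zeros as poles.

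The heart of the proof is a short case analysis on how these four points distribute among $\mathbb{Z}$-orbits. Since a nonzero singleton orbit cannot balance, either all four points lie in a single orbit, or they split into exactly two orbits each carrying one zero and one pole. The three possible zero-pole pairings modulo $\mathbb{Z}$ are:
\begin{enumerate}
\item $s_1 \equiv s_2 \pmod{\mathbb{Z}}$, which automatically gives $\zeta s_2 \equiv \zeta s_1 \pmod{\mathbb{Z}}$ and is equivalent to $\gamma = s_2-s_1 \in \mathbb{Z}$;
\item $s_1 \equiv \zeta s_1 \pmod{\mathbb{Z}}$ and $\zeta s_2 \equiv s_2 \pmod{\mathbb{Z}}$; using $\mu_2 = 1-\mu_1$ one computes $\zeta s_1 - s_1 = \gamma_1 - 1$ and $s_2 - \zeta s_2 = \gamma_2$, so this reduces to $\{\gamma_1,\gamma_2\}\subset\mathbb{Z}$;
\item $s_1 \equiv \zeta s_2 \pmod{\mathbb{Z}}$ paired with $s_2 \equiv \zeta s_1 \pmod{\mathbb{Z}}$, which places both zeros in one orbit and both poles in another, yielding orbit sums $\pm 2$ and therefore violating condition~(2) unless it collapses into case~(i) or~(ii).
\end{enumerate}
The single-orbit case forces $s_1 \equiv s_2 \pmod{\mathbb{Z}}$ as a sub-relation and so is subsumed under~(i).

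I expect the main obstacle to be keeping this case analysis airtight, especially handling the degenerate coincidences among the four points — for instance $s_1 = \zeta s_2$ (equivalent to $s_1+s_2+\mu_2 = 0$, which simultaneously forces $s_2 = \zeta s_1$ and reduces $G$ to $(s-s_1)^2/(s-s_2)^2$): here the orbit-sum criterion still forces $\gamma \in \mathbb{Z}$ directly. Combining the above case analysis with Proposition~\ref{prop:decoupling} then yields the claimed equivalence.
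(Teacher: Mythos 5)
Your proposal is correct and follows essentially the same approach as the paper: both verify condition (1) of Lemma~\ref{lem1} trivially and then analyze how the four points of $\operatorname{div}(G)$ distribute among $\mathbb{Z}$-orbits, arriving at the two admissible pairings $s_1\sim s_2$ (giving $\gamma\in\mathbb{Z}$) and $s_1\sim\zeta s_1$, $s_2\sim\zeta s_2$ (giving $\{\gamma_1,\gamma_2\}\subset\mathbb{Z}$). Your treatment of the coincidence $s_1=\zeta s_2$ (which forces $s_2=\zeta s_1$) is a useful explicit check that the paper only gestures at.
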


\begin{proof} We recall that in~\eqref{eq:Gdef} we defined $G$ as
$$G(s):=\frac{k_1(s)k_2(\zeta s)}{k_2(s)k_1(\zeta s)}=\frac{(s-s_1)(s+s_2+\mu_2)}{(s-s_2)(s+s_1+\mu_2)}.
$$
The divisor of $G$ is then
$$\mathrm{div}(G)=[s_1]+[-s_2-\mu_2]-[s_2]-[-s_1-\mu_2].$$ 
Note that we do not claim that the sum involves four terms. For instance, if $s_1=-s_2-\mu_2$ then we have a double zero and the divisor is  $\mathrm{div}(G)=2[s_1]-[s_2]-[-s_1-\mu_2]$.
The first condition in Lemma \ref{lem1} is automatically satisfied. We just have to check the second one. For $a,b\in \mathbb{C}$, we will say that $a\sim b$ if and only if $b-a\in \mathbb{Z}$. Let $\mathcal{Z}=\{s_1,-s_2-\mu_2 \}$ be the set of zeros of $G$ and $\mathcal{P}=\{s_2,-s_1-\mu_2 \}$ be the set of its poles. If Condition 2 occurs, for all $x\in \mathcal{Z}$ there exists $y\in \mathcal{Z}$ with $x\sim y$ and for all   $y\in \mathcal{Z}$ there exists $x\in \mathcal{P}$ such that $x\sim y$. Since $\sim$ is an equivalent relation we find that if condition 2 occurs then there is a bijection $h$ from  $\mathcal{Z}$ to $\mathcal{P}$ with for all $x\in \mathcal{Z}$, $x\sim h(x)$. Conversely, the existence of such bijection implies that the second condition holds. Then, a rational decoupling function exists if and only  if one of the two facts holds 
\begin{itemize}
\item[(a)] $s_1\sim s_2$ and $-s_1-\mu_2\sim -s_2-\mu_2$.
\item[(b)] $s_1 \sim -s_1-\mu_2 $ and $s_2\sim -s_2-\mu_2$.
\end{itemize} 
We recall that in~\eqref{eq:gammai} we defined $$\gamma=s_2-s_1,\quad \gamma_1=\mu_1-2s_1,
\quad\gamma_2=\mu_2+2s_2.$$ 
We then find 
that 
$$\text{(a)} \Leftrightarrow s_1\sim s_2 \Leftrightarrow -s_1-\mu_2\sim -s_2-\mu_2 \Leftrightarrow \gamma \in \mathbb{Z}$$ 
and remembering that $\mu_1+\mu_2=1$ we find that
$$
\text{(b)} \Leftrightarrow
\begin{cases}
s_1 \sim -s_1-\mu_2 \Leftrightarrow s_1 \sim -s_1+\mu_1 -1 \Leftrightarrow  \gamma_1 \in \mathbb{Z},
\\
s_2\sim -s_2-\mu_2 \Leftrightarrow \gamma_2 \in \mathbb{Z}.
\end{cases}
$$
Then, condition 2 occurs if and only if $\gamma \in \mathbb{Z}$, or $\{\gamma_1,\gamma_2\}\subset \mathbb Z$.
\end{proof}

\section{Tutte's invariants and conformal gluing function}\label{subsec:Tutte}

\subsection{Type I invariant}
\label{subsec:per}
The core concept in Tutte’s method is the notion of invariant, defined (in this context) as follow. 
\begin{definition}[Type I invariant]\label{def:1per} A function $I_1$ that is meromorphic on $\mathbb{C}$ and satisfies the following invariance
\begin{equation}\label{eq:inv1per}
I_1(s+1)=I_1(s)\text{ for all } s\in \mathbb{C}
\end{equation}
will be called \emph{Type I invariant} or \emph{1-periodic}.
\end{definition}
Recall that we defined in~\eqref{eq2} the lines $d_1$ and $d_2$ by
$$
d_{1}=\mu_1+i\mathbb{R}\quad\text{and}\quad d_{2}=-\mu_2+i\mathbb{R}
$$
and in~\eqref{eq3} we defined the strip $$\mathfrak{B}_1=\{s\in\mathbb C : -\mu_2<\mathfrak{Re}(s)<\mu_1\}$$ which is bounded by $d_1$ and $d_2$. Note that $d_1=d_2+1$ since $\mu_1+\mu_2=1$. We say that $\mathfrak{B}_1$ is the fundamental domain of a Type I invariant. Let us introduce the function
\begin{equation}\label{eq:w1}
\wb:\left\{\begin{array}{rcl}
  \mathbb C & \longrightarrow & \mathbb C\\
  s & \longmapsto & \tan\big(\pi(s+\mu_2-\frac{1}{2})\big) .
\end{array}\right.
\end{equation}
The following lemma establishes that $\wb$ is an $1$-periodic function and is conformal if we restrict it to the fundamental domain, see Figure~\ref{fig:w}. 

\begin{lemma}[Periodic conformal gluing function]\label{lemma:w1} The function $\wb$ is a conformal gluing function in the following sense:
\begin{enumerate}
    \item $\wb$ establishes a biholomorphism between $\mathfrak{B}_1$ and the cut plane $\mathbb C\setminus \Big\{(-i\infty,-i]\cup [i,+i\infty)\Big\}$ whose inverse is given by 
    \begin{equation}\label{eq:w1-1}
    \wb^{-1}(\omega):=\frac{1}{2}-\mu_2+\frac{1}{\pi}\arctan(\omega)=\frac{1}{2}-\mu_2+\frac{1}{2i\pi}\ln\left( \frac{1+is}{1-is} \right),
    \end{equation}
    \item $\wb$ is a Type I invariant and
    \begin{equation}\label{eq:w1d1d2}
    \wb(d_1)=\wb(d_2)=(-i\infty,-i]\cup [i,+i\infty).
    \end{equation}
\end{enumerate}
\end{lemma}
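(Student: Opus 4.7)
The plan is to reduce everything to the classical behavior of the complex tangent on a vertical strip of width $\pi$. Introduce the affine map $\psi(s):=\pi(s+\mu_2-\tfrac12)$, so that $\wb=\tan\circ\psi$. Since $\mu_1+\mu_2=1$, the strip $\mathfrak{B}_1=\{s : -\mu_2<\mathfrak{Re}(s)<\mu_1\}$ has width $1$ and $\psi$ sends it affinely and biholomorphically onto the standard strip $S:=\{z : -\pi/2<\mathfrak{Re}(z)<\pi/2\}$, with inverse $\psi^{-1}(z)=z/\pi+\tfrac12-\mu_2$.

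For item~(1), I would invoke the classical fact that $\tan$ is a biholomorphism from $S$ onto $\mathbb{C}\setminus\bigl((-i\infty,-i]\cup[i,+i\infty)\bigr)$, whose inverse is the principal branch of $\arctan$, namely $\arctan(\omega)=\frac{1}{2i}\ln\frac{1+i\omega}{1-i\omega}$. If needed, this can be checked by writing $\tan(z)=-i(e^{2iz}-1)/(e^{2iz}+1)$ and decomposing it as $z\mapsto e^{2iz}$ (a biholomorphism from $S$ onto $\mathbb{C}\setminus(-\infty,0]$) followed by the M\"obius transformation $w\mapsto -i(w-1)/(w+1)$ (a biholomorphism from $\mathbb{C}\setminus(-\infty,0]$ onto the announced cut plane). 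Composing $\arctan$ with $\psi^{-1}$ yields precisely the formula~\eqref{eq:w1-1}.

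For item~(2), the $1$-periodicity of $\wb$ is immediate from the $\pi$-periodicity of $\tan$: $\wb(s+1)=\tan(\psi(s)+\pi)=\tan(\psi(s))=\wb(s)$, so $\wb$ is a Type I invariant. For the image of the boundary lines, observe that $d_1=d_2+1$ (using $\mu_1+\mu_2=1$), so periodicity already gives $\wb(d_1)=\wb(d_2)$, and it is enough to evaluate $\wb$ on $d_2$. Parametrizing $s=-\mu_2+it$ with $t\in\mathbb{R}$ yields $\psi(s)=-\pi/2+i\pi t$, and a direct computation using $\tan(-\pi/2+w)=-\cot(w)$ together with $\cot(i\pi t)=-i\coth(\pi t)$ gives $\wb(s)=i\coth(\pi t)$. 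As $t$ varies over $\mathbb{R}^{*}$, the value $i\coth(\pi t)$ sweeps the two open rays $(-i\infty,-i)\cup(i,+i\infty)$; the pole at $t=0$ provides the extremities at $\pm i\infty$ and the limits $t\to\pm\infty$ accumulate at $\pm i$, giving the closed cuts of the statement.

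I expect no real obstacle: the whole lemma amounts to bookkeeping around a classical conformal representation of $\tan$, the only mild care required being the identification of the endpoints $\pm i$ of the cuts with the limits $t\to\pm\infty$ along the boundary lines $d_1$ and $d_2$.
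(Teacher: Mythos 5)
Your proposal is correct and takes essentially the same route as the paper: appeal to the classical conformal representation of $\tan$ on the standard vertical strip, pre-compose with the affine map $\psi$, derive $1$-periodicity from the $\pi$-periodicity of $\tan$, and compute $\wb$ on $d_2$ (then use $d_1=d_2+1$) via $\tan(-\pi/2+i\pi t)=i\coth(\pi t)$. The only addition is the optional sketch of the classical fact through $e^{2iz}$ and a M\"obius map, which the paper simply cites as well known.
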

\begin{proof} It is a well-known result that the tangent function establishes a biholomorphism between the strip $\left(-\frac{\pi}{2},\frac{\pi}{2}\right)+i\mathbb R$ and the cut plane $\mathbb C\setminus \Big\{(-i\infty,-i]\cup [i,+i\infty)\Big\}$. The inverse is the principal branch of the complex $\arctan$ defined as
$$\arctan(s)=\frac{1}{2i}\ln\left( \frac{1+is}{1-is} \right),$$
where $\ln$ denotes the principal branch of the complex logarithm.
A simple change of variables gives the formula $\wb^{-1}$.
The function $\wb$, which is also defined as a meromorphic function on the whole complex plane, is $1$-periodic
since the tangent function is $\pi$-periodic.

Let us now prove~\eqref{eq:w1d1d2}. Recall from the definition of the tangent function that 
$$\tan\left( \pm\frac{\pi}{2} + iy \right) = i \, \text{coth}(y) \in (-i\infty,-i]\cup [i,+i\infty).$$
Now observe that $d_1 = d_2 +1$ so that their images under $\wb$ coincide and we have:
$$\wb(d_1) = \wb(d_2) = \tan \left(\pm \frac{\pi}{2}+i\mathbb{R}\right)= i\coth \mathbb{R} =(-i\infty,-i]\cup [i,+i\infty).$$
\end{proof}

\begin{figure}
\centering 
\includegraphics[width=13cm]{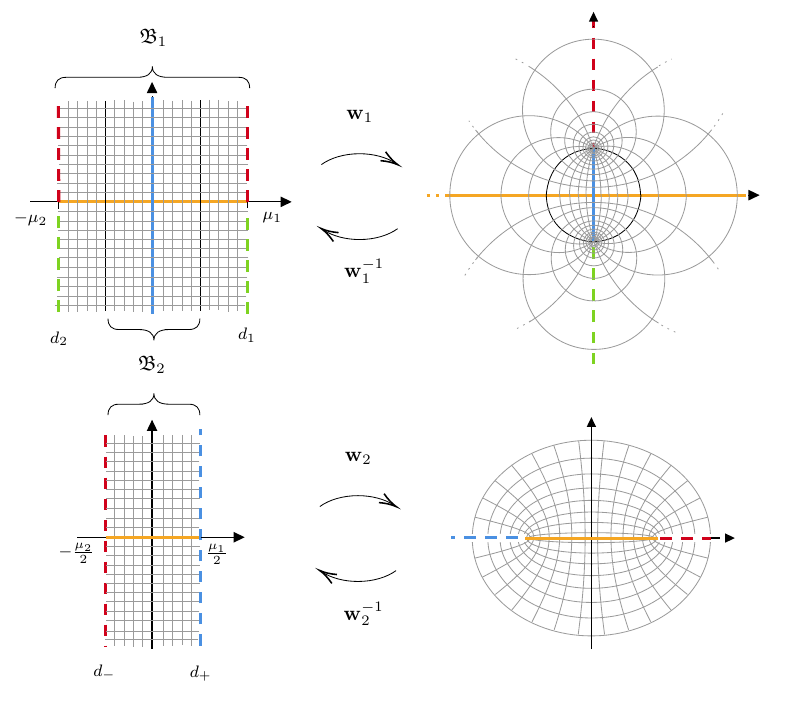}
\caption{Domains and codomains of the conformal gluing functions $\boldsymbol{\mathrm{w}}_1$ and $\boldsymbol{\mathrm{w}}_2$. Subsets are shown in matching colors with their images.}\label{fig:w}
\end{figure}

We now state the first \emph{invariant lemma} which is the main result of this section and which is a key stone of the proof of the main theorems. This lemma states that under some mild conditions on its growth and its poles, a Type I invariant is a rational fraction of $\wb$. This is why the invariant $\wb$ is sometimes called in the theory of Tutte's invariant, a \emph{canonical invariant}.

\begin{proposition}[Type I invariant lemma]\label{lemma:periodic}\
\begin{enumerate}
\item
If $I_1$ is a Type I invariant satisfying the following properties:
\begin{itemize}
\item $I_1$ is holomorphic on a neighborhood of $\overline{\mathfrak{B}}_1$,
\item $I_1$ grows at most polynomially at infinity in $ \overline{\mathfrak{B}}_1$ (i.e when $|z|\to\infty$ and $z\in \overline{\mathfrak{B}}_1$, $|I_1|$ is bounded by the modulus of a polynomial),
\end{itemize}
then $I_1$ is constant. 
\item More generally, if $I_1$ is an $1$-invariant satisfying the following properties:
\begin{itemize}
\item $I_1$ has finitely many poles in $\overline{\mathfrak{B}}_1$ which all belongs to $\mathfrak{B}_1\cup\{-\mu_2,\mu_1 \}$,
\item $I_1$ grows at most polynomially at infinity in $ \overline{\mathfrak{B}}_1$,
\end{itemize} 
then $I_1$ is a rational fraction in $\boldsymbol{\mathrm{w}}_1$, \textit{i.e.} there exists $F_1\in \mathbb C(X)$ such that $I_1=F_1(\boldsymbol{\mathrm{w}}_1)$. 

More precisely, let $p_1,\dots,p_n\in\mathfrak{B}_1$ the poles of $I$ with multiplicities $m_1,\dots,m_n$ and let $m_{\mu}$ the order of the (possible) pole at $-\mu_2$ and $\mu_1$ (note that the order at these two points is always the same since the function is $1$-periodic and $\mu_1+\mu_2=1$). By convention the multiplicity is 0 when there is no pole. Then there exist constants $c_{i,j}$ and $c_{\mu,j}$ such that
\begin{equation}
I_1=F_1\circ \wb=c+\sum_{i=1}^{n}\sum_{j=1}^{m_i}\frac{c_{i,j}}{\left(\wb-\wb(p_i)\right)^j} +\sum_{j=1}^{m_{\mu}}c_{\mu,j}\wb^j.
\label{eq:formuleF1}
\end{equation}
\end{enumerate}
\end{proposition}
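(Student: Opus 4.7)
The plan is to exploit the conformal gluing property of $\wb$ to reduce the statement to the classical fact that any meromorphic function on $\mathbb{P}^1$ is rational. Writing $u := e^{2\pi i s}$, any $1$-periodic meromorphic function $I_1$ on $\mathbb{C}$ descends to a meromorphic function $g$ on $\mathbb{C}^*$ via $I_1(s) = g(u)$; and from the identity $\tan z = \tfrac{1}{i}\cdot \tfrac{e^{2iz}-1}{e^{2iz}+1}$ a short computation gives
\[
\wb(s) \;=\; \phi(u), \qquad \text{where } \phi(u) := \frac{1}{i}\cdot\frac{\alpha u + 1}{\alpha u - 1}, \quad \alpha := e^{2\pi i \mu_2}.
\]
Since $\phi$ is a Möbius transformation (hence a biholomorphism of $\mathbb{P}^1$) sending $u_\mu := 1/\alpha = e^{2\pi i \mu_1}$ to $\omega=\infty$, it suffices to show that $g$ extends to a rational function on $\mathbb{P}^1$: then $I_1 = g\circ \phi^{-1}\circ \wb$ will automatically be rational in $\wb$ by composition, and partial fractions will yield~\eqref{eq:formuleF1}.

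The next step is to verify that $g$ has removable singularities at the two missing points $u=0$ and $u=\infty$ of $\mathbb{C}^\ast$. When $s = a + ib \in \overline{\mathfrak{B}}_1$ tends to $+i\infty$ (resp.\ $-i\infty$), $|u| = e^{-2\pi b}$ tends to $0$ (resp.\ $\infty$), and $|s| \sim |b| \sim |\log|u||/(2\pi)$. The polynomial growth hypothesis on $I_1$ at infinity in $\overline{\mathfrak{B}}_1$ therefore yields a bound of the form $|g(u)| \leqslant C(1+|\log|u||)^N$ near both $u=0$ and $u=\infty$. In particular $u\cdot g(u) \to 0$ as $u\to 0$ and $g(u)/u \to 0$ as $u\to \infty$, so Riemann's removable singularity theorem shows that $g$ extends holomorphically to $\{0,\infty\}$. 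Combined with the hypothesis that $I_1$ has finitely many poles in $\overline{\mathfrak B}_1$, this makes $g$ meromorphic on $\mathbb{P}^1$ with finitely many poles, hence rational.

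From this, part~1 is immediate: if $I_1$ is holomorphic in a neighborhood of $\overline{\mathfrak B}_1$, then $g$ has no poles at all on $\mathbb{P}^1$, so $g$ is constant and so is $I_1$. For part~2, the poles of $g$ on $\mathbb C^\ast$ are located at $u_i := e^{2\pi i p_i}$ (with the same multiplicity $m_i$ as the pole of $I_1$ at $p_i$) and at $u_\mu$ (with multiplicity $m_\mu$, coming from the identified boundary pole at $\mu_1 \sim -\mu_2$). Since Möbius transformations preserve pole orders, the rational function $F_1 := g\circ \phi^{-1}$ on $\mathbb{P}^1_\omega$ has poles of order $m_i$ at $\wb(p_i)\in\mathbb{C}$ and a single pole of order $m_\mu$ at $\omega=\infty$ (and no other poles). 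The standard partial fraction decomposition of such a rational function is precisely the right-hand side of~\eqref{eq:formuleF1}, up to choosing the constants $c$, $c_{i,j}$, and $c_{\mu,j}$.

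The main technical hurdle is the removable singularity step: one must make sure that the polynomial growth in $|s|$ in the unbounded strip really translates into a logarithmic growth in $|u|$ \emph{uniformly} on neighborhoods of $0$ and $\infty$, so that Riemann's theorem applies. Once this is established, the rest is algebra (and the alternative approach, defining $F_1 := I_1 \circ \wb^{-1}$ on the cut plane, would require in addition the verification that $F_1$ extends holomorphically across the two cuts, which is automatic in the $u$-picture since $\mathbb C^\ast = \mathbb C/\mathbb Z$ already quotients out the $1$-periodicity identifying $d_1$ with $d_2$).
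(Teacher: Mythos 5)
Your proof is correct and takes a genuinely different route from the paper's. The paper defines $g := I_1\circ\wb^{-1}$ on the slit plane $\mathbb{C}\setminus\{(-i\infty,-i]\cup[i,+i\infty)\}$ and then spends most of the argument showing that $g$ extends meromorphically across the cut: the two one-sided limits are matched via $1$-periodicity, a Morera-type theorem yields holomorphy, the endpoints $\pm i$ are handled by the growth hypothesis, and one concludes with (extended) Liouville. You instead pass to the quotient $\mathbb{C}/\mathbb{Z}\cong\mathbb{C}^*$ at the outset via $u=e^{2\pi i s}$, so the identification of $d_1$ with $d_2$ is built into the geometry and the cut-plane analysis disappears; what remains is Riemann's removable singularity theorem at $u=0,\infty$ (which applies because polynomial growth in $|s|$ becomes logarithmic growth in $|u|$ on full punctured neighborhoods, since $\overline{\mathfrak{B}}_1$ has width exactly $1$ and thus maps onto all of $\mathbb{C}^*$) plus the classical fact that a meromorphic function on $\mathbb{P}^1$ with finitely many poles is rational. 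The Möbius identity $\wb(s)=\phi(e^{2\pi i s})$ then transfers rationality in $u$ to rationality in $\wb$ with no need for the explicit formula for $\wb^{-1}$, and partial fractions give~\eqref{eq:formuleF1} since $\phi$ sends $u_\mu=e^{2\pi i\mu_1}$ to $\infty$ while the remaining $u_i$ avoid $u_\mu$ (no two points of $\mathfrak{B}_1\cup\{\mu_1\}$ differ by a nonzero integer). Your route is shorter and more structural; the paper's cut-plane route has the merit of running in visible parallel with the proof of the Type~II invariant lemma (Proposition~\ref{lemma:inv}), where the quotient-by-translation simplification is less immediate because $\w$ is invariant under the full group $\langle\eta,\zeta\rangle$ rather than under a single translation.
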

\begin{proof}
\begin{enumerate}
\item Assume that $I_1$ is holomorphic on a neighborhood of $\overline{\mathfrak{B}}_1$, and consider $g:=I_1\circ \wb^{-1}$ which by Lemma~\ref{lemma:w1} is \textit{a priori} defined and holomorphic on the slit plane 
\begin{equation}
\wb(\mathfrak{B}_1)=\mathbb C\setminus \Big\{(-i\infty,-i]\cup [i,+i\infty)\Big\}.
\end{equation}
\begin{itemize}
\item
First, we show that $g$ is holomorphic on $\mathbb{C}$. By composition of holomorphic functions, $g$ is holomorphic on $\wb(\mathfrak{B}_1)$. Let us show that $g$ can be extended by continuity on the cut $\wb(\partial \mathfrak{B}_1)=(-i\infty,-i]\cup[i,+i\infty)$.

Let $\omega_0\in (-i\infty,-i)\cup(i,+i\infty)$. We wish to show $g$ extends continuously at $\omega_0$.  For $\omega$ near $\omega_0$ but off the cut, write
\[
s=\wb^{-1}(\omega)\in\mathfrak{B}_1.
\]
As $\omega\to\omega_0$, there are two ways to approach the cut:
from the left  ($\mathfrak{Re}(\omega)<0$), in which case the principal inverse $\wb^{-1}$ gives a limit
\[
s_0 \;=\;\lim_{\substack{\omega\to\omega_0\\\mathfrak{Re}(\omega)<0}} \wb^{-1}(\omega)\,\in\,d_2,
\]
from the right ($\mathfrak{Re}(\omega)>0$), in which case the analytic continuation of $\wb^{-1}$ across the branch cut differs by $+1$, namely
\[
s_0+1 \;=\;\lim_{\substack{\omega\to\omega_0\\\mathfrak{Re}(\omega)>0}} \wb^{-1}(\omega)\in d_1 ,
\]
because we saw that $\wb(d_1)=\wb(d_2)$. Hence
\[
\lim_{\substack{\omega\to\omega_0\\\mathfrak{Re}(\omega)<0}}
 I_1\bigl(\wb^{-1}(\omega)\bigr)
=
I_1(s_0)
=
I_1(s_0+1)
=
\lim_{\substack{\omega\to\omega_0\\\mathfrak{Re}(\omega)>0}}
 I_1\bigl(\wb^{-1}(\omega)\bigr),
\]
where we have used $1$-periodicity of $I_1$. This shows the two one-sided limits coincide:
$$\lim_{\substack{\omega\to\omega_0\\\mathfrak{Re}(\omega)<0}} g(\omega)
=
\lim_{\substack{\omega\to\omega_0\\\mathfrak{Re}(\omega)<0}}
 I_1\bigl(\wb^{-1}(\omega)\bigr)=
\lim_{\substack{\omega\to\omega_0\\\mathfrak{Re}(\omega)>0}}
 I_1\bigl(\wb^{-1}(\omega)\bigr)
 =
\lim_{\substack{\omega\to\omega_0\\\mathfrak{Re}(\omega)>0}}\! g(\omega)$$
 so $g$ extends continuously at~$\omega_0\in (-i\infty,-i)\cup(i,+i\infty)$. It remains to check that $g(\omega)=I_1\bigl(\wb^{-1}(\omega)\bigr)$ extends continuously at the endpoints of the cut, $\pm i$.  By hypothesis, $I_1(s)$ grows at most polynomially as $|s|\to\infty$ in $\overline{\mathfrak B}_1$, and then $I_1(s)=o(s^n)$ for some $n>0$.
Near $\omega=i$, the local expansion of the inverse map gives 
\[
\wb^{-1}(\omega)
\; \underset{\omega\to i}{=}\;
\frac{1}{2\pi i}\,\ln(\omega-i)\;+\;O(1)
 =o\bigl(|\omega-i|^{-\alpha}\bigr)
\]
for every $\alpha>0$.
Hence as $\omega\to i$,
\[
g(\omega)
=\;
I_1\!\Bigl(\wb^{-1}(\omega)\Bigr)
\;=\;
o\bigl(|\omega-i|^{-n\alpha}\bigr)
\]
for every $\alpha>0$.
Thus, taking $\alpha$ small enough, we see that $(\omega-i)g(\omega)\to 0$ as $\omega\to i$ and $i$ is therefore a removable singularity by Riemann's theorem. So $g$ extends continuously (in fact holomorphically) at $\omega=i$. The same argument applies at $\omega=-i$.

In addition to being holomorphic on the slit plane, we proved that $g$ is continuous on the cut $(-i\infty,-i]\cup[i,+i\infty)$. It is therefore holomorphic on the whole complex plane (for more details, see Theorem 16.8 in~\cite{rudin}, whose proof relies on Morera's theorem). 

\item We now show that $g$ is bounded at infinity. For $\omega$ in the slit plane, when $|\omega|\to+\infty$, 
$\arctan(\omega)\rightarrow\pm\frac{\pi}{2}$ depending on the sign of $\mathfrak{Re}(\omega)$. Using the expression for $\wb^{-1}$ given in Lemma~\ref{lemma:w1} we obtain when $|\omega|\to+\infty$ in the slit plane
$$\wb^{-1}(\omega)\longrightarrow 
\begin{cases}
\mu_1 &\text{if } \mathfrak{Re} (\omega)>0
\\
-\mu_2 &\text{if } \mathfrak{Re} (\omega)<0
\end{cases} 
 $$
Since $g$ is $1$-periodic and $\mu_1+\mu_2=1$ we deduce that when $|\omega|\to+\infty$ we have
$$
I_1(\wb^{-1}(\omega)) \longrightarrow I_1(-\mu_2)=I_1(\mu_1)
$$
and $g$ is then bounded at infinity.
\end{itemize}
We have shown that $g$ is holomorphic on $\mathbb{C}$ and is bounded at infinity which implies, by Liouville's theorem, that $g$ is constant. 
We directly deduce that $I_1=g\circ \wb$ is constant.

\item We now assume that $I_1$ is meromorphic on a neighborhood of $\overline{\mathfrak{B}}_1$, with finitely many poles $p_1,\dots,p_n\in\mathfrak{B}_1$ (with multiplicities $m_1,\dots,m_n$) and possibly additional poles at $-\mu_2$ and $\mu_1$ of order $m_{\mu}$. By convention we take $m_\mu=0$ if there is no pole.

\begin{itemize}
\item First, we show that $I_1\circ \wb^{-1}$ is meromorphic on $\mathbb{C}$. Since $\wb^{-1}$ is analytic on $\wb(\mathfrak{B}_1)=\mathbb C\setminus \big\{ (-i\infty,-i]\cup [i,+i\infty)\big\}$, it follows that $I_1\circ \wb^{-1}$ is meromorphic on $\wb(\mathfrak{B}_1)$, with finitely many poles at the points $\wb(p_i)$. Biholomorphisms preserve the order of poles, being locally invertible holomorphic maps, hence $I_1\circ \wb^{-1}$ has a pole of order $m_i$ at $\wb(p_i)$.

Using the same reasoning as in the first part of the proof, we see that $I_1\circ \wb^{-1}$ is continuous on $(-i\infty,-i]\cup[i,+i\infty)$ and is therefore meromorphic on the whole of $\mathbb{C}$.
\item By removing the poles from $I_1\circ \wb^{-1}$, we now construct a function $g$ which is holomorphic on $\mathbb{C}$. We consider 
\begin{equation}
g(\omega):=I_1\circ \wb^{-1}(\omega)-\sum_{i=1}^n\sum_{j=1}^{m_i} \frac{c_{i,j}}{\left(\omega - \wb(p_i) \right)^j}
\label{eq:gI1w}
\end{equation}
for well-chosen constants $c_{i,j}$ such that $g$ has no pole on the complex plane $\mathbb C$ (though it may have a singularity at infinity, corresponding to potential poles of $I_1$ at $\mu_1$ and $-\mu_2$). Hence $g$ is holomorphic on $\mathbb{C}$. 
\item Finally, we show that $g$ grows as a polynomial at infinity. First,
as $\omega \to \infty$, the double sum of~\eqref{eq:gI1w} goes to $0$. Secondly, as $\omega \to \infty$,
$$\wb^{-1}(\omega)\sim c-\frac{1}{\pi \omega}+o\left(\frac{1}{\omega}\right),$$
where $c=\mu_1$ or $c=-\mu_2$ depending on the real part of $\omega$. Since $I_1$ has a pole of order $m_\mu$ at $c$ we deduce that $I_1\circ \wb^{-1}(\omega)$ grows as a polynomial of degree $m_\mu$ as $\omega\to\infty$. Hence, $g(\omega)$ grows at most like a polynomial of degree $m_\mu$ as $\omega\to\infty$.
\end{itemize} 
We have shown that $g$ is holomorphic on $\mathbb{C}$ and grows at infinity as a polynomial of degree $m_\mu$. The \textit{extended} Liouville theorem implies that $g$ is a polynomial of degree
$m_\mu$. We denote $c$ its constant coefficient and by $c_{\mu,j}$ its $j$-th coefficient. This completes the proof of the claimed formula, that is
\begin{equation*}
I_1=F_1\circ \wb\text{ where }F_1(\omega)=c+\sum_{i=1}^{n}\sum_{j=1}^{m_i}\frac{c_{i,j}}{\left(\omega-\wb(p_i)\right)^j} +\sum_{j=1}^{m_{\mu}}c_{\mu,j}\omega^j.
\end{equation*} 
\end{enumerate}
\end{proof}

\subsection{Type II invariant}
\label{subsec:inv}

After introducing the 1-periodic invariants in the previous section, that is, the functions invariant under the transformation \( \eta \circ \zeta(s) = s + 1 \), we now define in this section the functions that are invariant under both \( \eta \) and \( \zeta \). This section~\ref{subsec:inv} is built identically to section~\ref{subsec:per}.
\begin{definition}[Type II invariant]\label{def:inv} A function $I$ that is meromorphic on $\mathbb{C}$ and satisfies the following invariances
\begin{equation}\label{eq:invzetaeta}
I(s)=I(\zeta s)=I(\eta s)\text{ for all } s\in\mathbb{C}
\end{equation}
will be called a \emph{Type II invariant} or an \emph{$\langle \eta,\zeta\rangle$-invariant}. 
\end{definition}
Of course, since it is invariant by $\eta$ and $\zeta$, a Type II invariant is invariant by every element of the group they generate, that is $\langle \eta,\zeta\rangle$. And then, since \( \eta \circ \zeta(s) = s + 1 \), a Type II invariant is also a Type I invariant. Recall that $s_-$ (resp. $s_+$) is the fixed point of $\zeta$ (resp. $\eta$) and that we defined in~\eqref{eq2}  the lines $d_-$ and $d_+$ by
$$
d_{\pm}=s_{\pm}+i\mathbb{R}
$$
and in~\eqref{eq3} we defined the strip $$\mathfrak{B}_2=\{s\in\mathbb C : s_-<\mathfrak{Re}(s)<s_+\}$$ which is bounded by $d_-$ and $d_+$. Note that $\zeta (d_{-})=d_{-}$ and $\eta (d_{+})=d_{+}$. We say that $\mathfrak{B}_2$ is the fundamental domain of a Type II invariant.
Let us introduce the function
\begin{equation}\label{eq:w}
\w:\left\{\begin{array}{rcl}
  \mathbb C & \longrightarrow & \mathbb C\\
  s & \longmapsto & \cos\big(2\pi(s-s_-)\big) .
\end{array}\right.
\end{equation}
The following lemma establishes that $\w$ is an invariant and is conformal if we restrict it to the fundamental domain. We say that $\w$ is a canonical Type II invariant.

\begin{lemma}[Conformal gluing function]\label{lemma:w} The function $\w$ is a conformal gluing function in the following sense:
\begin{enumerate}
    \item $\w$ establishes a biholomorphism between $\mathfrak{B}_2$ and the cut plane $\mathbb C\setminus \Big\{(-\infty,-1]\cup [1,+\infty)\Big\}$ whose inverse is given by 
    \begin{equation}\label{eq:w-1}
    \w^{-1}(\omega):=s_-+\frac{1}{2i\pi}\ln\left(\omega+i\sqrt{1-\omega^2}\right).
    \end{equation}
    \item $\w$ is a Type II invariant and
    \begin{equation}\label{eq:wd1d2}
    \w(d_-)=[1,+\infty)\text{ and }\w(d_+)=(-\infty,-1].
    \end{equation}
\end{enumerate}
\end{lemma}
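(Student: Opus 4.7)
\medskip

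\textbf{Proof plan.} The strategy is to reduce everything to a well-known fact about the cosine function via the affine change of variable $u := 2\pi(s - s_-)$, exactly as in the proof of Lemma~\ref{lemma:w1}. Under this substitution, recalling that $s_+ - s_- = \tfrac{\mu_1 + \mu_2}{2} = \tfrac12$ by \eqref{eq:s+s-} and~\eqref{eq:H4}, the strip $\mathfrak{B}_2$ maps bijectively onto the vertical strip
\[
S := \{u \in \mathbb{C} : 0 < \mathfrak{Re}(u) < \pi\},
\]
and $\w(s) = \cos(u)$. The classical statement to invoke is that $\cos$ is a biholomorphism $S \to \mathbb{C} \setminus \bigl((-\infty,-1] \cup [1,+\infty)\bigr)$.

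For the first assertion of the lemma, I would verify this biholomorphism by hand: injectivity of $\cos$ on $S$ follows from the identity $\cos u_1 = \cos u_2 \iff u_1 \equiv \pm u_2 \pmod{2\pi}$; the width of $S$ being $\pi < 2\pi$ rules out the $+$ case with nonzero shift, and the $-$ case would force $\mathfrak{Re}(u_1 + u_2) \in 2\pi\mathbb{Z}$, impossible since $0 < \mathfrak{Re}(u_1 + u_2) < 2\pi$. Surjectivity onto the slit plane comes from solving $e^{iu} + e^{-iu} = 2\omega$, a quadratic in $z := e^{iu}$ with roots $\omega \pm i\sqrt{1-\omega^2}$; taking principal branches of the square root and logarithm on the slit plane yields the explicit formula $\w^{-1}(\omega) = s_- + \frac{1}{2i\pi}\ln\bigl(\omega + i\sqrt{1-\omega^2}\bigr)$ of~\eqref{eq:w-1}, which one checks is well-defined and holomorphic on $\mathbb{C} \setminus \bigl((-\infty,-1] \cup [1,+\infty)\bigr)$ and is a left-and-right inverse of~$\w$ on the strip.

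For the second assertion, the invariance $\w \circ \zeta = \w$ is immediate: using $\zeta s = -s + 2s_-$ from~\eqref{eq:etazeta},
\[
\w(\zeta s) = \cos\bigl(2\pi(-s + s_-)\bigr) = \cos\bigl(-2\pi(s - s_-)\bigr) = \w(s)
\]
by parity of $\cos$. For $\eta$, using $\eta s = -s + 2s_+$ and $2(s_+ - s_-) = 1$,
\[
\w(\eta s) = \cos\bigl(-2\pi(s-s_-) + 4\pi(s_+ - s_-)\bigr) = \cos\bigl(-2\pi(s - s_-) + 2\pi\bigr) = \w(s).
\]
Finally, the boundary images follow from a direct computation: on $d_- = s_- + i\mathbb{R}$ one has $\w(s_- + iy) = \cos(2\pi i y) = \cosh(2\pi y) \in [1, +\infty)$, and on $d_+ = s_+ + i\mathbb{R}$ one has $\w(s_+ + iy) = \cos(\pi + 2\pi i y) = -\cosh(2\pi y) \in (-\infty, -1]$.

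There is no real obstacle here; the only mild subtlety is justifying that the explicit formula for $\w^{-1}$ with principal branches is holomorphic on the slit plane and coincides with the inverse determined abstractly by the biholomorphism. Since the branch cut of $\sqrt{1 - \omega^2}$ can be chosen exactly on $(-\infty,-1] \cup [1,+\infty)$ and $\omega + i\sqrt{1-\omega^2}$ then avoids $(-\infty,0]$, the principal logarithm applies, and one verifies $\w(\w^{-1}(\omega)) = \omega$ by a one-line computation with $e^{\ln(\cdot)}$.
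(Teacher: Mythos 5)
Your proof is correct and follows essentially the same route as the paper: an affine change of variable $u=2\pi(s-s_-)$ reduces everything to the biholomorphism of $\cos$ on the strip $(0,\pi)+i\mathbb{R}$, from which the inverse formula, the $\eta$- and $\zeta$-invariance, and the boundary images all follow by direct computation. The only difference is cosmetic — you verify the injectivity and surjectivity of cosine on the strip by hand (via the equality criterion for $\cos$ and the quadratic in $e^{iu}$) where the paper cites a textbook reference.
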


\begin{proof}
Let us prove the first point. It is a well established result, see e.g. Chapter 3 of \cite{sveshnikov_tikhonov_71}, that the cosine function establishes a biholomorphism between the strip $(0,\pi)+i\mathbb R$ and the cut plane $\mathbb C\setminus \Big\{(-\infty,-1]\cup [1,+\infty)\Big\}$. The inverse is the principal branch of the complex $\arccos$ defined as
$$\arccos(s)=\frac{1}{i}\ln \left(s+i\sqrt{1-s^2}\right),$$
where $\ln$ denotes the principal branch of the complex logarithm and $\sqrt{\,\cdot\,}$ the principal branch of the square root.
A simple change of variables gives the formula $\w^{-1}$ and we do the following calculation to check it. For all $s\in\mathfrak{B}_2$,
\begin{align*}
s_-+\frac{1}{2i\pi}\ln\left(\w(s)+i\sqrt{1-\w(s)^2}\right)&=s_-+\frac{1}{2i\pi}\ln\Big(\cos\big(2\pi(s-s_-)\big)+i\sin\big(2\pi(s-s_-)\big)\Big)\\
&=s_-+\frac{1}{2i\pi}\ln\left(e^{2i\pi (s-s_-)}\right)=s_-+\frac{2i\pi(s-s_-)}{2i\pi}=s,
\end{align*}
which yields the expression for $\w^{-1}$. 

Let us prove the second point. The function $\w$ which is also defined as a holomorphic function on the whole complex plane, satisfies
$$\w(\zeta s)=\w(-s+2s_-)
=\cos\left(-2\pi(s-s_-)\right)=\cos\left(2\pi(s-s_-)\right)=\w(s)$$
and remembering that $2(s_+-s_-)=\mu_1+\mu_2=1$,
$$\w(\eta s)=\cos\left(2\pi(-s+2s_+-s_-)\right)
=\cos\left(2\pi(-s+s_-+1)\right)
=\cos\left(2\pi(s-s_-)\right)=\w(s).$$
Let us now prove~\eqref{eq:wd1d2}. Recall from the definition of the cosine function that 
$$\cos(x+iy)=\cosh(y)\cos(x)-i\sinh(y)\sin(x),$$
so that $\cos (iy)=\cosh(y)$ and $\cos (\pi+iy)=-\cosh(y)$, proving that $\w(s_{-}+i \mathbb{R})=\cos(i\mathbb{R})=[1,+\infty)$ and  $\w(s_{+}+i \mathbb{R})=\cos(\pi+i\mathbb{R})=(-\infty,-1]$.
\end{proof}
Proposition~\ref{lemma:inv} below (called the \emph{Type II invariant lemma}) describes the invariants in terms of the conformal gluing function $\w$.
To state this result we first need a small lemma. Recall that any meromorphic function $f$ admits, at every point $s\in\mathbb C$, a unique Laurent series expansion of the form
$$f(s+z)=\sum_{n\in\mathbb Z}a_nz^n$$
with finitely many nonzero $a_{-n}$ for $n\in\mathbb N$, that holds for $z$ in a neighborhood of $0$.
\begin{lemma}[Invariance and Laurent series]\label{lemma:res} 
If a function $f$ is $\eta$-invariant (resp. $\zeta$-invariant) at a neighborhood of $s_+$ (resp. $s_-$) then the odd coefficients of its Laurent series at $s_+$ (resp. $s_-$) are equal to zero. In particular, if an invariant has a pole at $s_+$ or $s_-$ then this pole has even order.
\end{lemma}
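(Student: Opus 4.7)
The plan is to exploit the fact that $\eta$ and $\zeta$ are central symmetries, so invariance under them amounts to parity of the Laurent expansion around the fixed point.

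First I would set $z = s - s_+$ so that, by definition~\eqref{eq:etazeta}, $\eta s = -s + 2s_+ = s_+ - z$. If $f$ is $\eta$-invariant on a neighborhood of $s_+$, then for $z$ in a punctured neighborhood of $0$ we have $f(s_+ + z) = f(s_+ - z)$. Writing the Laurent expansion
\begin{equation*}
f(s_+ + z) = \sum_{n \in \mathbb{Z}} a_n z^n,
\end{equation*}
the invariance translates into $\sum_n a_n z^n = \sum_n a_n (-z)^n = \sum_n (-1)^n a_n z^n$. By uniqueness of the Laurent expansion this forces $a_n = (-1)^n a_n$ for every $n \in \mathbb{Z}$, hence $a_n = 0$ whenever $n$ is odd. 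The argument for $\zeta$-invariance near $s_-$ is identical, using that $\zeta s = s_- - (s - s_-)$.

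For the second assertion, suppose $f$ is an invariant with a pole at $s_+$ (the case $s_-$ is the same). Let $N \geq 1$ be the order of this pole, so that $a_{-N} \neq 0$ and $a_n = 0$ for all $n < -N$ in the Laurent expansion above. The first part of the lemma forces $a_{-N} = 0$ when $-N$ is odd, i.e.\ when $N$ is odd; since $a_{-N} \neq 0$ by definition of the order, $N$ must be even.

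This proof is entirely routine, so there is no real obstacle: the only care needed is to observe that the Laurent expansion is unique on any punctured neighborhood where $f$ is meromorphic, which is exactly the setting given in the statement.
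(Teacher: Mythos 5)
Your proof is correct and follows exactly the paper's argument: expand $f$ in a Laurent series at the fixed point, use $f(s_+ + z) = f(s_+ - z)$ to get $a_n = (-1)^n a_n$, and conclude that odd coefficients vanish. You also spell out the consequence for the order of a pole, which the paper leaves to the reader; that is a welcome addition but not a different method.
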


\begin{proof}
Let $\sum _{n=-\infty}^{\infty }a_{n}(z-s_+)^{n}$ be the Laurent series of $f$ at the point $z=s_+$. Since $f$ is $\eta$-invariant, $f(s_++z)=f(s_+-z)$ and we obtain 
$$\sum _{n=-\infty}^{\infty }a_{n}z^{n}=f(s_++z)=f(s_+-z)=\sum _{n=-\infty}^{\infty }(-1)^n a_{n}z^{n}.$$ 
We deduce by uniqueness of the Laurent expansion that $a_{n}=(-1)^n a_{n}$ and therefore that $a_{2n+1}=0$ for all $n\in\mathbb{Z}$. The proof for $s_{-}$ is similar.
\end{proof}

We now state the second \emph{invariant lemma} which describes the Type II invariants in terms of the conformal gluing function $\w$.
\begin{proposition}[Type II invariant lemma]\label{lemma:inv} \
\begin{enumerate}
\item
If $I_2$ is an invariant satisfying the following properties:
\begin{itemize}
\item $I_2$ is holomorphic on a neighborhood of $\overline{\mathfrak{B}}_2$,
\item $I_2$ grows at most polynomially at infinity in $ \overline{\mathfrak{B}}_2$ (i.e when $|z|\to\infty$ and $z\in \overline{\mathfrak{B}}_2$, $|I_2|$ is bounded by the modulus of a polynomial),
\end{itemize}
then $I_2$ is constant. 
\item More generally, if $I_2$ is an invariant satisfying the following properties:
\begin{itemize}
\item $I_2$ has finitely many poles in $\overline{\mathfrak{B}}_2$ which all belongs to $\mathfrak{B}_2\cup\{s_-,s_+ \}$,
\item $I_2$ grows at most polynomially at infinity in $ \overline{\mathfrak{B}}_2$,
\end{itemize} 
then $I_2$ is a rational fraction in $\boldsymbol{\mathrm{w}}_2$, \textit{i.e.} there exists $F_2\in \mathbb C(X)$ such that $I_2=F_2(\boldsymbol{\mathrm{w}}_2)$. 

More precisely, let $p_1,\dots,p_n\in\mathfrak{B}_2$ the poles of $I$ with multiplicities $m_1,\dots,m_n$ and let $2m_-$ (resp. $2m_+$) the order of the (possible) pole $s_-$ (resp. $s_+$) (note that the order of $s_\pm$ is always even by Lemma~\ref{lemma:res} and by convention $m_\pm=0$ when $s_\pm$ is not a pole). Then there exist constants $c_{i,j}$ and $c_{\pm,j}$ such that
\begin{equation}
I_2=F_2\circ \w=c+\sum_{i=1}^{n}\sum_{j=1}^{m_i}\frac{c_{i,j}}{\left(\w-\w(p_i)\right)^j} +\sum_{\pm}\sum_{j=1}^{m_\pm}\frac{c_{\pm,j}}{\left(\w-\w(s_\pm)\right)^j} .
\label{eq:formuleF}
\end{equation}
\end{enumerate}
\end{proposition}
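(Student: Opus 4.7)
The plan is to parallel, almost step by step, the proof of Proposition~\ref{lemma:periodic} (Type~I invariant lemma), with the conformal gluing function $\w$ replacing $\wb$ and an additional subtlety arising from the critical points of $\w$ at $s_\pm$. I will set $g := I_2 \circ \w^{-1}$, initially defined on the slit plane $\w(\mathfrak{B}_2) = \mathbb{C} \setminus \{(-\infty,-1] \cup [1,+\infty)\}$, show it extends to an entire function (up to removing a rational part in the second case), and conclude by Liouville.

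For part~(1), I first extend $g$ across the two half-lines of the cut separately. For $\omega_0 \in (1,+\infty)$ the two one-sided limits of $\w^{-1}$ produce points $s_0, s_0' \in d_-$ with $s_0' = \zeta(s_0)$ (since $\zeta$ stabilizes $d_-$); the $\zeta$-invariance of $I_2$ then gives $I_2(s_0) = I_2(s_0')$, so the two one-sided limits of $g$ coincide and $g$ extends continuously across $[1,+\infty)$. The analogous argument with $\eta$-invariance handles $(-\infty,-1]$. At the cut endpoints $\omega = \pm 1$, the preimages are $s_\mp$, where $I_2$ is holomorphic and hence bounded, so Riemann's theorem removes the singularity. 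Combining continuity on the cut with holomorphy off it (Morera's theorem as in~\cite{rudin}) gives $g$ entire. For the behavior at infinity, observe that $\w^{-1}(\omega)$ keeps bounded real part in $(s_-, s_+)$ while $|\mathfrak{Im}(\w^{-1}(\omega))|$ grows only logarithmically in $|\omega|$ (since cosine grows exponentially in the imaginary direction). The polynomial-growth hypothesis on $I_2$ therefore yields $|g(\omega)| = O((\log|\omega|)^N)$ for some $N$, which is sub-polynomial; the generalized Liouville theorem forces $g$ to be constant, so $I_2 = g \circ \w$ is constant.

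For part~(2), the extension across the cuts works as before, because $I_2$ remains regular on $d_- \setminus \{s_-\}$ and $d_+ \setminus \{s_+\}$. The new ingredient is the transfer of polar parts. A pole of order $m_i$ of $I_2$ at $p_i \in \mathfrak{B}_2$ transfers through the local biholomorphism $\w$ to a pole of the same order of $g$ at $\w(p_i)$. For a pole at $s_\pm$, the expansion $\w(s) - \w(s_\pm)$ vanishes to order exactly two (simple critical point of the cosine), so a pole of order $2m_\pm$ of $I_2$ at $s_\pm$ transfers to a pole of order $m_\pm$ of $g$ at $\w(s_\pm) = \mp 1$; Lemma~\ref{lemma:res} is essential here to guarantee that the order of $I_2$ at $s_\pm$ is indeed even, hence that the corresponding pole of $g$ has integer order. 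Subtracting the explicit polar parts
\begin{equation*}
\widetilde{g}(\omega) := g(\omega) - \sum_{i=1}^{n}\sum_{j=1}^{m_i}\frac{c_{i,j}}{(\omega-\w(p_i))^j} - \sum_{\pm}\sum_{j=1}^{m_\pm}\frac{c_{\pm,j}}{(\omega-\w(s_\pm))^j}
\end{equation*}
with coefficients chosen to cancel all poles leaves $\widetilde{g}$ entire and with the same sub-polynomial growth at infinity as $g$. Liouville then gives $\widetilde{g} = c$ constant, yielding the desired formula~\eqref{eq:formuleF}.

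The main obstacle is the treatment of the critical points $s_\pm$ in part~(2). Unlike the Type~I setting, where $\wb$ is a local biholomorphism everywhere on $\overline{\mathfrak{B}}_1$ and boundary poles of $I_1$ become poles at infinity for the pulled-back function, here $\w$ is two-to-one at the boundary points $s_\pm$, and boundary poles of $I_2$ become finite poles of $g$ at $\pm 1$. Making the transfer of orders compatible with the branching structure is exactly what requires the parity statement of Lemma~\ref{lemma:res}, and this is the delicate part of the argument; once it is in place, the rest is a verbatim adaptation of Proposition~\ref{lemma:periodic}.
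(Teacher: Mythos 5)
Your proof is correct and follows essentially the same strategy as the paper: form $g = I_2 \circ \w^{-1}$, extend it across the cut using the $\zeta$- and $\eta$-invariance of $I_2$, handle the endpoints $\pm 1$ (where $\w$ has simple critical points and the parity Lemma~\ref{lemma:res} halves the pole order), control the growth via the logarithmic behavior of $\w^{-1}$, and conclude with Liouville. The only cosmetic differences are that the paper phrases the two-sided-limit argument via a complex-conjugation identity ($\overline{s_0}=\zeta s_0$, $\w^{-1}(\overline\omega)=\overline{\w^{-1}(\omega)}$) rather than your direct description of the two branches, and in part~(2) the paper subtracts the polar terms on the $s$-side and invokes part~(1) instead of applying Liouville again to $\widetilde g$; these are equivalent reorganizations of the same argument.
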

\begin{proof}  The proof is similar to the one of the Type I invariant lemma of Proposition~\ref{lemma:periodic}.
\begin{enumerate}
\item Assume that $I_2$ is holomorphic on a neighborhood of $\overline{\mathfrak{B}}_2$, and consider $g:=I_2\circ \w^{-1}$ which is \textit{a priori} defined and holomorphic on the slit plane 
\begin{equation}
\w(\mathfrak{B}_2)=\mathbb C\setminus \Big\{(-\infty,-1]\cup [1,+\infty)\Big\} .
\end{equation}
\begin{itemize}
\item 
First, we show that $g$ is holomorphic on $\mathbb{C}$. By composition of holomorphic functions, $g$ is holomorphic on $\w(\mathfrak{B}_2)$. Let us show that $g$ can be extended by continuity on the cut $\w(\partial \mathfrak{B}_2)=(-\infty,-1]\cup[1,+\infty)$.
Let $\omega_0\in [1,+\infty)$. We are going to prove that $g$ is continuous at $\omega_0$ (the proof for $\omega_0\in (-\infty,1]$ is similar). We need to show that
$$\lim_{\omega\to \omega_0 \atop \mathfrak{Im}(\omega) > 0} g(\omega)=\lim_{\omega\to \omega_0 \atop \mathfrak{Im}(\omega) < 0} g(\omega).$$
Since $\mathbb{C}\setminus \Big\{(-\infty,-1]\cup [1,+\infty)\Big\}$ is stable under conjugation, and $\omega_0=\overline{\omega_0}$ it suffices to prove that 
\begin{equation}\label{eq1}
\lim_{\omega\to \omega_0 \atop \mathfrak{Im}(\omega) > 0} g(\omega)=\lim_{\omega\to \omega_0 \atop \mathfrak{Im}(\omega) > 0} g(\overline{\omega}).\end{equation}
Let us choose $s_0\in d_-$ such that $\w(s_0)=\omega_0$, this is possible by~\eqref{eq:wd1d2}. Note that we also have $\w(\zeta s_0)=\omega_0$ since $\w$ is an invariant. 

Noticing that $\overline{s_0}=\zeta s_0$ (since $s_0\in d_-$) and $\w^{-1}(\overline{\omega})=\overline{\w^{-1}(\omega)}$ (which derives from standard properties of the logarithm) we obtain 
$$
\lim_{\omega\to\omega_0 \atop \mathfrak{Im}(\omega) > 0} \w^{-1}(\overline{\omega})=
\lim_{\omega\to\omega_0 \atop \mathfrak{Im}(\omega) > 0} \overline{\w^{-1}(\omega)}\in \{ s_0 ,\overline{s_0} \}.
$$
Then since $I_2$ is an invariant we have $I_2(s_0)=I_2(\overline{s_0})$ and since $I_2$ is continuous, we obtain \eqref{eq1}. It is then possible to extend by continuity $g$ at $\omega_0$.

In addition to being holomorphic on the slit plane, we proved that $g$ is continuous on the cut $(-\infty,-1]\cup[1,+\infty)$. It is therefore holomorphic on the whole complex plane (for more details, see Theorem 16.8 in~\cite{rudin}, whose proof relies on Morera's theorem). 

\item We now show that $g$ has subpolynomial growth at infinity. For $\omega$ in the slit plane, using the expression for $\w^{-1}$ given in Lemma~\ref{lemma:w},
$$|\w^{-1}(\omega)|\leqslant |s_-|+\frac{1}{2\pi}\left|\ln\left(\omega+i\sqrt{1-\omega^2}\right)\right|\leqslant |s_-|+\frac{1}{2\pi}\left(\ln\left(\left|\omega+i\sqrt{1-\omega^2}\right|\right)+\pi\right)$$
and when $|\omega|\to+\infty$,
$$\left|\omega+i\sqrt{1-\omega^2}\right|=O(|\omega|),$$
so that by comparing $\ln$ with any polynomial, we get, for all $d\in\mathbb N$ and all $\nu>0$,
$$|\w^{-1}(\omega)|^d=o(|\omega|^{\nu}).$$
By hypothesis of the lemma, let $n>0$ such that for all $s\in \overline{\mathfrak{B}}_2$, $I_2(s)=o(s^n)$ when $s\to\infty$. Hence, for any $\nu>0$ as small as wanted, we have when $|\omega|\to\infty$,
$$|g(\omega)|=|I_2\circ \w^{-1}(\omega)|=o(|\omega|^{n\nu}).$$
\end{itemize}
We have shown that $g$ is holomorphic on $\mathbb{C}$ and has a subpolynomial growth at infinity which implies, by Liouville's theorem, that $g$ is constant.  
We directly deduce that $I_2=g\circ \w$ is constant.

\item We now assume that $I_2$ is meromorphic on a neighborhood of $\overline{\mathfrak{B}}_2$ with finitely many poles 
$p_1,\dots,p_n\in\mathfrak{B}_2$ (with multiplicities $m_1,\dots,m_n$) and possibly a pole at $s_-$ (resp $s_+$) of order $2m_-$ (resp. $2m_+$).  Note that the order of these two last poles is necessary even by parity around the points $s_-$ and $s_+$ (it is a direct consequence of Lemma~\ref{lemma:res}). By convention we take $m_\pm=0$ if there is no pole.

\begin{itemize}
\item First, we show that $I_1\circ \w^{-1}$ is meromorphic on $\mathbb{C}$. Since $\w^{-1}$ is analytic on $\w(\mathfrak{B}_2)=\mathbb C\setminus \big\{ (-\infty,-1]\cup [1,+\infty)\big\}$, we deduce that $I_2\circ \w^{-1}$ is meromorphic on $\w(\mathfrak{B}_2)$, with a finite number of poles at the points $\w(p_i)$ of order $m_i$. 
Using the same reasoning as in the first part of the proof, we see that $I_2\circ \w^{-1}$ is continuous on $(-\infty,-1)\cup(1,+\infty)$ and is then meromorphic on the whole of $\mathbb{C}\setminus \{-1,1 \}$. The only difference here is that the points $\{-1,1 \}=\{\w(s_+),\w(s_-) \}$ are isolated singularities.

We now show that if $I_2$ has a pole of order $2m_-$ (resp. $2m_+$) in $s_-$ (resp. $s_+$) then $I_2\circ \w^{-1}$ has a pole of order $m_-$ (resp. $m_+$) in $\w(s_-)=1$ (resp. $\w(s_{+})=-1$). 
The Taylor expansion of cosine at $s_-$ gives $\w(s)=1- 2\pi^2 (s-s_-)^2+o((s-s_-)^2)$ and taking $s=\w^{-1}(\omega)$ in this formula we deduce that
$$
(\w^{-1}(\omega)-s_-)^2
\underset{\omega\to 1}{\sim}
\frac{1}{2\pi^2}(1-\omega).
$$
Since $I_2$ has a pole of order $2m_-$ at $s_-$, there is a nonzero constant $C$ such that we have
$$
I_2(s)(s-s_-)^{2m_-}\underset{s\to s_-}{\sim} C .
$$
Since $\w^{-1}(\omega)\to s_-$ when $\omega\to 1$, we deduce that 
$$I_2\circ \w^{-1} (\omega)(1-\omega)^{m_-} 
\underset{\omega\to 1}{\sim}
(2\pi^2)^{m_{-}} I_2\circ \w^{-1} (\omega)(\w^{-1}(\omega)-s_-)^{2m_-}
\underset{\omega\to 1}{\sim} (2\pi^2)^{m_{-}} C
$$
which implies that $I_2\circ \w^{-1}$ has a pole of order $m_-$ in $\w(s_-)=1$. The same holds in $\w(s_+)=-1$ which is a pole of order $m_+$.
\item By removing the poles from $I_2\circ \w^{-1}$, we now construct a function $g$ which is holomorphic on $\mathbb{C}$. We consider 
$$g(\omega):=I_2\circ \w^{-1}(\omega)-\sum_{i=1}^n\sum_{j=1}^{m_i} \frac{c_{i,j}}{\left(\omega - \w(p_i) \right)^j} -\sum_{\pm}\sum_{j=1}^{m_\pm} \frac{c_{\pm,j}}{\left(\omega - \w(s_\pm) \right)^j}$$
for well-chosen constants $c_{i,j}$ and $c_{\pm,j}$ such that $g$ has no pole and is then an entire function (the constants $c_{i,j}$ and $c_{\pm,j}$ are just the coefficients of the Laurent series at the points $\w(p_i)$ and $\w(s_\pm)$ of the function $I_2\circ \w^{-1}$).
\item Finaly, since $|\w(s)|\to\infty$ when $s\to\infty$ with $s\in \overline{\mathfrak{B}_2}$, and $I_2$ grows at most polynomially at infinity in $\overline{\mathfrak{B}_2}$, we deduce that 
$$g(\w(s)):=I_2(s) -\sum_{i=1}^n\sum_{j=1}^{m_i} \frac{c_{i,j}}{\left(\w(s) - \w (p_i)  \right)^j}-\sum_{\pm}\sum_{j=1}^{m_\pm} \frac{c_{\pm,j}}{\left(\w(s) - \w(s_\pm) \right)^j}$$
grows at most polynomially at infinity in $\overline{\mathfrak{B}}_2$. 
\end{itemize}
Since $\w$ and $I_2$ are invariants, $g(\w(s))$ is also invariant, and we have shown that it is holomorphic and grows at most polynomially at infinity in $\overline{\mathfrak{B}}_2$.
We deduce from the first result of this lemma proven in 1. that $g(\w(s))$ is equal to a constant $c$
and we obtain that $I_2=F_2\circ \w$ with
\begin{equation*}
F_2(\omega)=c+\sum_{i=1}^{n}\sum_{j=1}^{m_i}\frac{c_{i,j}}{\left(\omega-\w(p_i)\right)^j} +\sum_{\pm}\sum_{j=1}^{m_\pm}\frac{c_{\pm,j}}{\left(\omega-\w(s_\pm)\right)^j} .
\end{equation*} 
\end{enumerate}
\end{proof}
In fact, even if we do not need it in this article, it is possible to relax the hypothesis of the second part of the previous lemma and just assume that $I_2$ has finitely many poles in $\overline{\mathfrak{B}}_2$ which may also lie on $\partial\mathfrak{B}_2$ and still grows at most polynomially. Note that Lemma~\ref{lemma:inv} is similar to the invariant lemma of Proposition~5.4 of \cite{BoMe-El-Fr-Ha-Ra}, for more details on how to relax the assumptions, we refer to its proof.

\begin{remark}[Link between the Type I Invariant Lemma and the Type II Invariant Lemma]
Of course, the results of both invariant lemmas (Proposition~\ref{lemma:periodic} and~\ref{lemma:inv}) are compatible. A function which is invariant under both $\eta$ and $\zeta$ (Type II invariant) is in particular $1$-periodic (Type I invariant), since $\eta\circ\zeta(s)=s+1$. Hence a Type II invariant expressed in terms of 
\[
\w(s)=\cos\left(2\pi\left(s+\frac{\mu_2}{2}\right)\right),
\]
can be rewritten naturally in terms of 
\[
\wb(s)=\tan\left(\pi\left(s+\mu_2-\frac12\right)\right).
\]
This can be seen directly since we can express $\w$ in terms of a rational function of $\wb$ by using the trigonometric identities
\begin{equation}\label{eq5}
\cos (x) =\frac{1-\tan^2(x/2)}{1+\tan^2(x/2)},
\qquad
\tan(x+c)=\frac{\tan( x)+\tan (c)}{1+\tan (x)\,\tan (c)}.
\end{equation}
The second relation is used to adjust the additive constants as desired.  Notice, however, that one cannot express $\tan (x)$ as a rational function of $\cos(2x)$ since the introduction of a square root is needed. Therefore, it is not possible to express $\wb$ in terms of a rational function of $\w$. 

However, one could derive the Type II Invariant Lemma from the Type I Invariant Lemma by the following argument.  If $I$ is a Type II invariant satisfying the hypotheses of the corresponding invariant Lemma, then $I$ is $1$-periodic and hence by the Type I Invariant Lemma can be written as a rational function of $\wb$.  Thus there exists $F\in\mathbb C(X)$ such that
\[
I(s)=F\Big(\tan\left(\pi\left(s-s_+\right)\right)\Big).
\]
The constant $s_+$ in the tangent function has been chosen to be the fixed point of $\eta$.
And since $I$ is invariant by $\eta$ we obtain
\[
F\left(\tan\Big(\pi\left(s-s_+\right)\Big)\right)=I(s)=I(\eta s)=I(\mu_1-s)=F\left(-\tan\Big(\pi\left(s-s_+\right)\Big)\right).
\]
Hence $F(x)=F(-x)$, and it is a standard exercise to show that any rational function satisfying $F(x)=F(-x)$ must be a rational function of $x^2$, say $F(x)=R(x^2)$.  It follows that
\[
I(s)
=R\Big(\tan^2\left(\pi\left(s-s_+\right)\right)\Big),
\]
and since
\[
\tan^2( x)
=\frac{1-\cos(2x)}{1+\cos(2x)},
\]
we conclude that $I$ can indeed be expressed as a rational function in
\[
\w(s) =-\cos\Big(2\pi\left(s-s_+\right)\Big).
\]
We now offer a more algebraic perspective on the preceding discussion. Let $F_{1}$ be the field of functions satisfying the assumptions of the second item of Proposition \ref{lemma:periodic}, and let $F_2$ be the field of functions  satisfying the assumptions of the second item of Proposition \ref{lemma:inv}. We have $F_2\subset F_1$.  We have seen that every element of $F_1$ is a rational function in $\wb$. Conversely, every rational function in $\wb$ is an element of $F_1$. Then, $F_1=\mathbb{C}(\wb)$. Every rational function in $\w$ is a Type II invariant with suitable growth proving that $\mathbb{C}(\w)\subset F_2$. From  \eqref{eq5}, $\w$ may be expressed as a rational function in $\tan(\pi(s+\frac{\mu_2}{2}))$ with numerator and denominator of degree 2. Then the field extension $\mathbb{C}(\tan(\pi(s+\frac{\mu_2}{2}))\,|\,\mathbb{C}(\w)$ is of degree  at most $2$. By the second formula in \eqref{eq5}, $\mathbb{C}(\tan(\pi(s+\frac{\mu_2}{2}))=\mathbb{C}(\wb)$. Then, we deduce that the field extension $\mathbb{C}(\wb)\,|\,\mathbb{C}(\w)$ is of degree  at most $2$.
We have
$$
F_2 \supset \mathbb{C}(\w) \subset \mathbb{C}(\wb)=F_1.
$$
Then, we have an intermediate field extension $F_1\,|\,F_2\,|\,\mathbb{C}(\w)$. This proves that the field extension  $F_1|F_2$ is of degree at most $2$. 
Note that $F_2$ is strictly included in $F_1$ since  $\wb$ is in $F_1$ but not in $F_2$. Then $F_1\,|\,F_2$ is of degree $2$. By the \textit{tower law} for field extensions, the degree of $F_1\,|\,\mathbb{C}(\w)$ is the product of the degree of $F_1\,|\,F_2$ (which is $2$) by the degree of $F_2\,|\,\mathbb{C}(\w)$.
Since $F_1\,|\,\mathbb{C}(\w)$  is of degree at most $2$,the degree of $F_2\,|\,\mathbb{C}(\w)$ has to be $1$. Hence $F_2 = \mathbb{C}(\w)$.
\end{remark}

\subsection{Unknown invariants}
\label{subsec:unknowninv}
In the previous sections we have defined two types of invariants, and their respective canonical invariants $\wb$ and $\w$. In the following lemma we determine invariants using the decoupling function $D$ we have found in~\eqref{eq:decouplageGamma}. We say that these invariants are \emph{unknown} since they depend on $\varphi_1$ which is the function we are looking for.
\begin{lemma}[Unknown invariants]\label{lemma:invariantDphi} 
The function $\varphi_1/D$ is $1$-periodic, \textit{i.e.}, Type I Invariant. Furthermore, if the rational decoupling condition $\gamma\in\mathbb Z$ or $\{\gamma_1,\gamma_2\}\subset\mathbb Z$ holds, then 
\begin{itemize}
\item if $\gamma\in\mathbb Z$, then $\varphi_1/D$ is a Type II invariant;
\item if $\gamma\notin\mathbb Z$ and $\{\gamma_1,\gamma_2\}\subset\mathbb Z$ then $(\varphi_1/D)^2$ is a Type II invariant.
\end{itemize}
All these invariants are meromorphic and grows at most polynomially at infinity in $ \overline{\mathfrak{B}}_1$ and  $ \overline{\mathfrak{B}}_2$ and thus satisfy the invariant lemma hypothesis of Proposition~\ref{lemma:periodic} or~\ref{lemma:inv}.
\end{lemma}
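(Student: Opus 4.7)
The plan is to prove the three assertions in order, each reducing to inputs already established earlier in the paper.

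\textbf{Step 1: 1-periodicity of $\varphi_1/D$.} I would combine the two difference equations at hand. Theorem~\ref{thm:prolong} gives $\varphi_1(s+1)=G(s)\varphi_1(s)$ on $\mathbb{C}$, while Proposition~\ref{prop:decoupling} (or directly the functional equation $\Gamma(z+1)=z\Gamma(z)$ applied to the definition~\eqref{eq:decouplageGamma} of $D$) gives $D(s+1)=G(s)D(s)$. Dividing the first identity by the second immediately produces $(\varphi_1/D)(s+1)=(\varphi_1/D)(s)$; meromorphicity on $\mathbb{C}$ is clear because both $\varphi_1$ and $D$ are meromorphic on $\mathbb{C}$. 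Hence $\varphi_1/D$ is Type I invariant.

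\textbf{Step 2: Type II invariance under the rational decoupling condition.} I would rely on the explicit factorization of $D$ from Proposition~\ref{prop:decoupling},
\[
D(s)=2^{-\gamma}\frac{P(\y(s))}{Q(\y(s))}\bigl[\sqrt{2}(s-s_+)\bigr]^{\varepsilon},\qquad\varepsilon\in\{-1,0,1\}.
\]
The rational factor $P(\y(s))/Q(\y(s))$ is automatically $\eta$-invariant because $\y(\eta s)=\y(s)$ by~\eqref{eq:inv}. For the remaining factor, the key observation is $\eta s-s_+=-(s-s_+)$, so
\[
\bigl[\sqrt{2}(\eta s-s_+)\bigr]^{\varepsilon}=(-1)^{\varepsilon}\bigl[\sqrt{2}(s-s_+)\bigr]^{\varepsilon}.
\]
When $\gamma\in\mathbb{Z}$ (cases (a)--(b) of Proposition~\ref{prop:decoupling}), the proposition gives $\varepsilon=0$, hence $D(\eta s)=D(s)$; combined with $\varphi_1(\eta s)=\varphi_1(s)$ (extended to $\mathbb{C}$ by Theorem~\ref{thm:prolong}), this yields $\eta$-invariance of $\varphi_1/D$. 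When $\gamma\notin\mathbb{Z}$ and $\{\gamma_1,\gamma_2\}\subset\mathbb{Z}$ (cases (c)--(e)), the proposition gives $\varepsilon=\pm1$, so $D(\eta s)=-D(s)$ and the sign is killed by squaring, giving $\eta$-invariance of $(\varphi_1/D)^2$. In both situations, $\zeta$-invariance follows for free from $\eta$-invariance plus 1-periodicity, via the identity $\zeta s=\eta(s+1)$ (itself a consequence of $\eta\circ\zeta(s)=s+1$ and $\eta^2=\mathrm{Id}$), giving full Type II invariance as defined in~\eqref{eq:invzetaeta}.

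\textbf{Step 3: meromorphicity and polynomial growth.} Meromorphicity on $\mathbb{C}$ is inherited from $\varphi_1$ and $D$. For the growth, I would combine two ingredients already at hand: Lemma~\ref{lemma:stripe}(3), which gives $\varphi_1(s)\to 0$ as $|s|\to\infty$ in $\mathfrak{B}_1$ (and hence in $\mathfrak{B}_2\subset\mathfrak{B}_1$ by~\eqref{eq:inclusionBdelta}), and Lemma~\ref{lemma:asymptotic}, which gives $D(s)\sim s^{2\gamma}$ as $|s|\to\infty$. Together these imply $\varphi_1/D=O(|s|^{-2\gamma})$ at infinity in the strips, which is certainly polynomial, and the same conclusion holds after squaring. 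The poles of the resulting invariants in the closed strips are finite in number because $\varphi_1$ has at most one pole there (Lemma~\ref{lemma:stripe}(2)) and $D$, being a rational function under the decoupling condition, has only finitely many zeros and poles.

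\textbf{Main obstacle.} Everything is really a matter of bookkeeping: the only point that requires real care is the sign analysis in Step 2. One must check, across the five sub-cases of Proposition~\ref{prop:decoupling}, that $\varepsilon=0$ occurs \emph{precisely} when $\gamma\in\mathbb{Z}$ and $\varepsilon=\pm1$ occurs \emph{precisely} when $\gamma\notin\mathbb{Z}$ but $\{\gamma_1,\gamma_2\}\subset\mathbb{Z}$, so that the dichotomy between ``$\varphi_1/D$ is Type II invariant'' and ``$(\varphi_1/D)^2$ is Type II invariant'' aligns exactly with the statement of the lemma. This is the one point where the explicit case analysis of Proposition~\ref{prop:decoupling} is genuinely used; all remaining ingredients are direct consequences of previously established results.
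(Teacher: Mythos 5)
Your proof is correct and follows essentially the same route as the paper: dividing the difference equations for $\varphi_1$ and $D$, then using the explicit form of $D$ from Proposition~\ref{prop:decoupling} to check $\eta$-invariance, and finally invoking Lemmas~\ref{lemma:stripe} and~\ref{lemma:asymptotic} for the growth and pole bounds. The one small difference is in Step~2: you establish $\eta$-invariance by directly computing the parity of $D$ under $\eta$, via $\eta s - s_+ = -(s-s_+)$, which gives $D(\eta s)=(-1)^\varepsilon D(s)$; the paper instead argues that $D$ (when $\varepsilon=0$) or $D^2$ (when $\varepsilon=\pm1$, using $\y(s)-\y(s_+)=2(s-s_+)^2$) is globally a rational function of $\y(s)$ and therefore $\eta$-invariant because $\y\circ\eta=\y$. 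Both are valid and boil down to the same observation; your parity computation is arguably slightly cleaner because it sidesteps the distinction the paper makes (in Remark~\ref{rem:gammanotZ}) between ``expressible as a function of $\y$ on $\mathfrak{B}_2$'' and ``globally a rational function of $\y$''.
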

\begin{proof} In both cases, $\varphi_1/D$ is meromorphic on $\mathbb C$ thanks to Theorem~\ref{thm:prolong}. By the Definition~\ref{def:decoupling} of a decoupling function $G(s)=D(s+1)/D(s)$ and with the difference equation~\eqref{eq:qdiff} $\varphi_1(s+1)=G(s)\varphi_1(s)$ we obtain
\begin{equation}
\frac{\varphi_1(s+1)}{D(s+1)}=\frac{G(s)\varphi_1(s)}{D(s+1)}=\frac{\varphi_1(s)}{D(s)}
\label{eq:Dphi1per}
\end{equation}
and we deduce that $\varphi_1/D$ is $1$-periodic.
Let us check that when $\gamma\in\mathbb Z$ or $\{\gamma_1,\gamma_2\}\subset\mathbb Z$ holds the invariance properties~\eqref{eq:invzetaeta} are satisfied. First, remark that a function $f$ which is $1$-periodic and $\eta$-invariant is also $\zeta$-invariant since $\eta\zeta(s)=s+1$ et $\eta^2(s)=s$ (indeed $f(s)=f(s+1)=f(\eta\zeta(s))=f(\eta^2\zeta(s))=f(\zeta s)$).
\begin{itemize}
\item If $\gamma\in\mathbb Z$, cases (a) and (b) of Proposition~\ref{prop:decoupling} state that $D(s)=2^{-\gamma}P(\y(s))/Q(\y(s))$ is a function of $\y(s)$. Since $\varphi_1(s)=\phi_1(\y(s))$ is also a function of $\y(s)$ we deduce that $\varphi_1/D$ is a function of $\y(s)$
and is therefore $\eta$-invariant ($\y$ is $\eta$-invariant, see \eqref{eq:inv}). By~\eqref{eq:Dphi1per} $\varphi_1/D$ is also $1$-periodic and is therefore $\zeta$-invariant. It is then a Type II invariant.
\item If $\gamma\notin \mathbb Z$, $D$ is \textbf{not} a function of $\y(s)$ on  $\mathbb{C}$ because we have $\varepsilon\ne 0$ in the decoupling formula~\eqref{eq:formedecoupl} of Proposition~\ref{prop:decoupling}, see Remark~\ref{rem:gammanotZ} below. In this case we have
$$D(s)=2^{-\gamma}\frac{P\big(\y(s)\big)}{Q\big(\y(s)\big)}\left[\sqrt{2}(s-s_+)\right]^\varepsilon$$ 
where $\varepsilon=\pm 1$ is given in Proposition~\ref{prop:decoupling}. However, since $\eta (s_{+})=s_{+}$, a simple calculation using~\eqref{eq:xyspm} leads to
$$\y(s)-\y(s_+)=2\left(s-s_{+} \right)^2 .$$
Then $(\varphi_1/D)^2$ is a function of $\y(s)$ and thus is $\eta$-invariant and also $1$-periodic by~\eqref{eq:Dphi1per}. The function $(\varphi_1/D)^2$ is therefore $\zeta$-invariant and is then a Type II invariant.
\end{itemize}
Finally, $\varphi_1/D$ (and $(\varphi_1/D)^2$) grows at most polynomially when $s\to\infty$ in the strips $\overline{\mathfrak{B}}_1$ and $\overline{\mathfrak{B}}_2$ since $D(s)\sim s^{2\gamma}$ by \eqref{eq:asymptotic} and $\varphi_1(s)\to 0$ by \eqref{eq:lim0B}. It has also a finite number of poles in these strips (which are all real), see Lemma~\ref{lemma:stripe} and the definition of $D$. The assumptions of the invariant lemmas (Propositions~\ref{lemma:periodic} and~\ref{lemma:inv}) are thus satisfied.
\end{proof}

\begin{remark}[$\varphi_1/D$ is not a Type II invariant when $\gamma\notin \mathbb{Z}$]\label{rem:gammanotZ}
When $\varepsilon\ne 0$ in~\eqref{eq:formedecoupl}, %
$\varphi_1/D$ is not an $\langle \eta,\zeta \rangle$-invariant. This comes from the fact that $(s-s_+)$ is not $\eta$-invariant since $$\eta( s-s_+)=-(s-s_+)\neq (s-s_+).$$  
Note that it is true that for all $s\in\mathfrak{B}_2$,
$$\sqrt{2}(s-s_+)=\sqrt{\y(s)-\y(s_+)}.$$
However, this formula is not well-defined on a neighborhood of $\overline{\mathfrak{B}}_2$ due to the cut of the square root on $(-\infty,0)$. Indeed for $s\in d_+=s_++i\mathbb{R}$ we have $\y(s)-\y(s_+)=2\left(s-s_{+} \right)^2\leqslant 0$. Therefore, we cannot deduce that $(s-s_+)$ is $\eta$-invariant even if it can be expressed as a function of $\y$ in $\mathfrak{B}_2$ (this domain is nowhere stable by $\eta$ since $\eta \mathfrak{B}_2 \cap \mathfrak{B}_2 =\varnothing$). 
\end{remark}

We can now give a concise proof of Theorem~\ref{thm:main}, stated in Section~\ref{subsec:mainres}.
\begin{proof}[Proof of Theorem~\ref{thm:main}]
According to Lemma~\ref{lemma:invariantDphi}, $\varphi_1/D$ is a Type I invariant. Moreover, both $\varphi_1$ and $1/D$ are meromorphic on a neighborhood of $\overline{\mathfrak{B}}_1$ (see Theorem~\ref{thm:prolong}), have finitely many poles in this strip and grow at most polynomially ($\varphi_1$ tends to $0$ and $D(s)\sim s^{2\gamma}$ according to Lemma~\ref{lemma:asymptotic}). Hence, $\varphi_1/D$ satisfies all the conditions of the Type I invariant lemma (Proposition~\ref{lemma:periodic}), and therefore $\varphi_1/D=F_1(\wb)$ with $F_1\in \mathbb C(X)$. The rest of the proof consists of writing $y=\y(s)$ and applying Equation~\eqref{eq:varphitophi} to obtain $\phi_1(y)$ from $\varphi_1(s)$.
\end{proof}

\section{Explicit expression of the Laplace transform}\label{sec:Laplace}
The strategy of this section is to express the unknown invariants of Section~\ref{subsec:unknowninv} in terms of the canonical invariants $\wb$ and $\w$ using the Invariant Lemmas of Propositions~\ref{lemma:periodic} and~\ref{lemma:inv}.
This will enable us to state and proove our main theorems~\ref{thm:N},~\ref{thm:-N}, \ref{thm:notN} and~\ref{thm:general} that compute explicitly the Laplace transform $\phi_1$ whether under rational decoupling condition~\eqref{eq:decouplcond} or not. Symmetric formulas hold for $\phi_2$ and $\phi$ is obtained using the functional equation~\eqref{eq:FE}.

\begin{remark}[Normalization constants and notation] 
In order to avoid carrying around constants whose expression is of little importance at this stage, in the following theorems we will give expressions of the Laplace transform $\phi_1$ \textit{up to a multiplicative constant}, denoted using the proportionality symbol $\propto$ instead of the equality sign.
One can easily compute the proportionality constants in Theorems~\ref{thm:N},~\ref{thm:-N}, \ref{thm:notN} and~\ref{thm:general} thanks to the value of $\phi_1(0)$ and $\phi_2(0)$ given in~\eqref{eq:normcst}. 
\end{remark}

\begin{remark}[Meromorphic functions on $\mathbb{C}$]\label{rem:sinc}The expressions for the Laplace transforms given in Theorems~\ref{thm:-N} and~\ref{thm:notN} involve (up to pre-composition with an affine function) the following functions:
\begin{equation}
f_1(z)=\cos(\sqrt{z}),\quad f_2(z):=\frac{\sin(\sqrt{z})}{\sqrt{z}},\quad f_3(z):=\frac{\tan(\sqrt{z})}{\sqrt{z}}.
\end{equation}
At first glance, these functions may not appear to be well-defined for negative real values of $z$. However, they are actually meromorphic on the entire complex plane and even holomorphic in the case of the first two. 
The functions $\cos(z)$, $\sin(z)$ are holomorphic on $\mathbb{C}$, and admit Taylor expansions near $z = 0$. It implies that
\[
f_1(z) = \cos(\sqrt{z}) = \sum_{n=0}^\infty \frac{(-1)^n z^n}{(2n)!},
\quad
f_2(z) = \frac{\sin(\sqrt{z})}{\sqrt{z}} = \sum_{n=0}^\infty \frac{(-1)^n z^n}{(2n+1)!},
\]
which are power series in $z$, convergent on all of $\mathbb{C}$, so $f_1$ and $f_2$ are holomorphic on $\mathbb{C}$ and $f_3=f_2/f_1$ is meromorphic on $\mathbb{C}$.
\end{remark}

\subsection{Cases where $\gamma\in\mathbb{Z}$}

The following theorem generalizes a result on a \textit{skew symmetric} case ($\gamma=-1$) by Ichiba and Karatzas~\cite{ichiba_karatzas_degenerate_22}.

\begin{theorem}[Laplace transform, $\gamma\in-\mathbb N$]\label{thm:N} If $\gamma \in-\mathbb N$, then there exists a degree $-\gamma$ polynomial $Q\in\mathbb R[X]$ such that the Laplace transform $\phi_1$ satisfies
$$\phi_1(y)\propto\frac{1}{Q(y)}.$$
Here, the polynomial $Q$ is given by
$$Q(y):=\prod_{k=0}^{-\gamma-1}\Big(y-\y(s_1-k)\Big).$$
\end{theorem}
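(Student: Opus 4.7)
The strategy is to show that $\varphi_1/D$ is identically constant using the Type II invariant lemma; the explicit expression of $\phi_1$ then follows immediately by inverting the relation $\phi_1(y)=\varphi_1(s)$. Since $\gamma\in -\mathbb N$, Proposition~\ref{prop:decoupling}(a) gives the explicit rational form
\[
D(s)=\frac{2^{-\gamma}}{\prod_{k=0}^{-\gamma-1}\bigl(\y(s)-\y(s_1-k)\bigr)}=\frac{1}{\prod_{k=0}^{-\gamma-1}\bigl(s-(s_1-k)\bigr)\bigl(s-\eta(s_1-k)\bigr)},
\]
so that $D(s)=2^{-\gamma}/Q(\y(s))$ with $Q$ the polynomial in the statement. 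By Lemma~\ref{lemma:invariantDphi}, the quotient $\varphi_1/D$ is a Type II invariant with polynomial growth at infinity in $\overline{\mathfrak{B}}_2$, so it remains to verify that $\varphi_1/D$ is holomorphic on a neighborhood of $\overline{\mathfrak{B}}_2$, in order to apply item~1 of Proposition~\ref{lemma:inv}.

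\textbf{Pole analysis.} I would establish this holomorphy by matching the at-most-one pole of $\varphi_1$ against the explicit poles of $D$. By Lemma~\ref{lemma:stripe}(2), the only possible pole of $\varphi_1$ in $\overline{\mathfrak{B}}_1\supset\overline{\mathfrak{B}}_2$ is a simple pole at $s_1$, occurring exactly when $-\mu_2\leqslant s_1<0$. In the denominator of $D$ displayed above, the factor corresponding to $k=0$ is precisely $(s-s_1)(s-\eta(s_1))$, and since the case $\eta(s_1)=s_1$ (i.e.\ $s_1=s_+>0$) is excluded in this regime, $D$ has a simple pole at $s_1$. The two simple poles cancel, leaving $\varphi_1/D$ analytic at $s_1$. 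Every other pole of $D$ in $\overline{\mathfrak{B}}_2$ occurs at a point $s_1-k$ or $\eta(s_1-k)$ with $k\geqslant 1$; one checks using $s_1\in[-\mu_2/2,0)$ in this case that such points lie outside $\overline{\mathfrak{B}}_2$, and in edge cases where they do lie inside $\overline{\mathfrak{B}}_2$, $\varphi_1$ is analytic there, so these points contribute zeros of $\varphi_1/D$, not poles. Polynomial growth at infinity follows from $\varphi_1(s)\to 0$ in $\overline{\mathfrak{B}}_1$ (Lemma~\ref{lemma:stripe}(3)) together with $D(s)\sim s^{2\gamma}$ (Lemma~\ref{lemma:asymptotic}), which yields $|\varphi_1/D|=o(|s|^{-2\gamma})$.

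\textbf{Conclusion and main obstacle.} Item~1 of Proposition~\ref{lemma:inv} then forces $\varphi_1/D$ to be a constant $c\in\mathbb C$, so $\varphi_1(s)=c\,D(s)=c\,2^{-\gamma}/Q(\y(s))$. Writing $y=\y(s)$ and recalling that $\phi_1(y)=\varphi_1(s)$ gives $\phi_1(y)\propto 1/Q(y)$, as claimed. The main obstacle is the pole-cancellation step: one must verify case by case, depending on whether $s_1$ lies in $[-\mu_2/2,0)$, in $[-\mu_2,-\mu_2/2)$, or outside $[-\mu_2,0)$, that the unique potential simple pole of $\varphi_1$ at $s_1$ in $\overline{\mathfrak{B}}_1$ is either absent in $\overline{\mathfrak{B}}_2$ or exactly absorbed by the corresponding factor of $D$, and that no additional poles of $\varphi_1/D$ arise from the remaining factors in the denominator of $D$ (including the edge cases where factors coincide at the branch points $s_+$ or $s_-$, where $D$ acquires a double pole but $\varphi_1$ is analytic, so the quotient still only inherits zeros).
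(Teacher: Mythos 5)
Your proof is correct and takes the same route as the paper: show $\varphi_1/D$ is a Type~II invariant, verify the hypotheses of item~1 of Proposition~\ref{lemma:inv}, conclude it is constant, and invert. The one place where you work harder than needed is the holomorphicity step, which you frame as the ``main obstacle'' and resolve by tracking poles of $D$ in $\overline{\mathfrak{B}}_2$ and checking where they fall relative to $s_1$, $s_-$, $s_+$. The paper's argument is shorter because it starts from the observation that $1/D(s)=2^{\gamma}Q(\y(s))$ is a \emph{polynomial} in $s$, hence entire; consequently $\varphi_1/D=\varphi_1\cdot(1/D)$ is a product of $\varphi_1$ (which has at most one simple pole, at $s_1$, in $\overline{\mathfrak{B}}_2$) with an entire function that vanishes at $s_1$ (the $k=0$ factor of $Q(\y(s))$). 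The product is therefore holomorphic with no case analysis at all: poles of $D$ are irrelevant to the holomorphicity of the quotient because they can only produce \emph{zeros} of $\varphi_1/D$, never poles. Your edge-case discussion (double factors at $s_\pm$, positions of $s_1-k$ and $\eta(s_1-k)$) is correct but unnecessary once this is noticed.
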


\begin{proof} 
Recall by Proposition~\ref{prop:decoupling} that $D(s)=2^{-\gamma}/Q(\y (s))$, and by Lemma \ref{lemma:invariantDphi} $\varphi_1/D$ is a Type II invariant. By virtue of the third point of Lemma~\ref{lemma:stripe}, $\varphi_1/D$ is bounded by a polynomial at infinity. According to the second item of the same lemma, $\varphi_1$ is meromorphic and has (at most) a simple pole in $\overline{\mathfrak{B}}_2$ at $s_1$ when $s_1\in \overline{\mathfrak{B}}_2$. Since $1/D(s)=2^{\gamma}Q(\y (s))$ is a polynomial which has a root at $s_1$, we deduce that $\varphi_1/D$ is holomorphic on an neighborhood of $\overline{\mathfrak{B}}_2$. We can thus apply the Type II Invariant Lemma (Proposition~\ref{lemma:inv}), we deduce there exists a constant $c\in\mathbb R$ such that 
$$\varphi_1/D\equiv c$$
which completes the proof.
\end{proof}

The next theorem is a generalization of the one of Franceschi et al.~\cite{FIKR23+} where $\gamma=3$.

\begin{theorem}[Laplace transform, $\gamma \in \mathbb N$]\label{thm:-N} If $\gamma \in\mathbb N$, then there exists a degree $\gamma$ polynomial $P\in\mathbb R[X]$ such that the Laplace transform $\phi_1$ satisfies 
$$\phi_1(y)\propto\frac{P(y)}{\cos(\pi\sqrt{2y+\mu_1^2})-\cos(\pi\gamma_1)}.$$
Here, the polynomial $P$ is given by 
$$P(y):=\prod_{k=1}^{\gamma}\Big(y-\y(s_1+k)\Big).$$
\end{theorem}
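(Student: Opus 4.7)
The plan is to apply the Type II Invariant Lemma (Proposition~\ref{lemma:inv}) to $\varphi_1/D$, along the same lines as the proof of Theorem~\ref{thm:N}, but with the novelty that here $\varphi_1/D$ carries genuine poles and the resulting rational function in $\w$ is non-constant. By Proposition~\ref{prop:decoupling}(b), since $\gamma\in\mathbb{N}$, the decoupling function takes the polynomial form $D(s)=2^{-\gamma}P(\y(s))$ with $P$ as in the statement. Lemma~\ref{lemma:invariantDphi} then guarantees that $\varphi_1/D$ is a Type II invariant with at most polynomial growth in $\overline{\mathfrak{B}}_2$, so Proposition~\ref{lemma:inv} produces an $F\in\mathbb{C}(X)$ with $\varphi_1/D=F(\w)$.

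The heart of the proof is a pole analysis in $\overline{\mathfrak{B}}_2$. By Lemma~\ref{lemma:stripe}, $\varphi_1$ has no zeros in $\Delta$ and at most a single simple pole at $s_1$ in $\overline{\mathfrak{B}}_1$. The $2\gamma$ zeros of $D$, located at $s_1+k$ and $\eta(s_1+k)$ for $k=1,\dots,\gamma$, all lie in the single $\langle\eta,\zeta\rangle$-orbit of $s_1$ (using $\eta\circ\zeta=\mathrm{id}+1$). Because $\varphi_1/D$ is $\langle\eta,\zeta\rangle$-invariant, all of these local contributions collapse into a single pole of $F$ at $\omega_0:=\w(s_1)$. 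This pole is simple: near $s_1$ the pole of $\varphi_1$ is simple while $D(s_1)\neq 0$ (since $s_1\neq s_1+k$ for $k\geq 1$). A direct check at the boundary fixed points $s_{\pm}$ shows that no extra $(\omega\mp 1)^{-j}$ term appears in the expansion~\eqref{eq:formuleF}, so the principal parts reduce to a single simple pole.

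To pin down $F$ completely, we use the behavior at infinity. Lemma~\ref{lemma:stripe}, item~3, gives $\varphi_1(s)\to 0$ as $|s|\to\infty$ in $\overline{\mathfrak{B}}_1$, while Lemma~\ref{lemma:asymptotic} gives $D(s)\sim s^{2\gamma}$, so $(\varphi_1/D)(s)\to 0$ in $\overline{\mathfrak{B}}_2$. Since $|\w(s)|\to\infty$ in that strip, we must have $F(\omega)\to 0$ at infinity, which kills the constant term and the polynomial part of the expansion~\eqref{eq:formuleF}. Combined with the pole analysis, this forces $F(\omega)=c/(\omega-\omega_0)$ for some nonzero constant $c\in\mathbb{C}$.

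It only remains to translate back to the variable $y$. From $\y(s)=2s(s-\mu_1)$ one gets $(2s-\mu_1)^2=2y+\mu_1^2$, and from $\mu_1+\mu_2=1$ we have $\w(s)=\cos(2\pi s+\pi\mu_2)=-\cos(\pi(2s-\mu_1))$, hence $\w(s_1)=-\cos(\pi\gamma_1)$ with $\gamma_1=\mu_1-2s_1$. Therefore
$$\w(s)-\w(s_1)=-\bigl[\cos(\pi\sqrt{2y+\mu_1^2})-\cos(\pi\gamma_1)\bigr],$$
and plugging in $D(s)=2^{-\gamma}P(y)$ yields $\phi_1(y)\propto P(y)/[\cos(\pi\sqrt{2y+\mu_1^2})-\cos(\pi\gamma_1)]$, as claimed. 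The principal technical obstacle is the pole bookkeeping of the second paragraph: one must verify rigorously that the $\gamma$ zeros of $D$ lying in the $\langle\eta,\zeta\rangle$-orbit of $s_1$ neither over- nor under-cancel the simple pole of $\varphi_1$, and that the boundary points $s_{\pm}$ do not secretly contribute additional principal parts.
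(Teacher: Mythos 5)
Your overall strategy matches the paper's: invoke Proposition~\ref{prop:decoupling}(b) for the rational decoupling $D$, Lemma~\ref{lemma:invariantDphi} to see that $\varphi_1/D$ is a Type~II invariant, the Type~II invariant lemma to write $\varphi_1/D=F(\w)$, the vanishing of $\varphi_1/D$ at infinity to kill the constant and polynomial part of~$F$, and the trigonometric identity $\w(s)=-\cos(\pi\sqrt{2\y(s)+\mu_1^2})$ with $\w(s_1)=-\cos(\pi\gamma_1)$ to land on the stated formula. That part is all fine and the translation to the $y$-variable checks out.

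However, the pole bookkeeping that you flag as the main obstacle contains a concrete error. You assert that the pole of $\varphi_1/D$ at $s_1$ is simple because ``$D(s_1)\neq 0$ (since $s_1\neq s_1+k$ for $k\geq1$).'' This only rules out $s_1$ being among the roots $s_1+k$, but $D(s)=2^{-\gamma}P(\y(s))$ also vanishes at the $\eta$-conjugates $\eta(s_1+k)$, and the equation $s_1=\eta(s_1+k)$ reduces to $\gamma_1=k$. In particular when $\gamma_1=1$ (a case that is allowed: it corresponds to $s_1=s_-$, which lies in $\overline{\mathfrak B}_2$) one has $\eta(s_1+1)=s_1$, so $D(s_1)=0$ and $\varphi_1/D$ has a \emph{double} pole at $s_-$, not a simple one. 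Your ``direct check at $s_\pm$'' would therefore fail to return the conclusion you state. What saves the final formula is that a double pole at $s_\pm$ still corresponds to a simple pole of $F$ at $\w(s_\pm)=\pm1$ via the Laurent expansion in~\eqref{eq:formuleF}, because $\w-\w(s_\pm)$ has a double zero there — but that argument is precisely the case analysis you elide. The paper works through the cases $\gamma_1=1$, $\gamma_1\in(0,1)$, $1<\gamma_1<2\gamma+1$ (integer and non-integer), determining in each one which point of the orbit lies in $\overline{\mathfrak B}_2$ and with what multiplicity, and only then concludes that $F$ has a single simple pole at $-\cos(\pi\gamma_1)$. Your ``collapse into a single pole'' intuition is correct, but replacing that intuition by a proof requires exactly this case-by-case check, and the specific claim $D(s_1)\neq0$ is false in one of those cases.
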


\begin{proof}
From Lemma~\ref{lemma:invariantDphi} we know that $\varphi_1/D$ is a Type II Invariant and that we can apply the invariant lemma of Proposition~\ref{lemma:inv}. To do so, we study the poles of $\varphi_1/D$ in $\overline{\mathfrak{B}}_2$. Recall from Proposition~\ref{prop:decoupling} that $D(s)=2^{-\gamma} P(\y(s))$, the zeros of $D$ being the $s_1+k$ and $\eta(s_1+k)$ for $k=1,\dots,\gamma$. We claim that the function $\varphi_1/D$ has exactly one pole in $\overline{\mathfrak{B}}_2$ which we denote $\widetilde{s}$ and whose position and multiplicity only depends on the value of $\gamma_1$ defined at~\eqref{eq:gamma}:
\begin{itemize}
\item if $\gamma_1=1$, then $s_1=s_-$ and $\varphi_1$ has exactly one simple pole in $\overline{\mathfrak{B}}_2$ at $s_-$ (see Lemma~\ref{lemma:stripe}). On the other hand $\eta(s_1)=\eta(s_{-})=\frac{\mu_2}{2}+\mu_1=-\frac{\mu_2}{2}+1=s_1+1$, so that $D$ also has a simple zero at $s_-$, resulting in a double pole for $\varphi_1/D$ at $\widetilde{s}:=s_-$,
\item if $\gamma_1\in (0,1)$, then $s_1\in (s_-,s_+)$ and $\varphi_1$ has exactly one simple pole in $\overline{\mathfrak{B}}_2$ at $\widetilde{s}:=s_1$ (see Lemma~\ref{lemma:stripe}). All the poles of $1/D$ lie outside $\overline{\mathfrak{B}}_2$. More precisely, the points $s_1+k$ all lie to the right of $s_+$ and the $\eta(s_1+k)$ all lie to the left of $s_-$. 
\item if $1<\gamma_1<2\gamma_1+1$ then $\varphi_1$ has no poles in $\overline{\mathfrak{B}}_2$. The zeros of $D$ belong to the stripe $\overline{\mathfrak{B}}_2$ according to the following conditions:
$$
\begin{array}{ccccc}
\displaystyle s_1+ k \in \overline{\mathfrak{B}}_2 &\iff&
\displaystyle-\frac{\mu_2}{2}-s_1\leq  k\leq   \frac{\mu_1}{2}-s_1&\iff&\displaystyle  \frac{\gamma_1-1}{2}\leqslant k \leqslant \frac{\gamma_1}{2}  \\[0.3cm]
\eta(s_1-k)\in \overline{\mathfrak{B}}_2&\iff&\displaystyle-\frac{\mu_1}{2}-s_1+\mu_1\leq  k \leq   \frac{\mu_2}{2}-s_1+\mu_1 &\iff&\displaystyle \frac{\gamma_1}{2}\leqslant k \leqslant \frac{\gamma_1+1}{2}.
\end{array}
$$
One can then easily check that if $\gamma_1$ is an odd (\textit{resp.} even) integer, then $\varphi_1/D$ has a double pole at $\widetilde{s}:=s_-$ (\textit{resp}. $\widetilde{s}:=s_+$) and if there exists $k\in\mathbb Z$ such that $2k<\gamma_1<2k+1$ (\textit{resp}. $2k-1<\gamma_1<2k$) then $\varphi_1/D$ has a simple pole at $\widetilde{s}:=s_1+\lfloor\gamma_1/2\rfloor$ (\textit{resp}. $\widetilde{s}:=\eta(s_1+\lceil \gamma_1/2\rceil)$).
\item otherwise, if $\gamma_1\geqslant 2\gamma+1$ or $\gamma_1\leqslant 0$ we cannot have $\gamma>0$, and this case is therefore incompatible with the hypothesis of the theorem. Indeed, if $\gamma_1< 0$, then by~\eqref{eq4}, $\gamma_2=2\gamma+1-\gamma_1>0$ which leads to the impossible case 3 of Lemma~\ref{lemma:impossible}, and if $\gamma_1\geqslant 2\gamma+1$, then $\gamma_2<0$, which correspond to the impossible case 4 of Lemma~\ref{lemma:impossible}. Note that by case 1 of Lemma~\ref{lemma:impossible}, $\gamma_1\neq 0$.
\end{itemize}
We now apply the Type II Invariant Lemma (Proposition~\ref{lemma:inv}): there exists $c\in\mathbb R$ and $\widetilde{c}\neq 0$ such that
$$\frac{\varphi_1}{D}=c+\frac{\widetilde{c}}{\w-\w(\widetilde{s})}$$
(to match the notations of the invariant lemma, one must set $\widetilde{c}:=c_{\pm,1}$ and $m_\pm=1$ when $\widetilde{s}\in\{s_-,s_+\}$, and $\widetilde{c}:=c_{1,1}$ otherwise). Taking the limit as $s\to +\infty$ for $s\in \overline{\mathfrak{B}}_2$, we have $\varphi_1(s)\to 0$ by~\eqref{eq:lim0B}, $D(s)\to\infty$ by \eqref{eq:asymptotic} and $\w(s)\to \infty$, which ensures $c=0$. We deduce that,
\begin{equation}\label{eq:proof44}
\phi_1(\y(s))=\varphi_1(s)\propto \frac{P(\y(s))}{\w(s)-\w(\widetilde{s})} .
\end{equation}
Note that, in any case, 
$$\w(\widetilde{s})=-\cos(\pi \gamma_1).$$
Indeed, $\widetilde{s}$ is either of the form $s_1+k$ or $\eta(s_1+k)$ for some integer $k$. In the first case 
$$\w(s_1+k)=\w(s_1)=\cos(2\pi(s_1-s_-))=\cos(\pi(2s_1+\mu_2))=\cos(\pi(-\gamma_1+1))=-\cos(\pi\gamma_1),$$
and in the second case
$$\w(\eta(s_1+k))=\cos(2\pi(-(s_1+k)+\mu_1-s_-))=\cos(\pi(\gamma_1-2k+1))=-\cos(\pi\gamma_1).$$
Similarly, using the relations $s=\frac{1}{2}\Big(\mu_1\pm\sqrt{2\y(s)+\mu_1^2}\Big)$ and $\mu_1+\mu_2=1$,
\begin{align*}
\w(s)=\cos\Big(2\pi(s-s_-)\Big)=\cos\left(\pi\Big(\mu_1\pm\sqrt{2\y(s)+\mu_1^2}+\mu_2\Big)\right)=-\cos\left(\pi\sqrt{2\y(s)+\mu_1^2}\right)
\end{align*}
which yields the desired result.
\end{proof}

\subsection{Cases where $\gamma_1,\gamma_2\in\mathbb{Z}$}
Recall that $\w(s)=\cos\big(2\pi(s-s_-)\big)$. First, we state some relations that will be used several times in the rest of the paper. The trigonometric function $\w$ satisfies the so-called double-angle formulas:
\begin{equation}\label{rem2}
\w(s)+1 =2\cos^2\left(\pi(s-s_-)\right), \quad \w(s)-1 =-2\sin^2\left(\pi(s-s_-)\right).
\end{equation}

The following lemma is a technical result that will allow us, during the proofs of the main theorems, to choose the correct expression for $\varphi_1(s)$ from two alternatives. 
\begin{lemma}\label{lemma:sqrt}
Let $a,b\in\mathbb R$. If the function $a\w+b$ admits a meromorphic square root (\textit{i.e.} there exists a meromorphic function $f$ such that $f^2=a\w+b$) then $a=0$ or $a^2=b^2$.
\end{lemma}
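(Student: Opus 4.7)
The plan is to prove the contrapositive: if $a \neq 0$ and $a^2 \neq b^2$, then the function $a\w + b$ has a simple zero, which forbids the existence of a meromorphic square root, since $f^2 = g$ forces every zero of $g$ to have even order.

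First I would observe that $\w(s) = \cos\bigl(2\pi(s-s_-)\bigr)$ is entire, so $g(s) := a\w(s)+b$ is entire and its zero set coincides with the solutions of $\cos\bigl(2\pi(s-s_-)\bigr) = -b/a$. Since the complex cosine is surjective onto $\mathbb{C}$, such an $s_0 \in \mathbb{C}$ exists; note that when $|b/a|>1$ this $s_0$ is necessarily non-real, which is why the argument has to be carried out in $\mathbb{C}$ rather than on the real line.

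Next I would compute $g'(s_0) = -2\pi a \sin\bigl(2\pi(s_0-s_-)\bigr)$ and invoke the Pythagorean identity $\sin^2(z) + \cos^2(z) = 1$ to get
\[
\sin^2\bigl(2\pi(s_0-s_-)\bigr) \;=\; 1 - \frac{b^2}{a^2} \;\neq\; 0,
\]
so $g'(s_0)\neq 0$, meaning $s_0$ is a simple zero of $g$. Then, if there were a meromorphic $f$ with $f^2 = g$, one would have $\mathrm{ord}_{s_0}(g) = 2\,\mathrm{ord}_{s_0}(f)$, which is an even integer, contradicting $\mathrm{ord}_{s_0}(g) = 1$. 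Hence either $a=0$ or $a^2 = b^2$.

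There is no real obstacle; the only subtlety is remembering that $s_0$ need not be real, and that cosine is surjective on $\mathbb{C}$ (which is elementary, e.g.\ solving $e^{iz}+e^{-iz}=-2b/a$ as a quadratic in $e^{iz}$). One could also remark that in the excluded cases $a^2=b^2$, a square root does exist, via the double-angle formulas~\eqref{rem2} recalled just above: $a(\w+1) = 2a\cos^2(\pi(s-s_-))$ and $a(\w-1) = -2a\sin^2(\pi(s-s_-))$ are both perfect squares of meromorphic (in fact entire) functions, showing that the converse holds and the statement is sharp.
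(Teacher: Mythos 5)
Your proof is correct and follows essentially the same argument as the paper's: both use the surjectivity of the complex cosine to locate a zero $s_0$ of $a\w+b$, then combine the derivative with the Pythagorean identity to relate the multiplicity of that zero to the condition $a^2=b^2$. You phrase it contrapositively (directly exhibiting a simple zero when $a^2\neq b^2$) while the paper argues directly (forcing $g'(s_0)=0$ and solving for $a,b$), but the content is the same; your closing remark that the double-angle formulas realize the square root in the boundary cases $a^2=b^2$ is a nice check of sharpness not spelled out in the paper.
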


\begin{proof} Assume that $a\w+b$ admits a meromorphic square root $f$.
Suppose $a\ne 0$ and let us prove that $a^2=b^2$. Then $a\w +b$ is surjective (because the cosinus is) and there exists $s_0\in\mathbb C$ such that $f(s_0)^2=a\w(s_0)+b=0$. Since $a\w+b$ can be written as the square of a meromorphic function, all its zeros (and poles) must have even multiplicities. In particular, 
$$0=\frac{\mathrm{d}}{\mathrm{d}s}\Big(a\w(s)+b\Big)\Big|_{s=s_0}=2\pi a \sin\left(2\pi\left(s_0-s_-\right)\right).$$
Finally, by the Pythagorean identity
$$1=\cos^2\left(2\pi\left(s_0-s_-\right)\right)+\sin^2\left(2\pi\left(s_0-s_-\right)\right)=\cos^2\left(2\pi\left(s_0-s_-\right)\right).$$
so that $\w (s_0)^2=\cos^2\left(2\pi\left(s_0-s_-\right)\right)=\pm 1$. 
We obtain that
$0=f^2(s_0)=\pm a+b$, 
leading to $a^2=b^2$. 
\end{proof}

The next theorem deals with  
the cases where $\gamma_1,\gamma_2\in\mathbb{Z}$ and $\gamma\notin \mathbb{Z}$ so that $\gamma_1,\gamma_2$ cannot be simultaneously positive integers (Lemma~\ref{lemma:impossible}) and $\gamma_1$ and $\gamma_2$ are either both even or both odd (Remark~\ref{rem:gamma}).

\begin{theorem}[Laplace transform, $\{\gamma_1,\gamma_2\}\subset\mathbb Z$]\label{thm:notN} Suppose $\{\gamma_1,\gamma_2\}\subset \mathbb Z$ and $\gamma\notin \mathbb Z$.
\begin{enumerate}
\item If \textbf{either} $\gamma_1\in-2\mathbb{N}$ and $\gamma_2\in 2\mathbb{N}$ \textbf{or} $\gamma_1\in2\mathbb{N}-1$ and $\gamma_2\in -2\mathbb{N}+1$, 
then there exists polynomials $P$ and $Q$ such that
$$\phi_1(y)\propto \frac{P(y){\tan}\left(\frac{\pi}{2}\sqrt{2y+\mu_1^2}\right)}{Q(y)\sqrt{{2}y+\mu_1^2}}.$$
Here, $P$ and $Q$ are given by
$$P(y):=\prod_{k=0}^{\frac{\gamma_2}{2}-1}\Big(y-\y(s_2-k)\Big)\text{ and }Q(y):=\prod_{k=0}^{-\frac{\gamma_1}{2}-1}\Big(y-\y(s_1-k)\Big)$$
in the former case, and by
$$P(y):=\prod_{k=1}^{\frac{\gamma_1-1}{2}}\Big(y-\y(s_1+k)\Big)\text{ and }Q(y):=\prod_{k=1}^{-\frac{\gamma_2+1}{2}}\Big(y-\y(s_2+k)\Big)$$
in the latter.
\item If \textbf{either} $\gamma_1\in2\mathbb{N}$ and $\gamma_2\in -2\mathbb{N}$ \textbf{or} $\gamma_1\in -2\mathbb{N}+1$ and $\gamma_2\in 2\mathbb{N}-1$,
then there exists polynomials $P$ and $Q$ such that the Laplace transform $\phi_1$ satisfies
$$\phi_1(y)\propto \frac{P(y)\sqrt{2y+\mu_1^2}}{Q(y)\tan\left(\frac{\pi}{2}\sqrt{2y+\mu_1^2}\right)}.$$
Here, $P$ and $Q$ are given by
$$P(y):=\prod_{k=1}^{\frac{\gamma_1}{2}-1}\Big(y-\y(s_1+k)\Big)\text{ and }Q(y):=\prod_{k=1}^{-\frac{\gamma_2}{2}} \Big(y-\y(s_2+k)\Big)$$
in the former case, and by
$$P(y):=\prod_{k=0}^{\frac{\gamma_2+1}{2}-1}\Big(y-\y(s_2-k)\Big)\text{ and }Q(y):=\prod_{k=0}^{-\frac{\gamma_1+1}{2}}\Big(y-\y(s_1-k)\Big)$$
in the latter.
\item If $\gamma_1\in\mathbb{N}$ and $\gamma_2\in\mathbb{N}$, then there exists a polynomial $P\in\mathbb R[X]$ such that the Laplace transform $\phi_1$ satisfies
$$\phi_1(y)\propto\frac{P(y)\sqrt{2y+\mu_1^2}}{\mathrm{sin}\left(\pi\sqrt{2y+\mu_1^2}\right)}.$$
Here, the polynomial $P$ is given by
\begin{equation}\label{eq:decouplinggamma1gamma2}
P(y):=\prod_{k=1}^{\lfloor\frac{\gamma_1-1}{2}\rfloor} \Big(y-\y(s_1+k)\Big)\prod_{k=0}^{\lfloor\frac{\gamma_2}{2}-1\rfloor} \Big(y-\y(s_2-k)\Big).
\end{equation}
\end{enumerate}
\end{theorem}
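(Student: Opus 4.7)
The plan is to apply the Type II Invariant Lemma (Proposition~\ref{lemma:inv}) to the unknown invariant $(\varphi_1/D)^2$, which by Lemma~\ref{lemma:invariantDphi} is a Type II invariant with at most polynomial growth in $\overline{\mathfrak{B}}_2$ (since $\varphi_1(s)\to 0$ faster than any polynomial as $|s|\to\infty$ and $D(s)\sim s^{2\gamma}$ by Lemma~\ref{lemma:asymptotic}). The three sub-cases of the theorem exhaust all admissible sign and parity configurations of $(\gamma_1,\gamma_2)\in\mathbb Z^2$, once the forbidden patterns of Lemma~\ref{lemma:impossible} and the common parity constraint from Remark~\ref{rem:gamma} are imposed.

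The first step is to locate the poles and zeros of $\varphi_1/D$ inside $\overline{\mathfrak{B}}_2$. Lemma~\ref{lemma:stripe} implies that $\varphi_1$ is holomorphic and zero-free on $\overline{\mathfrak{B}}_2$ except possibly for a simple pole at $s_1$ when $s_1\in[-\mu_2,0)\cap\overline{\mathfrak{B}}_2$. The explicit forms~\eqref{eq:resulting1}--\eqref{eq:resulting3} of $D$ given by Proposition~\ref{prop:decoupling}(c)--(e) are products of linear factors $s-(s_1\pm k)$ and $s-(s_2\pm k)$. Using $\gamma_1=\mu_1-2s_1$, $\gamma_2=\mu_2+2s_2$ together with the common parity assumption and the fact that $s_+-s_-=\tfrac12$, a direct bookkeeping shows that out of all these factors exactly one zero and/or one pole of $D$ falls inside $\overline{\mathfrak{B}}_2$, and this singularity is always located at $s_-$ or $s_+$. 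Concretely: in sub-cases~1 and~2, $(\varphi_1/D)^2$ is holomorphic on $\overline{\mathfrak{B}}_2\setminus\{s_\pm\}$ with a double zero at one of $\{s_-,s_+\}$ and a double pole at the other; in sub-case~3, $(\varphi_1/D)^2$ has a double pole at both $s_-$ and $s_+$.

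Applying Proposition~\ref{lemma:inv} to $(\varphi_1/D)^2$ then yields the partial fraction decomposition
\[
\left(\frac{\varphi_1}{D}\right)^{\!\!2}\;=\;c\;+\;\frac{c_+}{\w+1}\;+\;\frac{c_-}{\w-1},
\]
and the remaining freedom is fixed by two constraints: the vanishing of $(\varphi_1/D)^2$ at infinity in $\overline{\mathfrak{B}}_2$, and the matching of the zeros at $s_-$ or $s_+$ where $\w$ takes the values $1$ and $-1$ respectively. These constraints force the right-hand side to reduce to one of the three canonical shapes $\mathrm{const}\cdot(\w+1)/(\w-1)$, $\mathrm{const}\cdot(\w-1)/(\w+1)$, or $\mathrm{const}/(\w^2-1)$, corresponding respectively to sub-cases~1, 2 and 3. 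That no other linear combination yields a meromorphic square may be confirmed via Lemma~\ref{lemma:sqrt} applied, after clearing the common denominator, to the numerator $c\w+(c_+-c_-)$.

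To conclude, I would extract the square root using the double-angle identities at the fixed point $s_+$ of $\eta$,
\[
\w(s)+1=2\sin^2\bigl(\pi(s-s_+)\bigr),\qquad \w(s)-1=-2\cos^2\bigl(\pi(s-s_+)\bigr),
\]
which give $\sqrt{(\w+1)/(\w-1)}\propto\tan(\pi(s-s_+))$, $\sqrt{(\w-1)/(\w+1)}\propto\cot(\pi(s-s_+))$ and $\sqrt{1/(\w^2-1)}\propto 1/\sin(2\pi(s-s_+))$, the sign of the branch being pinned down by the positivity condition at $y=0$ from~\eqref{eq:normcst}. Multiplying by the residual rational factor of $D$ from Proposition~\ref{prop:decoupling} and substituting $\pi(s-s_+)=\pm\tfrac{\pi}{2}\sqrt{2y+\mu_1^2}$ through~\eqref{eq:varphitophi}, one obtains the three stated expressions. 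The main obstacle is the systematic case analysis for the four parity/sign sub-configurations in sub-cases~1 and~2: one must verify that the factors of $D$ located outside $\overline{\mathfrak{B}}_2$ can be paired under $\eta$ and reassembled into precisely the polynomials $P(y)$ and $Q(y)$ announced in the statement, once the factor responsible for the singularity inside $\overline{\mathfrak{B}}_2$ has been absorbed into the trigonometric prefactor.
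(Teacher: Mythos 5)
Your overall plan coincides with the paper's: Lemma~\ref{lemma:invariantDphi} puts $(\varphi_1/D)^2$ in the scope of the Type II invariant lemma (Proposition~\ref{lemma:inv}), one then counts poles and zeros on $\overline{\mathfrak{B}}_2$, applies the partial-fraction representation~\eqref{eq:formuleF}, pins down the constants, and unwinds the trigonometry via $s=\tfrac12(\mu_1\pm\sqrt{2y+\mu_1^2})$. Your pole/zero bookkeeping is essentially correct, as is the choice to do the double-angle extraction at $s_+$ (which matches~\eqref{rem2} because $s_+-s_-=\tfrac12$). But two steps as written are wrong and need fixing.

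First, the constraint ``vanishing of $(\varphi_1/D)^2$ at infinity in $\overline{\mathfrak{B}}_2$'' is false in sub-cases~1 and~2. In those sub-cases $\gamma_1$ and $\gamma_2$ have opposite signs, so by Lemma~\ref{lemma:impossible} one must have $\gamma<0$; then $D(s)\sim s^{2\gamma}\to 0$ by Lemma~\ref{lemma:asymptotic}, and $\varphi_1/D$ is a $0/0$ indeterminacy at infinity. Indeed the answer $(\w+1)/(\w-1)$ tends to $1$, not $0$, as $\w\to\infty$. If you naively impose vanishing at infinity there, you would force $c=0$, then the zero at the other boundary point would force the residue to vanish, and the whole expression would collapse to zero, contradicting positivity of $\boldsymbol\nu_1$. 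The paper only uses the limit at infinity in sub-case~3, where $\gamma>0$ and $\varphi_1/D$ genuinely tends to $0$. In sub-cases~1 and~2 the paper instead uses Lemma~\ref{lemma:sqrt} plus the zero of $\varphi_1/D$ at the boundary point where $D$ has a pole. Your ``matching of the zeros at $s_-$ or $s_+$'' constraint is in fact the right and sufficient tool there (evaluating $c+c_{-,1}/(\w-1)$ at $s_+$, where $\w(s_+)=-1$, directly yields $c_{-,1}=2c$); you should drop the asymptotic constraint from sub-cases~1 and~2 altogether.

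Second, the numerator you propose to feed into Lemma~\ref{lemma:sqrt} is miscomputed. In sub-cases~1 and~2, exactly one of $c_\pm$ vanishes (say $c_+=0$), and clearing the single denominator gives $(\w-1)(\varphi_1/D)^2=c\w+(c_--c)$, not $c\w+(c_+-c_-)$. In sub-case~3, after concluding $c=0$ from the asymptotic, one gets $(\w^2-1)(\varphi_1/D)^2=(c_++c_-)\w+(c_--c_+)$, and Lemma~\ref{lemma:sqrt} applied to this linear polynomial in $\w$ gives $c_++c_-=0$ (the other branch, $(c_++c_-)^2=(c_--c_+)^2$, forces one of the residues to vanish and contradicts the existence of a pole). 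With these corrections the rest of the wind-up — pairing the factors of $D$ under $\eta$, absorbing the residual linear factor $s-s_+$ into the trigonometric prefactor via $\sqrt{2\y(s)+\mu_1^2}=2(s-s_+)$ up to sign, and converting to $y$ — goes through as you describe.
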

\begin{proof}
When $\{\gamma_1,\gamma_2\}\subset \mathbb Z$ and $\gamma\notin \mathbb Z$, we know from Lemma~\ref{lemma:invariantDphi} that $(\varphi_1/D)^2$ is a Type II Invariant and that we can apply the invariant lemma of Proposition~\ref{lemma:inv}. To do this, we study the poles of $(\varphi_1/D)^2$ in $\overline{\mathfrak{B}}_2=\{s\in\mathbb C : -\mu_2/2 \leqslant\mathfrak{Re}(s)\leqslant \mu_1/2\}$. Note that Lemma~\ref{lemma:stripe} can be interpreted as follow: $\varphi_1$ has a poles in $\overline{\mathfrak{B}}_2$ if and only if $\gamma_1\in (\mu_1,1]$.
\begin{enumerate}
\item Under these conditions and as established in Proposition~\ref{prop:decoupling} the decoupling function $D$ is rational and given by 
$$D(s)\propto\frac{P(\y(s))}{Q(\y(s))(s-s_+)}$$
(see the Proposition for the explicit polynomials $P$ and $Q$). We distinguish two cases based on the signs and parities of $\gamma_1$ and $\gamma_2$.
\begin{itemize}
\item[(a)] Suppose $\gamma_1\in-2\mathbb{N}$ and $\gamma_2\in 2\mathbb{N}$. We know from Lemma~\ref{lemma:stripe} that $\varphi_1$ has a pole in $\overline{\mathfrak{B}}_2$ if and only if $0<\mu_1<\gamma_1\leqslant 1$ which cannot be the case here. According to~\eqref{eq:resulting1}, all the zeros (\textit{resp.} poles) of $D$ in $\mathbb C$ are simple, and situated at $s_2-k$ for $k=0,\dots,\gamma_2-1$ (\textit{resp.} $s_1-k$ for $k=0,\dots,-\gamma_1$). The strip $\overline{\mathfrak{B}}_2$ being of width $\frac{1}{2}$, $D$ has at most one zero and one pole in $\overline{\mathfrak{B}}_2$. More precisely,
$$
\begin{array}{ccccc}
\displaystyle s_2- k \in \overline{\mathfrak{B}}_2 &\iff&
\displaystyle-\frac{\mu_1}{2}+s_2\leq  k\leq   \frac{\mu_2}{2}+s_2&\iff&\displaystyle  \frac{\gamma_2-1}{2}\leqslant k \leqslant \frac{\gamma_2}{2}  \\[0.3cm]
s_1-k\in \overline{\mathfrak{B}}_2&\iff&\displaystyle-\frac{\mu_1}{2}+s_1\leq  k \leq   \frac{\mu_2}{2}+s_1 &\iff&\displaystyle -\frac{\gamma_1}{2}\leqslant k \leqslant -\frac{\gamma_1-1}{2}.
\end{array}
$$
Since both $\gamma_1$ and $\gamma_2$ are even, the first (\textit{resp.} second) inequality is satisfied by $k=\gamma_2/2$ (\textit{resp.} $-\gamma_1/2$), resulting in a simple zero at $s_-=-\mu_2/2$ and a simple pole at $s_+=\mu_1/2$. We now apply the invariant lemma of Proposition~\ref{lemma:inv}: there exists two real numbers $c$ and $c_{-,1}\ne 0$ such that 
$$\left(\frac{\varphi_1}{D}\right)^2=c+\frac{c_{-,1}}{\w-\w(s_-)}=c+\frac{c_{-,1}}{\w-1}.$$
We now seek to determine a relationship between the constants $c$ and $c_{-,1}$. Applying the double-angle formula~\eqref{rem2} leads to the following identity:
$$\left[\frac{\varphi_1(s)}{D(s)}\sin\left(\pi\left(s-s_-\right)\right)\right]^2\propto c\w(s)+(c_{-,1}-c).$$
By Lemma~\ref{lemma:sqrt}, either $c=0$ or $c^{2}=(c_{-,1}-c)^2$. If $c$ were equal to 0 we would obtain
$$\frac{\varphi_1(s)}{D(s)}\propto \frac{1}{\sin\Big(\pi(s-s_-)\Big)}$$
which is not possible, since at $s=s_+$, the left-hand side of the above relation would vanish, while the right-hand side would remain nonzero. Hence $c^2=(c_{-,1}-c)^2$, \textit{i.e.} $c_{-,1}=0$ or $c_{-,1}=2c$. But the quantity $c_{-,1}$ is nonzero (it would contradict the existence of a pole). We conclude that $c_{-,1}=2c$. By \eqref{rem2} we obtain
$$\left(\frac{\varphi_1(s)}{D(s)}\right)^2\propto \frac{\w(s)+1}{\sin\Big(\pi(s-s_-)\Big)^2}\propto\frac{1}{\tan\Big(\pi(s-s_-)\Big)^2},$$
and finally
$$\varphi_1(s)\propto \frac{P(\y(s))}{Q(\y(s))(s-s_+)} \frac{1}{\tan\Big(\pi(s-s_-)\Big)}.$$
We then recover $\phi_1(y)$ from $\varphi_1(s)$ setting $y=\y(s)$ and applying Equation~\eqref{eq:varphitophi}:
$$\frac{-1}{(s-s_+)\tan\left(\pi(s-s_-)\right)}=\frac{\tan\left(\displaystyle\frac{\pi}{2}(2(s-s_-)-1)\right)}{(s-s_+)}=\frac{\displaystyle\tan\left(\frac{\pi}{2}\sqrt{2\y(s)+\mu_1^2}\right)}{\sqrt{2\y(s)+\mu_1^2}}.$$
Here, we used the trigonometric identity $\tan(\,\cdot\,-\pi/2)=-1/\tan(\,\cdot\,)$.
\item[(b)] Suppose $\gamma_1\in2\mathbb{N}-1$ and $\gamma_2\in -2\mathbb{N}+1$. If $\gamma_1\in \mathbb N\setminus\{1\}$ (so that $\varphi_1$ has no pole in $\overline{\mathfrak{B}}_2$), using the expression of $D(s)$ given at~\eqref{eq:resulting2}, $D$ has at most one simple zero (\textit{resp.} one simple pole) in $\overline{\mathfrak{B}}_2$, of the form $s_1+k$ (\textit{resp.} $s_2+k$) where $k$ is an integer. When $\gamma_1$ and $\gamma_2$ are odd and $\gamma_1\ne 1$, $D$ has a simple zero at $s_-$ and a simple pole at $s_+$. If $\gamma_1=1$, then $\varphi_1$ has a simple pole at $s_1=s_-$, but $D$ has no longer a zero at $s_1=s_-$, still resulting in a unique simple pole for $\varphi_1/D$ at $s_-$. For both subcases $\gamma_1\ne 1$ and $\gamma_1=1$, the rest of the proof mirrors the argument given above.
\end{itemize}
\item 
Under these conditions, the proof follows the same steps as in case 1. We just give a few key steps, most of the argument relies on interchanging the roles of $s_+$ and $s_-$. Recall that, according to Lemma~\ref{lemma:stripe}, $\varphi_1$ has a pole in $\overline{\mathfrak{B}}_2$ if and only if $0<\mu_1<\gamma_1\leqslant 1$, hence in these cases, $\varphi_1$ does not have pole in the strip. Proceeding along the same lines as the previous case, one can show that the decoupling function $D$ has only one simple zero in $\overline{\mathfrak{B}}_2$ situated at $s_+$ and one simple pole at $s_-$. By the invariant Lemma, there exists $c$ and $c_{+,1}$ such that
$$\left(\frac{\varphi_1}{D}\right)^2 = c+\frac{c_{+,1}}{\w-\w(s_+)}=c+\frac{c_{+,1}}{\w+1}.$$
Once again by~\eqref{rem2},
\begin{equation}\label{eq:aux2}
\left[\frac{\varphi_1(s)}{D(s)}\cos\left(\pi\left(s-s_-\right)\right)\right]^2\propto c\w(s)+(c_{+,1}+c),
\end{equation}
which, applying Lemma~\ref{lemma:sqrt}, leads to $c_{+,1}=-2c$. We conclude as in case 1 using~\eqref{eq:sys} and a few trigonometric formulas.
\item Suppose $\gamma_1$ and $\gamma_2$ are both positive integers. Proposition~\ref{prop:decoupling} implies that $$D(s)\propto (s-s_+)P(\y(s))$$ for an (explicit) polynomial $P$. As for the previous statements, the case $\gamma_1=1$ should be treated aside. If $\gamma_1\ne 1$ (\textit{resp.} $\gamma_1=1$), then $\varphi_1$ has no pole in the strip (\textit{resp.} has a simple pole at $s_-$), and as described by~\eqref{eq:resulting3}, $D$ has exactly two simple zeros in $\overline{\mathfrak{B}}_2$, at $s_-$ and $s_+$ (\textit{resp.} one simple zero at $s_+$). In any case, $\varphi_1/D$ has a two simple poles, at $s_-$ and $s_+$. By the invariant Lemma, there exists $c$ and $c_{\pm,1}\ne 0$ such that
\begin{equation}\label{eq:invlemmaconsequence}
\left(\frac{\varphi_1}{D}\right)^2 =c+\frac{c_{-,1}}{\w-\w(s_-)}+\frac{c_{+,1}}{\w-\w(s_+)}=c+\frac{c_{-,1}}{\w-1}+\frac{c_{+,1}}{\w+1}.
\end{equation}
Recalling from Proposition~\ref{prop:decoupling}, $\text{deg}(D)=2\gamma=\gamma_1+\gamma_2-1>0$ and from Lemma~\ref{lemma:stripe} that $\lim \varphi_1(s)=0$ as $|s|\to +\infty$ in $\mathfrak{B}_2$, one can take the limit in~\eqref{eq:invlemmaconsequence} and obtain
$$c=\lim_{|s|\to +\infty}\left(c+\frac{c_{-,1}}{\w-1}+\frac{c_{+,1}}{\w+1}\right)=\lim_{|s|\to +\infty} \left(\frac{\varphi_1(s)}{D(s)}\right)^2=0.$$
Rearranging Equation~\eqref{eq:invlemmaconsequence} accordingly leads to
$$\left(\w(s)^2-1\right)\left(\frac{\varphi_1(s)}{D(s)}\right)^2=c_{-,1}(\w(s)+1)+c_{+,1}(\w(s)-1)=(c_{-,1}+c_{+,1})\w(s)+(c_{-,1}-c_{+,1}).$$
The left hand side of the above equation is the square of a meromorphic function, hence by Lemma~\ref{lemma:sqrt} either $c_{-,1}=-c_{+,1}$ or $(c_{-,1}+c_{+,1})^2=(c_{-,1}-c_{+,1})^2$. This second alternative is equivalent to $c_{-,1}=0$ or $c_{+,1}=0$ which would contradict the existence of a pole. Hence $c_{-,1}=-c_{+,1}$ and 
\begin{equation}\label{eq:aux1}
\phi_1(\y(s))^2=\varphi_1(s)^2=\frac{2c_{-,1}D(s)^2}{\w(s)^{2}-1}\propto \frac{P(\y(s))^2(s-s_+)^2}{\sin\left(2\pi(s-s_-)\right)^2}.
\end{equation}
We conclude as in the previous cases using~\eqref{eq:sys}.
\end{enumerate}
\end{proof}

\subsection{Cases where $\gamma\notin\mathbb{Z}$ and $\{\gamma_1,\gamma_2\}\not\subset\mathbb{Z}$}
The goal of this subsection is to give an expression of $\varphi_1$ in the remaining cases. To do so, we first state a lemma about the poles of $\varphi_1/D$ in these cases.

\begin{lemma}[Poles of $\varphi_1/D$]
\label{lemma:polephi1D}
If $\gamma\notin \mathbb Z$ and $\{\gamma_1,\gamma_2\}\not\subset \mathbb Z$, then the function
$$
\frac{\varphi_1(s)}{D(s)}=\varphi_1(s)\frac{\Gamma(s-s_2)\Gamma(s+s_1+\mu_2)}{\Gamma(s-s_1)\Gamma(s+s_2+\mu_2)}
$$ 
has at most two poles (or a unique double pole) in $\mathfrak{B}_1\cup \{ \mu_1 \}$. More precisely, the only poles are:
\begin{itemize}
\item $s_2-p$ for some $p\in\mathbb{N}$ when $s_2>0$,
\item $\zeta s_1-q=-s_1-\mu_2-q$ for some $q\in\mathbb{N}$ when $s_1<0$.
\end{itemize}
except when $s_2-p=-s_1-\mu_2-q$ as they merge into a unique double pole and this happen when $s_2-\zeta s_1\in\mathbb Z$.
\end{lemma}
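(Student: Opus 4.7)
The plan is to write $\varphi_1/D$ explicitly as
$$
\varphi_1(s)\cdot\frac{\Gamma(s-s_2)\,\Gamma(s+s_1+\mu_2)}{\Gamma(s-s_1)\,\Gamma(s+s_2+\mu_2)}
$$
and exploit the fact that $\Gamma$ is meromorphic on $\mathbb{C}$ with simple poles exactly at $-\mathbb{N}_0$ and no zeros. This immediately isolates three possible sources of poles of $\varphi_1/D$ in $\mathbb{C}$: the poles of $\varphi_1$ itself, the poles of $\Gamma(s-s_2)$ located at $s_2-p$ ($p\in\mathbb{N}_0$), and the poles of $\Gamma(s+s_1+\mu_2)$ located at $\zeta s_1-q=-s_1-\mu_2-q$ ($q\in\mathbb{N}_0$). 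The only possible cancellations come from the simple zeros produced by the Gamma factors in the denominator, located at $s_1-k$ and $-s_2-\mu_2-k$ for $k\in\mathbb{N}_0$.

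The first reduction I would make is that, by Lemma~\ref{lemma:stripe}(2), the unique possible pole of $\varphi_1$ in $\overline{\mathfrak{B}}_1$ is a simple pole at $s_1$ when $s_1\in[-\mu_2,0)$. The factor $1/D$ has a simple zero there (the case $k=0$), which exactly kills this pole, so no contribution from $\varphi_1$ survives in $\mathfrak{B}_1$. It then remains to locate the real points of the two progressions $\{s_2-p\}$ and $\{\zeta s_1-q\}$ inside $\mathfrak{B}_1\cup\{\mu_1\}=(-\mu_2,\mu_1]$. Since this interval has length $\mu_1+\mu_2=1$ by~\eqref{eq:H4}, each progression meets it in at most one point; combining this with the constraints $s_2\notin[-\mu_2,0]$ and $s_1\notin[0,\mu_1]$ from Lemma~\ref{lemma:s1s2}, a short case analysis gives the claimed dichotomy: the first progression contributes exactly when $s_2>0$ and the second exactly when $s_1<0$.

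To finish, I would check when the two candidate poles coincide or are killed further. Equality $s_2-p=\zeta s_1-q$ is equivalent to $s_2-\zeta s_1=p-q\in\mathbb{Z}$, in which case the two simple poles of $\Gamma(s-s_2)$ and $\Gamma(s+s_1+\mu_2)$ superpose into a double pole. An extra cancellation of the pole at $s_2-p$ against a zero at $s_1-k$ would force $\gamma=p-k\in\mathbb{Z}$, excluded by hypothesis, and an analogous computation at $\zeta s_1-q$ again uses $\gamma\notin\mathbb{Z}$. The remaining cancellations, against the zero families $-s_2-\mu_2-k$ and $s_1-k$, are governed respectively by $\gamma_2\in\mathbb{Z}$ and $\gamma_1\in\mathbb{Z}$; the hypothesis $\{\gamma_1,\gamma_2\}\not\subset\mathbb{Z}$ prevents both, and either one individually can only lower the pole count, which is compatible with the ``at most two'' bound.

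The main obstacle, as I see it, is not conceptual but organizational: one has to juggle two arithmetic progressions of candidate poles against two progressions of cancelling zeros, under the three non-integrality conditions on $\gamma,\gamma_1,\gamma_2$, and verify that Lemma~\ref{lemma:s1s2} excludes precisely the endpoints needed for the strip count to be clean. The merge-to-double-pole scenario requires the most care, since it is exactly the configuration where the two progressions first collide within $(-\mu_2,\mu_1]$.
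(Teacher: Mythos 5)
Your decomposition matches the paper's — the three pole sources ($\varphi_1$, $\Gamma(s-s_2)$, $\Gamma(s+s_1+\mu_2)$) against the two zero families from $1/\Gamma(s-s_1)$ and $1/\Gamma(s+s_2+\mu_2)$, located inside the strip via Lemma~\ref{lemma:s1s2} — but two of your reductions contain genuine gaps. The first is the claim that $1/D$ has a simple zero at $s_1$ which ``exactly kills'' the potential pole of $\varphi_1$, so that ``no contribution from $\varphi_1$ survives''. This fails when $s_1=-\mu_2/2=\zeta s_1$, a configuration allowed by the hypotheses: it forces $\gamma_1=\mu_1+\mu_2=1\in\mathbb{Z}$, and then $\gamma=\gamma_2/2\notin\mathbb{Z}$ whenever $\gamma_2\notin\mathbb{Z}$, so both non-integrality conditions can still hold. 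There $\Gamma(s+s_1+\mu_2)=\Gamma(s-\zeta s_1)$ has a pole at $s_1$ cancelling the zero of $1/\Gamma(s-s_1)$, so $1/D$ is in fact regular and nonzero at $s_1$ and the pole of $\varphi_1$ \emph{does} survive. Worse, when you later test whether the candidate pole at $\zeta s_1-q$ is cancelled by a zero at $s_1-k$ (which is exactly the $\gamma_1\in\mathbb{Z}$ scenario), you would be spending the same zero twice and would wrongly conclude there is no pole at $s_1$, when the net order there is $-1$. The paper isolates and treats this configuration separately for precisely this reason.

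Second, you dismiss the single-integer cases $\gamma_1\in\mathbb{Z}$ or $\gamma_2\in\mathbb{Z}$ with ``can only lower the pole count, which is compatible with the `at most two' bound''. But the lemma is used in the proof of Theorem~\ref{thm:general} to write down the ansatz~\eqref{eq:notN} with indicators $\mathds1_{s_1<0}$, $\mathds1_{s_2>0}$, and the argument there relies on identifying where the poles actually sit, not just bounding their number. When $\gamma_2\in\mathbb{Z}$ and $\gamma_1\notin\mathbb{Z}$, one must rule out that the pole at $s_2-p$ is cancelled \emph{inside the strip} by a zero at $-s_2-\mu_2-k$; the paper shows such a coincidence in $(-\mu_2,\mu_1]$ would force $-\mu_2<s_2<0$, contradicting Lemma~\ref{lemma:s1s2}(c). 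The symmetric check for $\gamma_1\in\mathbb{Z}$, $\gamma_2\notin\mathbb{Z}$ is what leads to the $s_1=-\mu_2/2$ edge case above. Your proposal waves both of these away, but they are the real content of the argument rather than bookkeeping: without them the precise list of poles (and hence the structure fed into the Invariant Lemma) is not established.
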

\begin{proof}
We first list the poles of the various functions appearing in the expression above:
\begin{itemize}
\item The function $1/\Gamma$ is entire and thus $1/\Gamma(s-s_1)\Gamma(s+s_2+\mu_2)$ has no poles.
\item The poles of $\Gamma(s-s_2)$ are the real points $s_2-p$ for $p\in\mathbb{N}$. Since by Lemma~\ref{lemma:s1s2} we know that $s_2\notin [-\mu_2,0]$, there exists $p\in\mathbb{N}$ such that $s_2-p\in [-\mu_2,\mu_1]\subset \mathfrak{B}_1\cup \{ \mu_1 \}$ if and only if $s_2>0$.
\item The poles of $\Gamma(s+s_1+\mu_2)=\Gamma(s-\zeta s_1)$ are the real points $\zeta s_1 - q$ for $q\in\mathbb{N}$. There exists $q\in\mathbb{N}$ such that $\zeta s_1 - q \in (-\mu_2,\mu_1]\subset \mathfrak{B}_1\cup \{ \mu_1 \}$ if and only if $-s_1-\mu_2=\zeta s_1>-\mu_2$, that is, if $s_1<0$.
\item By Proposition~\ref{lemma:stripe}, $\varphi_1$ has a pole in $\mathfrak{B}_1\cup \{ \mu_1 \}$ if and only if $-\mu_2 \leqslant s_1 < 0$. When this happens, this pole is at $s_1$, it is unique, and it is simple.
\end{itemize}
We now check whether any zeros could compensate for the poles listed above:
\begin{itemize}
\item By Proposition~\ref{lemma:stripe}, $\varphi_1$ has no zeros in $(-\mu_2,\mu_1]$.
\item Let us use $\{\gamma_1,\gamma_2\}=\{\mu_1-2s_1,\mu_2+2s_2 \}\not\subset\mathbb Z$ and $\gamma=s_2-s_1\notin \mathbb Z$, to prove that the zeros of the function $1/\Gamma(s-s_1)\Gamma(s+s_2+\mu_2)$ cannot cancel the poles of $\Gamma(s-s_2)\Gamma(s+s_1+\mu_2)$. Let us use the notation $\sim$ from the proof of Proposition~\ref{prop1}. The poles of $\Gamma(s - s_i)$ are simple and occur at $s = s_i - k$ with $k \in \mathbb{N}$. Therefore, if $a \not\sim b$ are two complex numbers, the zeros of $1/\Gamma(s - a)$ cannot cancel the poles of $\Gamma(s - b)$. In the proof of Proposition \ref{prop1}, we observed that $\gamma \notin \mathbb{Z}$ implies
$$s_1 \not\sim s_2 \quad \text{and} \quad -s_1 - \mu_2 \not\sim -s_2 - \mu_2.$$
Hence, the zeros of $1/\Gamma(s - s_1)$ cannot cancel the poles of $\Gamma(s - s_2)$, and similarly, the zeros of $1/\Gamma(s + s_2 + \mu_2)$ cannot cancel the poles of $\Gamma(s + s_1 + \mu_2)$. The only remaining possibility is a cancellation between a zero of $1/\Gamma(s - s_1)$ and a pole of $\Gamma(s + s_1 + \mu_2)$, or between a zero of $1/\Gamma(s + s_2 + \mu_2)$ and a pole of $\Gamma(s - s_2)$. We also observed in the proof of Proposition~\ref{prop1} that
$$\gamma_1 \in \mathbb{Z} \iff s_1 \sim -s_1 - \mu_2, \quad \gamma_2 \in \mathbb{Z} \iff s_2 \sim -s_2 - \mu_2.$$
Therefore, if $\gamma_1$ and $\gamma_2$ are not integers, no cancellation is possible. Consider now the case $\gamma_2 = 2s_2 + \mu_2 \in \mathbb{Z}$ while $\gamma_1 \notin \mathbb{Z}$. From the previous discussion, we only need to consider the potential cancellation of a zero of $1/\Gamma(s - s_2)$ with a pole of $\Gamma(s + s_2 + \mu_2)$.
  However it is impossible to find $k$ and $k'\in-\mathbb{N}$ such that $$s_2-k=-s_2-\mu_2-k'\in(-\mu_2,\mu_1],$$ 
as this would implies that $-\mu_2<s_2<0$ which is not possible by~Lemma~\ref{lemma:s1s2}. The other case is when $\gamma_2\notin\mathbb{Z}$ and $\gamma_1=-2s_1+\mu_1\in\mathbb{Z}$. In this case we find $k$ and $k'\in\mathbb{N}$ such that $$s_1-k=-s_1-\mu_2-k'\in(-\mu_2,\mu_1],$$ 
this would implies that $-\mu_2<s_1<0$ and $k=k'=0$ and therefore $s_1=-\mu_2/2$. This very special case is treated below.
\item In this point, we assume that $\varphi_1$ has a pole (\textit{i.e.}\ $-\mu_2 \leqslant s_1 < 0$) and we show that if $s_1\neq -\mu_2/2$ (\textit{i.e.}, $s_1\neq \zeta s_1$), then $1/D(s_1)=0$, which cancels the pole of $\varphi_1$ (if $\varphi_1$ admits a pole in the strip, then this pole is simple, according to Lemma~\ref{lemma:stripe}). Indeed, $\Gamma(s-s_1)$ has a pole at $s_1$, while $\Gamma(s_1-s_2)=\Gamma(-\gamma)<\infty$ (since $\gamma\notin\mathbb{Z}$) and $\Gamma(2s_1+\mu_2)<\infty$ since we assumed $-1<-\mu_2<2s_1+\mu_2<\mu_2<1$, which could be an integer only if $2s_1+\mu_2=0$, which is excluded by assumption.
\item If $s_1 = -\mu_2/2$, then there is a simplification of the Gamma functions and we have
$$\frac{\varphi_1(s)}{D(s)}=\varphi_1(s)\frac{\Gamma(s-s_2)}{\Gamma(s+s_2+\mu_2)}$$
which has, in $\mathfrak{B}_1\cup\{\mu_1\}$, a simple pole at $s_1 = \zeta s_1 = -\mu_2/2$ and a simple pole at $s_2-p$ for some $p\in\mathbb{N}$ when $s_2>0$. Remark that the zeros of $1/\Gamma(s+s_2+\mu_2)$ are $-s_2-\mu_2-k$ and cannot coincide with either $s_1= -\mu_2/2$ or $s_2-p$ since $\gamma_2\notin\mathbb{Z}$.
\end{itemize}

In conclusion, the function $\varphi_1/D$ has at most two simple poles in $\mathfrak{B}_1\cup \{ \mu_1 \}$, namely $s_2-p$ for some $p\in\mathbb N$ when $s_2>0$ and $\zeta s_1-q$ for some $q\in\mathbb N$ when $s_1<0$, and these poles merge into a unique double pole when $s_2-p=-s_1-\mu_2-q$, \textit{i.e.} when $s_2-\zeta s_1\in\mathbb Z$. 
\end{proof}

\begin{theorem}[Laplace transform, $\{\gamma_1,\gamma_2\}\not\subset\mathbb Z$ and $\gamma\notin \mathbb Z$]\label{thm:general}
Suppose $\{\gamma_1,\gamma_2\}\not\subset \mathbb Z$ and $\gamma\notin \mathbb Z$.
\begin{enumerate}
\item If $s_1<0$ and $s_2>0$ then 
\begin{equation}\label{eq:general1}
    \varphi_1(s)\propto \frac{D(s)}{\sin(\pi(s+s_1+\mu_2))\sin(\pi(s-s_2))}.
\end{equation}
    \item If $s_1<0$ and $s_2<0$ then
    \begin{equation}\label{eq:general2}
        \varphi_1(s)\propto D(s) \frac{\sin(\pi(s+s_2+\mu_2))}{\sin(\pi(s+s_1+\mu_2))}.
    \end{equation}
    \item If $s_1>0$ and $s_2>0$ then
\begin{equation}\label{eq:general3}
\varphi_1(s)\propto D(s)\frac{\sin(\pi(s-s_1))}{\sin(\pi(s-s_2))}.
\end{equation}
\end{enumerate}
\end{theorem}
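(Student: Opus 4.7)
My strategy is to apply the Type I Invariant Lemma (Proposition~\ref{lemma:periodic}) directly to $\varphi_1/D$. By Lemma~\ref{lemma:invariantDphi}, this function is $1$-periodic with polynomial growth in $\overline{\mathfrak{B}}_1$, and by Lemma~\ref{lemma:polephi1D} its poles in $\mathfrak{B}_1 \cup \{\mu_1\}$ are at most two simple poles at $P_1 = s_2 - p$ (when $s_2 > 0$) and $P_2 = -s_1 - \mu_2 - q$ (when $s_1 < 0$), merging into a single double pole exactly when $s_2 - \zeta s_1 \in \mathbb{Z}$. The invariant lemma will thus give
\[
\frac{\varphi_1(s)}{D(s)} = c + \sum_{i \in \mathcal{I}} \frac{c_{i}}{\wb(s) - \wb(P_i)},
\]
where $\mathcal{I} \subseteq \{1,2\}$ is determined case by case (two poles in case 1, one pole in cases 2 and 3).

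Next I will verify, in each case, that the proposed right-hand side admits the same form. Using the identity
\[
\cot(\pi(s-a)) - \cot(\pi(s-b)) = \frac{\sin(\pi(a-b))}{\sin(\pi(s-a))\sin(\pi(s-b))}
\]
together with the Möbius relation expressing $\cot(\pi(s-p))$ as a linear fractional transformation of $\wb$, the quotient $T := \psi/D$ (where $\psi$ denotes the proposed right-hand side) can be rewritten as a rational function of $\wb$ with the prescribed poles: in case 1 as a pure difference of two cotangents with zero constant term and opposite residues, and in cases 2 and 3 as a single cotangent plus a constant. The theorem therefore reduces to matching finitely many constants between $\varphi_1/D$ and $T$.

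To pin those constants down, I will use two inputs. The residues of $\varphi_1/D$ at $P_1$ and $P_2$ are computable from the residues of $\Gamma(s-s_2)$ and $\Gamma(s+s_1+\mu_2)$ inside $D(s) = \Gamma(s-s_1)\Gamma(s+s_2+\mu_2)/[\Gamma(s-s_2)\Gamma(s+s_1+\mu_2)]$, evaluated against $\varphi_1(P_i)$. The asymptotics $\varphi_1/D \to 0$ as $s \to \pm i\infty$ in $\mathfrak{B}_1$, which follow from $|\varphi_1(s)| \leq \phi_1(\mathfrak{Re}(\y(s)))$ with $\mathfrak{Re}(\y(s)) \to -\infty$ quadratically in $|\mathfrak{Im}(s)|$, combined with $|D(s)| \sim |s|^{2\gamma}$, force the constant $c$ to zero in case 1 and yield the analogous constraint in cases 2 and 3. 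Finally, the $\eta$-invariance $\varphi_1(\eta s) = \varphi_1(s)$ from Lemma~\ref{lem:abc}, combined with the transformation rule
\[
\frac{D(\eta s)}{D(s)} = \frac{\sin(\pi(s+s_2+\mu_2))\sin(\pi(s-s_1))}{\sin(\pi(s+s_1+\mu_2))\sin(\pi(s-s_2))}
\]
obtained from the $\Gamma$-reflection formula $\Gamma(z)\Gamma(1-z) = \pi/\sin(\pi z)$, supplies the remaining relation between the two residues that matches exactly the one forced by the cotangent identity, completing the verification up to a global multiplicative constant.

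The main obstacle will be the explicit residue computation together with the verification that the relation between $c_1$ and $c_2$ imposed by the $\eta$-invariance reproduces the one built into the candidate expressions. The degenerate merged-pole case, where Lemma~\ref{lemma:polephi1D} predicts a double pole, should be handled as a limit of the generic formula, with the polynomial-in-$\wb$ contributions permitted by Proposition~\ref{lemma:periodic} absorbing the residues of the double pole.
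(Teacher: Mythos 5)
Your framework---apply the Type I Invariant Lemma (Proposition~\ref{lemma:periodic}) to $\varphi_1/D$ via Lemmas~\ref{lemma:invariantDphi} and~\ref{lemma:polephi1D}, then determine the constants---is exactly the paper's, and the identity $D(\eta s)/D(s)=\sin(\pi(s+s_2+\mu_2))\sin(\pi(s-s_1))/\bigl[\sin(\pi(s+s_1+\mu_2))\sin(\pi(s-s_2))\bigr]$ obtained from Euler's reflection formula is correct. However, the way you pin down the constants has a genuine gap. The claim that $\varphi_1/D\to 0$ at $\pm i\infty$ ``forces the constant $c$ to zero in case 1'' is false: along $s=it$ one has $\wb(s)\to\pm i$ (not $\infty$), so the vanishing of $\varphi_1/D$ translates to $F_1(i)=F_1(-i)=0$, a pair of conditions on $(c,c_1,c_2)$ whose solution ray has $c\neq 0$; indeed setting $c=0$ forces $c_1=c_2=0$ (the $2\times 2$ subdeterminant is proportional to $\wb(\zeta s_1)-\wb(s_2)\neq 0$), giving $\varphi_1\equiv 0$, a contradiction. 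Moreover, ``$\varphi_1/D\to 0$'' is simply false in Cases 2 and 3: there $\gamma<0$ (this follows from Lemmas~\ref{lemma:s1s2} and~\ref{lemma:impossible}), so $D(s)\sim s^{2\gamma}\to 0$ and a ratio of two vanishing quantities need not vanish. The paper's Cases 2 and 3 replace the missing limit by the asymptotic $D(s_++it)/D(s_+-it)\to e^{2i\pi\gamma}$ paired with $\varphi_1(s_++it)/\varphi_1(s_+-it)\to 1$ along $d_+$; your proposal supplies no substitute.

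The residue step is also circular as written: the residue of $\varphi_1/D$ at $P_i$ is an explicit $\Gamma$-residue \emph{times} the unknown value $\varphi_1(P_i)$, and under the standing hypothesis $\gamma\notin\mathbb{Z}$, $\{\gamma_1,\gamma_2\}\not\subset\mathbb{Z}$ the poles $P_1=s_2-p$ and $P_2=\zeta s_1-q$ lie (outside the merged subcase $s_2-\zeta s_1\in\mathbb{Z}$) in distinct orbits of $\langle\eta,\,s\mapsto s+1\rangle$, so $\eta$-invariance together with $1$-periodicity of $\varphi_1$ cannot relate $\varphi_1(P_1)$ to $\varphi_1(P_2)$. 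You also do not address the subcases where a pole falls at $\mu_1$ (i.e.\ $s_1\in-\mathbb{N}$ or $s_2\in\mu_1+\mathbb{N}$), which require the polynomial-in-$\wb$ terms of Proposition~\ref{lemma:periodic} and the separate treatment of Lemma~\ref{lemma:specific}, rather than a limit of the generic formula.
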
 
We recall it is not possible that $s_1>0$ and $s_2<0$: if $s_1>0$ then $s_1>\mu_1$ (according to Lemma~\ref{lemma:s1s2}) and then $\gamma_1:=\mu_1-2s_1<0$. Similarly, if $s_2<0$, then $s_2<-\mu_2$ and $\gamma_2:=\mu_2+2s_2<0$. Hence $s_1>0$ and $s_2<0$ would contradict the second item of Lemma~\ref{lemma:impossible}.
\begin{proof}
By Lemma~\ref{lemma:invariantDphi}, the function $\varphi_1/D$ is $1$-periodic and it is possible to apply the Type I Invariant Lemma (Proposition~\ref{lemma:periodic}). To do this we need to determine the poles of $\varphi_1/D$ in ${\mathfrak{B}_1}\cup \{\mu_1\}$ (it is sufficient to consider $\mu_1$ only, since the function is $1$-periodic and its behavior at $-\mu_2$ is therefore analogous to that at $\mu_1$).
This pole analysis is provided by Lemma~\ref{lemma:polephi1D} above: the only poles of $\varphi_1/D$ in ${\mathfrak{B}_1}\cup \{\mu_1\}$ are $s_2-p$ for some $p\in\mathbb N$ when $s_2>0$, and $\zeta s_1-q=-s_1-\mu_2-q$ for some $q\in\mathbb N$ when $s_1<0$. 
Note that we have seen that these poles can merge to give a double pole when $s_2-\zeta s_1\in\mathbb{Z}$. The \emph{specific cases} where one or both of them lie at $\mu_1$, \textit{i.e.} when $-s_1-\mu_2-q=\mu_1$ (\textit{i.e.}, $s_1\in-\mathbb N$) or when $s_2-p=\mu_1$ (\textit{i.e.}, $s_2\in\mu_1+\mathbb N$), are treated in Lemma~\ref{lemma:specific}. We now assume that we are in the \emph{generic cases} where these two poles cannot be equal to $\mu_1$, \textit{i.e.} $s_1\notin-\mathbb N$ and $s_2\notin\mu_1+\mathbb N$. Applying Proposition~\ref{lemma:periodic}, we obtain the following expression: 
\begin{equation}\label{eq:notN}
\frac{\varphi_1(s)}{D(s)}=c+\frac{c_1 \mathds1_{s_1<0}}{\wb(s)-\wb(\zeta s_1)}+\frac{c_2 \mathds1_{s_2>0}}{\wb(s)-\wb(s_2)} +\frac{c_3 \mathds1_{s_1<0}\mathds1_{s_2>0} \mathds1_{s_2-\zeta s_1\in\mathbb{Z}}}{(\wb(s)-\wb(s_2))^2}.
\end{equation}
Roughly speaking, $c_1$ and $c_2$ encode the potential poles at $\zeta s_1-q$ and $s_2-p$, while $c_3$ encodes the times when these poles merge into a double pole. The rest of the proof consists, in each case, of studying limits involving $\varphi_1$ to establish linear relations among the remaining constants, and of using the following trigonometric identities to derive the claimed formulas:
\begin{align*}\label{eq:trigo}
   &(\text{\textsc{a}})\;\; 1+\tan(x)^2=\frac{1}{\cos(x)^2}, &&(\text{\textsc{b}})\;\;\tan(x_1)-\tan(x_2)=\frac{\sin(x_1-x_2)}{\cos(x_1)\cos(x_2)}.
\end{align*}
    \begin{enumerate}
    \item Suppose $s_1<0$ and $s_2>0$. We need to distinguish two cases depending on whether the two poles in $\overline{\mathfrak{B}}_1$ (as given by Lemma~\ref{lemma:polephi1D}) merge. Suppose for now that $s_2-\zeta s_1 \notin \mathbb{Z}$, so that $\wb(\zeta s_1)\ne \wb(s_2)$ and we have
        $$\frac{\varphi_1(s)}{D(s)}=c+\frac{c_1}{\wb(s)-\wb(\zeta s_1)}+\frac{c_2}{\wb(s)-\wb(s_2)}.$$
By  Lemma~\ref{lemma:asymptotic}, $1/D(s)\sim s^{-2\gamma}\to 0$, hence by Lemma \ref{lemma:stripe} $\varphi_1/D$ goes to $0$ as $|s|\to+\infty$. Evaluating the above expression at $s=it$ and taking the limit as $t\to\pm \infty$ produces two linear equations satisfied by $c$, $c_1$ and $c_2$:
        $$0=c+\frac{c_1}{i-\wb(\zeta s_1)}+\frac{c_2}{i-\wb(s_2)}=c-\frac{c_1}{i+\wb(\zeta s_1)}-\frac{c_2}{i+\wb(s_2)}.$$
Once again, if $c=0$ then $c_1=c_2=0$ is the unique solution to the above equation, thus $c\ne 0$. This time, solving for $c_1$ and $c_2$ yields
        $$\frac{c_1}{c}=\frac{1+\wb(\zeta s_1)^2}{\wb(\zeta s_1)-\wb(s_2)}\text{ and }\frac{c_2}{c}=\frac{1+\wb(s_2)^2}{\wb(s_2)-\wb(\zeta s_1)}.$$
Putting all these quantities on the same denominator leads to
        \begin{align*}
            \frac{\varphi_1(s)}{D(s)}&\propto 1+\frac{1+\wb(\zeta s_1)^2}{\Big(\wb(\zeta s_1)-\wb(s_2)\Big)\Big(\wb(s)-\wb(\zeta s_1)\Big)}+\frac{1+\wb(s_2)^2}{\Big(\wb(s_2)-\wb(\zeta s_1)\Big)\Big(\wb(s)-\wb(s_2)\Big)}\\
            &=\frac{1+\wb(s)^2}{\Big(\wb(s)-\wb(\zeta s_1)\Big)\Big(\wb(s)-\wb(s_2)\Big)},
        \end{align*}
        and finally
        \begin{align*}
        \frac{\varphi_1(s)}{D(s)}\!\!\overset{\substack{(\text{\textsc{a}})\,+\,(\text{\textsc{b}})\\[0.1cm]\downarrow\\ \phantom{x}}}{\propto}\!\!\frac{\cos\Big(\pi(\zeta s_1+\mu_2-\frac12)\Big)\cos\Big(\pi(s_2+\mu_2-\frac12)\Big)}{\sin\Big(\pi(s-\zeta s_1)\Big)\sin\Big(\pi(s- s_2)\Big)}\propto \frac{1}{\sin\Big(\pi(s-\zeta s_1)\Big)\sin\Big(\pi(s- s_2)\Big)},
        \end{align*}
which is the claimed result. 

If however $s_2-\zeta s_1 \in \mathbb{Z}$ then $\wb(\zeta s_1)=\wb(s_2)$. We can take $c_1=0$ and we have
        \begin{equation}\label{eq:case1a}
            \frac{\varphi_1(s)}{D(s)}=c+\frac{c_2}{\wb(s)-\wb(s_2)}+\frac{c_3}{(\wb(s)-\wb(s_2))^2}.
        \end{equation}
Applying the same method as above, one can show that the constants $c$, $c_2$ and $c_3$ satisfy the following linear constraints:
\begin{equation}\label{eq:linearsystem}
0=c+\frac{c_2}{i-\wb(s_2)}+\frac{c_3}{(i-\wb(s_2))^2}=c-\frac{c_2}{i+\wb(s_2)}+\frac{c_3}{(i+\wb(s_2))^2}.
\end{equation}
If $c=0$, then solving the above relation would imply that $c_2=c_3=0$, and hence $\varphi_1\equiv 0$, which contradicts the positivity of the invariant measure. Therefore, $c\ne 0$ and solving for $c_2$ and $c_3$ yields
        $$\frac{c_2}{c}=2\wb(s_2)\text{ and }\frac{c_3}{c}=1+\wb(s_2)^2.$$
        By plugging these values back into~\eqref{eq:case1a}, one obtains
        $$\frac{\varphi_1(s)}{D(s)}\propto 1+\frac{2\wb(s_2)}{\wb(s)-\wb(s_2)}+\frac{1+\wb(s_2)^2}{(\wb(s)-\wb(s_2))^2}=\frac{1+\wb(s)^2}{(\wb(s)-\wb(s_2))^2}.$$
Now, we replace $\wb$ by its definition and apply the trigonometric relations:
        $$\frac{\varphi_1(s)}{D(s)}\overset{\substack{(\text{\textsc{a}})\,+\,(\text{\textsc{b}})\\[0.1cm]\downarrow\\ \phantom{x}}}{\propto} \frac{\cos\Big(\pi(s_2+\mu_2-\frac12)\Big)^2}{\sin\Big(\pi(s-s_2)\Big)^2}\propto \frac{1}{\sin\Big(\pi(s-s_2)\Big)^2},$$ which coincides with the claimed result when $s_2=\zeta s_1+n$ for some $n\in\mathbb Z$. 
        \item Suppose $s_1<0$ and $s_2<0$. In this case, 
        $$\frac{\varphi_1(s)}{D(s)}=c+\frac{c_1}{\wb(s)-\wb(\zeta s_1)}.$$
Moreover, $\gamma_1:=\mu_1-2s_1>0$ and $\gamma_2:=\mu_2+2s_2<\mu_2$, so either $\gamma_2<0$ or $\gamma_2\in [0,\mu_2)$, which would imply that $s_2\in [-\mu_2,0)$ and contradict Lemma~\ref{lemma:s1s2}. Therefore, $\gamma_1$ and $\gamma_2$ have opposite signs, and Lemma~\ref{lemma:impossible} ensures that $\gamma<0$. As a consequence of Lemma~\ref{lemma:asymptotic}, $D(s)\sim s^{2\gamma}\to 0$. Recall that $d_+:=s_++i\mathbb R$. This line is symmetric with respect to the real axis, and for all $s\in d_+$, $\boldsymbol{\mathrm{y}}(s)=\boldsymbol{\mathrm{y}}(\overline{s})$ and $\varphi_1(s)=\varphi_1(\overline{s})$. Hence
        $$1=\lim_{t\to +\infty} \frac{\varphi_1(s_++it)}{\varphi_1(s_+-it)}=\left(\lim_{t\to+\infty}\frac{D(s_++it)}{D(s_+-it)}\right) \left(c+\frac{c_1}{i-\wb(\zeta s_1)}\right)\left(\displaystyle c-\frac{c_1}{i+\wb(\zeta s_1)}\right)^{-1}.$$
        To compute the remaining limit, note that, according to Lemma~\ref{lemma:asymptotic}, as $t\to +\infty$, $$D(s_+\pm it)\sim (s_++it)^{2\gamma}\sim t^{2\gamma}(\pm i)^{2\gamma} =t^{2\gamma}\exp(\pm \gamma i \pi),$$
        so that 
        \begin{equation}\label{eq:D+/D-}
            \lim_{t\to +\infty}\frac{D(s_++it)}{D(s_+-it)}=e^{2i\pi \gamma}.
        \end{equation}
Finally, we obtain a linear equation linking $c$ and $c_1$,
$$e^{2i\pi \gamma}\left(c+\frac{c_1}{i-\wb(\zeta s_1)}\right)=c-\frac{c_1}{i+\wb(\zeta s_1)}.$$
Just like in the other cases, $c$ cannot be $0$ (otherwise, $\varphi_1\equiv 0$). The solutions to this linear equation satisfy
$$
\frac{c_1}{c}=\frac{(1+\wb(\zeta s_1)^2)(e^{-2i\pi \gamma}-1)}{\wb(\zeta s_1)(e^{-2i\pi \gamma}-1)-i(1+e^{-2i\pi \gamma})}=\frac{(1+\wb(\zeta s_1)^2)(e^{-i\pi \gamma}-e^{i\pi\gamma})}{\wb(\zeta s_1)(e^{-i\pi \gamma}-e^{i\pi \gamma})-i(e^{i\pi\gamma}+e^{-i\pi\gamma})}.
$$
Applying Euler's formula for sine and cosine leads to
\begin{align*}
\frac{c_1}{c}=\frac{(1+\wb(\zeta s_1)^2)\sin(\pi\gamma)}{\wb(\zeta s_1)\sin(\pi\gamma)+\cos(\pi\gamma)}&\overset{\substack{(\text{\textsc{a}})\\[0.1cm]\downarrow\\ \phantom{x}}}{=}\frac{\sin(\pi\gamma)}{\cos\Big(\pi(\zeta s_1+\mu_2-\frac12)\Big)\cos\Big(\pi(\zeta s_1+\mu_2-\frac12-\gamma)\Big)}\\
&\,=\frac{\sin(\pi \gamma)}{\sin(\pi s_1)\sin(\pi s_2)},
\end{align*}
and then\\[-0.5cm]
$$\frac{\varphi_1(s)}{D(s)}\propto 1+\frac{\sin(\pi \gamma)}{\sin(\pi s_1)\sin(\pi s_2)}\cdot\frac{1}{\wb(s)-\wb(\zeta s_1)}\overset{\substack{(\text{\textsc{b}})\\[0.1cm]\downarrow\\ \phantom{x}}}{=}1-\frac{\sin(\pi\gamma)\sin(\pi(s+\mu_2))}{\sin(\pi s_2)\sin(\pi(s-\zeta s_1))}.$$
(where we also used the fact that $\cos(\,\cdot\,-\pi/2)=\sin$). Finally, one can show using addition formulas that 
$$\frac{\varphi_1(s)}{D(s)}=\frac{\sin(\pi s_1)}{\sin(\pi s_2)}\cdot \frac{\sin\Big(\pi(s+s_2+\mu_2)\Big)}{\sin\Big(\pi(s+s_1+\mu_2)\Big)}\propto \frac{\sin\Big(\pi(s+s_2+\mu_2)\Big)}{\sin\Big(\pi(s+s_1+\mu_2)\Big)}.$$
        \item Suppose $s_1>0$ and $s_2>0$. In this case
        $$\frac{\varphi_1(s)}{D(s)}=c+\frac{c_2}{\wb(s)-\wb(s_2)}.$$
        Replacing $c_{1}$ with $c_{2}$ and $\wb(\zeta s_1)$ with $\wb(s_2)$ in the above argument immediatly yields
$$\frac{c_{2}}{c}=\frac{(1+\wb(s_2)^2)(e^{-2i\pi \gamma}-1)}{\wb(s_2)(e^{-2i\pi \gamma}-1)-i(1+e^{-2i\pi \gamma})}=\frac{\sin(\pi \gamma)}{\sin(\pi(s_2+\mu_2))\sin(\pi(s_1+\mu_2))},$$
and leads to
    $$\frac{\varphi_1(s)}{D(s)}\propto 1+\frac{\sin(\pi\gamma)}{\sin(\pi(s_2+\mu_2))\sin(\pi(s_1+\mu_2))}\cdot \frac{1}{\wb(s)-\wb(s_2)}=\frac{\sin(\pi(s_2+\mu_2))\sin(\pi(s-s_1))}{\sin(\pi(s_1+\mu_2))\sin(\pi(s-s_2))},$$
    which concludes the proof of the last case.
    \end{enumerate}
\end{proof}

\begin{lemma}[Specific cases]\label{lemma:specific} Under the assumptions of Theorem~\ref{thm:general}, and the additional condition that $s_1\in -\mathbb N$ or $s_2\in \mu_1+\mathbb N$, the conclusion of Theorem~\ref{thm:general} also holds.
\end{lemma}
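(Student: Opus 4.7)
The plan is to reduce these boundary cases to the generic regime already handled in the proof of Theorem~\ref{thm:general}, by a perturbation/continuity argument. Recall that the generic assumption $s_1 \notin -\mathbb{N}$ and $s_2 \notin \mu_1+\mathbb{N}$ was used exactly to ensure that each pole of $\varphi_1/D$ in $\overline{\mathfrak{B}}_1$ listed in Lemma~\ref{lemma:polephi1D} lies in the open strip $\mathfrak{B}_1$ rather than on its boundary $d_1=\mu_1+i\mathbb{R}$, where $\wb$ itself blows up and would force the polynomial-in-$\wb$ correction term $\sum c_{\mu,j}\wb^j$ of Proposition~\ref{lemma:periodic}. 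In the specific cases, one or both of these poles slip onto $d_1$.

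First I would show that from any specific parameters $(r_1,r_2)$ one can choose a sequence $(r_1^{(k)},r_2^{(k)})\to(r_1,r_2)$ such that the corresponding $(s_1^{(k)},s_2^{(k)})$ avoid $-\mathbb{N}$ and $\mu_1+\mathbb{N}$, while the open hypotheses \eqref{eq:H1}, \eqref{eq:H2}, \eqref{eq:H3} together with $\gamma^{(k)}\notin\mathbb{Z}$ and $\{\gamma_1^{(k)},\gamma_2^{(k)}\}\not\subset\mathbb{Z}$ persist; this is immediate since all the exceptional loci are countable unions of hyperplanes in parameter space. Then I would invoke the continuous dependence of the degenerate SRBM's invariant measure on its reflection matrix (weak continuity of $\boldsymbol{\pi}$) to conclude that $\varphi_1^{(k)}(s)$ and the normalization $\phi_1^{(k)}(0)$ from \eqref{eq:normcst} converge to $\varphi_1(s)$ and $\phi_1(0)$, locally uniformly in $s$ on compacta avoiding the (discrete) pole set.

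Next I would verify that the right-hand side of each formula in Theorem~\ref{thm:general}, viewed jointly as a function of $s$ and of the parameters, also extends continuously through the boundary cases. The key point is that when $s_2\in\mu_1+\mathbb{N}$ the apparent singularity of $1/\sin\bigl(\pi(s-s_2)\bigr)$ at $s=\mu_1$ is cancelled by a zero of $D(s)$ inherited from the Gamma quotient $\Gamma(s+s_2+\mu_2)/\Gamma(s-s_2)$, via the reflection formula $\Gamma(z)\Gamma(1-z)=\pi/\sin(\pi z)$; a symmetric compensation occurs when $s_1\in-\mathbb{N}$ between the Gamma factor $\Gamma(s+s_1+\mu_2)$ and $\sin\bigl(\pi(s+s_1+\mu_2)\bigr)$. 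Passing to the limit $k\to\infty$ in the proportionality $\varphi_1^{(k)}\propto(\text{RHS}^{(k)})$ then transfers each formula of Theorem~\ref{thm:general} to the specific case.

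The main obstacle is the subcase where $s_1\in-\mathbb{N}$ and $s_2\in\mu_1+\mathbb{N}$ simultaneously, and moreover the two boundary poles merge at $\mu_1$, producing a double pole of $\varphi_1/D$ on $d_1$: one must check that both compensations take place simultaneously and that no residual polynomial-in-$\wb$ contribution survives in the limit. A safe fallback, if continuity of $\boldsymbol{\pi}$ in the degenerate reflection parameters is regarded as delicate, is to redo the analysis directly: apply item~2 of Proposition~\ref{lemma:periodic} with poles allowed on $\{-\mu_2,\mu_1\}$, obtaining an expression of the form $c+c_2/(\wb-\wb(s_2))+c_1/(\wb-\wb(\zeta s_1))+\sum_{j=1}^{m_\mu} c_{\mu,j}\wb^j$ where the last sum accounts for the boundary poles; then determine $m_\mu$ and all coefficients by the asymptotic analyses at $s=it\to \pm i\infty$ and along the line $d_+$ (using Lemmas~\ref{lemma:stripe} and~\ref{lemma:asymptotic} as in the proof of Theorem~\ref{thm:general}), and verify by direct computation that the $c_{\mu,j}$ all vanish and the stated closed formula is recovered.
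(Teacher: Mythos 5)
Your proposal contains two arguments, and their status is quite different.

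Your \emph{primary} plan — perturb $(r_1,r_2)$ to generic values, invoke weak continuity of the invariant measure $\boldsymbol{\pi}$ with respect to the reflection matrix, and pass to the limit in the formulas of Theorem~\ref{thm:general} — has a genuine gap. The continuity of the invariant measure of a \emph{degenerate} SRBM as a function of the reflection parameters is never established in the paper and is not a routine fact: the standard continuity results for SRBMs assume a nondegenerate diffusion matrix, and even there they require some care near the boundary of the existence/recurrence region. Without either a citation or a proof of this continuity (together with the locally uniform convergence of $\varphi_1^{(k)}$ on a full neighbourhood of $\overline{\mathfrak{B}}_1$, not merely off the limiting pole set), the passage to the limit is unjustified. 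Note that the analytic side of your argument is fine: the compensations you describe between the zeros of $1/\Gamma(s-s_2)$, resp.\ $1/\Gamma(s+s_1+\mu_2)$, and the poles of $1/\sin(\pi(s-s_2))$, resp.\ $1/\sin(\pi(s+s_1+\mu_2))$, at $s=\mu_1$ are correct and do make the right-hand sides of \eqref{eq:general1}--\eqref{eq:general3} finite at $s=\mu_1$; the problem is entirely on the left-hand side.

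Your ``safe fallback'' is, up to notation, exactly what the paper does: apply item~2 of Proposition~\ref{lemma:periodic} allowing a pole at $\{-\mu_2,\mu_1\}$, so that the ansatz for $\varphi_1/D$ acquires an extra polynomial $\sum_j c_{\mu,j}\,\wb^j$ of degree $m_\mu\in\{1,2\}$ according as the boundary pole is simple or double; then determine all constants by the same two devices used in the generic proof (the $\gamma>0$ vanishing limit along the strip, or the $\gamma<0$ ratio $D(s_++it)/D(s_+-it)\to e^{2i\pi\gamma}$); and finally check consistency with the generic formulas by direct substitution $s_1=-n$ or $s_2=\mu_1+n$. The case you single out as the ``main obstacle'' (a double pole on $d_1$ when $s_1\in-\mathbb N$ and $s_2\in\mu_1+\mathbb N$ simultaneously) is indeed treated separately in the paper, and the computation shows $c_{\mu,1}=0$, $c=c_{\mu,2}$, giving $\varphi_1/D\propto 1+\wb^2=1/\sin^2(\pi(s+\mu_2))$, in agreement with \eqref{eq:general1} evaluated at these parameters. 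So the fallback both is correct and coincides with the paper's route; to make the proof complete you should promote it from a contingency to the actual argument, or else supply a proof of the continuity claim underlying your first route.
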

\begin{proof} In each case, we determine an expression similar to that in~\eqref{eq:notN}, then compute the relations between the constants as in the proof of the generic cases, and finally verify that the resulting formula matches the one given in Theorem~\ref{thm:general}.
   \begin{itemize}
       \item If $s_1\in -\mathbb N$ and $s_2<0$ \textbf{or} $s_2\in \mu_1+\mathbb N$ and $s_1>0$ then $\varphi_1/D$ has a unique simple pole at $\mu_1$, and by the Type I invariant Lemma (Proposition~\ref{lemma:periodic}) there exist constants $c$ and $c_{1}$ such that
       $$\frac{\varphi_1(s)}{D(s)}=c+c_{1}\wb(s).$$  
        As already shown in the corresponding generic case, $\gamma<0$. Applying the same method as in 2. and 3. in the previous theorem, one can show that
        $$e^{2i\pi \gamma}(c+ic_{1})=c-ic_{1}.$$
        Therefore $c_{1}=-c\tan(\pi\gamma)$ and $$\varphi_1(s)\propto D(s)\left(1-\tan(\pi\gamma)\wb(s)\right).$$ Now we need to check that the corresponding generic cases match this expression: if $s_1<0$ and $s_2<0$, then taking $s_1=-n\in -\mathbb N$ in the generic case~\eqref{eq:general2} leads to
        \begin{align*}
        \frac{\varphi_1(s)}{D(s)}&\propto \frac{\sin(\pi(s+\mu_2+s_2))}{\sin(\pi(s+\mu_2-n))}=\frac{\sin(\pi(s+\mu_2+\gamma))}{\sin(\pi(s+\mu_2))}\\
        &=\cos(\pi\gamma)+\frac{\sin(\pi\gamma)}{\tan(\pi(s+\mu_2))}\propto 1-\tan(\pi\gamma)\wb(s),
        \end{align*}
        (as a simple consequence of the addition formula for the sine function). If $s_1>0$ and $s_2>0$ , then taking $s_2=\mu_1+n$ (or equivalently $s_1=\mu_1+n-\gamma$) with $n\in\mathbb N$ in the generic case~\eqref{eq:general3} also yields
        $$\frac{\varphi_1(s)}{D(s)}\propto \frac{\sin(\pi(s-\mu_1-n+\gamma))}{\sin(\pi(s-\mu_1-n))}=\cos(\pi\gamma)\Big(1-\tan(\pi\gamma)\wb(s)\Big).$$
        This confirms consistency with both generic cases.
       \item If $s_1\in -\mathbb N$, $s_2>0$ and $s_2\notin \mu_1+\mathbb N$, then $\varphi_1/D$ has two simple poles in the strip: one at $\mu_1$ and another one at $s_2-q$ for some $q\in\mathbb N$. Thus 
$$\frac{\varphi_1(s)}{D(s)}=c+\frac{c_1}{\wb(s)-\wb(s_2)} +c_2\wb(s)$$       
       for well-choosen constants $c$, $c_1$ and $c_2$. Moreover, since $\gamma>0$, $\varphi_1/D\to 0$. Taking the limits as in the first generic case yields the system
        $$0=c+\frac{c_1}{i-\wb(s_2)}+ic_2=c-\frac{c_1}{i+\wb(s_2)}-ic_2$$
        whose nonzero solutions satisfy
        $$\frac{c_1}{c}=\wb(s_2)+\frac{1}{\wb(s_2)}\text{ and }\frac{c_2}{c}=\frac{1}{\wb(s_2)}.$$
        Hence
        \begin{align*}
        \frac{\varphi_1(s)}{D(s)}&\propto 1+\left(\wb(s_2)+\frac{1}{\wb(s_2)}\right)\frac{1}{\wb(s)-\wb(s_2)}+\frac{\wb(s)}{\wb(s_2)}\propto\frac{1}{\sin(\pi(s+\mu_2))\sin(\pi(s-s_2))}.
        \end{align*}
        This expression is to be compared with~\eqref{eq:general1}: replacing $s_1$ with $-n\in-\mathbb N$ in that formula yields the one derived above, with no further trigonometric computation.
       \item If $s_2\in \mu_1+\mathbb N$, $s_1<0$ and $s_1\notin \mathbb N$ then applying the same symmetry principle as in the proof of the corresponding generic case yields the following result that is consistent with Theorem~\ref{thm:general}:
       $$\varphi_1(s)\propto D(s)\frac{\sin(\pi s_1)\tan(\pi s_1)}{\sin(\pi(s+\mu_2))\sin(\pi(s+s_1+\mu_2))}$$
       \item If $s_1\in -\mathbb N$ \textbf{and} $s_2\in \mu_1+\mathbb N$, then $\varphi_1/D$ has a unique double pole at $\mu_1$, and for some constants $c$, $c_1$ and $c_2$,
        $$\frac{\varphi_1(s)}{D(s)}=c+c_1\w(s)+c_2\w(s)^2.$$
        Since $\gamma>0$, $\varphi_1(s)/D(s)\to 0$ when $|s|\to +\infty$. Taking the limits as in the first generic case shows that
        $$0=c+ic_1-c_2=c-ic_1-c_2.$$
        Taking the difference of the two equations immediatly gives $c_1=0$ and then $c=c_2$:
        $$\frac{\varphi_1(s)}{D(s)}\propto 1+\wb(s)^2=\frac{1}{\sin(\pi(s+\mu_2))^2}.$$
        This result matches with~\eqref{eq:general1}: let us evaluate this generic case at $(s_1,s_2)=(-m,\mu_1+n)$ for two natural numbers $m$ and $n$:
        $$\frac{\varphi_1(s)}{D(s)}\propto \frac{1}{\sin(\pi(s+\mu_2-m))\sin(\pi(s-\mu_1-n))}\propto \frac{1}{\sin(\pi(s+\mu_2))^2},$$
        where we relied on the fact that $\mu_1+\mu_2=1$ by~\eqref{eq:H4}.
   \end{itemize}
\end{proof}

\begin{proposition}[Consistency with rational decoupling cases]
\label{prop:consistency}
The formulas of Theorem~\ref{thm:general} remain valid in the rational decoupling cases: more precisely, imposing $\gamma \in \mathbb{Z}$ or $\{\gamma_1,\gamma_2\} \subset \mathbb{Z}$ in the expressions of Theorem~\ref{thm:general} reproduces exactly the formulas given in Theorems~\ref{thm:N}, \ref{thm:-N}, and~\ref{thm:notN}.
\end{proposition}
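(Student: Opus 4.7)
The plan is to verify case by case that substituting the rational decoupling conditions into the three formulas of Theorem~\ref{thm:general} reproduces the formulas of Theorems~\ref{thm:N}, \ref{thm:-N}, and \ref{thm:notN}. The key identities I will exploit are: when $\gamma = s_2 - s_1 \in \mathbb{Z}$, the sines $\sin(\pi(s-s_1))$ and $\sin(\pi(s-s_2))$ (respectively $\sin(\pi(s+s_1+\mu_2))$ and $\sin(\pi(s+s_2+\mu_2))$) differ only by the sign $(-1)^\gamma$; when $\gamma \in \mathbb{Z}+\tfrac12$ (the only other possibility under our assumptions, by Remark~\ref{rem:gamma}), the addition formula gives $\sin(\pi(s-s_2)) = \pm\cos(\pi(s-s_1))$, converting the ratio of sines into a tangent or cotangent.

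For $\gamma\in-\mathbb{N}$, case~1 of Theorem~\ref{thm:general} (which requires $s_1<0<s_2$) would force $\gamma>0$, so only cases 2 and 3 remain, and in both the sine ratios collapse to $(-1)^\gamma$, leaving $\varphi_1\propto D$. Substituting $D(s)=2^{-\gamma}/Q(\y(s))$ from Proposition~\ref{prop:decoupling}(a) recovers Theorem~\ref{thm:N}. For $\gamma\in\mathbb{N}$, Lemma~\ref{lemma:impossible} rules out cases 2 and 3 of Theorem~\ref{thm:general}, so only case~1 survives. I will then apply the product-to-sum identity
\[
2\sin\bigl(\pi(s+s_1+\mu_2)\bigr)\sin\bigl(\pi(s-s_2)\bigr) = \cos\bigl(\pi(s_1+s_2+\mu_2)\bigr) - \cos\bigl(\pi(2s+s_1-s_2+\mu_2)\bigr),
\]
combined with $2s_1+\mu_2=1-\gamma_1$, $s_2-s_1\in\mathbb{Z}$, and the identity $\cos(\pi(2s+\mu_2)) = -\cos(\pi\sqrt{2y+\mu_1^2})$ already used in the proof of Theorem~\ref{thm:-N}, to show this product equals $\tfrac{(-1)^\gamma}{2}\bigl[\cos(\pi\sqrt{2y+\mu_1^2}) - \cos(\pi\gamma_1)\bigr]$. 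Combined with $D(s)\propto P(\y(s))$ from Proposition~\ref{prop:decoupling}(b), this reproduces Theorem~\ref{thm:-N}.

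For $\{\gamma_1,\gamma_2\}\subset\mathbb{Z}$ with $\gamma\notin\mathbb{Z}$, I will exploit $\gamma\in\mathbb{Z}+\tfrac12$ to convert the sine ratio in cases 2 and 3 of Theorem~\ref{thm:general} into $\pm\tan(\pi(s-s_1))$, and the sine product in case~1 (when both $\gamma_1,\gamma_2>0$, the setting of case~3 of Theorem~\ref{thm:notN}) into $\pm\tfrac12\sin(\pi\sqrt{2y+\mu_1^2})$ via the duplication formula. Writing $s-s_1 = \tfrac12\bigl(\pm\sqrt{2y+\mu_1^2}+\gamma_1\bigr)$, the $\pi$-periodicity of $\tan$ together with the integrality of $\gamma_1/2$ or $(\gamma_1-1)/2$ (according to the common parity of $\gamma_1$ and $\gamma_2$ given by Remark~\ref{rem:gamma}) will yield $\tan(\tfrac{\pi}{2}\sqrt{2y+\mu_1^2})$ or its reciprocal. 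Substituting the rational decoupling $D(s)\propto \tfrac{P(\y(s))}{Q(\y(s))}[\sqrt{2}(s-s_+)]^\varepsilon$ from Proposition~\ref{prop:decoupling}, in which $\sqrt{2}(s-s_+) = \pm\sqrt{2y+\mu_1^2}/\sqrt{2}$, will then distribute the square-root factor to match each of the three sub-cases of Theorem~\ref{thm:notN}. I expect the main difficulty to be the sign-and-parity bookkeeping required to align with the values $\varepsilon\in\{-1,0,1\}$ prescribed in Proposition~\ref{prop:decoupling}; conceptually, every step reduces to standard addition formulas for sine and cosine, and no new tools are required.
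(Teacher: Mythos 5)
Your proposal is correct and takes essentially the same approach as the paper: a direct case-by-case verification using product-to-sum, double-angle, and periodicity identities for sine and cosine, together with the substitution $s=\tfrac12(\mu_1\pm\sqrt{2y+\mu_1^2})$ and the explicit form of $D$ from Proposition~\ref{prop:decoupling}. The only (cosmetic) difference is the iteration order — you loop over the three rational-decoupling regimes and identify which case of Theorem~\ref{thm:general} is compatible with each, while the paper loops over the three cases of Theorem~\ref{thm:general} and identifies which decoupling conditions are compatible — but the underlying sub-cases and trigonometric computations are identical.
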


\begin{proof}
The proof proceeds by a direct verification of the formulas in Theorem~\ref{thm:general} under the rational decoupling conditions. We split the proof into three cases corresponding to those of Theorem~\ref{thm:general}.
\begin{enumerate}
\item If $s_1<0$ and $s_2>0$, then, $\gamma_1:=\mu_1-2s_1>0$, $\gamma_2:=\mu_1+2s_2>0$, $\gamma:=s_2-s_1>0$  and according to Theorem~\ref{thm:general},
\begin{equation}\label{eq:proofconsist1}
\frac{\varphi_1(s)}{D(s)}\propto \frac{1}{\sin\left(\pi(s+s_1+\mu_2)\right)\sin\left(\pi(s-s_2)\right)}.
\end{equation}
There are two rational decoupling cases compatible with these conditions: either $\gamma \in \mathbb{N}$ or $\{\gamma_1, \gamma_2\} \subset \mathbb{N}$. If $\gamma\in\mathbb N$, then Equation~\eqref{eq:proof44} in the proof of Theorem~\ref{thm:-N} shows that
\begin{equation}\label{eq:proofconsist2}
\frac{\varphi_1(s)}{D(s)}\propto \frac{1}{\cos\left(2\pi(s-s_-)\right)+\cos(\pi\gamma_1)}.
\end{equation}
We apply the standard trigonometric identity $\sin(a)\sin(b)=\frac{1}{2}(\cos(a-b)-\cos(a+b))$ to get 
\begin{align*}
&\phantom{=}\;\sin\left(\pi(s+s_1+\mu_2)\right)\sin\left(\pi(s-s_2)\right)\\
&\propto \cos\left(\pi(s_1+s_2+\mu_2)\right)-\cos\left(\pi(2s+s_1-s_2+\mu_2)\right)\\
&=\cos\left(\pi(-\gamma_1+\gamma+1)\right)-\cos\left(2\pi(s-s_1)+\pi\gamma\right)\\
&\propto \cos(2\pi(s-s_-))-\cos(\pi\gamma_1),\text{ since (}\gamma\in\mathbb N\text{),}
\end{align*}
which proves the consistency between~\eqref{eq:proofconsist1} and~\eqref{eq:proofconsist2}. If $\{\gamma_1,\gamma_2\}\subset \mathbb N$, then Equation~\eqref{eq:aux1} in the proof of Theorem~\ref{thm:notN} states that
\begin{equation}\label{eq:proofconsist3}
\frac{\varphi_1(s)}{D(s)}\propto \frac{1}{\sin\left(2\pi(s-s_-)\right)}.
\end{equation}
One can replace $s_1$ and $s_2$ with $\frac{1}{2}(\mu_1-\gamma_1)$ and $\frac{1}{2}(\gamma_2-\mu_2)$ respectively in~\eqref{eq:proofconsist1} to obtain
\begin{align*}
\sin\left(\pi(s+s_1+\mu_2)\right)\sin\left(\pi(s-s_2)\right)&=\sin\left(\pi\left(s+\frac{\mu_1}{2}-\frac{\gamma_1}{2}+\mu_2\right)\right)\sin\left(\pi\left(s-\frac{\gamma_2}{2}+\frac{\mu_2}{2}\right)\right)\\
&=\sin\left(\pi\left(s-s_-\right)-\frac{\pi(\gamma_1-1)}{2}\right)\sin\left(\pi(s-s_-)-\frac{\pi\gamma_2}{2}\right)\\
&\propto \sin\left(\pi(s-s_-)\right)\cos\left(\pi(s-s_-)\right)\propto \sin\left(2\pi(s-s_-)\right).
\end{align*}
The last step in the above computation relies on the fact that $\gamma_1$ and $\gamma_2$ share the same parity (see Remark~\ref{rem:gamma}).
\item If $s_1<0$ and $s_2<0$, then $\gamma_1>0$ (by definition) and $\gamma_2<\mu_2$. If $\gamma_2>0$, then $s_2\in(-\mu_2/2,0)$, which contradicts Lemma~\ref{lemma:s1s2}. Hence $\gamma_2<0$, and Lemma~\ref{lemma:impossible} implies that $\gamma>0$. There are two rational decoupling cases satisfying these inequalities: either $\gamma\in\mathbb{N}$, or $\gamma_1,-\gamma_2\in\mathbb{N}$. One should therefore impose $\gamma\in\mathbb{N}$ in~\eqref{eq:proof44} and $(\gamma_1,\gamma_2)\in\mathbb{N}\times -\mathbb{N}$ in the proof of Theorem~\ref{thm:notN} (distinguishing two subcases according to the parity of $\gamma_1$) to recover~\eqref{eq:general2}. The remainder of the proof only involves standard trigonometric reasoning.
\item If $s_1>0$ and $s_2>0$, then by an argument similar to the previous case we obtain 
$\gamma_1<0$, $\gamma_2>0$, and $\gamma>0$. Under these conditions, $D$ is rational if and only if $\gamma\in\mathbb{N}$ or $-\gamma_1,\gamma_2\in\mathbb{N}$. As in the preceding case, basic trigonometry suffices to verify the consistency of the corresponding formulas.
\end{enumerate}
\end{proof}

\section{Differential hierarchy for the Laplace transform}\label{sec:hierarchy}

The goal of this section is to prove the classification given in the introduction by Table~\ref{tab:classification} which gives necessary and sufficient condition for the Laplace transforms $\phi$, $\phi_1$ and $\phi_2$ to belong to the hierarchy 
\begin{equation}\label{eq:hierarchy}
\text{rational}\subset\text{algebraic}\subset\text{D-finite}\subset\text{D-algebraic} .
\end{equation}
Let us now give more precise definitions of these classes of functions, in both the univariate and bivariate context:
\begin{definition}[differential and algebraic nature]\label{def:hierarchy} Let $U \subseteq \mathbb C^d$ be a domain and let 
$f:U\to\mathbb C$ be a meromorphic function. 
We say that $f$ is \textit{differentially algebraic} 
(abbreviated \textit{D-algebraic}) if there exists 
$n \in \mathbb N_0$ and, for each $1 \le k \le d$, 
a nonzero polynomial 
$$P_k \in \mathbb C(z_k)[X_0,\dots,X_n]$$
such that 
$$P_k\bigl(f, \partial_{z_k} f,\dots,\partial_{z_k}^{\,n} f\bigr) = 0.$$
We say that $f$ is \textit{differentially transcendental} 
(abbreviated \textit{D-transcendental}) if it is not 
differentially algebraic, and \textit{differentially finite} (abbreviated 
\textit{D-finite}, also called \textit{holonomic}) if, for each 
$k$, the corresponding polynomial $P_k$ is linear in the 
indeterminates $X_0,\dots,X_n$.\\
The function $f$ is said to be \textit{algebraic} if it satisfies 
a non-trivial polynomial relation
$$Q(z_1,\dots,z_d,f)=0,\qquad 
Q \in \mathbb C[z_1,\dots,z_d,X].$$
Note that in the definition of D-algebraic functions, it is equivalent to require the polynomial differential equations having coefficient in $\mathbb{R}$.
\end{definition}
We will show that $\phi_1$ is rational if and only if it is algebraic, if and only if it is $D$-finite. We will see that $\phi_1$ is $D$-algebraic if and only if there exists a decoupling function for $G$. In Section~\ref{subsec:subcases}, we will show that if $\gamma\in\mathbb{Z}$ or  $\{ \gamma_1,\gamma_2\}\subset \mathbb{Z}$ then $\phi_1$ is $D$-algebraic and that in these algebraic cases, $\phi_1$ is $D$-finite if and only if $\gamma\in-\mathbb{N}$. Due to the symmetries in the problem and in our results, $\phi_1$ and $\phi_2$ will share the same nature. The following proposition allows us to deduce that of $\phi$.
\begin{proposition}[Common nature] If $\phi_1$ and $\phi_2$ both belong to the same class of the hierarchy~\eqref{eq:hierarchy}, then so does $\phi$.
\end{proposition}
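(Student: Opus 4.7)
The plan is to read the proof directly off the functional equation~\eqref{eq:FE}. Since $K(x,y)$ is a polynomial that does not vanish identically, we may rearrange~\eqref{eq:FE} as
$$\phi(x,y) = -\frac{k_1(x,y)\,\phi_1(y) + k_2(x,y)\,\phi_2(x)}{K(x,y)},$$
so that $\phi$ is expressed as a rational combination of $\phi_1$, $\phi_2$ and polynomials in $(x,y)$. The argument then reduces to checking, class by class, that the operations appearing here preserve membership in the class.

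First I would verify the (essentially trivial) embedding of univariate functions into the bivariate framework of Definition~\ref{def:hierarchy}. If $\phi_1(y)$ is rational, algebraic, D-finite, or D-algebraic as a function of $y$, then its lift $(x,y) \mapsto \phi_1(y)$ remains in the same class: the defining relation in $y$ is preserved verbatim, and in the two differential cases the trivial relation $\partial_x \phi_1 = 0$ supplies the required equation in the $x$-variable. By symmetry, the same holds for $\phi_2(x)$. Moreover $K$, $k_1$, $k_2$ are polynomials in $(x,y)$ and therefore belong to every class of the hierarchy.

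Then I would invoke the closure of each class under the bivariate ring operations and under division by a nonzero polynomial. The rational and algebraic classes are fields, so closure is immediate. For bivariate D-finite functions, closure under sums, products, and multiplication by rational functions is the well-known closure theorem in the style of Stanley's results and their bivariate extension, used routinely in the enumerative and analytic references cited in the paper (e.g.~\cite{BoMe-El-Fr-Ha-Ra}). For D-algebraic functions, closure under the ring operations and under inversion of nonzero elements follows from the classical Ritt--Kolchin differential algebra: the set of D-algebraic functions forms a differential subfield containing all rational functions. Combining these closure properties with the embedding step above immediately yields the conclusion class by class, by reading off the explicit formula for $\phi$.

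The proof is thus conceptually a one-line consequence of the functional equation and of closure. The main bookkeeping concerns the bivariate reading of Definition~\ref{def:hierarchy}: when forming the product $k_2(x,y)\phi_2(x)$, for instance, the $y$-relation it inherits may involve $x$ in its coefficients, and one must interpret the definition so that this is permitted — this is the standard convention in the literature the paper draws on. Once this convention is fixed, no further analytic input is needed, and the proposition follows directly.
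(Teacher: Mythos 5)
Your proof is correct and follows essentially the same route as the paper: rearrange the functional equation~\eqref{eq:FE} to express $\phi$ as a rational combination of $\phi_1$ and $\phi_2$, then invoke closure of each class in the hierarchy. The paper compresses the closure step into the single observation that each class is a $\mathbb{C}(x,y)$-vector space, which is precisely what your more detailed embedding-and-closure discussion establishes; your explicit treatment of the bivariate reading of Definition~\ref{def:hierarchy} is a useful clarification that the paper leaves implicit.
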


\begin{proof} By the functional equation~\eqref{eq:FE}
$$\phi(x,y)=-\frac{k_1(x,y)\phi_1(y)+k_2(x,y)\phi_2(x)}{K(x,y)},$$
where $k_1$, $k_2$ and $K$ are polynomials, given in~\eqref{eq:noyau} and~\eqref{eq:k12}. The conclusion comes from the fact that each class of the hierarchy~\eqref{eq:hierarchy} is a $\mathbb{C}(x,y)$-vector space.
\end{proof}

In Section~\ref{subsec:transcendance} we will show that if $\phi_1$ is $D$-algebraic then $\gamma\in\mathbb{Z}$ or  $\{ \gamma_1,\gamma_2\}\subset \mathbb{Z}$, which is the same thing, to show that if $\gamma\notin\mathbb{Z}$ and $\{ \gamma_1,\gamma_2\}\not\subset \mathbb{Z}$ then $\phi_1$ is $D$-transcendental.
This will complete the proof of the above,  since the remaining cases will be treated in Section \ref{subsec:subcases}.

\subsection{D-finite, algebraic and rational cases}\label{subsec:subcases}

\begin{lemma}\label{lem:wDADF} For all $a,b,c\in \mathbb R$ (with $a\ne 0$), the following functions are D-algebraic but not D-finite:
$$g_1(y):=\frac{1}{\cos(\sqrt{ay+b})-c},\quad  g_2(y):=\frac{\sqrt{ay+b}}{\sin\left(\sqrt{ay+b}\right)},\quad  g_3(y):=\frac{\sqrt{ay+b}}{\tan\left(\sqrt{ay+b}\right)},\quad  g_4:=\frac{1}{g_3} $$
\end{lemma}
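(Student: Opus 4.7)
The approach will be to (i) derive explicit polynomial differential equations for each $g_i$ via two well-chosen auxiliary functions, and (ii) refute D-finiteness by counting poles and invoking a classical rigidity property of D-finite meromorphic functions.

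The plan is to introduce $v(y):=\cos(\sqrt{ay+b})$ and $\tilde v(y):=\sin(\sqrt{ay+b})/\sqrt{ay+b}$, which are entire on $\mathbb{C}$ by Remark~\ref{rem:sinc}. A direct differentiation using $(\sqrt{ay+b})'=a/(2\sqrt{ay+b})$ together with $\cos^2+\sin^2=1$ immediately yields the two identities
\[
4(ay+b)(v')^2 = a^2(1-v^2), \qquad v^2 + (ay+b)\,\tilde v^2 = 1,
\]
and each target function decomposes as
\[
g_1 = \frac{1}{v-c}, \quad g_2 = \frac{1}{\tilde v}, \quad g_3 = \frac{v}{\tilde v}, \quad g_4 = \frac{\tilde v}{v}.
\]
The first identity shows $v$ is D-algebraic; the second shows $\tilde v$ is algebraic over the D-algebraic differential field $\mathbb{C}(y)\langle v\rangle$, hence also D-algebraic. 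Since the class of D-algebraic meromorphic functions is a field, D-algebraicity of all four $g_i$ follows at once.

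For the non-D-finite direction, I would invoke the following standard rigidity property: a meromorphic function on $\mathbb{C}$ that is D-finite has only finitely many poles. The short justification is that if $\sum_{i=0}^n p_i(y)f^{(i)}=0$ with polynomial coefficients and $p_n\not\equiv 0$, then at every $y_0$ where $p_n(y_0)\ne 0$ the ODE is regular and its local solution space is an $n$-dimensional space of holomorphic functions, forcing $f$ itself to be holomorphic at $y_0$. Hence the poles of $f$ lie in the finite zero set of $p_n$.

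It then remains to exhibit, for each $g_i$, infinitely many poles in $\mathbb{C}$. For $g_1$ the pole set is $\{y:\cos(\sqrt{ay+b})=c\}$, which is infinite because $\cos$ attains every complex value infinitely often while the map $w\mapsto w^2$ is finite-to-one. For $g_2$ and $g_3$ the poles come from the zeros of $\tilde v$, namely $\sqrt{ay+b}\in\pi(\mathbb{Z}\setminus\{0\})$, yielding the infinite sequence $y_k=(k^2\pi^2-b)/a$, $k\ge 1$. For $g_4$ the poles come from the zeros of $v$, namely $\sqrt{ay+b}\in\pi/2+\pi\mathbb{Z}$. Each count rules out D-finiteness. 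The only mildly delicate point I anticipate is checking that these candidate poles are not cancelled by zeros of the numerator (which is automatic since $v=\pm 1$ on the zeros of $\tilde v$, and vice versa, so no simultaneous vanishing occurs), and this is the only step in the proof that requires any care.
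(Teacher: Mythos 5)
Your proposal is correct and follows essentially the same two-step route as the paper: D-algebraicity via closure properties of D-algebraic functions, and non-D-finiteness by exhibiting infinitely many poles and invoking the fact that D-finite meromorphic functions have only finitely many singularities. One small improvement over the paper's text: where the proof in the paper asserts in passing that $\cos$, $\sin$, $\tan$ are ``algebraic'' (they are not---they are transcendental over $\mathbb{C}(x)$, though certainly D-algebraic), you instead derive the explicit first-order polynomial ODE $4(ay+b)(v')^2=a^2(1-v^2)$ for $v=\cos(\sqrt{ay+b})$ and the algebraic relation $v^2+(ay+b)\tilde v^2=1$ linking $\tilde v$ to $v$, which is both precise and more self-contained; this does not change the route but tightens a loose statement.
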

\begin{proof}
Notice that $\cos$, $\sin$ $\tan$ and $\sqrt{\,\cdot \,}$ are algebraic and therefore D-algebraic. Recall from~\cite{Dalgebraic} that the class of D-algebraic functions is a field and is stable under composition (provided the composition is defined), hence $g_1$, $g_2$, $g_3$ and $g_4$ are D-algebraic. To check wether these functions are D-finite, we will use the fact that D-finite functions have finitely many singularities. Here, $g_1$, $g_2$, $g_3$ and $g_4$ clearly violates this rule, hence cannot be D-finite.
\end{proof}

\begin{proposition}[Sufficient condition for $D$-algebraicity]
\label{prop:CSDA}
If $\gamma\in\mathbb{Z}$ or  $\{ \gamma_1,\gamma_2\}\subset \mathbb{Z}$ then $\phi_1$ is $D$-algebraic. 
\end{proposition}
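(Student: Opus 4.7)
The plan is to deduce $D$-algebraicity directly from the explicit formulas for $\phi_1$ established in Section~\ref{sec:Laplace}, case by case, depending on which decoupling hypothesis is satisfied.

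First, I would recall that by Proposition~\ref{prop:decoupling}, whenever $\gamma\in\mathbb{Z}$ or $\{\gamma_1,\gamma_2\}\subset\mathbb{Z}$, the decoupling function $D$ is a rational function of $s$. Then I would split into the three exhaustive subcases covered by our main theorems. If $\gamma\in -\mathbb{N}$, Theorem~\ref{thm:N} shows $\phi_1(y)\propto 1/Q(y)$ with $Q$ a polynomial, which is rational in $y$ and hence trivially $D$-algebraic. If $\gamma\in \mathbb{N}$, Theorem~\ref{thm:-N} gives
\[
\phi_1(y)\propto \frac{P(y)}{\cos(\pi\sqrt{2y+\mu_1^2})-\cos(\pi\gamma_1)},
\]
which is of the form $g_1$ in Lemma~\ref{lem:wDADF} (up to multiplication by the polynomial $P$). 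Finally, if $\{\gamma_1,\gamma_2\}\subset\mathbb{Z}$ and $\gamma\notin\mathbb{Z}$, Theorem~\ref{thm:notN} expresses $\phi_1$ as a rational function of $y$ multiplied by one of the building blocks $g_2$, $g_3$ or $g_4$ of Lemma~\ref{lem:wDADF}, again composed with an affine change of variable in $y$.

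To conclude in each case, I would invoke the standard fact, already used in the proof of Lemma~\ref{lem:wDADF}, that the class of $D$-algebraic functions forms a field and is closed under composition whenever the composition is well defined (see \cite{Dalgebraic}). Since polynomials are $D$-algebraic and the functions $\cos$, $\sin$, $\tan$ and $\sqrt{\,\cdot\,}$ are algebraic, hence $D$-algebraic, the explicit expressions above are finite combinations (sums, products, quotients, compositions) of $D$-algebraic functions of $y$, so $\phi_1$ is $D$-algebraic.

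There is no real obstacle here: the entire difficulty has been absorbed into establishing the explicit formulas of Section~\ref{sec:Laplace} and into the decoupling analysis of Section~\ref{subsec:decoupl}. The only mild point to be careful about is ensuring that the affine substitution $y\mapsto 2y+\mu_1^2$ inside $\sqrt{\,\cdot\,}$ and the trigonometric functions is compatible with the closure-under-composition property of $D$-algebraic functions, which is standard.
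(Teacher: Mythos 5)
Your proposal is correct and follows essentially the same approach as the paper: both deduce $D$-algebraicity from the explicit formulas of Theorems~\ref{thm:N}, \ref{thm:-N}, and~\ref{thm:notN}, identify the non-rational factor as one of the building blocks classified in Lemma~\ref{lem:wDADF}, and conclude via closure of $D$-algebraic functions under products (and, in your slightly more detailed version, compositions). The only cosmetic difference is that you split into the three subcases explicitly, whereas the paper unifies them by writing $\phi_1 = (P/Q)\,g$ with $g \equiv 1$ when $\gamma \in -\mathbb{N}$.
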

\begin{proof}
Theorems~\ref{thm:N},~\ref{thm:-N} and~\ref{thm:notN} in the previous section show that when $\gamma\in\mathbb{Z}$ or  $\{ \gamma_1,\gamma_2\}\subset \mathbb{Z}$,
$$
\phi_1(y)=\frac{P(y)}{Q(y)}g(y)
$$
where $P$ and $Q$ are polynomials coming from the decoupling function $D$, and $g$ is one of the D-algebraic functions listed in Lemma~\ref{lem:wDADF} (or $g\equiv 1$ when $\gamma\in-\mathbb N$). As for every rational function, $P/Q$ is also D-alebraic, so that their product $\phi_1$ is D-algebraic.
\end{proof}

\begin{proposition}[Sufficient condition for rationality]
If $\gamma\in-\mathbb{N}$ then $\phi_1$ is rational.
\end{proposition}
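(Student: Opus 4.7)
The proof is essentially immediate from Theorem~\ref{thm:N}. My plan is to simply cite that theorem, which asserts that when $\gamma \in -\mathbb{N}$, one has
$$\phi_1(y) \propto \frac{1}{Q(y)}, \qquad Q(y) = \prod_{k=0}^{-\gamma-1}\bigl(y - \y(s_1-k)\bigr) \in \mathbb{R}[X],$$
a polynomial of degree $-\gamma$. Since the reciprocal of a nonzero polynomial is by definition a rational function, this yields the rationality of $\phi_1$.

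The only slight subtlety is to pin down the proportionality constant so that we get a genuine equality and not merely a proportionality. For this I would use Lemma~\eqref{eq:normcst}, which gives the explicit value
$$\phi_1(0) = 2\,\frac{\mu_1 - r_2\mu_2}{1 - r_1 r_2}.$$
Evaluating $Q$ at $y=0$, one obtains an explicit nonzero constant $Q(0) = \prod_{k=0}^{-\gamma-1} \y(s_1-k) \neq 0$ (nonzero since Lemma~\ref{lemma:s1s2} ensures $s_1 \notin [0,\mu_1]$, so that $\y(s_1-k) = 2(s_1-k)(s_1-k-\mu_1) \neq 0$ for every $k \in \{0,\dots,-\gamma-1\}$). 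This fixes the constant of proportionality and realizes $\phi_1$ explicitly as
$$\phi_1(y) = \frac{2(\mu_1 - r_2\mu_2)}{(1-r_1 r_2)} \cdot \frac{Q(0)}{Q(y)} \in \mathbb{C}(y).$$

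There is no real obstacle here, as the structural work was already carried out in Theorems~\ref{thm:N} and the normalization lemma. I would therefore present the proof as a one-line corollary of Theorem~\ref{thm:N}, optionally including the explicit normalization above if we wish the statement to yield a closed-form formula rather than a mere membership assertion.
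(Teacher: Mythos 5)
Your proof is essentially identical to the paper's: both simply cite Theorem~\ref{thm:N}, which already gives $\phi_1\propto 1/Q$ with $Q$ a polynomial, and conclude rationality. The extra normalization paragraph is unnecessary for the statement (which only asserts membership in $\mathbb C(y)$, not a closed form), and its justification that $Q(0)\ne 0$ is slightly incomplete: Lemma~\ref{lemma:s1s2} gives $s_1\notin[0,\mu_1]$, which rules out $\y(s_1-k)=0$ only for $k=0$; for $k\geq 1$ you would also need the sign information coming from Lemma~\ref{lemma:impossible} (exactly one of $\gamma_1,\gamma_2$ is positive when $\gamma\in-\mathbb N$, forcing either $s_1<0$ or $s_1>\mu_1+1$, in both of which cases every $\y(s_1-k)>0$). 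Since that paragraph is tangential, the proof stands as a valid one-line corollary of Theorem~\ref{thm:N}, exactly as in the paper.
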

\begin{proof}
If $\gamma\in-\mathbb{N}$, Theorem~\ref{thm:N} shows that $\phi_1=1/Q$ is rational. 
\end{proof}

It just remains to show that in these $D$-algebraic cases, if $\gamma\notin-\mathbb{N}$ then $\phi_1$ is not $D$-finite (and then neither algebraic nor rational).
We will need the following lemma. 

\begin{proposition}[Necessary condition for D-finiteness]
If $\gamma\in\mathbb{Z}$ or  $\{ \gamma_1,\gamma_2\}\subset \mathbb{Z}$, and if $\gamma\notin-\mathbb{N}$, then $\phi_1$ is not $D$-finite (and then neither algebraic nor rational).
\end{proposition}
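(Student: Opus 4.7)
The plan is to exploit the explicit formulas of Theorems~\ref{thm:-N} and~\ref{thm:notN}, together with Lemma~\ref{lem:wDADF}, which provides a library of D-algebraic but non-D-finite building blocks. The heuristic is that, when $\gamma\notin-\mathbb{N}$, the explicit expression of $\phi_1$ always carries a trigonometric factor with infinitely many zeros in its denominator (or infinitely many poles), producing an infinite discrete set of singularities for $\phi_1$ and thereby obstructing D-finiteness, since a D-finite function has only finitely many singularities.

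First I would recall the closure property that the class of D-finite functions in one variable forms a $\mathbb{C}(y)$-algebra: in particular, for any nonzero $R\in\mathbb{C}(y)$, the function $R\cdot f$ is D-finite if and only if $f$ is. I would then match the explicit form of $\phi_1$ to one of the transcendental functions $g_1,g_2,g_3,g_4$ of Lemma~\ref{lem:wDADF} in each regime. When $\gamma\in\mathbb{N}$, Theorem~\ref{thm:-N} yields $\phi_1(y)\propto P(y)\,g_1(y)$ for suitable parameters $(a,b,c)$ depending on $\mu_1$ and $\gamma_1$. When $\{\gamma_1,\gamma_2\}\subset\mathbb{Z}$ and $\gamma\notin\mathbb{Z}$, the three families of Theorem~\ref{thm:notN} produce, depending on the signs and parities of $\gamma_1,\gamma_2$, expressions of the form $\phi_1(y)=R(y)\,g_i(y)$ with $R\in\mathbb{C}(y)\setminus\{0\}$ and $g_i\in\{g_2,g_3,g_4\}$, after absorbing the $\pi/2$ or $\pi$ rescaling into the parameters $(a,b)$. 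If $\phi_1$ were D-finite, then $g_i=\phi_1/R$ would also be D-finite by the closure property, contradicting Lemma~\ref{lem:wDADF}. Because rational and algebraic functions are D-finite, this simultaneously rules out rationality and algebraicity of $\phi_1$.

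The main obstacle is of a bookkeeping nature: for each subcase one has to verify that the trigonometric factor in the explicit formula matches, up to a nonzero rational multiplier, one of the exact shapes $g_1,\dots,g_4$ prescribed by Lemma~\ref{lem:wDADF}, and that no hidden cancellation with the polynomial factors $P$ and $Q$ removes the infinite set of singularities carried by $g_i$. The latter point is automatic, since $P$ and $Q$ have only finitely many zeros while each $g_i$ has infinitely many poles, so the singular set of $\phi_1$ remains infinite and no linear ODE with rational coefficients can annihilate it.
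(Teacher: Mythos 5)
Your proof is correct and follows essentially the same route as the paper's: write $\phi_1 = R\cdot g_i$ with $R$ rational and $g_i$ one of the non-D-finite functions of Lemma~\ref{lem:wDADF}, then invoke closure of D-finite functions under multiplication by nonzero rational functions to derive a contradiction. Your closing remark (that $\phi_1$ directly has infinitely many singularities since the finite zero set of $P,Q$ cannot cancel the infinite pole set of $g_i$) is a valid shortcut that bypasses the closure argument entirely, but it restates the very mechanism by which Lemma~\ref{lem:wDADF} rules out D-finiteness of the $g_i$, so it is a complementary observation rather than a genuinely different proof.
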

This proposition is equivalent to say that if $\gamma\in\mathbb{Z}$ or  $\{ \gamma_1,\gamma_2\}\subset \mathbb{Z}$, then $\phi_1$ is $D$-finite implies that $\gamma\in-\mathbb{N}$ (and is then rational by the previous proposition).
\begin{proof}
The class of D-finite functions is closed under multiplication (see~\cite{algoDfinite} for proofs and algorithms to construct the differential equation satisfied by the product of two D-finite functions) and contains all rational functions. Moreover, Lemma~\ref{lem:wDADF} asserts that, under the rational decoupling condition \eqref{eq:decouplcond}, if $\gamma\notin -\mathbb N$ then $\phi_1$ is the product of a rational function $P/Q$ by a nonholonomic function $g$. If $\phi_1$ was D-finite, one could show that $Q\phi_1/P=g$ is also D-finite, which is a contradiction.
\end{proof}

\subsection{Differential transcendance and D-algebraic cases}\label{subsec:transcendance}

We have already shown in the previous section that if $\gamma\in\mathbb{Z}$ or  $\{ \gamma_1,\gamma_2\}\subset \mathbb{Z}$ then $\phi_1$ is $D$-algebraic. In this section we want to show the reciprocal. We will have shown that $\phi_1$ is $D$-transcendental if and only if $\gamma\notin\mathbb{Z}$ and $\{ \gamma_1,\gamma_2\}\not\subset \mathbb{Z}$. This conclusion will follow from a result from difference Galois  theory, which we will use here as a black box, see Remark~\ref{rem:galois} below. 
The proposition below is a direct consequence of \cite{HS} (Corollary 3.4). 

\begin{proposition}[Differential algebraicity and difference equation \cite{HS} (Corollary 3.4)]
\label{cor3.4}
Let $g(x) \in \mathbb{C}(x)$.
If $u(x)$ is a nonzero function meromorphic on $\mathbb{C}$ satisfying $u(x+1)=g(x) u(x)$ and $u(x)$ is differentially algebraic over $\mathbb{C}(x)$ then $g(x)=c \frac{d(x+1)}{d(x)}$ for some nonzero $c \in \mathbb{C}$ and $d(x) \in \mathbb{C}(x)$.
\end{proposition}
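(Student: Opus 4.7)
The plan is to apply \cite[Corollary 3.4]{HS} essentially as a black box; the present proposition is only a convenient reformulation of that result. That corollary is established via parametrized Picard--Vessiot theory: to the rank-one $\sigma$-equation $\sigma u = g u$, where $\sigma : f(x) \mapsto f(x+1)$ acts on the difference field of meromorphic functions on $\mathbb{C}$, one attaches a parametrized Picard--Vessiot extension, and translates the condition that a nonzero solution $u$ be differentially algebraic over $\mathbb{C}(x)$ into an algebraic constraint on the associated parametrized Galois group. The classification of these rank-one groups then forces $g$ to take the telescoping form $g(x) = c\, d(x+1)/d(x)$ for some $c \in \mathbb{C}^*$ and $d \in \mathbb{C}(x)^*$.

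The only thing to verify is that the hypotheses of \cite[Corollary 3.4]{HS} are met. The difference field $(\mathcal{M}(\mathbb{C}), \sigma)$ of meromorphic functions on $\mathbb{C}$ with the unit shift is of the type considered in \cite{HS}; its subfield of $\sigma$-invariants (the $1$-periodic meromorphic functions) contains $\mathbb{C}$, and $\mathbb{C}(x) \subset \mathcal{M}(\mathbb{C})$ is stable under $\sigma$. Our assumptions---that $u$ is a nonzero meromorphic solution of $\sigma u = g u$ with $g \in \mathbb{C}(x)$, and that $u$ is D-algebraic over $\mathbb{C}(x)$---then match verbatim the hypotheses of the cited corollary, whose conclusion is exactly the claimed factorization of $g$.

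The substantive content is entirely hidden inside \cite[Corollary 3.4]{HS}: developing enough parametrized Picard--Vessiot theory to associate a differential algebraic group to a rank-one $\sigma$-equation, classifying all such groups, and checking that only the telescoping case $g = c\, d(x+1)/d(x)$ can produce a D-algebraic solution. We take this as a black box. The role of the present proposition, to be used in Section~\ref{subsec:transcendance}, is precisely to convert the absence of a rational decoupling function for $G$ (characterized in Proposition~\ref{prop1} via the divisor criterion of \cite[Section 2.1]{vdPS}) into the genuine differential transcendence of $\varphi_1$, and therefore of $\phi_1$, whenever $\gamma \notin \mathbb{Z}$ and $\{\gamma_1,\gamma_2\} \not\subset \mathbb{Z}$.
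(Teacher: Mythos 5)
Your proposal is correct and takes essentially the same approach as the paper: the paper simply states that the proposition ``is a direct consequence of \cite{HS} (Corollary 3.4)'' and offers no independent proof, treating the result as a black box exactly as you do. Your additional gloss on the internals of the cited corollary (parametrized Picard--Vessiot theory, classification of rank-one difference Galois groups) and the brief check that the hypotheses of \cite[Corollary 3.4]{HS} are met is accurate and harmless, but adds nothing the paper relies on.
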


\begin{proposition}[Necessary condition for $D$-algebraicity]
\label{prop:CNDA}
If $\phi_1$ is $D$-algebraic then there exist a rationnal decoupling and $\gamma\in\mathbb{Z}$ or  $\{ \gamma_1,\gamma_2\}\subset \mathbb{Z}$.
\end{proposition}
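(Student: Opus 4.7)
The plan is to lift the differential algebraicity from $\phi_1(y)$ to $\varphi_1(s)$, apply the Galois-theoretic criterion of Proposition~\ref{cor3.4} to the difference equation $\varphi_1(s+1)=G(s)\varphi_1(s)$ established in Theorem~\ref{thm:prolong}, and finally invoke Proposition~\ref{prop1} to translate the existence of a rational decoupling into the desired arithmetic condition on $\gamma$, $\gamma_1$, $\gamma_2$.

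\textbf{Step 1 (Transfer of D-algebraicity).} I would first show that if $\phi_1$ is D-algebraic, then so is $\varphi_1(s)=\phi_1(\y(s))=\phi_1(2s(s-\mu_1))$. Indeed, setting $y=\y(s)$ and differentiating by the chain rule, each derivative $\phi_1^{(k)}(\y(s))$ can be expressed inductively as a rational function over $\mathbb{C}(s)$ of $\varphi_1(s),\varphi_1'(s),\dots,\varphi_1^{(k)}(s)$, since $\y'(s)=4s-2\mu_1$ is a nonzero polynomial. Substituting these expressions into a nontrivial algebraic differential equation satisfied by $\phi_1$ with coefficients in $\mathbb{C}(y)$ and clearing denominators yields a nontrivial algebraic differential equation for $\varphi_1$ with coefficients in $\mathbb{C}(s)$.

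\textbf{Step 2 (Applying the Galois criterion).} By Theorem~\ref{thm:prolong}, $\varphi_1$ is meromorphic on $\mathbb{C}$, nonzero (it is the meromorphic extension of the Laplace transform of the nontrivial measure $\boldsymbol{\nu}_1$), and satisfies $\varphi_1(s+1)=G(s)\varphi_1(s)$. Applying Proposition~\ref{cor3.4} to $u=\varphi_1$ and $g=G\in\mathbb{C}(s)$ yields the existence of $c\in\mathbb{C}^{*}$ and $d\in\mathbb{C}(s)$ with
\[
G(s)=c\,\frac{d(s+1)}{d(s)}.
\]

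\textbf{Step 3 (Pinning down the constant).} From the explicit expression \eqref{eq:Gdef}, $G(s)\to 1$ as $s\to\infty$. For any nonzero $d\in\mathbb{C}(s)$, a direct computation on the leading terms shows that $d(s+1)/d(s)\to 1$ as $s\to\infty$. Therefore $c=1$, so that $d$ is a rational decoupling function of $G$ in the sense of Definition~\ref{def:decoupling}.

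\textbf{Step 4 (Conclusion).} By Proposition~\ref{prop1}, the existence of a rational decoupling function for $G$ is equivalent to
\[
\gamma\in\mathbb{Z}\quad\text{or}\quad \{\gamma_1,\gamma_2\}\subset\mathbb{Z},
\]
which is the desired conclusion.

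The only genuinely nontrivial ingredient is Proposition~\ref{cor3.4}, used as a black box from difference Galois theory; the rest of the argument is bookkeeping around the change of variable $y=\y(s)$ and the asymptotic normalization fixing $c=1$. The main point to double-check is that the change of variable preserves D-algebraicity in a way compatible with the definition used here, i.e., that derivatives in $y$ can indeed be recovered rationally from derivatives in $s$, which follows from $\y'(s)\not\equiv 0$.
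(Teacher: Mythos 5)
Your proposal is correct and follows essentially the same path as the paper's proof: transfer D-algebraicity from $\phi_1$ to $\varphi_1$, apply Proposition~\ref{cor3.4} to the difference equation to obtain $G(s)=c\,d(s+1)/d(s)$, normalize $c=1$ using $\lim_{s\to\infty}G(s)=1$, and conclude via Proposition~\ref{prop1}. The only cosmetic difference is that you spell out the chain-rule argument behind Step~1, whereas the paper simply invokes closure of D-algebraic functions under composition (with the algebraic map $\y$); both are valid.
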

\begin{proof}
We assume that $\phi_1$ is $D$-algebraic which directly implies that $\varphi_1(s)=\phi_1(\y(s))$ is also $D$-algebraic by composition of D-algebraic functions. Let us recall the difference equation~\eqref{eq:qdiff} of Theorem~\ref{thm:prolong}: 
$$\varphi_1(s+1)=G(s)\varphi_1(s).$$ 
The Proposition~\ref{cor3.4} then implies that there exists $c\in \mathbb{C}^*$ and a nonzero rational function $d$ such that $G(s)=c \frac{d(s+1)}{d(s)}$. Taking the limit at infinity, we see that $\lim_\infty G=1$ and necessarily we have $c=1$ and we deduce that $d$ is a rational decoupling function for $G$. We conclude with Proposition~\ref{prop1} which states that if $G$ admits a rational decoupling function then $\gamma\in\mathbb{Z}$ or $\{ \gamma_1,\gamma_2\}\subset \mathbb{Z}$.
\end{proof}

\begin{remark}[Necessary and sufficient condition for $D$-algebraicity and decoupling]\label{rem:CNSdec}
Combining Proposition~\ref{prop:CSDA}, Proposition~\ref{prop:CNDA} and Proposition~\ref{prop1} we have thus shown that $\phi_1$ is differentially algebraic if and only if $G$ admits a rational decoupling function if and only if $\gamma\in\mathbb{Z}$ or  $\{ \gamma_1,\gamma_2\}\subset \mathbb{Z}$. Similar statements can be found in the classification of generating series of walks in the quarter plane, see \cite{dreyfus2021differential,hardouin2021differentially}.
\end{remark}

\section{Explicit expressions of the lateral density}\label{sec:density}
\noindent\textbf{Notation.} In this section, we denote by $\mathcal{L}$ the Laplace transform operator. For instance, $\phi_1 = \mathcal{L}\nu_1$.

\subsection{Rational cases $\gamma\in-\mathbb{N}$}\label{subsec:fractionalDO}

The aim of this section is to invert the Laplace transforms found in Section~\ref{sec:Laplace} to obtain the lateral invariant measure. Among all these expressions for $\phi_1$ found in this section, one can be easily inverted.

\begin{theorem}[Lateral measure, $\gamma\in-\mathbb N$]\label{thm:densityrat} If $\gamma\in-\mathbb N$, then $\nu_1$ is given by the following sum-of-exponentials
$$\nu_1(v)\propto\sum_{k=0}^{-\gamma-1}c_k\exp\left(-\y(s_1-k)v\right)\text{ where }c_k:=\prod_{\substack{j=0\\ j\ne k}}^{-\gamma-1}\frac{1}{\y(s_1-k)-\y(s_1-j)}.$$
\end{theorem}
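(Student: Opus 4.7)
The plan is to invert the Laplace transform term by term, starting from Theorem~\ref{thm:N}, which asserts that up to a multiplicative constant
$$\phi_1(y)=\frac{1}{Q(y)}=\frac{1}{\prod_{k=0}^{-\gamma-1}\bigl(y-\y(s_1-k)\bigr)}.$$
This is a proper rational function (the degree of $Q$ is $-\gamma\geq 1$), so it can be inverted by the classical partial fraction method, and the claimed sum-of-exponentials form is precisely what one expects from Laplace inversion of a rational function with simple poles.

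First, I would verify that the poles $\y(s_1-k)$ for $k=0,\ldots,-\gamma-1$ are pairwise distinct. Since $\y(s)=2s(s-\mu_1)$ is a parabola with vertex at $s_+=\mu_1/2$, one has $\y(a)=\y(b)$ if and only if $a=b$ or $a+b=\mu_1$. The second alternative, applied to $a=s_1-k$ and $b=s_1-k'$ with $k\neq k'$, forces $\gamma_1=\mu_1-2s_1=-(k+k')$ for some integers in $\{1,\ldots,-2\gamma-3\}$; one rules this out using the sign constraints of Lemma~\ref{lemma:impossible} together with $\gamma_1+\gamma_2=2\gamma+1$ under the hypothesis $\gamma\in-\mathbb{N}$.

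Second, once distinctness is established, the classical residue formula yields
$$\frac{1}{Q(y)}=\sum_{k=0}^{-\gamma-1}\frac{c_k}{y-\y(s_1-k)},\qquad c_k=\frac{1}{Q'(\y(s_1-k))}=\prod_{\substack{j=0\\ j\neq k}}^{-\gamma-1}\frac{1}{\y(s_1-k)-\y(s_1-j)},$$
which is exactly the coefficient appearing in the statement.

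Third, I would invert each elementary term via the identity $\mathcal{L}\{e^{-\alpha v}\}(y)=-1/(y-\alpha)$, valid when $\mathfrak{Re}(y)<\mathfrak{Re}(\alpha)$. Because $\nu_1$ is a finite positive measure on $(0,\infty)$ whose Laplace transform is analytic on $\{\mathfrak{Re}(y)<0\}$, every pole of $\phi_1$ must have nonnegative real part; moreover $\phi_1(0)$ is finite and explicit by the lemma giving the values~\eqref{eq:normcst}, so no pole lies at $0$, hence all $\y(s_1-k)$ are \emph{strictly positive} reals. Each term therefore inverts to a genuinely decaying exponential, and absorbing the global sign into the proportionality symbol gives the stated formula for $\nu_1$.

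The only nonroutine step is ensuring distinctness of the poles; everything else is a direct application of partial fractions and termwise Laplace inversion, with convergence guaranteed by the probabilistic origin of $\nu_1$.
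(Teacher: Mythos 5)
Your proposal is correct and follows essentially the same route as the paper: invoke Theorem~\ref{thm:N}, establish that the roots $\y(s_1-k)$ of $Q$ are pairwise distinct by reducing a hypothetical collision to the constraint $\gamma_1 = -(k+k')$ (equivalently, $s_1-k = \eta(s_1-k')$ in the paper's phrasing) and deriving a contradiction from Lemma~\ref{lemma:impossible} and the identity $\gamma_1+\gamma_2=2\gamma+1$, then do partial fractions and termwise inversion. Your extra observation that the poles $\y(s_1-k)$ must be strictly positive (because $\phi_1$ is a Laplace transform of a finite measure, hence pole-free on $\{\mathfrak{Re}(y)\leqslant 0\}$) is a nice justification of why the resulting exponentials genuinely decay — a point the paper leaves implicit.
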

\begin{proof} From Theorem~\ref{thm:N}, we know that $\phi_1(y) \propto 1/Q(y)$, where the roots of $Q$ are the values $\y(s_1 - k)$ for $k = 0, \dots, -\gamma - 1$. We claim that $Q$ has no double roots. Suppose, for contradiction, that there exist $0 \leq i < j \leq -\gamma - 1$ such that $\y(s_1 - i) = \y(s_1 - j)$. Then $s_1 - i = \eta(s_1 - j)$, which is equivalent to $i + j = -\gamma_1$. Hence, 
$$2\gamma+3= (\gamma+2)+(\gamma+1)\leqslant -i-j=\gamma_1 \leqslant -1<0.$$
According to Lemma~\ref{lemma:impossible}, $\gamma_1$ and $\gamma_2$ cannot both be negative simultaneously, so $ \gamma_2 > 0$. But then:
$$
2\gamma+3=\gamma_1+\gamma_2+2>\gamma_1+2
$$
which leads to the contradiction $\gamma_1>\gamma_1+2$. Therefore, the polynomial $Q$ has only simple roots. One can thus perform a partial fraction decomposition of $1/Q$, and invert term by term to obtain the claimed result.
\end{proof}

\subsection{Heuristic for D-algebraic cases $\gamma\in\mathbb{N}$ or $\{\gamma_1,\gamma_2\}\subset\mathbb{N}$}
\label{sec:heuri}
In Theorems~\ref{thm:-N} and~\ref{thm:notN} we saw that the D-algebraic cases that remain to be considered fall within the general form
$$\phi_1 = \frac{P}{Q}g,$$ where $P$ and $Q$ are polynomials and the function $g$ is explicitly defined in terms of the functions $g_i$ listed in Lemma ~\ref{lem:wDADF}. We describe a heuristic for inverting the Laplace transform in the special case $Q\equiv 1$, a situation that coincides with the cases where cases $\gamma\in\mathbb{N}$ (in this case $g$ is a function of type $g_1$) or $\{\gamma_1,\gamma_2\}\subset\mathbb{N}$ (in this case $g$ is a function of type $g_2$).
Suppose 
$$\phi_1(y)=P(y)(\mathcal{L}f)(y)$$ 
where $P$ is a polynomial and $f$ is a function such that $g=\mathcal{L}f$. Then integrating by parts yields 
$$\phi_1(y)=P(y)g(y)=\int_0^{+\infty}f(v)P(y)e^{yv}\mathrm{d}v=\int_0^{+\infty}f(v)\cdot\boldsymbol{\mathrm{P}}e^{yv}\mathrm{d}v=\int_0^{+\infty}\boldsymbol{\mathrm{P}}^*f(v)\cdot e^{yv}\mathrm{d}v,$$
where $\boldsymbol{\mathrm{P}}$ is the differential operator $P(\mathrm{d}/\mathrm{d}v)$ and $\boldsymbol{\mathrm{P}}^*$ is its adjoint in the Hilbert space $L^2(\mathbb R_+)$, \textit{i.e.}, $\boldsymbol{\mathrm{P}}^*=P(-\mathrm{d}/\mathrm{d}v)$. By injectivity of the Laplace transform operator, the equality $\mathcal{L}\{\nu_1\}=\mathcal{L}\{\boldsymbol{\mathrm{P}}^*f\}$ implies $\nu_1=\boldsymbol{\mathrm{P}}^*f$.\\

 To ensure that all boundary terms arising from integration by parts vanish in the above identity, the function $f$ must satisfy the following two conditions:
 \begin{itemize}
 \item \textit{flatness condition}: $f$ and its derivatives up to order $\mathrm{deg}(P)-1$ must vanish at $v=0$,
 \item \textit{asymptotic control condition}: $f$ and its derivatives must grow at most exponentially at infinity, in the sense that $f^{(j)}(v) e^{yv} \to 0$ as $v\to +\infty$.
\end{itemize}  
Note that it is enough to verify this second condition for real values of $y$ below some threshold $y_0$, since the result then extends by analytic continuation.

As it happens, it is possible to express the function $f$ as a function of type theta. The aim of the next section is to define and study these functions, in particular, check that they satisfy the flatness and asymptotic control conditions. The Section~\ref{subsec:density} will set out and demonstrate the results that give the explicit formula in these cases. 

\subsection{Jacobi Theta functions and Mittag-Leffler expansions}\label{subsec:theta}
Let us introduce the following Jacobi Theta–like functions:
\begin{align}
\theta_{\mathtt{a}}(q) &:= \sum_{n \in \mathbb{Z}} \left(n + \tfrac{\gamma_1}{2}\right) q^{\frac{(2n+\gamma_1)^2 - \mu_1^2}{2}},\label{eq:thetaa} \\
\theta_{\mathtt{b}}(q) &:= \sum_{n \in \mathbb{Z}} (-1)^n n^2q^{\frac{n^2 - \mu_1^2}{2}}.\label{eq:thetab}
\end{align}
In Theorems~\ref{thm:-N} and~\ref{thm:notN}, we provided explicit formulas for the Laplace transform $\phi_1$, involving the functions analyzed in Lemma~\ref{lem:wDADF}. In what follows, we establish connections between these functions and the theta functions introduced above.

\begin{remark}[Link with classical Theta functions]
The function $\theta_{\mathtt{b}}$ is closely related to the classical Jacobi Theta function
$$\theta_4(q) := \sum_{n \in \mathbb{Z}} (-1)^n q^{n^2},$$
via the identity
$$\theta_{\mathtt{b}}(q) = q^{\frac{1-\mu_1^2}{2}}\theta_4'(\sqrt{q}).$$
Jacobi Theta functions already exhibit profound connections with probability theory; see, for example, \cite{Salminen_Vignat_2024}.
\end{remark}

\begin{proposition}[Theta-type representation]\label{prop:jacobi} The functions $v\mapsto \theta_\star(e^{-v})$ have the following series representation 
\begin{align}
\theta_{\mathtt{a}}(e^{-v})&=\left(\frac{\pi}{2v}\right)^{3/2}\sum_{n\in\mathbb Z}n \sin(\pi\gamma_1 n)\exp\left(-\frac{\pi^2n^2}{2v}+\frac{\mu_1^2 v}{2}\right),\\
\theta_{\mathtt{b}}(e^{-v})&=\left(\frac{2\pi}{v^3}\right)^{1/2}\sum_{n\in\mathbb Z}\left(1-\frac{\pi^2(2n-1)^2}{v}\right)\exp\left(\frac{\mu_1^2v}{2}-\frac{\pi^2(2n-1)^2}{2v}\right).
\end{align}
\end{proposition}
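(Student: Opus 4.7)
My plan is to prove both identities as direct applications of the Poisson summation formula after isolating the Gaussian structure of the series. In both cases the $\mu_1^2$-contribution factors out of the sum as a global prefactor $e^{\mu_1^2 v/2}$, which matches the exponential appearing on the right-hand sides. Since the functions and their Fourier transforms involved are Schwartz, Poisson summation applies without convergence concerns, and the whole proof reduces to computing two explicit Gaussian Fourier transforms and rearranging the resulting sums.

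For $\theta_{\mathtt{a}}(e^{-v})$, I would first use the identity $(2n+\gamma_1)^2/2 = 2(n+\gamma_1/2)^2$ to rewrite
\[
\theta_{\mathtt{a}}(e^{-v}) \;=\; e^{\mu_1^2 v/2}\sum_{n\in\mathbb{Z}} f\bigl(n + \gamma_1/2\bigr), \qquad f(x):= x\,e^{-2vx^2},
\]
and then apply Poisson summation in the form $\sum_n f(n+a) = \sum_k \widehat{f}(k)\,e^{2\pi ika}$ with $a = \gamma_1/2$. Starting from the Gaussian transform $\int e^{-2vx^2 -2\pi ikx}\,dx = \sqrt{\pi/(2v)}\,e^{-\pi^2 k^2/(2v)}$ and differentiating in $k$, one gets $\widehat{f}(k) = -\tfrac{i\pi k}{2v}\sqrt{\pi/(2v)}\,e^{-\pi^2 k^2/(2v)}$. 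Pairing the $k$ and $-k$ contributions kills the imaginary parts and replaces $e^{i\pi k\gamma_1}$ by $2i\sin(\pi k\gamma_1)$, which after collecting the factor $(\pi/(2v))^{3/2}$ yields exactly the claimed identity.

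For $\theta_{\mathtt{b}}(e^{-v})$, I would proceed analogously with $f(x):= x^2 e^{-vx^2/2}$, using the shifted Poisson formula $\sum_n (-1)^n f(n) = \sum_k \widehat{f}(k-1/2)$ (absorbing $(-1)^n = e^{2\pi i n/2}$). Here $\widehat{f}$ is obtained by applying $-(2\pi)^{-2}\partial_k^2$ to the Gaussian transform $\sqrt{2\pi/v}\,e^{-2\pi^2 k^2/v}$, giving
\[
\widehat{f}(k) \;=\; \sqrt{2\pi/v^3}\,\Bigl(1 - \tfrac{4\pi^2 k^2}{v}\Bigr)\,e^{-2\pi^2 k^2/v},
\]
and the substitution $k \mapsto k-1/2$ produces precisely the coefficient $1 - \pi^2(2k-1)^2/v$ and the Gaussian $e^{-\pi^2(2k-1)^2/(2v)}$ appearing in the statement. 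No genuine obstacle is expected; the only care required is bookkeeping the factors of $\pi$, $v$, and signs in the Fourier transforms and in the $\pm k$ pairings.
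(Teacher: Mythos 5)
Your proposal is correct and uses essentially the same method as the paper: Poisson summation combined with differentiated Gaussian Fourier transforms, with the $k\leftrightarrow -k$ pairing recovering $\sin(\pi\gamma_1 k)$ in the first case. The only cosmetic difference is that you factor out $e^{\mu_1^2 v/2}$ and use the shifted forms $\sum_n f(n+a)=\sum_k \widehat f(k)e^{2\pi ika}$ and $\sum_n(-1)^n f(n)=\sum_k\widehat f(k-\tfrac12)$, whereas the paper keeps the shift, the alternating sign, and the $\mu_1^2$-term inside the summand $h_\star$ and applies the plain formula $\sum_n h_\star(n)=\sum_n\widehat h_\star(n)$; the resulting Gaussian integrals and bookkeeping are identical.
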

\begin{proof}
Consider the function $v\mapsto\theta_{\mathtt{a}}(e^{-v})$. For a fixed value of $v>0$, one can write
$$\theta_{\mathtt{a}}(e^{-v})=\sum_{n\in\mathbb Z}h_{\mathtt{a}}(n),\text{ where }h_{\mathtt{a}}(t):=\left(t+\frac{\gamma_1}{2}\right)\exp\left(-v\frac{(2t+\gamma_1)^2-\mu_1^2}{2}\right).$$
For all $v>0$, the function $h_{\mathtt{a}}$ is rapidly decreasing (\textit{i.e.}, belongs to the Schwartz space $\mathcal{S}(\mathbb R)$) because the factor $\exp(-2 v t^2 - 2 v \gamma_1 t)$ dominates any polynomial growth of $t$. Hence, one can apply the Poisson summation formula to express this sum in terms of its Fourier transform $\widehat{h}_{\mathtt{a}}$:
$$\sum_{n\in\mathbb Z}h_{\mathtt{a}}(n)=\sum_{n\in\mathbb Z}\widehat{h}_{\mathtt{a}}(n),\text{ where }\widehat{h}_{\mathtt{a}}(\omega):=\int_{-\infty}^{+\infty}h_{\mathtt{a}}(t)e^{-2i\pi\omega t}\mathrm{d}t.$$
Let us compute explicitly the Fourier transform $\widehat{h}_{\mathtt{a}}$. For all $\omega\in\mathbb R$,
\begin{align*}
\widehat{h}_{\mathtt{a}}(\omega)&=e^{i\pi \omega \gamma_1}\int_{-\infty}^{+\infty} \left(t+\frac{\gamma_1}{2}\right)\exp\left(-2i\pi \omega\left(t+\frac{\gamma_1}{2}\right)-v\frac{(2t+\gamma_1)^2-\mu_1^2}{2}\right)\mathrm{d}t\\
&=e^{i\pi \omega\gamma_1}\left(\frac{-1}{2i\pi }\right)\frac{\mathrm{d}}{\mathrm{d}\omega}\int_{-\infty}^{+\infty} \exp\left(-2i\pi \omega\left(t+\frac{\gamma_1}{2}\right)-v\frac{(2t+\gamma_1)^2-\mu_1^2}{2}\right)\mathrm{d}t,
\end{align*}
where the second equality follows from the dominated convergence theorem, which always holds for functions in the Schwartz space $\mathcal{S}(\mathbb{R})$. We recognize the well-known Gaussian integral
$$\int_{-\infty}^{+\infty} \exp\left(-2i\pi \omega\left(t+\frac{\gamma_1}{2}\right)-v\frac{(2t+\gamma_1)^2-\mu_1^2}{2}\right)\mathrm{d}t=\sqrt{\frac{\pi}{2v}}\exp\left(-\frac{\pi^2\omega^2}{2v}+\frac{\mu_1^2 v}{2}\right),$$
and we deduce that
$$\widehat{h}_{\mathtt{a}}(\omega)=\frac{-e^{i\pi\omega\gamma_1}}{2i\pi}\frac{\mathrm{d}}{\mathrm{d}\omega}\sqrt{\frac{\pi}{2v}}\exp\left(-\frac{\pi^2\omega^2}{2v}+\frac{\mu_1^2 v}{2}\right)=-i\omega\left(\frac{\pi}{2v}\right)^{3/2}e^{i\pi\omega\gamma_1}\exp\left(-\frac{\pi^2 \omega^2}{2v}+\frac{\mu_1^2v}{2}\right).$$
Finally,
$$\theta_{\mathtt{a}}(e^{-v})=\left(\frac{\pi}{2v}\right)^{3/2}\sum_{n\in\mathbb Z}n\Big[\sin(\pi n\gamma_1)-i\cos(\pi n\gamma_1)\Big]\exp\left(-\frac{\pi^2 n^2}{2v}+\frac{\mu_1^2v}{2}\right).$$
One obtains the claimed result by noticing that the imaginary parts of the terms cancel pairwise for $n\ne 0$, and that the zeroth term vanishes, leaving only the real part of the sum.\\
Concerning $\theta_{\mathtt{b}}$, the same method shows that
$$\theta_{\mathtt{b}}(e^{-v})=\sum_{n\in\mathbb Z}h_{\mathtt{b}}(n)=\sum_{n\in\mathbb Z}\widehat{h}_{\mathtt{b}}(n),\text{ where }h_{\mathtt{b}}(t):=t^2\exp\left(i\pi t-v\frac{t^2-\mu_1^2}{2}\right)$$
and
\begin{align*}
\widehat{h}_{\mathtt{b}}(\omega):=\int_{-\infty}^{+\infty}h_{\mathtt{b}}(t)e^{-2i\pi \omega t}\mathrm{d}t &=\frac{-1}{4\pi^2}\frac{\mathrm{d}^2}{\mathrm{d}\omega^2}\int_{-\infty}^{+\infty}\exp\left(i\pi t-v\frac{t^2-\mu_1^2}{2}-2i\pi\omega t\right)\mathrm{d}t\\
&=\frac{-1}{4\pi^2}\frac{\mathrm{d}^2}{\mathrm{d}\omega^2}\left[\sqrt{\frac{2\pi}{v}}\exp\left(\frac{\mu_1^2v}{2}-\frac{\pi^2(1-2\omega)^2}{2v}\right)\right]\\
&=\left(\frac{2\pi}{v^3}\right)^{1/2}\left(1-\frac{\pi^2(1-2\omega)^2}{v}\right)\exp\left(\frac{\mu_1^2 v}{2}-\frac{\pi^2(1-2\omega)^2}{2v}\right),
\end{align*}
which establishes the claimed result.
\end{proof}
\begin{corollary}[Flatness]\label{cor:flat} For $\star\in\{\mathtt{a},\mathtt{b}\}$, The function $v\mapsto \theta_\star(e^{-v})$ is flat at $0$, that is 
$$\lim_{v\downarrow 0}\frac{\mathrm{d}^n}{\mathrm{d}v^n}\theta_\star(e^{-v})=0,\text{ for all }k\in\mathbb N_0$$
\end{corollary}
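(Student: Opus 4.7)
The plan is to exploit the Jacobi-inversion-type representations established in Proposition~\ref{prop:jacobi}. Indeed, those representations are tailor-made for analyzing the regime $v\downarrow 0$: whereas the original series \eqref{eq:thetaa}--\eqref{eq:thetab} have terms like $q^{n^2/2}=e^{-n^2 v/2}$ which go to $1$ as $v\to 0$, the transformed series have terms containing $\exp(-\pi^2 n^2/(2v))$, which is super-polynomially small in $v$ as $v\downarrow 0$.

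First, I would check that, for every $n$, the $n$-th term in each series is of the general form
\[
T_n(v)=v^{-\alpha}\,R_n(v)\,\exp\!\left(-\frac{c_n}{v}+\frac{\mu_1^2 v}{2}\right),
\]
where $\alpha\in\{3/2\}$, $R_n$ is a polynomial in $1/v$ with coefficients depending (at most) polynomially on $n$, and $c_n\ge 0$. For $\star=\mathtt{a}$, one has $c_n=\pi^2 n^2/2$, so the $n=0$ term vanishes identically (thanks to the factor $n$), and $c_n\ge \pi^2/2$ for $|n|\ge 1$. For $\star=\mathtt{b}$, one has $c_n=\pi^2(2n-1)^2/2\ge \pi^2/2$ for all $n\in\mathbb Z$, so there is no exceptional term. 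A straightforward induction on the derivation order, using Leibniz's rule, then shows that any derivative $T_n^{(k)}(v)$ is still a finite linear combination of terms of the same form with the same $c_n$ (only $\alpha$ and the polynomial $R_n$ change).

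Second, I would justify termwise differentiation by a Weierstrass M-test argument. Fix $k\in\mathbb N_0$ and a small $v_0>0$. The key estimate is that, for $v\in(0,v_0]$,
\[
|T_n^{(k)}(v)|\le v^{-N(k)}\,P(|n|)\,\exp\!\left(-\frac{\pi^2 n^2}{2v}\right)\exp\!\left(\frac{\mu_1^2 v}{2}\right)
\]
for some integer $N(k)$ and some polynomial $P$ independent of $v$. Splitting $\exp(-\pi^2 n^2/(2v))=\exp(-\pi^2/(2v))\exp(-\pi^2(n^2-1)/(2v))$ and bounding the second factor by $\exp(-\pi^2(n^2-1)/(2v_0))$, one gets a geometric tail in $n$, so the series of $k$-th derivatives converges uniformly on $(0,v_0]$. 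Hence $\frac{d^k}{dv^k}\theta_\star(e^{-v})=\sum_n T_n^{(k)}(v)$.

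Finally, from the uniform bound above one has
\[
\Big|\tfrac{d^k}{dv^k}\theta_\star(e^{-v})\Big|\le C_k\,v^{-N(k)}\,\exp\!\left(-\frac{\pi^2}{2v}\right),
\]
and since $v^{-m}\exp(-\pi^2/(2v))\to 0$ as $v\downarrow 0$ for any $m\in\mathbb N_0$, this proves flatness at $0$. The main technical point, hence the step that requires the most care, is the uniform control of arbitrary derivatives: one must verify that the polynomial blow-up in $1/v$ produced by successive differentiations is always beaten by the common exponential factor $\exp(-\pi^2/(2v))$, and that the tail in $n$ remains summable uniformly on $(0,v_0]$. Once these bounds are written out, the conclusion is immediate.
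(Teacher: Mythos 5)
Your proposal takes essentially the same route as the paper: use the Poisson-inverted representation from Proposition~\ref{prop:jacobi}, observe that every term carries a factor $\exp(-\pi^2 k^2/(2v))$ with $k\ne 0$ (because the $n=0$ term of $\theta_{\mathtt a}$ vanishes and $(2n-1)^2\ge 1$ for $\theta_{\mathtt b}$), and conclude that each term and its derivatives die faster than any power of $v$. The only difference is that you carefully justify the termwise differentiation and uniformity in $n$ via a Weierstrass M-test, whereas the paper simply asserts that differentiating term by term preserves the decay; your added bookkeeping is correct and fills that gap.
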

\begin{proof} For $\star=\mathtt{a}$ we have, according to Proposition~\ref{prop:jacobi},
$$\theta_{\mathtt{a}}(e^{-v})=\left(\frac{\pi}{2v}\right)^{3/2}\sum_{n\in\mathbb Z}n \sin(\pi\gamma_1 n)\exp\left(-\frac{\pi^2n^2}{2v}+\frac{\mu_1^2 v}{2}\right)$$
For each nonzero $n$, the term $n \sin(\pi \gamma_1 n) \exp(-\pi^2 n^2/(2v))$ vanishes faster than any power of $v$ as $v \downarrow 0$. Differentiating term by term preserves this property, so all derivatives also tend to zero in the limit, and $\theta_{\mathtt{a}}(e^{-v})$ is flat at $v=0$. The same argument applies to $\theta_{\mathtt{b}}(e^{-v})$.
\end{proof}

\begin{lemma}[Mittag-Leffler expansions]\label{lemma:mittag} The following series expansions hold
\begin{align*}
    \frac{\sin(a)}{\cos(\sqrt{z})-\cos(a)}&=-2\sum_{n\in\mathbb Z}\frac{a+2\pi n}{z-(a+2\pi n)^2},\quad\quad&\frac{1}{\sqrt{z}\sin(\sqrt{z})}=\sum_{n\in\mathbb Z}\frac{(-1)^n}{z-(n\pi)^2}.
\end{align*}
\end{lemma}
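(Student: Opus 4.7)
The plan is to reduce both identities to classical Mittag-Leffler expansions via the substitution $w=\sqrt{z}$, viewing both sides as meromorphic functions of $z$ (well-defined on $\mathbb C$ by Remark~\ref{rem:sinc}). For either identity, the strategy is the same: check that the poles and principal parts on the two sides coincide, then show the difference is entire and bounded, hence identically zero by Liouville's theorem.

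For the first identity, I would start from the product-to-sum formula $\cos(w)-\cos(a)=-2\sin\!\bigl(\tfrac{w-a}{2}\bigr)\sin\!\bigl(\tfrac{w+a}{2}\bigr)$, so that with $w=\sqrt{z}$,
$$\frac{\sin(a)}{\cos(\sqrt{z})-\cos(a)}=\frac{-\sin(a)}{2\sin\!\bigl(\frac{\sqrt{z}-a}{2}\bigr)\sin\!\bigl(\frac{\sqrt{z}+a}{2}\bigr)}.$$
The LHS is meromorphic in $z$ with simple poles exactly at $z_n:=(a+2\pi n)^2$, $n\in\mathbb Z$. Using $\tfrac{d}{dz}\cos(\sqrt{z})=-\tfrac{\sin(\sqrt{z})}{2\sqrt{z}}$ and $\sin(\sqrt{z_n})=\sin(a+2\pi n)=\sin(a)$, the residue at $z_n$ is
$$\operatorname*{Res}_{z=z_n}\frac{\sin(a)}{\cos(\sqrt{z})-\cos(a)}=\frac{\sin(a)}{-\sin(\sqrt{z_n})/(2\sqrt{z_n})}=-2(a+2\pi n),$$
which is precisely the numerator appearing on the RHS.

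For the second identity, the function $\frac{1}{\sqrt{z}\sin(\sqrt{z})}$ has a simple pole at $z=0$ with residue $1$ (since $\sqrt{z}\sin(\sqrt{z})\sim z$), and simple poles at $z=n^2\pi^2$ for $n\ge 1$ with residue $2(-1)^n$, obtained by expanding $\sin(\sqrt{z})$ around $\sqrt{z}=n\pi$. This matches the RHS, where the symmetric sum pairs $n$ and $-n$ and uses $(-1)^n=(-1)^{-n}$. Alternatively, one can directly invoke the classical expansion $\frac{1}{\sin w}=\frac{1}{w}+\sum_{n\neq 0}\frac{(-1)^n}{w-n\pi}$, divide by $w$, substitute $w^2=z$, and pair the $\pm n$ terms.

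The main obstacle will be the convergence of the series on the RHS: its individual terms decay only like $1/|n|$, so it converges only as a symmetric Cauchy sum, not absolutely. This forces care in the closing Liouville step: rather than bounding the difference on large circles $|z|=R$ in the $z$-plane, I would work in the $w=\sqrt{z}$ variable and bound the difference on circles $|w|=(2N+1)\pi/2$ that stay uniformly away from all the poles. Standard estimates on $1/\sin(w)$ and $1/(\cos(w)-\cos(a))$ on such circles give uniform decay, showing the entire difference tends to $0$ at infinity and hence vanishes identically.
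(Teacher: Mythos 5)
Your proposal is correct and takes essentially the same route as the paper: the paper simply cites the Mittag-Leffler expansion (Ahlfors), and you carry out that standard argument — identify the simple poles and residues via the substitution $w=\sqrt{z}$, then close with a Liouville argument on circles in the $w$-plane avoiding the poles, which correctly handles the conditional (principal-value) convergence of the series.
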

\begin{proof}
These formulas are direct applications of the Mittag-Leffler expansion, which describes how to reconstruct certain meromorphic functions from its poles and residues. See~\cite[section 2.1]{ahlfors} for more details.
\end{proof}
\begin{remark}[Convergence of the Mittag-Leffler expansions] The first sum must be understood in the sense of \textit{principal value} for bi-infinite series, \textit{i.e.},
$$\frac{\sin(a)}{\cos(\sqrt{z})-\cos(a)}=-2\lim_{N\to +\infty}\sum_{n=-N}^N\frac{a+2\pi n}{z-(a+2\pi n)^2}.$$
\end{remark}
\begin{proposition}[Laplace transforms of Theta functions]\label{prop:TLtheta}
The functions $v \mapsto \theta_\star(e^{-v})$ admit Laplace transforms given by
\begin{align}
\mathcal{L}\left\{\theta_{\mathtt{a}}(e^{-v})\right\}(y) &= \frac{\pi}{2}\cdot \frac{\sin(\pi \gamma_1)}{\cos\left(\pi\sqrt{2y+\mu_1^2}\right)-\cos(\pi\gamma_1)},\\
\mathcal{L}\left\{\theta_{\mathtt{b}}(e^{-v})\right\}(y) &= \frac{-2\pi\sqrt{2y+\mu_1^2}}{\sin\left(\pi\sqrt{2y+\mu_1^2}\right)}.
\end{align}
\end{proposition}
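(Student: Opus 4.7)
The plan is to use the Poisson-dual representations of $v\mapsto\theta_\star(e^{-v})$ from Proposition~\ref{prop:jacobi}, which express each function as an absolutely convergent series of Gaussian-like terms, and then compute the Laplace transform termwise via a classical half-integer modified-Bessel integral. This bypasses the conditional nature of the naive termwise Laplace transform of the original defining series and turns the whole computation into Fubini plus geometric-series summation. I would work first on the half-plane $\mathfrak{Re}(y)<-\mu_1^2/2$, where the factor $e^{\mu_1^2 v/2}$ in the Jacobi formulas is dominated by $e^{yv}$ at infinity in $v$, and the factor $\exp(-\pi^2 n^2/(2v))$ produces super-exponential decay in $n$ uniformly on compact subsets of $(0,\infty)$; once the identity is established on this half-plane, analytic continuation (both sides being meromorphic on $\mathbb{C}$) extends it to the common domain.

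For the first identity, each term in the Jacobi expansion of $\theta_{\mathtt{a}}(e^{-v})$ contributes, after Fubini, an integral of the form $\int_0^\infty v^{-3/2}e^{-av-b/v}\mathrm{d}v=\sqrt{\pi/b}\,e^{-2\sqrt{ab}}$ with $a=-y-\mu_1^2/2>0$ and $b=\pi^2 n^2/2$. Collecting the prefactors from Proposition~\ref{prop:jacobi} and setting $\xi:=\exp(-\pi\sqrt{-2y-\mu_1^2})$, the transform reduces to $\pi\sum_{n\geq 1}\sin(\pi\gamma_1 n)\,\xi^n$, the imaginary part of a convergent geometric series equal to $\pi\xi\sin(\pi\gamma_1)/(1-2\xi\cos(\pi\gamma_1)+\xi^2)$. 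Factoring the denominator as $2\xi\bigl(\cosh(\pi\sqrt{-2y-\mu_1^2})-\cos(\pi\gamma_1)\bigr)$ and using $\cosh(iz)=\cos(z)$ to identify $\cosh(\pi\sqrt{-2y-\mu_1^2})=\cos(\pi\sqrt{2y+\mu_1^2})$ gives exactly the claimed formula.

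For the second identity, the Jacobi expansion of $\theta_{\mathtt{b}}(e^{-v})$ contains both $v^{-3/2}$ and $v^{-5/2}$ factors; the $v^{-5/2}$ integral is obtained by differentiating the previous Bessel formula in $b$, and the key algebraic miracle is that the combination appearing in Proposition~\ref{prop:jacobi} cancels exactly to
\[
\int_0^\infty\Bigl(v^{-3/2}-\pi^2(2n-1)^2 v^{-5/2}\Bigr)e^{-av-b_n/v}\,\mathrm{d}v=-2\sqrt{\pi a}\,e^{-2\sqrt{ab_n}},
\]
with $b_n=\pi^2(2n-1)^2/2$. The remaining sum $\sum_{n\in\mathbb{Z}}\exp(-|2n-1|\pi t)$, with $t:=\sqrt{-2y-\mu_1^2}$, counts every positive odd integer twice and telescopes as $1/\sinh(\pi t)$; substituting $\sinh(\pi t)=-i\sin(\pi\sqrt{2y+\mu_1^2})$ and $\sqrt{-2y-\mu_1^2}=-i\sqrt{2y+\mu_1^2}$ yields the claimed expression.

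The hard part is really the legitimacy of the interchange: working from the original series $\sum_n(n+\gamma_1/2)e^{-v\lambda_n^{\mathtt{a}}}$ does \emph{not} yield an absolutely convergent double integral, because the termwise Laplace image $\sum_n (2n+\gamma_1)/((2n+\gamma_1)^2-(2y+\mu_1^2))$ converges only in the principal-value sense. This is precisely why I route the computation through Proposition~\ref{prop:jacobi}, whose Poisson-dual series \emph{is} absolutely integrable termwise. As a sanity check, the principal-value sums from the naive approach are exactly the Mittag-Leffler expansions in Lemma~\ref{lemma:mittag} (applied with $a=\pi\gamma_1$, $z=\pi^2(2y+\mu_1^2)$ for $\theta_{\mathtt{a}}$ and with the identity $n^2=(n^2-c)+c$ plus Abel-summability of $\sum_n(-1)^n$ for $\theta_{\mathtt{b}}$), and they produce the same formulas.
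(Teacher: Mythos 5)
Your proposal is correct and takes a genuinely different route from the paper. The paper computes $\mathcal{L}\{\theta_\star(e^{-v})\}$ by interchanging the defining $q$-series with the Laplace integral, obtaining a termwise Laplace image, and then recognizing it as a Mittag--Leffler expansion (Lemma~\ref{lemma:mittag}); for $\theta_{\mathtt b}$ the paper additionally routes through the differential-operator identity $\theta_{\mathtt b}(e^{-v})=-(2\partial_v-\mu_1^2)\big[\sum_n(-1)^n e^{-v(n^2-\mu_1^2)/2}\big]$ before applying Mittag--Leffler. You instead start from the Poisson-dual series of Proposition~\ref{prop:jacobi}, apply Fubini on the dual side where the double integral is genuinely absolutely convergent for real $y<-\mu_1^2/2$, evaluate each term with the half-integer Bessel integral $\int_0^\infty v^{-3/2}e^{-av-b/v}\,\mathrm{d}v=\sqrt{\pi/b}\,e^{-2\sqrt{ab}}$ (and its $b$-derivative for $\theta_{\mathtt b}$), sum the resulting geometric series in $\xi=e^{-\pi\sqrt{2a}}$, and extend by analyticity. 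I verified the algebra: for $\theta_{\mathtt a}$ the prefactors collapse to $\pi\sum_{n\geq 1}\sin(\pi\gamma_1 n)\xi^n=\pi\xi\sin(\pi\gamma_1)/(1-2\xi\cos(\pi\gamma_1)+\xi^2)$ and the $\cosh\!\leftrightarrow\!\cos$ rewrite gives the claimed formula; for $\theta_{\mathtt b}$ the cancellation $\int_0^\infty(v^{-3/2}-2b_n v^{-5/2})e^{-av-b_n/v}\,\mathrm{d}v=-2\sqrt{\pi a}\,e^{-2\sqrt{ab_n}}$ holds exactly because $\pi^2(2n-1)^2=2b_n$, and the double-covering of positive odd integers by $\{|2n-1|:n\in\mathbb Z\}$ produces $1/\sinh(\pi\sqrt{2a})$.

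The most valuable part of your write-up is the final paragraph: you correctly point out that on the original $q$-series side the termwise $L^1$ bound is $\sum_n|n+\gamma_1/2|/\lambda_n\sim\sum 1/|n|$, which diverges, so neither Fubini nor the dominated convergence theorem applies as-is, and the termwise Laplace image converges only in the principal-value sense (exactly as the paper's own Remark on convergence of the Mittag--Leffler expansion admits). The paper's one-line justification \enquote{which follows from a standard dominated convergence argument} therefore glosses over a real conditional-convergence issue; a careful version would need an Abel-summation or symmetric-truncation argument. Your route through Proposition~\ref{prop:jacobi} sidesteps this entirely, at the modest cost of invoking the Bessel-type integral and requiring a branch discussion in the final $\cosh\!\to\!\cos$, $\sinh\!\to\!-i\sin$ substitutions (harmless here since both target expressions are even in the choice of $\sqrt{2y+\mu_1^2}$). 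Net effect: your proof is somewhat longer but cleaner on the interchange; the paper's is shorter and dovetails with the Mittag--Leffler machinery it has already set up, but at the price of an unproven convergence claim.

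One minor point to tighten: you declare $a=-y-\mu_1^2/2>0$, which tacitly restricts to real $y$; state explicitly that you first establish the identity for real $y<-\mu_1^2/2$ and then invoke uniqueness of analytic continuation (both sides being analytic on the half-plane of convergence and meromorphic in $y$ by Remark~\ref{rem:sinc}).
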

\begin{proof}
This statement is a simple consequence of the Mittag-Leffler expansions given in Lemma~\ref{lemma:mittag}, and of the fact that, for $\mathfrak{Re}(y)<0$,
$$\mathcal{L}\left\{\exp(av)\right\}(y):=\int_0^{+\infty}\exp(av+yv)\mathrm{d}v=-\frac{1}{y+a}.$$
For $\star=\mathtt{a}$,
\begin{align*}
\mathcal{L}\left\{\theta_{\mathtt{a}}(e^{-v})\right\}(y)&=\int_0^{+\infty}\sum_{n\in\mathbb Z}\left[\left(n+\frac{\gamma_1}{2}\right)\exp\left(-v\frac{(2n+\gamma_1)^2-\mu_1^2}{2}\right)\right]e^{yv}\mathrm{d}v\\
&=\sum_{n\in\mathbb Z}\left(n+\frac{\gamma_1}{2}\right)\int_0^{+\infty}\exp\left(-\frac{(2n+\gamma_1)^2-\mu_1^2}{2}v+yv\right)\mathrm{d}v\\
&=-\sum_{n\in\mathbb Z}\left(n+\frac{\gamma_1}{2}\right)\left(y-\displaystyle\frac{(2n+\gamma_1)^2-\mu_1^2}{2}\right)^{-1}\\
&=\frac{\pi}{2}\cdot -2\sum_{n\in\mathbb Z}\frac{\pi\gamma_1+2\pi n}{\pi^2(2y+\mu_1^2)-(\pi\gamma_1+2\pi n)^2}\\
&=\frac{\pi}{2}\cdot\frac{\sin(\pi\gamma_1)}{\cos\left(\pi\sqrt{2y+\mu_1^2}\right)-\cos(\pi\gamma_1)}.
\end{align*}
The crucial step is the exchange between the sum and the integral, which follows from a standard dominated convergence argument. For the Laplace transform of $\theta_{\mathtt{b}}(e^{-v})$, note that
$$\theta_{\mathtt{b}}(e^{-v})=-\left(2\frac{\partial}{\partial v}-\mu_1^2\right)\left[\sum_{n\in\mathbb Z}(-1)^n \exp\left(-v\frac{n^2-\mu_1^2}{2}\right)\right].$$
The function to which the differential operator $2\partial/\partial v -\mu_1^2$ is applied in the above expression is flat at $0$. Indeed, by applying the Poisson summation formula (see the proof of Proposition~\ref{prop:jacobi}), one obtains
$$\sum_{n\in\mathbb Z}(-1)^n\exp\left(-v\frac{n^2-\mu_1^2}{2}\right)=\sqrt{\frac{2\pi}{v}}\sum_{n\in\mathbb Z}\exp\left(\frac{\mu_1^2v}{2}-\frac{\pi^2(2n-1)^2}{2v}\right),$$
which clearly satisfies the flatness condition. We can therefore integrate by parts:
\begin{align*}
\mathcal{L}\{\theta_{\mathtt{b}}(e^{-v})\}(y)&=(2y+\mu_1^2)\mathcal{L}\left\{\sum_{n\in\mathbb Z}(-1)^n\exp\left(-v\frac{n^2-\mu_1^2}{2}\right)\right\}(y)\\
&=-(2y+\mu_1^2)\sum_{n\in\mathbb Z}(-1)^n \left(y+\frac{\mu_1^2-n^2}{2}\right)^{-1}\\
&=-2\sum_{n\in\mathbb Z}\frac{\pi^2(2y+\mu_1^2)}{\pi^2(2y+\mu_1^2)-(n\pi)^2}=\frac{-2\pi\sqrt{2y+\mu_1^2}}{\sin\left(\pi\sqrt{2y+\mu_1^2}\right)}.
\end{align*}
The last step above is a straightforward application of the Mittag–Leffler expansion given in Lemma~\ref{lemma:mittag}.
\end{proof}

\begin{corollary}[Growth condition]\label{cor:growth} For $\star\in\{\mathtt{a},\mathtt{b}\}$, there exists $y_\star<0$ such that, for all $y<y_\star$,
$$\lim_{v\to+\infty}e^{yv}\frac{\mathrm{d}^j}{\mathrm{d}v^j}\theta_\star(e^{-v})=0,\text{ for all }j\in\mathbb N_0.$$
\end{corollary}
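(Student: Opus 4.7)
The plan is to work directly with the defining series \eqref{eq:thetaa}-\eqref{eq:thetab} in the variable $v$ (rather than the Poisson-summed form from Proposition~\ref{prop:jacobi}), differentiate term by term, and then choose $y_\star$ negative enough to dominate the leading exponential rate.

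First I would write
\[
\theta_{\mathtt{a}}(e^{-v}) = \sum_{n\in\mathbb Z} a_n\, e^{\lambda_n^{\mathtt{a}} v}, \qquad a_n := n+\tfrac{\gamma_1}{2}, \quad \lambda_n^{\mathtt{a}} := -\tfrac{(2n+\gamma_1)^2-\mu_1^2}{2},
\]
and analogously $\theta_{\mathtt{b}}(e^{-v})=\sum_n (-1)^n n^2 e^{\lambda_n^{\mathtt{b}} v}$ with $\lambda_n^{\mathtt{b}}:=-(n^2-\mu_1^2)/2$. Since $\lambda_n^\star \sim -2n^2$ as $|n|\to\infty$, the formal termwise $j$-th derivative $\sum_n a_n (\lambda_n^\star)^j e^{\lambda_n^\star v}$ converges absolutely and locally uniformly on $(0,+\infty)$, and therefore equals $\frac{d^j}{dv^j}\theta_\star(e^{-v})$.

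Next I would identify $c_\star := \sup\{\lambda_n^\star : n\in\mathbb Z, \text{ coefficient nonzero}\}$. For $\theta_{\mathtt{a}}$, this supremum is bounded above by $\mu_1^2/2$ (attained when $-\gamma_1/2\in\mathbb Z$). For $\theta_{\mathtt{b}}$, the $n=0$ term is killed by the factor $n^2$, so the effective supremum is $\lambda_{\pm 1}^{\mathtt{b}} = (\mu_1^2-1)/2 < 0$ (using $0<\mu_1<1$, which follows from \eqref{eq:H3}-\eqref{eq:H4}). I then set $y_\star := -c_\star - 1 < 0$; for any $y<y_\star$, every exponent $y+\lambda_n^\star$ is $\leq -1<0$ uniformly in $n$.

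For such a $y$, I write
\[
e^{yv}\,\frac{d^j}{dv^j}\theta_\star(e^{-v}) = \sum_{n} a_n (\lambda_n^\star)^j\, e^{(y+\lambda_n^\star) v}.
\]
Each summand tends to $0$ as $v\to+\infty$ since $y+\lambda_n^\star<0$. To interchange the limit and the sum I would apply dominated convergence: for $v\geq 1$, the general term is bounded in absolute value by $|a_n|\,|\lambda_n^\star|^j\, e^{y+\lambda_n^\star}$, and the resulting series converges because the super-exponential decay of $e^{\lambda_n^\star}\sim e^{-2n^2}$ beats the polynomial factors $|a_n|\,|\lambda_n^\star|^j = O(n^{2j+1})$. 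Passing to the limit inside the sum gives the claimed vanishing.

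There is no real obstacle here; the only point requiring mild care is writing the uniform-in-$n$ majorant used for dominated convergence, and noting separately why the $n=0$ term must be excluded in the $\theta_{\mathtt{b}}$ case (which is what allows $y_{\mathtt{b}}$ to exist as a negative number rather than requiring $y<-\mu_1^2/2$ — though the latter choice would also work uniformly for both cases and would shorten the argument slightly).
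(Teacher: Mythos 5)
Your proof is correct, and it takes a genuinely different and arguably cleaner route than the paper's. The paper first appeals to the Laplace-transform identities of Proposition~\ref{prop:TLtheta}, identifies the abscissa of convergence as the smallest pole $y_\star=\min_n\y(s_1+n)$, and then invokes a Hardy--Littlewood Tauberian theorem to transfer this back to a pointwise bound $\theta_\star(e^{-v})=O(e^{-y_\star v})$; the derivative case is handled by the same appeal. You instead go straight at the Dirichlet-type series $\theta_\star(e^{-v})=\sum_n a_n e^{\lambda_n^\star v}$ (with $\lambda_n^\star\to-\infty$ quadratically), justify term-by-term differentiation, set $y_\star=-c_\star-1$ where $c_\star$ is the largest exponent with nonzero coefficient, and then pass the $v\to+\infty$ limit inside the sum via a dominated-convergence majorant. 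Your route is more elementary and self-contained — in particular, it avoids the mild delicacy that Tauberian theorems typically require some positivity or regularity hypothesis, whereas $\theta_{\mathtt a}$ has coefficients $n+\gamma_1/2$ that change sign; for a series with quadratically diverging negative exponents, the growth rate is read off directly, which is exactly what you do. Two small slips worth noting, neither of which harms the argument: the asymptotic $\lambda_n^\star\sim-2n^2$ holds for $\star=\mathtt a$ but for $\star=\mathtt b$ it is $\lambda_n^{\mathtt b}\sim-n^2/2$ (only quadratic decay matters, so the dominated-convergence step is unaffected); and your parenthetical that $\mu_1^2/2$ is "attained when $-\gamma_1/2\in\mathbb Z$" is off, since in that event the corresponding coefficient $n+\gamma_1/2$ vanishes — but you only use the upper bound, so this does not affect anything.
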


\begin{proof}
Using the expression given Proposition~\ref{prop:TLtheta}, one can determine that the poles of $\mathcal{L}\{\theta_{\mathtt{a}}(e^{-v})\}$ are precisely at $\y(s_1+n)$ for $n\in\mathbb{Z}$. These poles are all real and satisfy $\y(s_1 + n) \geqslant \y(s_+) = -\mu_1^2/2$. Let us denote  
$$y_{\mathtt{a}} := \min_{n \in \mathbb{Z}} \y(s_1 + n),$$ 
which corresponds to the abscissa of convergence of the Laplace transform. According to the Hardy-Littlewood Tauberian theorem (see~\cite{Feller_1971}), we then have  
$$\theta_{\mathtt{a}}(e^{-v}) = O\left(e^{-y_{\mathtt{a}} v}\right), \quad \text{as } v \to +\infty,$$
which proves the claim for $j = 0$. The same reasoning applies to the $j$-th derivative, since all the poles of its Laplace transform are likewise greater than $-\mu_1^2/2$. Similarly, the poles of $\mathcal{L}\{\theta_{\mathtt{b}}(e^{-v})\}$ are located at $(n^2-\mu_1^2)/2$ for $ n\in\mathbb{Z}$. They all are greater than $y_{\mathtt{b}} := \y(s_+)$, and the remaining argument proceeds exactly as in the case $\star = \mathtt{a}$.
\end{proof}

\subsection{Density of the lateral measure when $\gamma\in\mathbb{N}$ or $\{\gamma_1,\gamma_2\}\subset\mathbb{N}$}\label{subsec:density}
As mentioned in Section~\ref{sec:heuri}, we can invert the Laplace transform $\phi_1$ to recover the lateral measure $\nu_1$ when $\phi_1$ is D-algebraic and $\gamma>0$ (that is, $\gamma\in\mathbb N$ or ${\gamma_1,\gamma_2}\subset\mathbb N$). See Remark~\ref{rem:limitation} for a discussion of the limitations in the other D-algebraic cases.

\begin{theorem}[Density, $\gamma\in\mathbb N$]\label{thm:densitygamma} If $\gamma\in \mathbb N$ then the density $\nu_1$ of the lateral measure $\boldsymbol{\nu}_1$ is given by
$$\nu_1(v)\propto\sum_{n\in\mathbb Z}\left(n+\frac{\gamma_1}{2}\right)P\left(\frac{(2n+\gamma_1)^2-\mu_1^2}{2}\right)\exp\left(-v\frac{(2n+\gamma_1)^2-\mu_1^2}{2}\right),$$
where $P(y):=\prod_{k=1}^{\gamma}\Big(y-\y(s_1+k)\Big)$. Equivalently
\begin{equation}\label{eq:opetheta1}
\nu_1(v)\propto \boldsymbol{\mathrm{P}}^*\theta_{\mathtt{a}}(e^{-v})\text{ where }\boldsymbol{\mathrm{P}}^*:=P\left(-\frac{\partial}{\partial v}\right)=\prod_{k=1}^{\gamma}\left(-\frac{\partial}{\partial v}-\y(s_1+k)\right).
\end{equation}
\end{theorem}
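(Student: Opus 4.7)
The plan is to combine Theorem~\ref{thm:-N} with Proposition~\ref{prop:TLtheta} in order to express $\phi_1$ as the Laplace transform of an explicit function, then invert by applying the adjoint differential operator $\boldsymbol{\mathrm{P}}^{*}$. More precisely, Theorem~\ref{thm:-N} gives
\[
\phi_1(y)\;\propto\;\frac{P(y)}{\cos\!\bigl(\pi\sqrt{2y+\mu_1^2}\bigr)-\cos(\pi\gamma_1)},
\]
while Proposition~\ref{prop:TLtheta} identifies the denominator, up to a nonzero constant, with $\mathcal{L}\{\theta_{\mathtt{a}}(e^{-v})\}(y)$. Hence, up to proportionality,
\[
\phi_1(y)\;\propto\;P(y)\,\mathcal{L}\{\theta_{\mathtt{a}}(e^{-v})\}(y).
\]

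The next step is the integration-by-parts argument of Section~\ref{sec:heuri}. Writing $P(y)e^{yv}=\boldsymbol{\mathrm{P}}\,e^{yv}$ with $\boldsymbol{\mathrm{P}}=P(\partial/\partial v)$, and integrating by parts $\deg(P)=\gamma$ times, yields
\[
\phi_1(y)\;\propto\;\int_0^{+\infty}\boldsymbol{\mathrm{P}}^{*}\theta_{\mathtt{a}}(e^{-v})\,e^{yv}\,\mathrm{d}v
\;=\;\mathcal{L}\{\boldsymbol{\mathrm{P}}^{*}\theta_{\mathtt{a}}(e^{-v})\}(y),
\]
where all boundary terms at $v=0$ vanish by the flatness property (Corollary~\ref{cor:flat}) and all boundary terms at $v=+\infty$ vanish by the growth estimate (Corollary~\ref{cor:growth}), provided we restrict to $y<y_{\mathtt{a}}$. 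Injectivity of the Laplace transform then gives $\nu_1(v)\propto \boldsymbol{\mathrm{P}}^{*}\theta_{\mathtt{a}}(e^{-v})$, which is the second form~\eqref{eq:opetheta1} of the statement.

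To obtain the explicit series, one applies $\boldsymbol{\mathrm{P}}^{*}=\prod_{k=1}^{\gamma}(-\partial/\partial v-\y(s_1+k))$ term by term to the defining series
\[
\theta_{\mathtt{a}}(e^{-v})=\sum_{n\in\mathbb Z}\Bigl(n+\tfrac{\gamma_1}{2}\Bigr)\exp\!\left(-v\,\lambda_n\right),\qquad
\lambda_n:=\frac{(2n+\gamma_1)^2-\mu_1^2}{2}.
\]
Each exponential $\exp(-v\lambda_n)$ is an eigenfunction of $-\partial/\partial v$ with eigenvalue $\lambda_n$, so $\boldsymbol{\mathrm{P}}^{*}\exp(-v\lambda_n)=P(\lambda_n)\exp(-v\lambda_n)$, producing precisely the claimed formula.

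The main technical obstacle is to justify that the differentiations and integrations can be interchanged with the infinite sum defining $\theta_{\mathtt{a}}(e^{-v})$. For the termwise application of $\boldsymbol{\mathrm{P}}^{*}$, uniform convergence of the differentiated series on compact subsets of $(0,+\infty)$ suffices, and this follows from the Gaussian decay of the coefficients $\exp(-v\lambda_n)$ in $n$ for fixed $v>0$. For the integration by parts, one needs the flatness of $\theta_{\mathtt{a}}(e^{-v})$ and of its derivatives at $v=0^+$, which is exactly Corollary~\ref{cor:flat}, together with the exponential control at infinity for $y<y_{\mathtt{a}}$ given by Corollary~\ref{cor:growth}; once the identity $\mathcal{L}\nu_1=\mathcal{L}\{\boldsymbol{\mathrm{P}}^{*}\theta_{\mathtt{a}}(e^{-v})\}$ is established for $y<y_{\mathtt{a}}$, analytic continuation extends it to the full half-plane $\mathfrak{Re}(y)<0$, and injectivity of the Laplace transform concludes the proof.
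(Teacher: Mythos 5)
Your proposal follows the same route as the paper's own proof: identify the transcendental factor in the expression of Theorem~\ref{thm:-N} as $\mathcal{L}\{\theta_{\mathtt{a}}(e^{-v})\}$ via Proposition~\ref{prop:TLtheta}, apply the integration-by-parts argument of the heuristic section backed by Corollaries~\ref{cor:flat} and~\ref{cor:growth}, and then evaluate $\boldsymbol{\mathrm{P}}^{*}$ term by term using the eigenfunction relation $\boldsymbol{\mathrm{P}}^{*}e^{-\lambda_n v}=P(\lambda_n)e^{-\lambda_n v}$. Your final paragraph on interchanging the sum with $\boldsymbol{\mathrm{P}}^{*}$ and with the Laplace integral makes explicit a step the paper leaves implicit, but the overall argument is the same.
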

\begin{proof}By Theorem~\ref{thm:-N},
$$\mathcal{L}\{\nu_1\}(y)=\phi_1(y)\propto \frac{P(y)}{\cos\left(\pi\sqrt{2y+\mu_1^2}\right)-\cos(\pi\gamma_1)}.$$
We recognize in this expression a factor corresponding to a Mittag-Leffler expansion, as described in Lemma~\ref{lemma:mittag},  which was shown in Proposition~\ref{prop:TLtheta} to be (up to a multiplicative constant) the Laplace transform of $\theta_{\mathtt{a}}(e^{-v})$:  
$$\phi_1(y)\propto P(y)\mathcal{L}\left\{\theta_{\mathtt{a}}(e^{-v})\right\}(y).$$
We know from Corollaries~\ref{cor:flat} and~\ref{cor:growth} that $v\mapsto\theta_{\mathtt{a}}(e^{-v})$ satisfies both the \textit{flatness} and the \textit{asymptotic control conditions}. We can therefore apply the heuristic from Section~\ref{subsec:fractionalDO} yields
\begin{align*}
\nu_1(v)&\propto \boldsymbol{\mathrm{P}}^*\theta_{\mathtt{a}}(e^{-v})= \left[\prod_{k=1}^{\gamma}\left(\frac{\partial}{\partial v}+\y(s_1+k)\right)\right]\left(\sum_{n\in\mathbb Z}\left[\left(n+\frac{\gamma_1}{2}\right)\exp\left(-v\frac{(2n+\gamma_1)^2-\mu_1^2}{2}\right)\right]\right)\\
&=\sum_{n\in\mathbb N}\left(n+\frac{\gamma_1}{2}\right)\prod_{k=1}^{\gamma}\left(\frac{\partial}{\partial v}+\y(s_1+k)\right)\exp\left(-v\frac{(2n+\gamma_1)^2-\mu_1^2}{2}\right)\\
&=\sum_{n\in\mathbb N}\left(n+\frac{\gamma_1}{2}\right)\prod_{k=1}^{\gamma}\left[\left(\frac{\mu_1^2-(2n+\gamma_1)^2}{2}+\y(s_1+k)\right)\right]\exp\left(-v\frac{(2n+\gamma_1)^2-\mu_1^2}{2}\right).
\end{align*}
\end{proof}

Figure~\ref{fig:plots} shows plots of $\nu_1$ corresponding to several parameter choices.

\begin{figure}
  \centering
  \includegraphics[width=\textwidth]{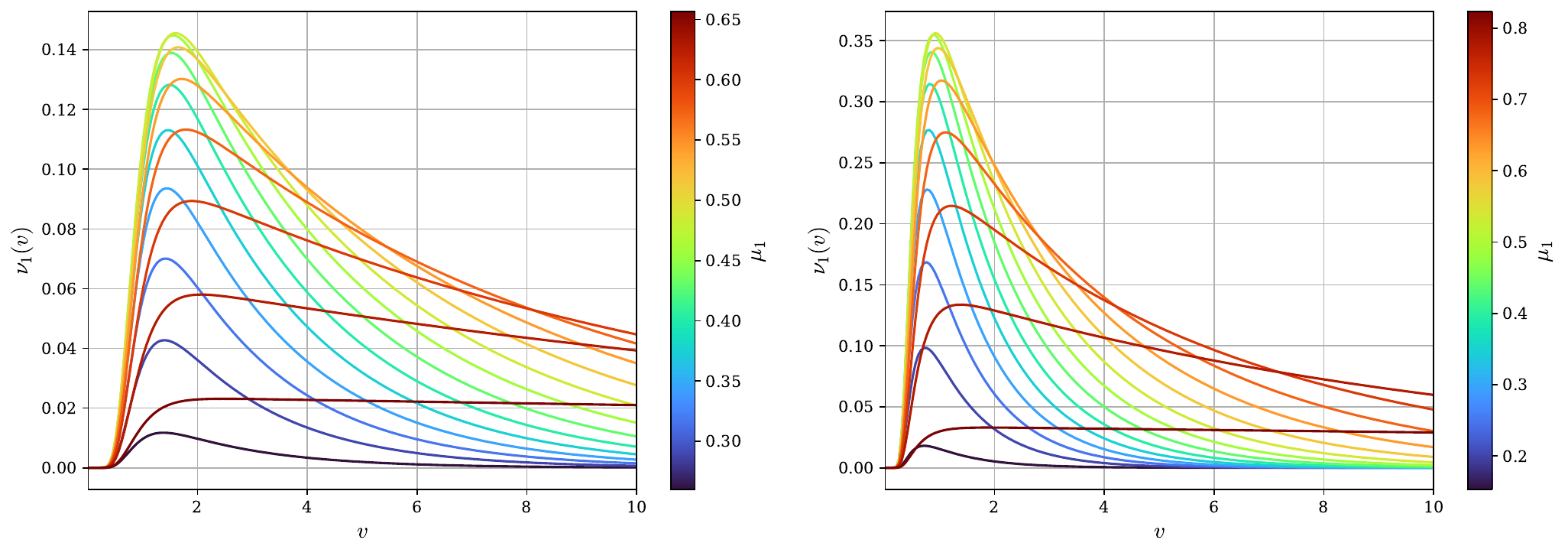} 
  \caption{Graphs of $\nu_1$ for $\gamma=1$ (left) and $\gamma=4$ (right). In both cases, the values of $r_1$ and $r_2$ are fixed — $(r_1,r_2)=(\frac12,\frac13)$ on the left and $(r_1,r_2)=(\frac15,\frac16)$ on the right — while $\mu_1=1-\mu_2$ varies between $r_2/(1+r_2)$ and $1/(1+r_1)$, the bounds imposed by the recurrence condition~\eqref{eq:H2}.}
  \label{fig:plots}
\end{figure}

\begin{theorem}[Density, $\{\gamma_1,\gamma_2\}\subset \mathbb N$]\label{thm:densitygamma12} If $\{\gamma_1,\gamma_2\}\subset \mathbb N$ and $\gamma\notin\mathbb Z$, then the density $\nu_1$ of the lateral measure $\boldsymbol{\nu}_1$ is given by
$$\nu_1(v)\propto \sum_{n\in\mathbb Z}(-1)^nn^2P\left(\frac{n^2-\mu_1^2}{2}\right)\exp\left(-v\frac{n^2-\mu_1^2}{2}\right),$$
where 
$$P(y):=\prod_{k=1}^{\lfloor\frac{\gamma_1-1}{2}\rfloor}\Big(y-\y(s_1+k)\Big)\prod_{k=0}^{\lfloor\frac{\gamma_2}{2}-1\rfloor}\Big(y-\y(s_2-k)\Big).$$ 
Equivalently, $\nu_1(v)\propto \boldsymbol{\mathrm{P}}^*\theta_{\mathtt{b}}(e^{-v})$, with 
$$\boldsymbol{\mathrm{P}}^*:=P\left(-\frac{\partial}{\partial v}\right)=\left[\prod_{k=1}^{\lfloor\frac{\gamma_1-1}{2}\rfloor}\left(-\frac{\partial}{\partial v}-\y(s_1+k)\right)\right]\left[\prod_{k=0}^{\lfloor\frac{\gamma_2}{2}-1\rfloor}\left(-\frac{\partial}{\partial v}-\y(s_2-k)\right)\right]$$
\end{theorem}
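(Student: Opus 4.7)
The plan is to mirror closely the proof of Theorem~\ref{thm:densitygamma}, substituting the theta-type function $\theta_{\mathtt{b}}$ for $\theta_{\mathtt{a}}$ and using the appropriate expression of $\phi_1$ from Theorem~\ref{thm:notN}, case (3). More precisely, the assumptions $\{\gamma_1,\gamma_2\}\subset\mathbb{N}$ and $\gamma\notin\mathbb{Z}$ place us exactly in case (3) of Theorem~\ref{thm:notN}, which asserts
$$
\phi_1(y)\propto \frac{P(y)\sqrt{2y+\mu_1^2}}{\sin\left(\pi\sqrt{2y+\mu_1^2}\right)}
$$
with the polynomial $P$ of the statement. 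By Proposition~\ref{prop:TLtheta}, the Laplace transform of $v\mapsto\theta_{\mathtt{b}}(e^{-v})$ is proportional to $\sqrt{2y+\mu_1^2}/\sin(\pi\sqrt{2y+\mu_1^2})$, so
$$
\phi_1(y)\propto P(y)\,\mathcal{L}\bigl\{\theta_{\mathtt{b}}(e^{-v})\bigr\}(y).
$$

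Next I would apply the heuristic of Section~\ref{sec:heuri}. To do so legitimately, I would invoke Corollary~\ref{cor:flat} (flatness at $v=0$) and Corollary~\ref{cor:growth} (exponential decay of derivatives at $v\to+\infty$ against $e^{yv}$ for $y$ below some threshold), which together guarantee that all boundary terms arising from iterated integration by parts vanish. Writing $\boldsymbol{\mathrm{P}}^*=P(-\partial/\partial v)$, this yields
$$
\mathcal{L}\bigl\{\boldsymbol{\mathrm{P}}^*\theta_{\mathtt{b}}(e^{-v})\bigr\}(y)=P(y)\,\mathcal{L}\bigl\{\theta_{\mathtt{b}}(e^{-v})\bigr\}(y)\propto \phi_1(y)=\mathcal{L}\{\nu_1\}(y),
$$
and the injectivity of the Laplace transform then gives $\nu_1(v)\propto \boldsymbol{\mathrm{P}}^*\theta_{\mathtt{b}}(e^{-v})$, establishing the second formulation.

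Finally, I would derive the explicit series by inserting the definition~\eqref{eq:thetab} of $\theta_{\mathtt{b}}$ and exchanging $\boldsymbol{\mathrm{P}}^*$ with the sum. Since $-\partial/\partial v$ acts on $\exp(-v\alpha)$ as multiplication by $\alpha$, one has
$$
\boldsymbol{\mathrm{P}}^*\exp\!\left(-v\frac{n^2-\mu_1^2}{2}\right)=P\!\left(\frac{n^2-\mu_1^2}{2}\right)\exp\!\left(-v\frac{n^2-\mu_1^2}{2}\right),
$$
which yields term by term the announced series. The term-by-term differentiation is justified by the super-exponential decay of the summands (and of their derivatives) established in the proof of Corollary~\ref{cor:flat}, uniformly on compact subsets of $(0,\infty)$.

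I do not expect any serious obstacle: the only delicate point is the legitimacy of the integration by parts, but this was already addressed in the proof of Theorem~\ref{thm:densitygamma} and the estimates of Corollaries~\ref{cor:flat} and~\ref{cor:growth} apply verbatim to $\theta_{\mathtt{b}}$. The proof is therefore essentially a one-line reduction to Proposition~\ref{prop:TLtheta} followed by a direct computation.
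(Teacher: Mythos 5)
Your proposal matches the paper's own proof essentially step by step: both reduce to case (3) of Theorem~\ref{thm:notN}, identify the Laplace transform of $\theta_{\mathtt{b}}(e^{-v})$ via Proposition~\ref{prop:TLtheta}, invoke Corollaries~\ref{cor:flat} and~\ref{cor:growth} to legitimize the integration-by-parts heuristic, and conclude by letting $\boldsymbol{\mathrm{P}}^*$ act term by term on the exponential series. The argument is correct and complete; no discrepancy with the paper's reasoning.
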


\begin{proof}
From Theorem~\ref{thm:notN}, when $\{\gamma_1,\gamma_2\}\subset \mathbb N$,
$$\phi_1(y)\propto \frac{P(y)\sqrt{2y+\mu_1^2}}{\sin\left(\pi\sqrt{2y+\mu_1^2}\right)}\propto P(y)\mathcal{L}\left\{\theta_{\mathtt{b}}(e^{-v})\right\}(y),$$
where $P$ is given in Proposition~\ref{prop:decoupling}. The function $v\mapsto\theta_{\mathtt{b}}(e^{-v})$ satisfies both the \textit{flatness} and the \textit{asymptotic control conditions} by Corollaries~\ref{cor:flat} and~\ref{cor:growth}, respectively. We can therefore apply the heuristic given in Section~\ref{subsec:fractionalDO}, and use the linearity of $\boldsymbol{\mathrm{P}}^*$ to obtain the desired result.
\end{proof}

\begin{remark}[On the remaining cases]\label{rem:limitation} This method to invert the Laplace transform $\phi_1$ encounters an obstruction when $\gamma_1$ and $\gamma_2$ are integers with opposite signs. For instance, if $\gamma_1\in -2\mathbb N+1$ and $\gamma_2\in 2\mathbb N-1$, then Theorem~\ref{thm:notN} ensures that $Q(y)\phi_1(y)\propto (2y+\mu_1^2)P(y)\widetilde g_3(y)$ where 
$$\widetilde g_3(y):=\frac{1}{\sqrt{2y+\mu_1^2}\tan\left(\frac{\pi}{2}\sqrt{2y+\mu_1^2}\right)}\propto \mathcal{L}\left\{\theta_{\mathtt{c}}(e^{-v})\right\}(y),\quad \theta_{\mathtt{c}}(q):=\sum_{n\in\mathbb Z}q^{\frac{(2n)^2-\mu_1^2}{2}}= \theta_3(q^2)q^{-\mu_2/2},$$
Similarly, if $\gamma_1\in-2\mathbb N$ and $\gamma_2\in 2\mathbb N$, then $Q(y)\phi_1(y)\propto P(y)g_4(y)$ where
$$g_4(y):=\frac{\tan\left(\frac{\pi}{2}\sqrt{2y+\mu_1^2}\right)}{\sqrt{2y+\mu_1^2}}\propto\mathcal{L}\left\{\theta_{\mathtt{d}}(e^{-v})\right\}(y),\quad \theta_{\mathtt{d}}(q):=\sum_{n\in\mathbb Z}q^{\frac{(2n+1)^2-\mu_1^2}{2}} = \theta_2(q^2)q^{-\mu_2/2}.$$
In the previous formulas, $\theta_2$ and $\theta_3$ are the classical Jacobi theta functions. Although $v\mapsto \theta_{\mathtt{c}}(e^{-v})$ and $v\mapsto \theta_{\mathtt{d}}(e^{-v})$ satisfy the \textit{asymptotic control condition}, they do not fulfil the \textit{flatness condition}. More precisely, for $\star\in\{\mathtt{c},\mathtt{d}\}$, it follows from classical results on Jacobi theta functions that
$$\theta_{\star}(e^{-v})\sim \sqrt{\frac{\pi}{2v}},\text{ when }v\downarrow 0.$$
In particular, the integration by parts formula cannot be applied directly. It would be interesting to investigate whether this method can be adapted to the case where $\gamma_1$ and $\gamma_2$ are integers with opposite signs in future work.
\end{remark}

\appendix
\section{Special cases}\label{sec:r1r2}
To avoid an excessive number of cases, we handle the ones where $r_1=-1$ or $r_2=-1$ in this appendix, separately. Note that, according to~\eqref{eq:H1}, $r_1$ and $r_2$ cannot be equal to $-1$ simultaneously.
\begin{remark}[Interpretations] We offer two complementary interpretations of these limit cases: a probabilistic one regarding the Brownian particles model itself, and a geometric one concerning the vanishing sets of the polynomial coefficients $K$, $k_1$ and $k_2$, appearing in the functional equation~\eqref{eq:FE}.
\begin{enumerate}
\item (\textit{probabilistic interpretation}) Using the notations from the introduction, $r_1=-1$ implies that $q_1^-=0$ and $q_2^+=1$. This corresponds to the case where the leading particle $R_1$ of the ordered process $R$ behaves as a free, \textit{i.e.} unreflected, Brownian motion, and the medium particle $R_2$ reflects on it, taking all the local time for itself.
\item (\textit{geometric interpretation}) When $r_1=-1$ (\textit{resp.} $r_2=-1$), the vanishing set of $k_1$ (\textit{resp.} $k_2$) coincides with the symmetry axis of the parabola $K=0$, see Figure~\ref{fig:overdegenerate}.

\begin{figure}
\centering
\includegraphics[width=15cm]{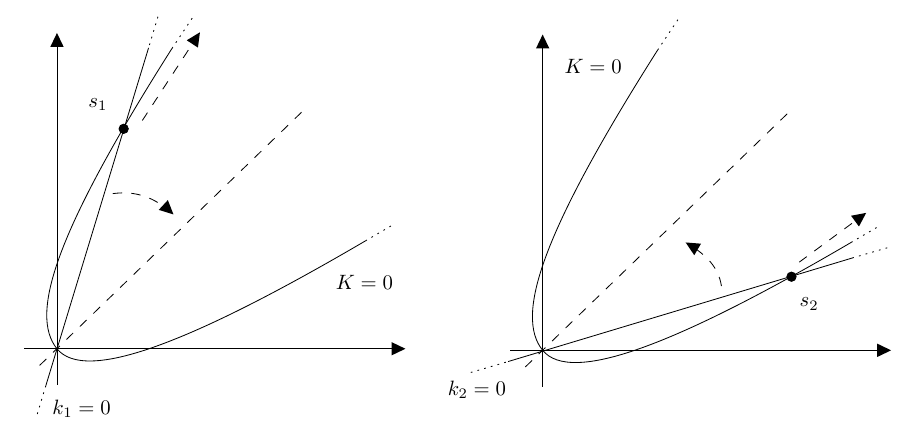}
\caption{The axis of symmetry of the parabola $K=0$ is the limit position of $k_1=0$ as $r_1\to -1$ (on the left) and $k_2=0$ as $r_2\to -1$ (on the right).}
\label{fig:overdegenerate}
\end{figure}
\end{enumerate}
\end{remark}
\subsection{First case: $r_1=-1$}
Suppose $r_1=-1$. Setting $(\x,\y)$ as the uniformization given in Proposition~\ref{prop:unif}, on can easily compute
$$k_1(s)=k_1(\x(s),\y(s))=2s,$$
and $k_2(s)=k_2(\x(s),\y(s))=2(1+r_2)s(s-s_2)$ where $s_2$ is alredy defined in~\eqref{eq:k2}.  Morally, the second root of $k_1(s)$ is sent to infinity. As a direct consequence, neither $s_1$, nor $\gamma$, nor $\gamma_1$ can be defined in this context {(in Equation~\eqref{eq:k2}, the denominator of $s_1$ becomes $0$ when $r_1=-1$)}. We can nonetheless define $\gamma_2$ as in~\eqref{eq:gammai}. The following lemma describes the possible values for $\gamma_2$ under the existence and recurrence condition.
\begin{lemma}[Positivity of $\gamma_2$]\label{lemma:gamma2pos} Suppose $r_1=-1$. Then $\gamma_2>0$.
\end{lemma}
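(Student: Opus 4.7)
The plan is to directly unpack the definition of $\gamma_2$ and use the hypotheses, since the case $r_1=-1$ considerably constrains the admissible parameters.

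First, I would observe that when $r_1=-1$, the existence condition \eqref{eq:H1} forces $1-r_1r_2 = 1+r_2 > 0$ (the alternative $\{r_1>0, r_2>0\}$ is obviously excluded), so in particular the denominator $1+r_2$ appearing in $\gamma_2 = \frac{1+\mu_1-r_2\mu_2}{1+r_2}$ is strictly positive. It therefore suffices to show that the numerator $1 + \mu_1 - r_2\mu_2$ is positive.

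Second, the recurrence condition \eqref{eq:H2} gives $\mu_1 - r_2\mu_2 > 0$, hence
$$1 + \mu_1 - r_2\mu_2 > 1 > 0.$$
Combining this with $1+r_2 > 0$ yields $\gamma_2 > 0$. There is no real obstacle here; the lemma is essentially a direct consequence of \eqref{eq:H1} and \eqref{eq:H2} specialized to $r_1=-1$.
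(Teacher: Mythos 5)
Your proof is correct and matches the paper's own argument essentially line for line: both deduce $1+r_2>0$ from \eqref{eq:H1} under $r_1=-1$, and both conclude by noting that the numerator $1+\mu_1-r_2\mu_2$ is positive by \eqref{eq:H2}. The only minor difference is cosmetic — you make explicit the strict bound $>1$ where the paper simply invokes the sign.
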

\begin{proof} When $r_1=-1$,~\eqref{eq:H1} is equivalent to $1+r_2>0$. Using the expression of $\gamma_2$ given in~\eqref{eq:gamma}, $\gamma_2$ and $1+\mu_1-r_2\mu_2$ share the same sign, which is positive by~\eqref{eq:H2}.
\end{proof}
Recall that $\zeta (s)=-s+2s_-$. From the above expression we obtain  
$$\frac{k_2 (s)}{k_1 (s)}=(1+r_2)(s-s_2).$$ 
Then, following the steps of Section~\ref{subsec:prolongement}, we can show an alternative version for Theorem~\ref{thm:prolong}:
\begin{theorem}[Meromorphic continuation and difference equation, $r_1=-1$]\label{thm:prolongementr1} The function $\varphi_1$ admits a meromorphic continuation to $\mathbb C$, pole-free on $\overline{\Delta}$ and nonzero on $\overline{\Delta}\cap\mathbb R$, along with the functional equation~\eqref{eq:eqfunc} and the invariance properties~\eqref{eq:inv1} and~\eqref{eq:inv2}, through the formula 
\begin{equation}\label{eq:diffeqr1}
\varphi_1(s+1)=G(s)\varphi_1(s)
\end{equation}
where 
\begin{equation}\label{eq:defGr1}
G(s):=\frac{k_1(s)k_2(\zeta s)}{k_2(s)k_1(\zeta s)}=-\frac{s-(2s_--s_2)}{s-s_2}=-\frac{s-(s_2-\gamma_2)}{s-s_2}.
\end{equation}
\end{theorem}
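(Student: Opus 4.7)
The plan is to mimic the meromorphic continuation argument of Section~\ref{subsec:prolongement} step by step, exploiting two simplifications coming from the degenerate form $k_1(s) = 2s$: the ratio $k_2(s)/k_1(s) = (1+r_2)(s-s_2)$ is now a polynomial, and the would-be zero $s_1$ of $k_1$ has disappeared. The construction proceeds in three successive extensions, exactly as before. First, on the initial domains $\Delta_{\mathrm{x}}$ and $\Delta_{\mathrm{y}}$, the functions $\varphi_1$ and $\varphi_2$ are holomorphic and the functional and invariance relations $(\mathfrak{a})$, $(\mathfrak{b})$, $(\mathfrak{c})$ of Lemma~\ref{lem:abc} still hold (their proofs do not depend on $r_1$). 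The extension to $\Delta$ is then carried out via $\varphi_1(s) = -(1+r_2)(s-s_2)\varphi_2(s)$ on $\Delta_{\mathrm{x}}$, which is now automatically \emph{holomorphic} since the previous candidate pole at $s_1$ is no longer there, and symmetrically for $\varphi_2$; removability of the singularity at $s=0$ follows exactly as in Proposition~\ref{prop:conttoDelta}.

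Next, the extension to $\widetilde\Delta = \Delta\cup\eta\Delta\cup\zeta\Delta$ uses the invariance properties~\eqref{eq:inv1} and~\eqref{eq:inv2} exactly as in Proposition~\ref{prop:continuationDeltatilde}. To derive the difference equation, I evaluate the functional equation~\eqref{eq:eqfunc} at $s$ and at $\zeta s$, and eliminate $\varphi_2(s)$ via $\varphi_2(\zeta s) = \varphi_2(s)$ and $\varphi_1(\zeta s) = \varphi_1(\eta\zeta s) = \varphi_1(s+1)$. This yields $\varphi_1(s+1) = G(s)\varphi_1(s)$ with
\[
G(s) \;=\; \frac{k_1(s)\,k_2(\zeta s)}{k_2(s)\,k_1(\zeta s)} \;=\; -\frac{s+\mu_2+s_2}{s-s_2} \;=\; -\frac{s-(s_2-\gamma_2)}{s-s_2},
\]
where the last equality uses $2s_- = -\mu_2$ and $\gamma_2 = \mu_2+2s_2$. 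Iterating this relation and its reciprocal together with Lemma~\ref{lemma:cover} produces the desired meromorphic continuation of $\varphi_1$ to all of $\mathbb{C}$, and the functional/invariance relations propagate by analytic continuation.

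It remains to establish the two claims specific to this degenerate case: $\varphi_1$ is pole-free on $\overline{\Delta}$ and nonzero on $\overline{\Delta}\cap\mathbb R = [-\mu_2,\mu_1]$. The absence of poles is immediate: on $\Delta_{\mathrm{y}}$, $\varphi_1 = \phi_1\circ\y$ is holomorphic as the composition of a Laplace transform with $\y$, while on $\Delta_{\mathrm{x}}$ the extension $-(1+r_2)(s-s_2)\varphi_2(s)$ is a polynomial times the holomorphic function $\varphi_2$. For the nonvanishing, on $(0,\mu_1)$ one has $\varphi_1 = \phi_1\circ\y > 0$ since $\y$ takes real values and $\phi_1$ is a strictly positive integral on $\mathbb R_{\leqslant 0}$; on $(-\mu_2,0)$ the extension formula is the product of the nonvanishing polynomial $(s-s_2)$ with the nonvanishing $\varphi_2$, where the key point is that $s_2 \notin (-\mu_2,0)$, which plays the role of Lemma~\ref{lemma:s1s2}$\mathrm{(c)}$ and follows from~\eqref{eq:H2} together with Lemma~\ref{lemma:gamma2pos}; finally, the three boundary points $0,-\mu_2,\mu_1$ are handled by explicit evaluation through~\eqref{eq:normcst}, yielding strictly positive values (for instance $\varphi_1(-\mu_2)=(1+r_2)(\mu_2+s_2)\phi_2(0) = \phi_2(0) > 0$, after using $\mu_1+\mu_2=1$).

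The main obstacle is really only this last sign analysis: it must be carried out using only the existence and recurrence assumptions~\eqref{eq:H1}--\eqref{eq:H3} together with the positivity of $\gamma_2$ established in Lemma~\ref{lemma:gamma2pos}. Apart from this verification, the proof is a faithful line-by-line transcription of Section~\ref{subsec:prolongement}, simplified by the disappearance of the pole at $s_1$ in $k_2/k_1$.
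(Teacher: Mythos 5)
Your proposal is correct and follows essentially the same route as the paper: reuse the step-by-step continuation machinery of Section~\ref{subsec:prolongement}, note that the disappearance of the root $s_1$ of $k_1$ makes the extension formula $\varphi_1(s)=-(1+r_2)(s-s_2)\varphi_2(s)$ automatically holomorphic on $\Delta_{\mathrm{x}}$, derive $G(s)=-\frac{s+\mu_2+s_2}{s-s_2}$ by the same elimination trick, and reduce the zero/pole analysis to the location of $s_2$. Your explicit check of the boundary values at $-\mu_2,\,\mu_1,\,0$ is a welcome refinement — the paper's proof treats only $\Delta$ and leaves the closure implicit.

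One small imprecision in your sign analysis: you claim $s_2\notin(-\mu_2,0)$ ``follows from~\eqref{eq:H2} together with Lemma~\ref{lemma:gamma2pos}.'' The conclusion $\gamma_2>0$ of Lemma~\ref{lemma:gamma2pos} only gives $s_2>-\mu_2/2$, which does not yet exclude $s_2\in(-\mu_2/2,0)$. The clean justification is that Lemma~\ref{lemma:s1s2}(c) still applies verbatim, since its proof only requires $r_2\neq-1$ together with~\eqref{eq:H2}--\eqref{eq:H4}; alternatively, combining $1+r_2>0$ (which is~\eqref{eq:H1} when $r_1=-1$) with $\mu_1-r_2\mu_2>0$ from~\eqref{eq:H2} gives $s_2>0$ outright. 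Either way the conclusion you need is correct — this is only a slip in attribution, not a gap.
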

\begin{remark} The notation $G$ is consistent with the general difference equation~\eqref{eq:Gdef}, 
since as $r_1 \to -1$ we have $k_1(s)/k_1(\zeta s)\to -1$.
\end{remark}
\begin{proof}
We refer to Section~\ref{subsec:prolongement} for step-by-step proofs of the meromorphic continuation to $\mathbb C$. To study the poles and zeros of $\varphi_1$ (\textit{resp.} $\varphi_2$) in the domain $\Delta$, recall that it is already pole-free on $\Delta_{\mathrm{y}}$ (\textit{resp.} $\Delta_{\mathrm{x}}$) and nonzero on $\Delta_{\mathrm{y}}\cap \mathbb R$ (\textit{resp.} $\Delta_{\mathrm{x}}\cap\mathbb R$). For all $s\in \Delta_{\mathrm{y}}$,
$$\varphi_1(s)=-(1+r_2)(s-s_2)\varphi_2(s).$$
Thus, $\varphi_1$ has no poles in $\Delta_{\mathrm{y}}$, and $\varphi_1(s)=0$ if and only if $s=s_2\in \Delta_{\mathrm{x}}$, which is impossible by Lemma~\ref{lemma:s1s2}.
\end{proof}
The function $-G$ admits the following decoupling function (see Definition~\ref{def:decoupling}):
\begin{equation}\label{eq:defDdagger}
D_\dagger(s):=\frac{\Gamma(s+s_2+\mu_2)}{\Gamma(s-s_2)}.
\end{equation}
The next proposition identifies when $D_\dagger$ is rational, gives explicit formulas in these cases, and shows that no rational decoupling exists otherwise.
\begin{proposition}[Rational decoupling]\label{prop:rationalr1} If $r_1=-1$ then the following three properties are equivalent:
\begin{enumerate}
\item The function $-G$ admits a rational decoupling,
\item The decoupling function $D_\dagger$ is rational,
\item $\gamma_2\in\mathbb N$.
\end{enumerate}
Moreover, if $\gamma_2\in\mathbb N$ then
\begin{equation}\label{eq:rationalr1}
D_\dagger(s)=2^{-\gamma_2/2}P(\y(s))\Big[\sqrt{2}(s-s_+)\Big]^\varepsilon,
\end{equation}
with
\begin{equation*}
P(y):=\prod_{k=0}^{\left\lfloor\frac{\gamma_2}{2}-1\right\rfloor}\Big(y-\y(s_2-k)\Big)
\end{equation*}
and $\varepsilon=1$ if $\gamma_2$ is odd and $\varepsilon=0$ otherwise.
\end{proposition}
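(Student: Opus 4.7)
The plan is to prove the equivalences through the chain $(2)\Rightarrow(1)\Rightarrow(3)\Rightarrow(2)$, and then establish the explicit formula~\eqref{eq:rationalr1} as a byproduct of $(3)\Rightarrow(2)$.

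The implication $(2)\Rightarrow(1)$ is immediate: since $D_\dagger$ satisfies $-G(s)=D_\dagger(s+1)/D_\dagger(s)$ by the functional equation of $\Gamma$, if $D_\dagger$ is rational then $-G$ admits a rational decoupling.

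For $(1)\Rightarrow(3)$, I will apply the criterion of Lemma~\ref{lem1} from Put–Singer to the rational function $-G$. Using the identity $\gamma_2=\mu_2+2s_2$, one checks $2s_--s_2=-\mu_2-s_2=s_2-\gamma_2$, so
\[
-G(s)=\frac{s-(s_2-\gamma_2)}{s-s_2},\qquad \operatorname{div}(-G)=[s_2-\gamma_2]-[s_2].
\]
The limit at infinity equals $1$, so the first condition of Lemma~\ref{lem1} is automatic. The only $\mathbb Z$-orbit with nonzero contribution is $s_2+\mathbb Z$, on which the orbit-sum vanishes if and only if $s_2-\gamma_2\sim s_2$, i.e.\ $\gamma_2\in\mathbb Z$. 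Combined with Lemma~\ref{lemma:gamma2pos}, which forces $\gamma_2>0$, this gives $\gamma_2\in\mathbb N$.

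For $(3)\Rightarrow(2)$ and the explicit formula, I will compute $D_\dagger$ directly. Since $\gamma_2=\mu_2+2s_2$ implies $s_2+\mu_2=\gamma_2-s_2$, we have
\[
D_\dagger(s)=\frac{\Gamma(s-s_2+\gamma_2)}{\Gamma(s-s_2)}\overset{\eqref{eq:Gammarec}}{=}\prod_{k=0}^{\gamma_2-1}\bigl(s-(s_2-k)\bigr),
\]
which is a polynomial of degree $\gamma_2$, so (2) holds. To recover the form~\eqref{eq:rationalr1}, I use the elementary identity
\[
(s-a)(s-\eta a)=\tfrac12\bigl(\y(s)-\y(a)\bigr),
\]
valid since $a+\eta a=2s_+$, together with the pairing $s_2-k\leftrightarrow \eta(s_2-k)=s_2-(\gamma_2-1-k)$, which follows from $2s_+-(s_2-k)=s_2-(\gamma_2-1-k)$ thanks to $2s_2-\mu_1=\gamma_2-1$. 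When $\gamma_2$ is even, the indices $\{0,\ldots,\gamma_2-1\}$ split into $\gamma_2/2$ disjoint pairs, giving $D_\dagger(s)=2^{-\gamma_2/2}\prod_{k=0}^{\gamma_2/2-1}(\y(s)-\y(s_2-k))$ with $\varepsilon=0$. When $\gamma_2$ is odd, the middle index $k=(\gamma_2-1)/2$ is a fixed point of the pairing, corresponding precisely to $s_2-(\gamma_2-1)/2=s_+$; isolating this factor $(s-s_+)$ and pairing the remaining $2\lfloor\gamma_2/2\rfloor$ terms yields the formula with $\varepsilon=1$, after absorbing $2^{-1/2}$ into the bracket $[\sqrt{2}(s-s_+)]$.

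The main obstacle is purely bookkeeping in the odd case of step $(3)\Rightarrow(2)$: one must verify that the middle element of the product is exactly $s-s_+$ (this relies on the identity $\gamma_2-1=2s_2-\mu_1$, which in turn uses $\mu_1+\mu_2=1$ from~\eqref{eq:H4}) and that the floor expression $\lfloor\gamma_2/2-1\rfloor$ in the statement matches the number of remaining pairs, which equals $(\gamma_2-1)/2-1=\lfloor\gamma_2/2\rfloor-1$ for odd $\gamma_2$ and $\gamma_2/2-1$ for even $\gamma_2$. No analytic difficulty arises beyond these parity computations.
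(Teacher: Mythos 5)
Your proof is correct and follows essentially the same route as the paper: $(2)\Rightarrow(1)$ is immediate from the $\Gamma$-functional equation, $(3)\Rightarrow(2)$ is the explicit pairing of $s_2-k$ with $\eta(s_2-k)$ around the fixed point $s_+$, and $(1)\Rightarrow(3)$ follows from the Put--Singer criterion together with the positivity of $\gamma_2$ (the paper invokes the criterion without spelling out the divisor computation, which you do, and you also correctly place $(s-s_+)^\varepsilon$ in the numerator, whereas the paper's proof sketch has a sign typo writing it as $1/(s-s_+)^\varepsilon$). One small terminology slip in your final paragraph: $\lfloor\gamma_2/2-1\rfloor$ is the upper index of the product, not the number of remaining pairs (that count is $\lfloor\gamma_2/2-1\rfloor+1=\lfloor\gamma_2/2\rfloor$), but this is cosmetic and the formula you derived is right.
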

\begin{proof}
The implication 2.$\implies$1. is trivial.
 The techniques to prove that $\gamma_2\in\mathbb N$ implies that $D_\dagger\in \mathbb C(s)$ are very similar to the proof of Proposition \ref{prop:decoupling} but in a simpler setting so we will only sketch the proof. Assume that $\gamma_2\in\mathbb N$. Using the relation~\eqref{eq:Gammarec} satisfied by the gamma function, one obtains
$$D_\dagger(s)=\frac{\Gamma(s-s_2+\gamma_2)}{\Gamma(s-s_2)}=\prod_{k=0}^{\gamma_2 -1}\Big(s-(s_2-k)\Big).$$
With $s-s_{+}=s-(s_2 -\frac{\gamma_2 -1}{2})$, we may reorder the term to obtain
$$\prod_{k=0}^{\gamma_2 -1}\Big(s-(s_2-k)\Big)\propto \frac{1}{(s-s_{+})^{\varepsilon}}\prod_{k=0}^{\left\lfloor\frac{\gamma_2}{2}-1\right\rfloor}\Big(\y(s)-\y(s_2-k)\Big)$$
where $\varepsilon=1$ if $\gamma_2$ is odd and $\varepsilon=0$ otherwise. The remaining proportionality constants arise from the fact that the leading coefficient of $\y$ is $2$. The last implication 1.$\implies$3. is a direct consequence of Put-Singer rational decoupling criterion (Lemma~\ref{lem1}).
\end{proof}
\begin{remark}[Alternative decoupling function] Alternatively, we could consider the following decoupling function for $G$ itself:
$$\widetilde{D}_\dagger(s):=\sin\left(\pi s\right)D_\dagger(s).$$
This decoupling function is never rational.
\end{remark}
\begin{lemma}[On the function $\varphi_1/D_\dagger$]\label{lemma:phi1Dr1} Suppose $r_1=-1$.
\begin{enumerate}
\item $(\varphi_1/D_\dagger)^2$ is a Type I invariant. Moreover, if $\gamma_2\in\mathbb N$, then $(\varphi_1/D_\dagger)^2$ is a Type II invariant.
\item For $s$ staying in $\mathfrak{B}_1$, $\varphi_1/D_\dagger(s)\to 0$ as $|s|\to +\infty$.
\item If $\gamma_2\notin \mathbb N$, then $\varphi_1/D_\dagger$ admits exactly one (simple) pole in $\mathfrak{B}_1\cup \{\mu_1\}$, at $s_2-p$ for some non-negative integer $p$. 
\end{enumerate}
\end{lemma}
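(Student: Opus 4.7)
\medskip

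\noindent\textbf{Plan of proof.} My strategy is to treat the three items one after the other, exploiting the explicit structure of $D_\dagger$ as a Gamma quotient and the properties of $\varphi_1$ already established in Theorem~\ref{thm:prolongementr1}. The key computational input is the decoupling relation, which here reads $D_\dagger(s+1)/D_\dagger(s)=-G(s)$ (note the sign, since $D_\dagger$ is a decoupling function of $-G$, not $G$, as seen from Proposition~\ref{prop:rationalr1} and the definition~\eqref{eq:defDdagger}).

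For item~1, I would combine the difference equation~\eqref{eq:diffeqr1} satisfied by $\varphi_1$ with the identity $D_\dagger(s+1)=-G(s)D_\dagger(s)$ to obtain
\[
\frac{\varphi_1(s+1)}{D_\dagger(s+1)}=-\frac{\varphi_1(s)}{D_\dagger(s)},
\]
which immediately shows that $(\varphi_1/D_\dagger)^2$ is $1$-periodic, hence a Type~I invariant. When $\gamma_2\in\mathbb N$, I would invoke formula~\eqref{eq:rationalr1} of Proposition~\ref{prop:rationalr1} and square it, using the identity $2(s-s_+)^2=\y(s)-\y(s_+)$ (which follows from~\eqref{eq:xyspm}) to conclude that $D_\dagger(s)^2$ is a rational function of $\y(s)$, whatever the parity of $\gamma_2$. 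Since $\varphi_1(s)^2=\phi_1(\y(s))^2$ is also a function of $\y$, the ratio $(\varphi_1/D_\dagger)^2$ is $\eta$-invariant by~\eqref{eq:inv}, and combined with the $1$-periodicity (which is $\eta\circ\zeta$-invariance) this gives $\zeta$-invariance and hence Type~II invariance.

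Item~2 is a straightforward asymptotic computation. I would apply the standard Stirling ratio~\eqref{eq:ratiogamma} already used in the proof of Lemma~\ref{lemma:asymptotic} to obtain $D_\dagger(s)\sim s^{\gamma_2}$ as $|s|\to\infty$ with $|\arg s|<\pi$. Since $\gamma_2>0$ by Lemma~\ref{lemma:gamma2pos}, we have $|D_\dagger(s)|\to\infty$, while $\varphi_1(s)\to 0$ in $\mathfrak{B}_1$ by the third item of Lemma~\ref{lemma:stripe} (which carries over verbatim to the $r_1=-1$ setting, the proof relying only on $\mathfrak{Re}(\y(s))\to-\infty$). The quotient therefore tends to zero.

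For item~3, the main task is a careful bookkeeping of poles and zeros. I would write
\[
\frac{\varphi_1(s)}{D_\dagger(s)}=\varphi_1(s)\,\frac{\Gamma(s-s_2)}{\Gamma(s+s_2+\mu_2)},
\]
and list the candidates: $\varphi_1$ has no poles in $\overline{\Delta}\supset\mathfrak{B}_1\cup\{\mu_1\}$ by Theorem~\ref{thm:prolongementr1}; $1/\Gamma(s+s_2+\mu_2)$ is entire; and $\Gamma(s-s_2)$ has simple poles exactly at the points $s_2-k$, $k\in\mathbb N_0$. Using $\gamma_2>0$ together with Lemma~\ref{lemma:s1s2}\,(c), I would first deduce $s_2>0$, which guarantees that exactly one integer $p\in\mathbb N_0$ satisfies $s_2-p\in(-\mu_2,\mu_1]$ (the interval having length $\mu_1+\mu_2=1$). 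The potentially delicate point, and the main obstacle, is to rule out cancellation of this pole by a zero of the numerator: such a cancellation would force $s_2-p=-s_2-\mu_2-k$ for some $k\in\mathbb N_0$, equivalently $\gamma_2=p+k\in\mathbb N_0$, contradicting $\gamma_2\notin\mathbb N$ (the case $\gamma_2=0$ being excluded by Lemma~\ref{lemma:impossible}\,(1), whose argument applies identically here). Since $\varphi_1$ has no real zeros on $\overline{\Delta}$ either (arguing exactly as in item~1 of Lemma~\ref{lemma:stripe}), we conclude that $\varphi_1/D_\dagger$ has a unique simple pole in $\mathfrak{B}_1\cup\{\mu_1\}$, located at $s_2-p$.
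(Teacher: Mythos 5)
Your proposal follows essentially the same route as the paper's proof: use the decoupling relation $D_\dagger(s+1)=-G(s)D_\dagger(s)$ to obtain anti-periodicity of $\varphi_1/D_\dagger$ (hence $1$-periodicity after squaring), observe $D_\dagger^2$ depends only on $\y(s)$ when $\gamma_2\in\mathbb N$, combine the Stirling estimate $D_\dagger(s)\sim s^{\gamma_2}$ with $\gamma_2>0$ and $\varphi_1\to 0$ for item~2, and do explicit pole bookkeeping for item~3.

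One small arithmetic slip in item~3: the cancellation equation $s_2-p=-s_2-\mu_2-k$ gives $2s_2+\mu_2=p-k$, i.e.\ $\gamma_2=p-k$, not $\gamma_2=p+k$. This alone yields only $\gamma_2\in\mathbb Z$; to upgrade to $\gamma_2\in\mathbb N$ you should invoke Lemma~\ref{lemma:gamma2pos} (which you already use in item~2) rather than Lemma~\ref{lemma:impossible}(1). The contradiction with $\gamma_2\notin\mathbb N$ then follows as you intended, so the conclusion survives, but the stated algebra does not.

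Your remark that item~3 of Lemma~\ref{lemma:stripe} transfers to the $r_1=-1$ setting because its proof relies only on $\mathfrak{Re}(\y(s))\to-\infty$ is a useful piece of explicit justification that the paper leaves implicit.
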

\begin{proof}
\begin{enumerate}
\item The function $D_\dagger$ is a decoupling function for $-G$; therefore, $D_\dagger^2$ is a decoupling function for $G^2$. Squaring both sides of the difference equation~\eqref{eq:diffeqr1} and replacing $G(s)$ by $D_\dagger(s+1)/D_\dagger(s)$ shows that $(\varphi_1/D_\dagger)^2$ is $1$-periodic (\textit{i.e.}, a Type~I invariant). Moreover, if $\gamma_2\in\mathbb{N}$, then $D_\dagger^2(s)$ depends only on $\y(s)$, so $(\varphi_1/D_\dagger)^2$ is invariant under both $\eta$ and $\eta\circ\zeta$, and is therefore a Type~II invariant.
\item Using the asymptotic behavior given by~\eqref{eq:ratiogamma}, we obtain $D_\dagger(s)\sim s^{\gamma_2}$. 
Lemma~\ref{lemma:gamma2pos} shows that $\gamma_2>0$, hence $D_\dagger(s)\to+\infty$ as $|s|\to\infty$. 
Moreover, by a standard result on the Laplace transform, $\varphi_1(s)\to 0$ as $s\to\infty$ in the strip $\mathfrak{B}_1$ (see Lemma~\ref{lemma:stripe}). 
Combining these two limits establishes the desired result.
\item We already know, by Theorem~\ref{thm:prolongementr1}, that $\varphi_1$ has no pole in $\overline{\mathfrak{B}}_1\subset \overline{\Delta}$. Hence the poles of $\varphi_1/D_\dagger$ in the strip are exactly those of $1/D_\dagger$. One can easily check (mimicking the proof of Lemma~\ref{lemma:polephi1D}) that these poles are all simple and of the form $s_2-p$ with $p\in\mathbb{N}_0$. Moreover, they cannot be canceled by a zero in the ratio of Gamma functions, since this would imply $\gamma_2\in\mathbb{N}$. Finally, since $\gamma_2>0$ implies $s_2>0$, exactly one of these poles lies in $\mathfrak{B}_1\cup\{\mu_1\}$.
\end{enumerate}
\end{proof}
\begin{theorem}[Laplace transform, $r_1=-1$ and $\gamma_2\in \mathbb N$]\label{thm:DAr1} Suppose $r_1=-1$ and $\gamma_2\in\mathbb N$. Then there exists a polynomial $P\in\mathbb R[X]$ such that the Laplace transform $\phi_1$ satisfies 
$$\phi_1(y)\propto \frac{P(y)\sqrt{2y+\mu_1^2}}{\sin\left(\frac{\pi}{2}\sqrt{2y+\mu_1^2}\right)}$$
if $\gamma_2$ is odd, and 
$$\phi_1(y)\propto \frac{P(y)}{\cos\left(\frac{\pi}{2}\sqrt{2y+\mu_1^2}\right)}$$
if $\gamma_2$ is even. Here the polynomial $P$ is given by
$$P(y):=\prod_{k=0}^{\left\lfloor\frac{\gamma_2}{2}-1\right\rfloor}\Big(y-\y(s_2-k)\Big).$$
\end{theorem}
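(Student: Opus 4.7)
The strategy mirrors that of Theorem~\ref{thm:notN}: apply the Type II invariant lemma (Proposition~\ref{lemma:inv}) to the invariant $(\varphi_1/D_\dagger)^2$, which by Lemma~\ref{lemma:phi1Dr1} is indeed a Type II invariant when $\gamma_2\in\mathbb N$, tends to $0$ at infinity in $\mathfrak{B}_1\supset\mathfrak{B}_2$, and therefore grows at most polynomially with finitely many poles in $\overline{\mathfrak{B}}_2$.

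The first step is a precise pole analysis of $\varphi_1/D_\dagger$ on the closed strip $\overline{\mathfrak{B}}_2=[s_-,s_+]+i\mathbb{R}$. By Theorem~\ref{thm:prolongementr1}, $\varphi_1$ has no pole on $\overline{\Delta}\supset\overline{\mathfrak{B}}_2$, so the poles can only come from the zeros of $D_\dagger$. Using the explicit factorization $D_\dagger(s)\propto P(\y(s))(s-s_+)^\varepsilon$ from~\eqref{eq:rationalr1} and the identities $s_2=s_-+\gamma_2/2=s_++(\gamma_2-1)/2$, I will solve the elementary inequalities $s_2-k\in[s_-,s_+]$ and $\eta(s_2-k)\in[s_-,s_+]$ to show that $D_\dagger$ has exactly one simple zero in $\overline{\mathfrak{B}}_2$, located at $s_-$ when $\gamma_2$ is even and at $s_+$ when $\gamma_2$ is odd (in the odd case this zero comes from the $(s-s_+)$ factor rather than from $P(\y(s))$). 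Consequently $(\varphi_1/D_\dagger)^2$ has a double pole at this single point $s_\star\in\{s_-,s_+\}$, consistent with the even-order requirement of Lemma~\ref{lemma:res}.

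Applying Proposition~\ref{lemma:inv} to $(\varphi_1/D_\dagger)^2$ then yields constants $c$ and $c_\star$ such that
$$\left(\frac{\varphi_1(s)}{D_\dagger(s)}\right)^{\!2}=c+\frac{c_\star}{\w(s)-\w(s_\star)}.$$
Letting $|s|\to\infty$ in $\overline{\mathfrak{B}}_2$, the left-hand side vanishes by the second item of Lemma~\ref{lemma:phi1Dr1} and the right-hand side tends to $c$, so $c=0$. Using the double-angle identities~\eqref{rem2}, namely $\w-1=-2\sin^2(\pi(\cdot-s_-))$ when $s_\star=s_-$ and $\w+1=2\cos^2(\pi(\cdot-s_-))$ when $s_\star=s_+$, one extracts a meromorphic square root and obtains
$$\varphi_1(s)\propto\frac{D_\dagger(s)}{\sin(\pi(s-s_-))}\quad\text{or}\quad\varphi_1(s)\propto\frac{D_\dagger(s)}{\cos(\pi(s-s_-))}$$
according to the parity of $\gamma_2$; the correct choice of branch is forced by the simple-pole structure of $\varphi_1/D_\dagger$ at $s_\star$.

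The final step is purely computational: substituting the explicit form~\eqref{eq:rationalr1} of $D_\dagger$, applying the change of variable $s=\tfrac12(\mu_1+\sqrt{2y+\mu_1^2})$, and using the identities $\sin(\pi/2+x)=\cos(x)$ and $\cos(\pi/2+x)=-\sin(x)$ with $x=(\pi/2)\sqrt{2y+\mu_1^2}$, one recovers the announced formulas for $\phi_1(y)$. The main obstacle is the parity-dependent pole counting in $\overline{\mathfrak{B}}_2$; once that is settled, the remainder is a direct transcription of the invariant-lemma argument used to prove Theorem~\ref{thm:notN}.
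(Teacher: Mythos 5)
Your proof is correct and follows the paper's overall strategy — apply the Type~II invariant lemma (Proposition~\ref{lemma:inv}) to $(\varphi_1/D_\dagger)^2$, pin down its unique double pole in $\overline{\mathfrak{B}}_2$ at $s_-$ or $s_+$ according to the parity of $\gamma_2$, and then use the double-angle identities~\eqref{rem2} to take a square root — but it takes a genuinely different and in fact shorter route to the intermediate conclusion $c=0$. The paper argues via Lemma~\ref{lemma:sqrt}: rewriting the invariant-lemma output as
$$\left(\frac{\varphi_1}{D_\dagger}\right)^{\!2}\bigl(\w-\w(s_\pm)\bigr)=c\w+\bigl(c_{\pm,1}-c\w(s_\pm)\bigr),$$
it observes that the left-hand side is a perfect square, invokes Lemma~\ref{lemma:sqrt} to conclude $c=0$ or $c_{\pm,1}=\mp 2c$, and then rules out the second alternative by checking that it would force $\varphi_1$ to vanish at $s_\mp\in\Delta\cap\mathbb R$, contradicting Lemma~\ref{lemma:stripe}. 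Your argument instead takes the limit $|s|\to\infty$ in $\overline{\mathfrak{B}}_2$: since $\gamma_2>0$ (Lemma~\ref{lemma:gamma2pos}) gives $D_\dagger(s)\sim s^{\gamma_2}\to\infty$ while $\varphi_1(s)\to 0$, the ratio $\varphi_1/D_\dagger$ vanishes at infinity, and comparison with $\w\to\infty$ yields $c=0$ immediately, bypassing Lemma~\ref{lemma:sqrt} and the contradiction step entirely. This works here precisely because $D_\dagger$ has strictly positive degree $\gamma_2$, and it is the same device the paper itself uses in case~3 of the proof of Theorem~\ref{thm:notN}; the authors presumably preferred the Lemma~\ref{lemma:sqrt} route in this appendix for stylistic consistency with cases~1 and~2 of that theorem. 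One small imprecision in your write-up: you say the ``correct choice of branch is forced by the simple-pole structure of $\varphi_1/D_\dagger$ at $s_\star$,'' but the two branches of the square root differ only by an overall sign, which is absorbed into the proportionality constant (fixed separately by $\phi_1(0)=\boldsymbol{\nu}_1(\mathbb R_+)$); the pole structure does not distinguish them. Everything else — the pole placement ($s_-$ for even $\gamma_2$, $s_+$ for odd), the role of the $(s-s_+)^\varepsilon$ factor in the odd case, and the final change of variable producing $\cos\bigl(\tfrac{\pi}{2}\sqrt{2y+\mu_1^2}\bigr)$ and $\sin\bigl(\tfrac{\pi}{2}\sqrt{2y+\mu_1^2}\bigr)$ — is accurate and matches the paper.
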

\begin{proof}
According to Lemma~\ref{lemma:phi1Dr1}, if $\gamma_2\in\mathbb N$ then $(\varphi_1/D_\dagger)^2$ is a Type II invariant. Applying Theorem~\ref{thm:prolongementr1}, $\varphi_1$ does not have any pole in the strip $\overline{\mathfrak{B}}_2$. The poles of $\varphi_1/D_\dagger$ are therefore the roots of $D_\dagger$, \textit{i.e.}, $s_2-k$ for $k$ between $0$ and $\gamma_2-1$. One can easily check that $\varphi_1/D_\dagger$ has a unique (simple) pole in $\overline{\mathfrak{B}}_2$ situated at $s_\pm$ ($s_-$ if $\gamma_2$ is even, and $s_+$ otherwise). Hence, by the (Type II) Invariant Lemma (see Proposition~\ref{lemma:inv}) there exists $c$ and $c_{\pm,1}\ne 0$ such that  
$$\left(\frac{\varphi_1}{D_\dagger}\right)^2=c+\frac{c_{\pm,1}}{\w-\w(s_\pm)}$$
or equivalently
$$\left(\frac{\varphi_1}{D_\dagger}\right)^2\big(\w-\w(s_\pm)\big)=c\w+\big(c_{\pm,1}-c\w(s_\pm)\big).$$
Noticing that $\w(s_\pm)=\mp 1$, one can use the double-angle formulas~\eqref{rem2}, to show that the left hand side of the above equation is the square of a meromorphic function, hence (by Lemma~\ref{lemma:sqrt}), either 
\begin{equation}\label{eq:alternative}
c=0\text{ or }c^2=\big(c_{\pm,1}\pm c\big)^2.
\end{equation}
Given that $c_{\pm,1}\ne 0$, the second alternative in~\eqref{eq:alternative} is equivalent to $c_{\pm,1}=\mp 2c$. Suppose that this second alternative holds. Then if $\gamma_2$ is even, it would lead to 
$$\varphi_1(s)\propto \frac{1}{\tan\Big(\pi(s-s_-)\Big)}\prod_{k=0}^{\gamma_2-1}\Big(s-(s_2-k)\Big),$$
and if $\gamma_2$ was odd,
$$\varphi_1(s)\propto \tan\Big(\pi(s-s_-)\Big)\prod_{k=0}^{\gamma_2-1}\Big(s-(s_2-k)\Big),$$
(see, for example, the proof of Theorem~\ref{thm:general} for similar trigonometric manipulations). In both cases, $\varphi_1$ would vanish at $s=s_\mp \in\Delta\cap\mathbb R$. But that would contradict Lemma~\ref{lemma:stripe}, hence $c=0$ and the rest of the proof follows similarly as in Theorem~\ref{thm:notN}.
\end{proof}

\begin{theorem}[Laplace transform, $r_1=-1$ and $\gamma_2\notin \mathbb N$]\label{thm:DTr1} Suppose $r_1=-1$ and $\gamma_2\notin\mathbb N$. Then
$$\varphi_1(s)\propto \frac{D_\dagger(s)}{\sin\left(\pi(s-s_2)\right)}=\frac{\Gamma(s+s_2+\mu_2)}{\Gamma(s-s_2)\sin\left(\pi(s-s_2)\right)}.$$
\end{theorem}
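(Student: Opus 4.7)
The plan is to use the squaring trick to obtain a Type~I invariant from the $1$-antiperiodic function $\varphi_1/D_\dagger$, apply the Type~I Invariant Lemma (Proposition~\ref{lemma:periodic}), and then pin down the resulting rational function by a vanishing condition at infinity. This parallels case~3 of Theorem~\ref{thm:notN}, where the same squaring device handles a square-root obstruction to direct invariance.

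Define $J := (\varphi_1/D_\dagger)^2$. Since $D_\dagger$ decouples $-G$ (Proposition~\ref{prop:rationalr1}), the ratio $\varphi_1/D_\dagger$ satisfies $(\varphi_1/D_\dagger)(s+1) = -(\varphi_1/D_\dagger)(s)$, so $J$ is $1$-periodic and is a Type~I invariant. By Lemma~\ref{lemma:phi1Dr1}, $\varphi_1/D_\dagger$ has a unique simple pole in $\mathfrak{B}_1\cup\{\mu_1\}$, located at $s_2-p$ for some $p\in\mathbb{N}_0$, so $J$ admits a unique double pole at that point in $\overline{\mathfrak{B}}_1$. For growth: the functional equation~\eqref{eq:eqfunc} gives $\varphi_1 = -(1+r_2)(s-s_2)\varphi_2$ on $\Delta$, and combining the Laplace boundedness on $\Delta_\mathrm{y}$ with the boundedness of $\varphi_2$ on $\Delta_\mathrm{x}$ (both are Laplace transforms of finite measures on their native half-planes) yields $|\varphi_1(s)|=O(|s|)$ on $\overline{\mathfrak{B}}_1\subset\overline{\Delta}$, while Stirling gives $|D_\dagger(s)|\sim|s|^{\gamma_2}$, hence $|J(s)|=O(|s|^{2-2\gamma_2})$ is polynomial. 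Proposition~\ref{lemma:periodic} then produces
$$
J = F_1(\wb),\qquad F_1(\omega)=c+\frac{c_{1,1}}{\omega-\omega_0}+\frac{c_{1,2}}{(\omega-\omega_0)^2},\qquad \omega_0:=\wb(s_2-p),
$$
with the edge case $s_2-p\in\{-\mu_2,\mu_1\}$ (where $\omega_0=\infty$) giving a polynomial in $\wb$ and being handled analogously.

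The constants are pinned down by the behavior at infinity. Lemma~\ref{lemma:stripe} ensures $\varphi_1(s)\to 0$ as $|\mathrm{Im}(s)|\to\infty$ in $\mathfrak{B}_1$, while $|D_\dagger(s)|\to\infty$; hence $J(s)\to 0$. Simultaneously $\wb(s)\to\pm i$ according to the sign of $\mathrm{Im}(s)$, so $F_1(i)=F_1(-i)=0$. Solving this $2\times 2$ linear system forces $F_1(\omega) = c(\omega^2+1)/(\omega-\omega_0)^2$, uniquely up to the overall scalar. Applying the identities $\omega^2+1 = 1/\sin^2(\pi(s+\mu_2))$ and $\omega-\omega_0 = \sin(\pi(s-s_2))/[\sin(\pi(s+\mu_2))\sin(\pi(s_2+\mu_2))]$ reduces $J(s)$ to a constant multiple of $1/\sin^2(\pi(s-s_2))$, so that $\varphi_1(s)\propto D_\dagger(s)/\sin(\pi(s-s_2))$ after a consistent choice of sign (absorbed in the proportionality constant). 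The equivalent Gamma-function expression then follows from the reflection formula $\Gamma(z)\Gamma(1-z)=\pi/\sin(\pi z)$ applied to $D_\dagger(s)=\Gamma(s+s_2+\mu_2)/\Gamma(s-s_2)$. The most delicate point will be the growth bound for $\varphi_1$ across the whole strip $\overline{\mathfrak{B}}_1$, which must be propagated from $\Delta_\mathrm{y}$ to $\Delta_\mathrm{x}\setminus\Delta_\mathrm{y}$ via the functional equation before the Invariant Lemma can be applied.
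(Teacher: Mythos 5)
Your proposal is correct and follows essentially the same route as the paper: pass to the squared ratio $(\varphi_1/D_\dagger)^2$ to eliminate the sign, apply the Type~I Invariant Lemma with the pole analysis of Lemma~\ref{lemma:phi1Dr1}, impose vanishing at $\pm i$, and simplify trigonometrically. One small cosmetic remark: the second expression $\Gamma(s+s_2+\mu_2)/(\Gamma(s-s_2)\sin(\pi(s-s_2)))$ is obtained directly by unwrapping the definition of $D_\dagger$ in Equation~\eqref{eq:defDdagger}, with no appeal to Euler's reflection formula needed; and the edge case $s_2-\mu_1\in\mathbb N$ that you wave off as ``handled analogously'' is worked out explicitly in the paper (yielding a quadratic polynomial in $\wb$ with the $c_{\mu,1}$-coefficient forced to vanish), but your sketch would indeed carry through there too.
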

\begin{proof} According to Lemma~\ref{lemma:phi1Dr1}, $(\varphi_1/D_\dagger)^2$ is a Type I invariant, and its only pole in $\overline{\mathfrak{B}}_1\cup \{\mu_1\}$ is at $s_2-p$ for some $p\in\mathbb N$ and has multiplicity 2.
Assume for the moment that $s_2-\mu_1\notin\mathbb N$ (so that $s_2-p\ne \mu_1$). By the Type I invariant Lemma (see Proposition~\ref{lemma:periodic}), there exist three constants $c$, $c_{1,1}$ and $c_{1,2}$ (with $c_{1,2}\ne 0$) such that
$$\left(\frac{\varphi_1}{D_\dagger}\right)^2=c+\frac{c_{1,1}}{\wb-\wb(s_2-p)}+\frac{c_{1,2}}{\Big(\wb-\wb(s_2-p)\Big)^2}.$$
Note that $\wb(s_2-p)=\wb(s_2)$. Still from Lemma~\ref{lemma:phi1Dr1}, the left-hand side must tends to $0$ as $|s|\to +\infty$. In particular, for $s=\pm it$ with $t\to+\infty$, we obtain two linear equations involving $c_{1,1}$ and $c_{1,2}$:
$$0=c+\frac{c_{1,1}}{i-\wb(s_2)}+\frac{c_{1,2}}{\Big(i-\wb(s_2)\Big)^2}=c-\frac{c_{1,1}}{i+\wb(s_2)}+\frac{c_{1,2}}{\Big(i+\wb(s_2)\Big)^2}.$$
This system of linear equations is similar to the one of~\eqref{eq:linearsystem}. Solving it and performing the same trigonometric simplifications, one obtains
\begin{equation}\label{eq:r1general}
\varphi_1(s)^2\propto \frac{D_\dagger(s)^2}{\sin\Big(\pi(s-s_2)\Big)^2}.
\end{equation}
Let us now suppose that $s_2-\mu_1\in\mathbb N$ (so that $\varphi_1/D_\dagger$ has a pole at $\mu_1$) and derive a compatible conclusion. Applying Proposition~\ref{lemma:periodic} in this context gives
$$\left(\frac{\varphi_1}{D_\dagger}\right)^2=c+c_{\mu,1}\wb+c_{\mu,2}\wb^2,$$
for well choosen constants $c$, $c_{\mu,1}$ and $c_{\mu,2}\ne 0$. Arguing as before, we obtain $c+ic_{\mu,1}-c_{\mu,2}=0$ and $c-ic_{\mu,1}-c_{\mu,2}=0$, which implies $c_{\mu,1}=0$ and $c=c_{\mu,2}\ne 0$. Finally,
\begin{equation}\label{eq:r1special}
\left(\frac{\varphi_1(s)}{D_\dagger(s)}\right)^2\propto 1+\wb(s)^2=\frac{1}{\cos\Big(\pi(s+\mu_2-\frac12)\Big)^2}=\frac{1}{\sin\Big(\pi(s+\mu_2)\Big)^2}.
\end{equation}
Finally, substituting $s_2$ with $\mu_1+n$ for some $n\in\mathbb N$ in~\eqref{eq:r1general} recovers~\eqref{eq:r1special}, thereby establishing the unified formula.
\end{proof}
For a discussion on well-definedness and holomorphicity of $\sin(\sqrt{z})/\sqrt{z}$ and $\cos(\sqrt{z})$, see Remark~\ref{rem:sinc}.
\begin{corollary}[Algebraic and differential nature, $r_1=-1$]\label{cor:classificationr1} Suppose $r_1=-1$. Then $\phi_1$ is differentially algebraic if and only if $\gamma_2\in\mathbb N$. Moreover, $\phi_1$ is never D-finite.
\end{corollary}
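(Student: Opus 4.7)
The argument mirrors the strategy used in Section~\ref{sec:hierarchy} for the generic case, adapted to the degenerate setting $r_1=-1$ where only $\gamma_2$ is defined. I will split the statement into three implications: (i) $\gamma_2\in\mathbb N \Rightarrow \phi_1$ is D-algebraic; (ii) $\phi_1$ D-algebraic $\Rightarrow \gamma_2\in\mathbb N$; and (iii) $\phi_1$ is never D-finite.

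For (i), I would invoke the explicit formulas of Theorem~\ref{thm:DAr1}, which give $\phi_1(y)$ as a rational function multiplied by either $\sqrt{2y+\mu_1^2}/\sin\!\bigl(\tfrac{\pi}{2}\sqrt{2y+\mu_1^2}\bigr)$ (when $\gamma_2$ is odd) or $1/\cos\!\bigl(\tfrac{\pi}{2}\sqrt{2y+\mu_1^2}\bigr)$ (when $\gamma_2$ is even). As in the proof of Lemma~\ref{lem:wDADF}, the functions $z\mapsto \sin(\sqrt{z})/\sqrt{z}$ and $z\mapsto\cos(\sqrt{z})$ are entire (see Remark~\ref{rem:sinc}) and are compositions of algebraic and D-algebraic functions, hence themselves D-algebraic. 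Since the class of D-algebraic functions forms a field stable under composition with rational functions, $\phi_1$ is D-algebraic.

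For (ii), the key tool is the difference equation $\varphi_1(s+1)=G(s)\varphi_1(s)$ from Theorem~\ref{thm:prolongementr1}, where $G(s)=-(s-s_2+\gamma_2)/(s-s_2)$. If $\phi_1$ is D-algebraic, then so is $\varphi_1=\phi_1\circ \y$ (composition with a polynomial). I then apply the Hardouin--Singer Proposition~\ref{cor3.4}: there exist $c\in\mathbb C^*$ and $d\in\mathbb C(s)\setminus\{0\}$ such that $G(s)=c\,d(s+1)/d(s)$. The crucial observation is that $G(s)\to -1$ as $s\to\infty$, whereas $d(s+1)/d(s)\to 1$ for any nonzero rational $d$; hence $c=-1$. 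This is exactly the statement that $d$ is a rational decoupling function for $-G$, so Proposition~\ref{prop:rationalr1} forces $\gamma_2\in\mathbb N$.

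For (iii), if $\gamma_2\notin\mathbb N$ then $\phi_1$ is already not D-algebraic by (ii), hence not D-finite. If $\gamma_2\in\mathbb N$, I would use the fact that D-finite functions admit only finitely many singularities in $\mathbb C$. Reading off Theorem~\ref{thm:DAr1}, the denominator $\sin\!\bigl(\tfrac{\pi}{2}\sqrt{2y+\mu_1^2}\bigr)$ (or $\cos\!\bigl(\tfrac{\pi}{2}\sqrt{2y+\mu_1^2}\bigr)$) vanishes for infinitely many values of $y\in\mathbb C$, namely those with $\sqrt{2y+\mu_1^2}\in 2\mathbb Z$ (resp.\ $2\mathbb Z+1$), while the numerator $P(y)$ is a fixed polynomial and can cancel only finitely many of these zeros. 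Thus $\phi_1$ has infinitely many poles and cannot be D-finite. The expected main obstacle is the clean application of Proposition~\ref{cor3.4} in step (ii), in particular the asymptotic determination of the constant $c=-1$, which uses in an essential way that the ``missing'' root $s_1$ has been sent to infinity when $r_1=-1$; all subsequent steps are straightforward verifications.
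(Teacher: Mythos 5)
Your proof is correct and follows essentially the same three-step structure as the paper's argument: Theorem~\ref{thm:DAr1} and Lemma~\ref{lem:wDADF} for sufficiency and non-D-finiteness, and the Hardouin--Singer Proposition~\ref{cor3.4} combined with Proposition~\ref{prop:rationalr1} for necessity. Your step (ii) is slightly more carefully spelled out than the paper's, since you make explicit the asymptotic normalization $c=-1$ needed to pass from the conclusion of Proposition~\ref{cor3.4} (which produces $G(s)=c\,d(s+1)/d(s)$) to a rational decoupling of $-G$, which is what Proposition~\ref{prop:rationalr1} actually concerns.
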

\begin{proof}
Assume that $\gamma_2\in\mathbb N$. We claim that $\phi_1$ is D-algebraic but not D-finite. By Theorem~\ref{thm:DAr1}, the function $\phi_1$ can be expressed as the product of a polynomial $P$ with one of the two functions $$g_1(y):=\frac{1}{\cos(\sqrt{ay+b})},\qquad g_2(y):=\frac{\sqrt{ay+b}}{\sin(\sqrt{ay+b})}$$ for suitably chosen constants $a\ne 0$ and $b$. Lemma~\ref{lem:wDADF} ensures that both $g_1$ and $g_2$ are D-algebraic but not D-finite. Since the class of D-algebraic functions is closed under multiplication, it follows that $\phi_1$ is itself D-algebraic. Suppose, for contradiction, that $\phi_1$ were D-finite. As a rational function, $1/P$ is trivially D-finite, forcing $\phi_1/P$ to be D-finite. Then, either $g_1$ or $g_2$ is D-finite, contradicting Lemma~\ref{lem:wDADF}. Hence $\phi_1$ is D-algebraic but not D-finite, as claimed.\\
\noindent Conversely, if $\gamma_2\notin \mathbb N$, then Proposition~\ref{prop:rationalr1} asserts that $G$ admits no rational decoupling. By contraposition of Proposition~\ref{cor3.4}, this rules out the possibility that $\varphi_1$ is D-algebraic. Finally, obtaining $\phi_1(y)$ from $\varphi_1(s)$ amounts to performing the algebraic substitution $y = \y(s)$. Since the class of D-transcendental functions is stable under nonconstant algebraic substitutions, the conclusion follows.
\end{proof}

Finally, we conclude this subsection by inverting the Laplace transform of the lateral measure, using the argument from section~\ref{sec:density}.

\begin{corollary}[Density when D-algebraic, $r_1=-1$] Suppose $r_1=-1$ and $\gamma_2\in\mathbb N$. Then 
$$\nu_1(v)\propto \sum_{n\in\mathbb Z}(-1)^n\left[\prod_{k=0}^{\frac{\gamma_2-1}{2}}\left(\frac{\mu_1^2-4n^2}{2}+\y(s_2-k)\right)\right]\exp\left(-v\frac{4n^2-\mu_1^2}{2}\right)$$
if $\gamma_2$ is odd, and
$$\nu_1(v)\propto \sum_{n\in\mathbb Z}(4n+1)\left[\prod_{k=0}^{\frac{\gamma_2}{2}-1}\left(\frac{\mu_1^2-(4n+1)^2}{2}+\y(s_2-k)\right)\right]\exp\left(-v\frac{(4n+1)^2-\mu_1^2}{2}\right)$$
if $\gamma_2$ is even.
\end{corollary}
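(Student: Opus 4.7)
The plan is to invert, in each of the two cases of Theorem~\ref{thm:DAr1}, the Laplace transform of $\phi_1$ by following the heuristic of Section~\ref{sec:heuri} already applied in Theorems~\ref{thm:densitygamma} and~\ref{thm:densitygamma12}. The first step is to identify a theta-like function whose Laplace transform coincides, up to proportionality, with the non-polynomial factor of $\phi_1$. Specifically, I would introduce
$$
\theta_\dagger^{\mathrm{odd}}(q) := \sum_{n\in\mathbb Z}(-1)^n\,n^2\,q^{(4n^2-\mu_1^2)/2},\qquad
\theta_\dagger^{\mathrm{even}}(q) := \sum_{n\in\mathbb Z}(4n+1)\,q^{((4n+1)^2-\mu_1^2)/2},
$$
whose Laplace poles $y_n$ match respectively the zeros of $\sin\!\bigl(\tfrac{\pi}{2}\sqrt{2y+\mu_1^2}\bigr)$ and $\cos\!\bigl(\tfrac{\pi}{2}\sqrt{2y+\mu_1^2}\bigr)$. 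A Mittag--Leffler expansion analogous to Lemma~\ref{lemma:mittag}, with careful pairing of the $\pm n$ terms to ensure convergence, should yield
$$
\mathcal L\bigl\{\theta_\dagger^{\mathrm{odd}}(e^{-v})\bigr\}(y) \propto \frac{\sqrt{2y+\mu_1^2}}{\sin\!\bigl(\tfrac{\pi}{2}\sqrt{2y+\mu_1^2}\bigr)},\qquad
\mathcal L\bigl\{\theta_\dagger^{\mathrm{even}}(e^{-v})\bigr\}(y) \propto \frac{1}{\cos\!\bigl(\tfrac{\pi}{2}\sqrt{2y+\mu_1^2}\bigr)}.
$$

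The second step is to check, exactly as in Corollaries~\ref{cor:flat} and~\ref{cor:growth}, that $v\mapsto\theta_\dagger^\star(e^{-v})$ is flat at $v=0$ and satisfies the required asymptotic control at $v=+\infty$. The Poisson summation formula (used as in Proposition~\ref{prop:jacobi}) provides dual series representations of $\theta_\dagger^\star(e^{-v})$ in which each term decays faster than any polynomial as $v\downarrow 0$, establishing the flatness; and since the relevant Laplace poles are all real and strictly greater than $\y(s_+)=-\mu_1^2/2$, the Hardy--Littlewood Tauberian theorem yields the desired sub-exponential bound as $v\to+\infty$.

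These conditions legitimate the integration-by-parts argument of Section~\ref{sec:heuri}, giving
$$
\nu_1(v)\;\propto\;\boldsymbol{\mathrm{P}}^*\theta_\dagger^\star(e^{-v})\;=\;\sum_{n\in\mathbb Z} c_n\,P(y_n)\,e^{-v y_n},
$$
where $\boldsymbol{\mathrm{P}}^*=P(-\partial/\partial v)$ with $P$ the polynomial from Theorem~\ref{thm:DAr1}, and $(c_n,y_n)$ are read off from the series defining $\theta_\dagger^\star$. In the even case, this matches the claimed formula directly (up to the global sign $(-1)^{\gamma_2/2}$ absorbed in $\propto$). In the odd case, the residual $n^2$ prefactor is merged into the product by means of the identity
$$
n^2 = \tfrac{1}{2}\bigl(y_n-\y(s_+)\bigr),\qquad \y(s_+)=-\mu_1^2/2,
$$
combined with $s_+=s_2-(\gamma_2-1)/2$ (valid when $\gamma_2$ is odd), which naturally extends the product to the full range $k=0,\dots,(\gamma_2-1)/2$ required by the statement.

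The main obstacle will be making the Mittag--Leffler expansion of $\sqrt{2y+\mu_1^2}/\sin\!\bigl(\tfrac{\pi}{2}\sqrt{2y+\mu_1^2}\bigr)$ fully rigorous, since the naive series over $n\in\mathbb Z$ converges only conditionally (the general term does not tend to zero) and a generalized version with polynomial subtractions is required; an equivalent workaround is to reduce to the standard expansion $\pi/\sin(\pi z)=\sum_n (-1)^n/(z-n)$ multiplied by $u$ and to regroup $\pm n$ terms as in the proof of Proposition~\ref{prop:TLtheta}. Beyond this bookkeeping, the argument is a direct adaptation of the techniques of Section~\ref{subsec:theta}, and no fundamentally new difficulty should arise.
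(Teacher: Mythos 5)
Your proposal is correct and matches the proof the paper implicitly intends (the corollary is stated without proof, as a direct application of the Section~\ref{sec:density} machinery to Theorem~\ref{thm:DAr1}). The key points are all in place: the theta functions $\theta_\dagger^{\mathrm{odd}}$ and $\theta_\dagger^{\mathrm{even}}$ have the right support and coefficients; for the even case the Mittag--Leffler expansion is literally Lemma~\ref{lemma:mittag}'s first identity with $a=\pi/2$, and for the odd case your proposed workaround (expanding $1/(\sqrt z\sin\sqrt z)$ via Lemma~\ref{lemma:mittag} and carrying the extra factor $2y+\mu_1^2$ as the operator $\pm2\partial_v+\mu_1^2$) is precisely the device the paper already uses for $\theta_{\mathtt b}$ in the proof of Proposition~\ref{prop:TLtheta}; and the final bookkeeping, via $n^2=\tfrac12\bigl(y_n-\y(s_+)\bigr)$ together with $s_+=s_2-\tfrac{\gamma_2-1}{2}$ to fold the residual $n^2$ into the $k=\tfrac{\gamma_2-1}{2}$ factor, is exactly what reconciles $\boldsymbol{\mathrm{P}}^*\theta_\dagger^{\mathrm{odd}}$ with the displayed product.
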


\subsection{Second case: $r_2=-1$}

We now want to determine $\phi_1(y)$ when $r_2=-1$.
It would be possible to adapt all the proofs of the preceding section to this case to obtain the desired results. We shall instead derive them by exploiting the symmetries of the model {to first obtain $\phi_2(x)$} using the results of the case $r_1=-1$ discussed in the previous section and then using the functional equation linking $\varphi_1$ and $\varphi_2$ thanks to the equation~\eqref{eq:eqfunc} which states that $k_1(s)\varphi_1(s)+k_2(s)\varphi_2(s)=0$. To obtain $\varphi_2(s)$ by symmetry, we exchange the role of several quantities accordingly: $r_1$ with $r_2$ (as desired), $\x$ with $\y$, $s$ with $-s$ (when working through the uniformization), $\mu_1$ with $\mu_2$, $s_1$ with $-s_2$, $\eta$ with $\zeta$, and $\gamma_1$ with $\gamma_2$.

In this context, only $s_1$ and $\gamma_1$ can be defined, whereas $s_2$, $\gamma_2$, and $\gamma$ cannot {(in Equation~\eqref{eq:k2}, the denominator of $s_2$ becomes $0$ when $r_2=-1$)}. Lemma~\ref{lemma:gamma2pos} immediately translates into $\gamma_1>0$ whenever $r_2=-1$
\begin{theorem}[Laplace transform, $r_2=-1$ and $\gamma_1\in\mathbb N$] Suppose $r_2=-1$ and $\gamma_1\in\mathbb N$. Then there exists a polynomial $P\in \mathbb R[X]$ such that the Laplace transform $\phi_1$ satisfies 
$$\phi_1(y)\propto \frac{P(y)\sqrt{2y+\mu_1^2}}{\sin\Big(\frac{\pi}{2}\sqrt{2y+\mu_1^2}\Big)}$$
if $\gamma_1$ is even, and
$$\phi_1(y)\propto \frac{P(y)}{\cos\Big(\frac{\pi}{2}\sqrt{2y+\mu_1^2}\Big)}$$
if $\gamma_1$ is odd. Here, the polynomial $P$ is given by
\begin{equation}\label{eq:defPr1}
P(y):=\prod_{k=1}^{\left\lfloor\frac{\gamma_1-1}{2}\right\rfloor}\Big(y-\y(s_1+k)\Big).
\end{equation}
\end{theorem}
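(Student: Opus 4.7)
The plan is to implement the symmetry argument sketched at the start of this subsection: first compute $\phi_2(x)$ by applying Theorem~\ref{thm:DAr1} to the coordinate-swapped system, then recover $\phi_1(y)$ from the functional equation~\eqref{eq:eqfunc}. Consider the swapped process $\widetilde{\mathbf G}(t) := (G_2(t), G_1(t))$, which satisfies a Skorokhod decomposition of the same form but with the indices $1$ and $2$ permuted, so that $\widetilde r_1 = r_2 = -1$ and $\widetilde \gamma_2 = \gamma_1 \in \mathbb N$. Since $\widetilde \phi_1 = \phi_2$ and a direct computation in the swapped uniformization gives $\widetilde s_2 = -s_1$ and $\widetilde\y(-s_1-k) = \x(s_1+k)$, Theorem~\ref{thm:DAr1} produces
$$\phi_2(x)\ \propto\ \frac{\widetilde P(x)\sqrt{2x+\mu_2^2}}{\sin\bigl(\tfrac{\pi}{2}\sqrt{2x+\mu_2^2}\bigr)}\ \text{ if $\gamma_1$ is odd}, \qquad \phi_2(x)\ \propto\ \frac{\widetilde P(x)}{\cos\bigl(\tfrac{\pi}{2}\sqrt{2x+\mu_2^2}\bigr)}\ \text{ if $\gamma_1$ is even},$$
with $\widetilde P(x) = \prod_{k=0}^{\lfloor\gamma_1/2-1\rfloor}(x-\x(s_1+k))$.

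To transfer this information to $\varphi_1(s) = \phi_1(\y(s))$, I would specialize~\eqref{eq:eqfunc} to $r_2=-1$: direct computation of the kernel coefficients on the Riemann surface gives $k_1(s)=2(1+r_1)s(s-s_1)$ and $k_2(s)=\y(s)-\x(s)=-2s$, hence
$$\varphi_1(s)\ =\ -\frac{k_2(s)}{k_1(s)}\,\varphi_2(s)\ =\ \frac{\varphi_2(s)}{(1+r_1)(s-s_1)}.$$
Substituting $\varphi_2(s) = \phi_2(\x(s))$ and using the identities $2\x(s)+\mu_2^2 = (2s+\mu_2)^2$ and $2\y(s)+\mu_1^2 = (2s-\mu_1)^2$, together with the half-period relation $\sin\bigl(\tfrac{\pi}{2}(2s+\mu_2)\bigr) = \cos\bigl(\tfrac{\pi}{2}(2s-\mu_1)\bigr)$ (coming from $\mu_1+\mu_2=1$), the trigonometric denominator is transformed from the variable $x$ into the variable $y$, sine and cosine swap roles, and the odd/even labels between $\phi_2$ and $\phi_1$ get exchanged, as required by the statement.

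For the polynomial factor, I would expand $\widetilde P(\x(s)) = 2^{\lfloor\gamma_1/2\rfloor}\prod_{k}(s-s_1-k)(s+s_1+k+\mu_2)$ via $\x(s)-\x(s_1+k) = 2(s-s_1-k)(s+s_1+k+\mu_2)$, cancel the $(s-s_1)$ factor coming from $k=0$ against the denominator, and recombine the remaining linear factors in $s$ into factors of $y$ via the identity $\y(s)-\y(s_1+j) = 2(s-s_1-j)(s+s_1+j-\mu_1)$ applied with $j=k+1$. After this reshuffling, what survives is precisely $\prod_{k=1}^{\lfloor(\gamma_1-1)/2\rfloor}(y-\y(s_1+k)) = P(y)$, together with at most one leftover linear factor in $s$ that combines with the branch $\sqrt{2y+\mu_1^2} = 2s-\mu_1$ to recover the claimed $\sqrt{2y+\mu_1^2}$ in the numerator. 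The main obstacle is precisely this last bookkeeping: carefully tracking parities, handling the edge cases $\gamma_1\in\{1,2\}$ where $\widetilde P$ is trivial, and verifying $\eta$-invariance of every intermediate expression to guarantee that the result genuinely defines a function of $y$. Once this is carried out, the claimed formulas follow.
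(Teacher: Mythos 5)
Your proposal is correct and follows the same route as the paper's proof: pass to $\phi_2$ by the coordinate swap applied to Theorem~\ref{thm:DAr1}, transfer to $\varphi_1$ via the functional equation on the Riemann surface (with $k_2(s)=-2s$), and convert $\x$-expressions into $\y$-expressions using $\sqrt{2\x(s)+\mu_2^2}=2s+\mu_2$, the half-period shift $\sin\bigl(\tfrac{\pi}{2}(2s+\mu_2)\bigr)=\cos\bigl(\tfrac{\pi}{2}(2s-\mu_1)\bigr)$, and the factorizations $\x(s)-\x(a)=2(s-a)(s-\zeta a)$ and $\y(s)-\y(a)=2(s-a)(s-\eta a)$. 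The paper merges all the linear factors (including the one coming from $\sqrt{2\x(s)+\mu_2^2}$) into $\prod_{k=0}^{\gamma_1-1}(s-(s_1+k))$ and then pairs $k\leftrightarrow\gamma_1-k$ via $\eta$, whereas your diagonal pairing ($j=k+1$) is an equivalent bookkeeping, so the two proofs match in substance.
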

\begin{proof} Assume for the moment that $\gamma_1$ is an odd integer. By applying the substitutions described above to Theorem~\ref{thm:DAr1}, we obtain
\begin{equation}\label{eq:phi2x}
\phi_2(x)\propto \frac{H(x)\sqrt{2x+\mu_2^2}}{\sin\Big(\frac{\pi}{2}\sqrt{2x+\mu_2^2}\Big)}\text{ where }H(x):=\prod_{k=0}^{\frac{\gamma_1-1}{2}-1}\Big(x-\x(s_1+k)\Big).
\end{equation}
This expression can be evaluated at $x=\x(s)$, and then substituted into the functional equation~\eqref{eq:eqfunc},
which relates $\varphi_1$ and $\varphi_2$:
\begin{equation}\label{eq:eqfuncr2}
\varphi_1(s)=\frac{-k_2(s)\varphi_2(s)}{k_1(s)}=\frac{\varphi_2(s)}{(1+r_1)(s-s_1)}\propto\frac{H(\x(s))\sqrt{2\x(s)+\mu_2^2}}{\sin\Big(\frac{\pi}{2}\sqrt{2\x(s)+\mu_2^2}\Big)(s-s_1)}.
\end{equation}
To express $\varphi_1$ in terms of $\y(s)$ and thus recover $\phi_1(y)$, note that
$$\sqrt{2\x(s)+\mu_2^2}=2s+\mu_2=2s-\mu_1+1=\sqrt{2\y(s)+\mu_1^2}+1,$$
and hence
\begin{equation}\label{eq:sinsqrt}
\sin\left(\frac{\pi}{2}\sqrt{2\x(s)+\mu_2^2}\right)=\sin\left(\frac{\pi}{2}\sqrt{2\y(s)+\mu_1^2}+\frac{\pi}{2}\right)=\cos\left(\sqrt{2\y(s)+\mu_1^2}\right).
\end{equation}
Also,
\begin{equation}\label{eq:H(x(s))}
H(\x(s))\sqrt{2\x(s)+\mu_2^2}=\prod_{k=0}^{\gamma_1-1}\Big(s-(s_1+k)\Big)\propto (s-s_1)\prod_{k=1}^{\frac{\gamma_1-1}{2}}\Big(\y(s)-\y(s_1+k)\Big).
\end{equation}
and the second product coincides with $P(\y(s))$ defined in~\eqref{eq:defPr1}. Substituting~\eqref{eq:sinsqrt} and~\eqref{eq:H(x(s))} into~\eqref{eq:eqfuncr2} yields the claimed formula in the odd case. The proof for the even case is analogous and omitted.
\end{proof}
\begin{theorem}[Laplace transform, $r_2=-1$ and $\gamma_1\notin\mathbb N$] Suppose $r_2=-1$ and $\gamma_1\notin\mathbb N$. Then 
$$\varphi_1(s)\propto \frac{\Gamma(s-s_1)}{\Gamma(s+s_1+\mu_2)\sin(\pi(s+s_1+\mu_2))}.$$
\end{theorem}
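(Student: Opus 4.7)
The plan is to mirror the argument just used in the preceding theorem (the case $r_2=-1$, $\gamma_1\in\mathbb{N}$), replacing the input Theorem~\ref{thm:DAr1} by Theorem~\ref{thm:DTr1}. First, I would apply the symmetry of the model (swapping $r_1\leftrightarrow r_2$, $\mu_1\leftrightarrow \mu_2$, and hence $s_1\leftrightarrow -s_2$, $\gamma_1\leftrightarrow \gamma_2$, together with $s\mapsto -s$ in the uniformization) to Theorem~\ref{thm:DTr1}. Since this swap sends $\varphi_1$ in the transformed model to $\varphi_2(-\,\cdot\,)$ in the original one, the hypothesis $r_1=-1$, $\gamma_2\notin \mathbb{N}$ becomes $r_2=-1$, $\gamma_1\notin\mathbb{N}$, and the formula of Theorem~\ref{thm:DTr1} yields
$$
\varphi_2(-s)\propto \frac{\Gamma(s-s_1+\mu_1)}{\Gamma(s+s_1)\sin(\pi(s+s_1))}.
$$
Replacing $s$ by $-s$ gives
$$
\varphi_2(s)\propto \frac{\Gamma(-s-s_1+\mu_1)}{\Gamma(s_1-s)\sin(\pi(s_1-s))}.
$$

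Second, I would use the functional equation~\eqref{eq:eqfunc} to express $\varphi_1$ in terms of $\varphi_2$. When $r_2=-1$, the uniformized coefficients simplify to
$k_2(s)=\y(s)-\x(s)=-2s$ and $k_1(s)=2(1+r_1)s(s-s_1)$, so that
$$
\varphi_1(s)=-\frac{k_2(s)}{k_1(s)}\varphi_2(s)=\frac{\varphi_2(s)}{(1+r_1)(s-s_1)}.
$$

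Third, I would clean up the expression using two applications of Euler's reflection formula $\Gamma(z)\sin(\pi z)=\pi/\Gamma(1-z)$. Applied with $z=s_1-s$, it gives $\Gamma(s_1-s)\sin(\pi(s_1-s))=\pi/[(s-s_1)\Gamma(s-s_1)]$, so the factor $(s-s_1)$ in the denominator of $\varphi_1$ is absorbed and one obtains
$$
\varphi_1(s)\propto \Gamma(-s-s_1+\mu_1)\,\Gamma(s-s_1).
$$
Applied once more with $z=-s-s_1+\mu_1$, and using $\mu_1+\mu_2=1$ together with the identity $\sin(\pi(\mu_1-s-s_1))=\sin(\pi(s+s_1+\mu_2))$, one gets $\Gamma(-s-s_1+\mu_1)=\pi/[\Gamma(s+s_1+\mu_2)\sin(\pi(s+s_1+\mu_2))]$, yielding the claimed identity
$$
\varphi_1(s)\propto \frac{\Gamma(s-s_1)}{\Gamma(s+s_1+\mu_2)\sin(\pi(s+s_1+\mu_2))}.
$$
None of the steps is genuinely difficult, the symmetry argument being precisely that already illustrated in the preceding theorem; the only mild point of care is to justify the symmetry transformation at the level of the uniformization, namely the identifications $\x'(s)=\y(-s)$, $\y'(s)=\x(-s)$ and $\varphi_1'(s)=\varphi_2(-s)$ between the uniformized objects in the original and swapped models, before specialising Theorem~\ref{thm:DTr1} in the swapped coordinates.
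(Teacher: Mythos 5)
Your proof is correct and follows essentially the same route as the paper's: apply the $r_1\leftrightarrow r_2$ symmetry to Theorem~\ref{thm:DTr1} to get $\varphi_2$, pass to $\varphi_1$ via the functional equation~\eqref{eq:eqfunc} (noting $k_2(s)=-2s$ when $r_2=-1$), and simplify with Gamma-function identities. The only cosmetic difference is that the paper invokes $\Gamma(z+1)=z\Gamma(z)$ plus one application of Euler's reflection formula, whereas you apply the reflection formula twice — these are equivalent ways of organizing the same algebra.
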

\begin{proof} Applying the substitutions described above in Theorem~\ref{thm:DTr1}, we obtain
$$\varphi_2(s)\propto \frac{\Gamma(-s-s_1+\mu_1)}{\Gamma(-s+s_1)\sin(\pi(s-s_1))}.$$
From the functional equation~\eqref{eq:eqfunc} (which is unchanged by the substitutions), we deduce that
$$\varphi_1(s)=\frac{-k_2(s)\varphi_2(s)}{k_1(s)}\propto\frac{\varphi_2(s)}{s-s_1}
=\frac{\Gamma(-s-s_1+\mu_1)}{\Gamma(-s+s_1)\sin(\pi(s-s_1))(s-s_1)}.$$
The desired result then follows directly from the functional equation $\Gamma(z+1)=z\Gamma(z)$ together with Euler’s reflection formula $\Gamma(1-z)\Gamma(z)=\pi/\sin(\pi z)$.
\end{proof}
\begin{corollary}[Algebraic and differential nature, $r_2=-1$]\label{cor:classificationr2} Suppose $r_2=-1$. Then $\phi_1$ is differentially algebraic if and only if $\gamma_1\in\mathbb N$. Moreover, $\phi_1$ is never D-finite.
\end{corollary}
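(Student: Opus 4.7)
The plan is to reduce the statement to Corollary~\ref{cor:classificationr1} via the symmetry between the $r_1=-1$ and $r_2=-1$ cases, and then transfer the conclusion from $\phi_2$ to $\phi_1$ using the functional equation on the Riemann surface $\mathcal{S}$.

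The symmetry substitutions $r_1 \leftrightarrow r_2$, $\x \leftrightarrow \y$, $\mu_1 \leftrightarrow \mu_2$, $s_1 \leftrightarrow -s_2$, $\gamma_1 \leftrightarrow \gamma_2$, and $\phi_1 \leftrightarrow \phi_2$ (the same ones already used to derive the two preceding theorems for $r_2=-1$) convert the $r_2=-1$ setting into the $r_1=-1$ setting. Applying Corollary~\ref{cor:classificationr1} in the converted problem therefore yields: when $r_2=-1$, the Laplace transform $\phi_2$ is D-algebraic if and only if $\gamma_1 \in \mathbb N$, and is never D-finite.

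To transfer this to $\phi_1$, I would use Lemma~\ref{lem:abc}, which gives $k_1(s)\varphi_1(s) + k_2(s)\varphi_2(s) = 0$. When $r_2=-1$, the computation $k_2(s) = \y(s) - \x(s) = -2s$ (using $\mu_1+\mu_2=1$) together with $k_1(s) = 2(1+r_1)s(s-s_1) \ne 0$ yields
\[
\varphi_1(s) = \frac{1}{(1+r_1)(s-s_1)}\,\varphi_2(s).
\]
Since D-algebraic functions form a field stable under multiplication by rational functions, and D-finite functions form a $\mathbb C(s)$-module with the same stability, $\varphi_1$ and $\varphi_2$ occupy exactly the same position in the differential hierarchy. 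Moreover, the algebraic substitutions $y=\y(s)$ and $x=\x(s)$ preserve both D-algebraicity and D-finiteness, so $\phi_i$ and $\varphi_i$ share their differential nature for $i=1,2$. Combining these observations proves the corollary.

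The main routine check will be to verify that $\gamma_1$ on one side of the symmetry indeed corresponds to $\gamma_2$ on the other, which is immediate from $\gamma_1 = \mu_1 - 2s_1$, $\gamma_2 = \mu_2 + 2s_2$ and the substitutions $\mu_1 \leftrightarrow \mu_2$, $s_1 \leftrightarrow -s_2$, and that no degeneracy of $k_1$ occurs (which is ensured by our standing hypothesis $r_1 \ne -1$ and by Lemma~\ref{lemma:s1s2}). No genuine obstacle is expected here, since everything reduces to the already established case $r_1=-1$.
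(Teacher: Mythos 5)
Your proof is correct. Since the paper does not write out a proof of Corollary~\ref{cor:classificationr2} at all (the whole $r_2=-1$ subsection is announced as ``derived by exploiting the symmetries of the model''), the comparison is really between your explicit transfer argument and the implicit argument the paper intends, which would mirror the proof of Corollary~\ref{cor:classificationr1} applied to the two explicit $\phi_1$-formulas of the $r_2=-1$ theorems.

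Your route differs from that implicit one in a way that actually buys something. The proof of Corollary~\ref{cor:classificationr1} has two halves: the D-algebraic/not-D-finite half (read off the explicit formula and invoke Lemma~\ref{lem:wDADF}), and the D-transcendental half (which needs the difference equation of Theorem~\ref{thm:prolongementr1} plus the rational-decoupling criterion of Proposition~\ref{prop:rationalr1} and the Galois black box Proposition~\ref{cor3.4}). The $r_2=-1$ subsection never re-derives a difference equation or a decoupling criterion for $\varphi_1$, so running the ``direct'' proof of Corollary~\ref{cor:classificationr1} there would require extracting a difference equation from the $\Gamma$-formula of the $\gamma_1\notin\mathbb N$ theorem. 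Your transfer argument sidesteps this entirely: you get the full classification of $\phi_2$ for free from Corollary~\ref{cor:classificationr1} and the index swap, and then the single relation $\varphi_1(s)=\varphi_2(s)/\bigl((1+r_1)(s-s_1)\bigr)$ moves the classification across, since each class in the hierarchy is stable under multiplication by a nonzero rational function (for D-finite: a $\mathbb C(s)$-module closed under products; for D-algebraic: a field). The passage between $\phi_i$ and $\varphi_i$ via the polynomial $\y$ (resp.\ $\x$) and its two-valued algebraic inverse $s=\tfrac12(\mu_1\pm\sqrt{2y+\mu_1^2})$ is exactly the algebraic-substitution stability the paper already invokes in the proofs of Proposition~\ref{prop:CNDA} and Corollary~\ref{cor:classificationr1}. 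So everything you use is already established in the paper, and the argument is complete. The only detail worth stating a little more carefully is the direction ``$\varphi_1$ D-algebraic $\Rightarrow\phi_1$ D-algebraic'': this needs the D-algebraic class to be closed under composition with an algebraic function on the inside (the paper cites this stability), not merely under substituting a polynomial — but that is precisely what the paper already relies on, so you are fine.
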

In Table~\ref{tab:comparaison} (Appendix~\ref{sec:C}), the classifications given in Corollaries~\ref{cor:classificationr1} and~\ref{cor:classificationr2} are summarized and compared with the one presented in Section~\ref{sec:hierarchy} (in the case where neither $r_1=-1$ nor $r_2=-1$), as well as with the classification established by Bousquet-Mélou et al.~\cite{BoMe-El-Fr-Ha-Ra} in the nondegenerate case.\\

To conclude this appendix, we invert the Laplace transform $\phi_1$ when $\gamma_1\in\mathbb N$ (\textit{i.e.}, when $\phi_1$ is differentially algebraic).

\begin{corollary}[Density when D-algebraic, $r_2=-1$] Suppose $r_2=-1$ and $\gamma_1\in\mathbb N$. Then 
$$\nu_1(v)\propto \sum_{n\in\mathbb Z}(-1)^n\left[\prod_{k=1}^{\frac{\gamma_1}{2}}\left(\frac{\mu_1^2-4n^2}{2}+\y(s_1+k)\right)\right]\exp\left(-v\frac{4n^2-\mu_1^2}{2}\right)$$
if $\gamma_1$ is even, and
$$\nu_1(v)\propto \sum_{n\in\mathbb Z}(4n+1)\left[\prod_{k=1}^{\frac{\gamma_1-1}{2}}\left(\frac{\mu_1^2-(4n+1)^2}{2}+\y(s_1+k)\right)\right]\exp\left(-v\frac{(4n+1)^2-\mu_1^2}{2}\right)$$
if $\gamma_1$ is odd.
\end{corollary}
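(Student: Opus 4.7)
The plan is to mirror the density-recovery strategy of Section~\ref{sec:density}, adapting it to the trigonometric factors $\sqrt{2y+\mu_1^2}/\sin(\frac{\pi}{2}\sqrt{2y+\mu_1^2})$ and $1/\cos(\frac{\pi}{2}\sqrt{2y+\mu_1^2})$ appearing in $\phi_1$ under the hypothesis $r_2=-1$. Starting from the explicit formulas just established, I will first derive Mittag-Leffler expansions for the trigonometric factors; then introduce theta-like functions whose Laplace transforms realize these expansions; verify the flatness-and-decay hypotheses of the inversion heuristic of Section~\ref{sec:heuri}; and finally apply the adjoint differential operator $\boldsymbol{\mathrm{P}}^{*}=P(-\partial/\partial v)$ to recover $\nu_1$.

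Concretely, substituting $z=\frac{\pi^2}{4}(2y+\mu_1^2)$ in the two identities of Lemma~\ref{lemma:mittag} (taking $a=\pi/2$ in the first) yields
$$\frac{\sqrt{2y+\mu_1^2}}{\sin\!\big(\tfrac{\pi}{2}\sqrt{2y+\mu_1^2}\big)}\propto(2y+\mu_1^2)\sum_{n\in\mathbb Z}\frac{(-1)^n}{y-(4n^2-\mu_1^2)/2},\quad \frac{1}{\cos\!\big(\tfrac{\pi}{2}\sqrt{2y+\mu_1^2}\big)}\propto\sum_{n\in\mathbb Z}\frac{4n+1}{y-((4n+1)^2-\mu_1^2)/2}.$$
The key combinatorial observation for the even case is that $s_1+\gamma_1/2=s_+$ (since $\gamma_1=\mu_1-2s_1$), whence $\y(s_+)=-\mu_1^2/2$ gives $2y+\mu_1^2=2(y-\y(s_1+\gamma_1/2))$. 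This allows the extra linear factor in $y$ coming from the $\sqrt{\,\cdot\,}$ to merge with $P(y)$, producing the augmented polynomial $\widetilde P(y):=\prod_{k=1}^{\gamma_1/2}(y-\y(s_1+k))$, which matches exactly the $\gamma_1/2$ factors appearing in the statement. In the odd case, the polynomial $P$ is unchanged and the coefficient $(4n+1)$ coming directly from the Mittag-Leffler expansion will play the role that $n+\gamma_1/2$ played for $\theta_{\mathtt a}$ in Section~\ref{subsec:theta}.

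Next I would introduce the theta-like functions
$$h_{\mathrm e}(v):=\sum_{n\in\mathbb Z}(-1)^n\exp\!\Big(-v\tfrac{4n^2-\mu_1^2}{2}\Big),\qquad h_{\mathrm o}(v):=\sum_{n\in\mathbb Z}(4n+1)\exp\!\Big(-v\tfrac{(4n+1)^2-\mu_1^2}{2}\Big),$$
whose Laplace transforms coincide (up to a multiplicative constant) with the two Mittag-Leffler series above. Flatness of $h_{\mathrm e}$ and $h_{\mathrm o}$ at $v=0$ follows by Poisson summation applied verbatim as in Proposition~\ref{prop:jacobi}: each function admits a dual representation of the form $v^{-\alpha}\sum_n e^{-c_n/v}$ with $c_n>0$, which is $C^\infty$-flat at the origin. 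The exponential decay of all derivatives, necessary to justify the repeated integrations by parts of Section~\ref{sec:heuri}, is obtained from the Hardy-Littlewood Tauberian argument of Corollary~\ref{cor:growth}, since the poles of the associated Laplace transforms all lie above $-\mu_1^2/2$.

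With both hypotheses in hand, the heuristic of Section~\ref{sec:heuri} applies rigorously: for $\gamma_1$ even, $\nu_1(v)\propto\widetilde{\boldsymbol{\mathrm{P}}}^{*}h_{\mathrm e}(v)$ with $\widetilde{\boldsymbol{\mathrm{P}}}^{*}:=\widetilde P(-\partial/\partial v)$, and for $\gamma_1$ odd, $\nu_1(v)\propto\boldsymbol{\mathrm{P}}^{*}h_{\mathrm o}(v)$. Pulling these operators through the sums produces the polynomial evaluations $\widetilde P((4n^2-\mu_1^2)/2)$ and $P(((4n+1)^2-\mu_1^2)/2)$ weighted by the exponential rates, and rewriting each factor $\frac{\star^{2}-\mu_1^2}{2}-\y(s_1+k)=-\big(\frac{\mu_1^2-\star^{2}}{2}+\y(s_1+k)\big)$ yields a global sign $(-1)^{\gamma_1/2}$ or $(-1)^{(\gamma_1-1)/2}$ that is absorbed in the proportionality constant, reproducing the two advertised formulas. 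The subtle step I expect to require the most care is ensuring that the principal-value interpretation of the Mittag-Leffler expansion commutes both with the termwise inverse Laplace transform and, in the even case, with the multiplication by $(2y+\mu_1^2)$; this will be handled via the flatness and decay bounds for $h_{\mathrm e}$, along the lines of the proof of Proposition~\ref{prop:TLtheta}.
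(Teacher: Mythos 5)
The paper states this corollary without proof, leaving it as a direct application of the Section~\ref{sec:density} machinery to the $r_2=-1$ Laplace transforms; your proposal correctly carries out that program. In particular, the Mittag-Leffler substitutions, the observation $s_1+\gamma_1/2=s_+$ merging the stray $(2y+\mu_1^2)$ factor into the polynomial in the even case, the flatness/decay checks via Poisson summation and the Tauberian argument, and the sign-bookkeeping $(-1)^{\lfloor\gamma_1/2\rfloor}$ absorbed into $\propto$ all check out, so your reconstruction matches the intended argument.
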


\section{Homogeneity relations}\label{sec:homogeneity}

In this appendix we explain how we can extend our results to a general degenerate covariance and drift with simple change of variables. All the quantities should depend on the list of parameters of the model
\begin{equation}
    \Omega := (\mu_1,\mu_2,\sigma_1,\sigma_2,r_1,r_2).
\end{equation}
satisfying \eqref{eq:H1},~\eqref{eq:H2} and~\eqref{eq:H3}. We note $\phi(x,y;\Omega)$, $\phi_1(y;\Omega)$ and $\phi_2(x;\Omega)$ the associate Laplace transforms.
We will also note
\begin{equation}
    \mathrm{q}:=\frac{\mu_1}{\sigma_1}+\frac{\mu_2}{\sigma_2}
\end{equation}
We introduce an alternative list of parameters 
\begin{equation}
    \widetilde{\Omega}=(\widetilde\mu_1,\widetilde\mu_2,\widetilde\sigma_1,\widetilde\sigma_2,\widetilde r_1, \widetilde r_2):=\left(\frac{\mu_1}{\sigma_1\mathrm{q}},\frac{\mu_2}{\sigma_2\mathrm{q}},1,1,\frac{r_1\sigma_1}{\sigma_2},\frac{r_2\sigma_2}{\sigma_1}\right)
\end{equation}
which therefore satisfy the analogue of~\eqref{eq:H4}, \textit{i.e.}, $\widetilde\mu_1+\widetilde\mu_2=1$ and $\widetilde\sigma_1=\widetilde\sigma_2=1$ and also the analogue of~\eqref{eq:H1},~\eqref{eq:H2} and~\eqref{eq:H3}.
The main theorems of this article determine the Laplace transforms $\phi(x,y;\widetilde\Omega)$, $\phi_1(y;\widetilde\Omega)$ and $\phi_2(x;\widetilde\Omega)$.

\begin{proposition}[Changing variables] The Laplace transforms satisfy the following homogeneity relations
\begin{equation}
\label{eq:chgtvarphi}
    \phi(x,y;\Omega)=\phi\left(\displaystyle\frac{\sigma_1 x}{\mathrm{q}},\displaystyle\frac{\sigma_2 y}{\mathrm{q}};\widetilde{\Omega}\right),
    \quad \phi_1(y;\Omega)=\mathrm{q}\sigma_1\phi_1\left(\displaystyle\frac{\sigma_2 y}{\mathrm{q}};\widetilde{\Omega}\right),
    \quad\phi_2(x;\Omega)=\mathrm{q}\sigma_2\phi_2\left(\displaystyle\frac{\sigma_1 x}{\mathrm{q}};\widetilde{\Omega}\right).
\end{equation}
We deduce that, up to a (Lebesgue) null set,
\begin{equation}
\label{eq:chgtvarp} 
\pi(u,v ; \Omega)=\displaystyle\frac{\mathrm{q}^2}{\sigma_1\sigma_2}\pi\left(\displaystyle\frac{\mathrm{q} u}{\sigma_1},\displaystyle\frac{\mathrm{q} v}{\sigma_2};\widetilde{\Omega}\right),
    \quad \nu_1( v;\Omega)=\displaystyle\frac{\mathrm{q}^2\sigma_1}{\sigma_2}\nu_1\left(\displaystyle\frac{\mathrm{q} v}{\sigma_2};\widetilde{\Omega}\right),
    \quad \nu_2(u;\Omega)=\displaystyle\frac{\mathrm{q}^2\sigma_2}{\sigma_1}\nu_2\left(\displaystyle\frac{\mathrm{q} u}{\sigma_1};\widetilde{\Omega}\right).
\end{equation}
\end{proposition}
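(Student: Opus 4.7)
My plan is to prove the relations \eqref{eq:chgtvarphi} by performing a linear change of variables in the functional equation \eqref{eq:FE} and then invoking the uniqueness of the Laplace transform of the invariant probability measure. The relations \eqref{eq:chgtvarp} will follow by a standard change of variable in the integral defining each Laplace transform, together with the injectivity of the Laplace transform on positive measures. Crucially, the normalization $\widetilde\mu_1+\widetilde\mu_2 = (\mu_1/\sigma_1+\mu_2/\sigma_2)/\mathrm{q}=1$ and $\widetilde\sigma_1=\widetilde\sigma_2=1$ is built into the definition of $\widetilde\Omega$, so the tilde objects satisfy \eqref{eq:H4}.

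The first and main computational step is to verify that, under the substitution $x=\mathrm{q}X/\sigma_1$ and $y=\mathrm{q}Y/\sigma_2$, the polynomials $K$, $k_1$, $k_2$ defined in \eqref{eq:noyau}--\eqref{eq:k12} transform multiplicatively into the analogous polynomials $\widetilde K$, $\widetilde k_1$, $\widetilde k_2$ associated with $\widetilde\Omega$. Indeed, since $\sigma_1 x-\sigma_2 y = \mathrm{q}(X-Y)$, a direct expansion gives
$$
K(x,y)=\mathrm{q}^2\widetilde K(X,Y),\quad
k_1(x,y)=\frac{\mathrm{q}}{\sigma_1}\widetilde k_1(X,Y),\quad
k_2(x,y)=\frac{\mathrm{q}}{\sigma_2}\widetilde k_2(X,Y),
$$
using $\widetilde\mu_i=\mu_i/(\sigma_i\mathrm{q})$, $\widetilde r_1=r_1\sigma_1/\sigma_2$ and $\widetilde r_2=r_2\sigma_2/\sigma_1$. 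Plugging these identities into \eqref{eq:FE} evaluated at $(x,y)=(\mathrm{q}X/\sigma_1,\mathrm{q}Y/\sigma_2)$ and dividing by $\mathrm{q}^2$ shows that the three candidate functions
$$
\widetilde\phi(X,Y):=\phi\!\left(\frac{\mathrm{q}X}{\sigma_1},\frac{\mathrm{q}Y}{\sigma_2};\Omega\right),\quad
\widetilde\phi_1(Y):=\frac{1}{\mathrm{q}\sigma_1}\phi_1\!\left(\frac{\mathrm{q}Y}{\sigma_2};\Omega\right),\quad
\widetilde\phi_2(X):=\frac{1}{\mathrm{q}\sigma_2}\phi_2\!\left(\frac{\mathrm{q}X}{\sigma_1};\Omega\right)
$$
satisfy the functional equation \eqref{eq:FE} with the parameter list $\widetilde\Omega$.

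The second step is to recognize that $\widetilde\phi,\widetilde\phi_1,\widetilde\phi_2$ are Laplace transforms of positive measures (this is clear from their definition, since $\phi,\phi_1,\phi_2$ are) with the correct normalization. A short check using \eqref{eq:normcst} and $1-\widetilde r_1\widetilde r_2=1-r_1r_2$ shows that $\widetilde\phi(0,0)=1$ and that $\widetilde\phi_1(0),\widetilde\phi_2(0)$ match the expected boundary values expressed in terms of $\widetilde\mu_i$ and $\widetilde r_i$. Since the invariant probability measure is unique under \eqref{eq:H2}, its Laplace transform, together with the two boundary Laplace transforms, is uniquely determined; hence $\widetilde\phi(X,Y)=\phi(X,Y;\widetilde\Omega)$, $\widetilde\phi_1(Y)=\phi_1(Y;\widetilde\Omega)$ and $\widetilde\phi_2(X)=\phi_2(X;\widetilde\Omega)$, which is precisely \eqref{eq:chgtvarphi}.

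The third step is to deduce \eqref{eq:chgtvarp} from \eqref{eq:chgtvarphi} by changing variables in the defining integrals: for instance, setting $\widetilde v=\mathrm{q}v/\sigma_2$ in $\phi_1(y;\Omega)=\int_0^\infty e^{yv}\nu_1(v;\Omega)\mathrm{d}v$ and comparing with the integral representation of $\mathrm{q}\sigma_1\phi_1(\sigma_2 y/\mathrm{q};\widetilde\Omega)$ yields $(\sigma_2/\mathrm{q})\nu_1(\sigma_2\widetilde v/\mathrm{q};\Omega)\mathrm{d}\widetilde v=\mathrm{q}\sigma_1\nu_1(\widetilde v;\widetilde\Omega)\mathrm{d}\widetilde v$, and injectivity of the Laplace transform on a.e.-defined positive densities gives the claimed pointwise identity (up to a null set). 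The cases of $\nu_2$ and $\pi$ are handled identically, the latter requiring a two-dimensional change of variable with Jacobian $\mathrm{q}^2/(\sigma_1\sigma_2)$. The step I expect to be the most delicate is the uniqueness invoked in the second paragraph: strictly speaking it relies on the fact that the triple $(\phi,\phi_1,\phi_2)$ is uniquely characterized, among Laplace transforms of the positive measures arising from the stationary regime, by the functional equation together with the normalizations \eqref{eq:normcst} --- a fact that is either established as part of the main results of the paper or follows directly from the uniqueness of the invariant measure under \eqref{eq:H2}.
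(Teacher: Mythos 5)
Your proposal is correct and follows essentially the same route as the paper's own proof: you transform $K,k_1,k_2$ multiplicatively under the substitution, plug into the functional equation, divide by $\mathrm{q}^2$, and appeal to uniqueness of the solution triplet, then change variables in the integrals to recover the densities. The only difference is that you spell out the normalization check via~\eqref{eq:normcst} (using $\widetilde r_1\widetilde r_2=r_1r_2$), whereas the paper invokes uniqueness more compactly, but this is a minor stylistic variation rather than a different argument.
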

\begin{proof} 
Let us start with the functional equation~\eqref{eq:FE}:
\begin{equation}
\label{eq:FEOm}
-K\left(x,y;\Omega\right)\phi(x,y;\Omega)
=k_1\left(x,y;\Omega\right)\phi_1(y;\Omega)+k_2\left(x,y;\Omega\right)\phi_2(x;\Omega)
\end{equation}
A direct computation show that the polynomial coefficients of the functional equation obey the following change of parameters rules 
    \begin{equation*}
        K\left(\frac{\mathrm{q}x}{\sigma_1},\frac{\mathrm{q}y}{\sigma_2};\Omega\right)=\mathrm{q}^2K\left(x,y;\widetilde{\Omega}\right), 
    \end{equation*}
    \begin{equation}
  k_1\left(\frac{\mathrm{q}x}{\sigma_1},\frac{\mathrm{q}y}{\sigma_2};\Omega\right)=\frac{\mathrm{q}}{\sigma_1}k_1\left(x,y;\widetilde{\Omega}\right),    \quad  k_2\left(\frac{\mathrm{q}x}{\sigma_1},\frac{\mathrm{q}y}{\sigma_2};\Omega\right)=\frac{\mathrm{q}}{\sigma_2}k_2\left(x,y;\widetilde{\Omega}\right).
    \end{equation}
Then, evaluating the functional equation~\eqref{eq:FEOm} at $(\frac{\mathrm{q}x}{\sigma_1},\frac{\mathrm{q}y}{\sigma_2})$ 
and dividing by $\mathrm{q}^2$ both sides of the equation we obtain 
$$
K\left(x,y;\widetilde{\Omega}\right)
\underbrace{\phi\left(\frac{\mathrm{q}x}{\sigma_1},\frac{\mathrm{q}y}{\sigma_2};\Omega\right)}_{\displaystyle=\phi(x,y;\widetilde{\Omega})}
=k_1\left(x,y;\widetilde{\Omega}\right)\underbrace{\frac{1}{\mathrm{q}\sigma_1}\phi_1\left(\frac{\mathrm{q}y}{\sigma_2};\Omega\right)}_{\displaystyle=\phi_1(y;\widetilde{\Omega})}+k_2\left(x,y;\widetilde{\Omega}\right)\underbrace{\frac{1}{\mathrm{q}\sigma_2}\phi_2\left(\frac{\mathrm{q}x}{\sigma_1};\Omega\right)}_{\displaystyle=\phi_2(x;\widetilde{\Omega})}.
$$
The equalities under the braces follows from the following. Since $\widetilde\Omega$ satisfy the analogue of~\eqref{eq:H1},~\eqref{eq:H2},~\eqref{eq:H3} and~\eqref{eq:H4}, we showed in this article that the following functional equation
$$
K\left(x,y;\widetilde{\Omega}\right)
\phi(x,y;\widetilde{\Omega})
=k_1\left(x,y;\widetilde{\Omega}\right)\phi_1(y;\widetilde{\Omega})+k_2\left(x,y;\widetilde{\Omega}\right)\phi_2(x;\widetilde{\Omega})
$$
has a unique solution composed by the triplet $(\phi(x,y;\widetilde{\Omega}),\phi_1(y;\widetilde{\Omega}),\phi_2(x;\widetilde{\Omega}))$ under the assumption that $\phi$ is the Laplace transform of a probability measure (positive measure of mass 1) and $\phi_1$ and $\phi_2$ are Laplace transforms of positive measures. This directly implies Equation~\eqref{eq:chgtvarphi} and inverting the Laplace transforms and using time scaling we obtain Equation~\eqref{eq:chgtvarp}.
\end{proof}

\section{Links with the nondegenerate case}\label{sec:C}

In the nondegenerate case, Bousquet-Mélou et al.~\cite{BoMe-El-Fr-Ha-Ra} proved that the differential nature of the invariant measure only depends on three quantities $\alpha$, $\alpha_1$ and $\alpha_2$, defined as follows: consider a (nondegenerate) reflected brownian motion in $\mathbb R_+^2$, with nonsingular covariance matrix $\Sigma$. This model is equivalent (through a linear automorphism) to a reflected brownian motion a wedge with opening angle $\beta\in (0,\pi)$, identity covariance matrix, reflection angles $\delta$ and $\varepsilon$, and drift angle $\theta$ (see Figure~\ref{fig:wedge}). Then 
\begin{equation}\label{eq:defalphas}
\alpha:=\frac{\delta+\varepsilon-\pi}{\beta},\quad \alpha_1:=\frac{2\varepsilon+\theta-\beta-\pi}{\beta},\quad \alpha_2:=\frac{2\delta-\theta-\pi}{\beta}.
\end{equation}

\begin{figure}
\begin{center}
\includegraphics[width=12cm]{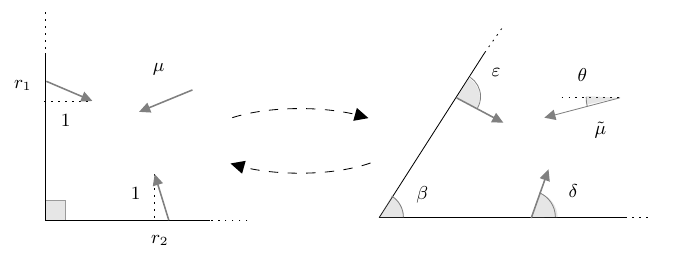}
\end{center}
\caption{from the quadrant to the wedge: definition for the angles $\beta$, $\delta$, $\varepsilon$ and $\theta$.}
\label{fig:wedge}
\end{figure}

\noindent These key parameters satisfy the relation $\alpha_1+\alpha_2=2\alpha-1$ which is reminiscent of~\eqref{eq4}. The opening angle satisfies
$$\beta=\arccos\left(-\frac{\sigma_{12}}{\sqrt{\sigma_1\sigma_2}}\right),\text{ where }\Sigma=\left(\begin{array}{cc}
\sigma_1 & \sigma_{12}\\
\sigma_{12} & \sigma_2
\end{array}\right),$$
so that $\Sigma\to \mathcal{A}$ (the degenerate covariance matrix defined in the introduction) if and only if $\sigma_{12}\to -\sqrt{\sigma_1\sigma_2}$ if and only if $\beta\to 0$: the degenerate model can be interpreted as the limit as $\beta \to 0$ of the nondegenerate model. One can show the following relations between the quadrant and the wedge parameters:\\[-0.2cm]
$$
\tan(\delta)=\frac{\sin(\beta)}{r_2+\cos(\beta)},\quad \tan(\varepsilon)=\frac{\sin(\beta)}{r_1+\cos(\beta)},\quad \tan(\theta)=\frac{\mu_2\sin(\beta)}{\mu_1+\mu_2\cos(\beta)},
$$
hence,
$$\lim_{\beta\to 0}\left(\frac{\delta}{\beta}\right)=\frac{1}{1+r_2},\quad \lim_{\beta\to 0}\left(\frac{\varepsilon}{\beta}\right)=\frac{1}{1+r_1},\quad \lim_{\beta\to 0}\left(\frac{\theta}{\beta}\right)=\frac{\mu_2}{\mu_1+\mu_2}=\mu_2.$$
As a direct consequence,
$$\lim_{\beta\to 0}\left(\alpha+\frac{\pi}{\beta}\right)= \gamma+1,\quad \lim_{\beta\to 0}\left(\alpha_1+\frac{\pi}{\beta}\right)=\gamma_1,\quad \lim_{\beta\to 0}\left(\alpha_2+\frac{\pi}{\beta}\right)= \gamma_2.$$
One can, \textit{in some sense} take the limit as $\beta$ goes to $0$ in the nondegenerate classification given in~\cite{BoMe-El-Fr-Ha-Ra} to recover the classification obtained in the present article. Here are two key observations:
\begin{itemize}
\item In the nondegenerate case, Bousquet-Mélou et al. needed to distinguish whether the opening angle was a rational multiple of $\pi$ or not. In the limit, this distinction is no longer necessary.
\item The nature of the Laplace transforms appears to be \textit{singularly perturbed}: although \textit{strictly algebraic} (algebraic but not rational) or \textit{strictly D-finite}  (D-finite but not algebraic) Laplace transforms appear in the nondegenerate case, the degenerate case does not admit any. It is worth noting that the classes of rational, algebraic, D-finite, and D-algebraic functions are not closed under uniform convergence on compact subsets (for example, Taylor approximations can be used to construct counterexamples).
\end{itemize}
Table~\ref{tab:comparaison} compares the classification in the nondegenerate case from~\cite{BoMe-El-Fr-Ha-Ra} with the one established in the present article. The second line corresponds to the Corollaries~\ref{cor:classificationr1} and~\ref{cor:classificationr2}, and the indices $i$ and $j$ are such that $\{i,j\}=\{1,2\}$.
\vfill
\begin{table}[!htbp]
\centering
\begin{tabular}{
!{\vrule width 1pt}c!{\vrule width 1pt}c|c|c|c!{\vrule width 1pt}}
\noalign{\hrule height 1pt}
\begin{tabular}{@{}c@{}}\textbf{nature of} $\boldsymbol{\phi_1}$\\  $\boldsymbol{\phi_2}$ \textbf{and} $\boldsymbol{\phi}$ \end{tabular} & rational & algebraic & D-finite & D-algebraic  \\ 
\noalign{\hrule height 1pt} 
\begin{tabular}{@{}c@{}} \textbf{degenerate} \\ and $r_1,r_2\ne -1$  \end{tabular} & \multicolumn{3}{c|}{$\gamma\in-\mathbb{N}$} & \begin{tabular}{@{}c@{}}$\gamma\in\mathbb{Z}$ or \\  $\{ \gamma_1,\gamma_2\}\subset \mathbb{Z}$\end{tabular}     \\  \hline
\begin{tabular}{@{}c@{}} \textbf{degenerate} \\ and $r_i=-1$  \end{tabular} & \multicolumn{3}{c|}{never} & $\gamma_j\in\mathbb N$  \\  \noalign{\hrule height 1pt}
\begin{tabular}{@{}c@{}} \textbf{nondegenerate} \\ and $\beta\notin\pi\mathbb Q$  \end{tabular} & $\alpha\in-\mathbb N$ & \begin{tabular}{@{}c@{}} $\alpha\in-\mathbb N$ or \\ $\{\alpha_1,\alpha_2\}\subset \mathbb Z$ \end{tabular} & \begin{tabular}{@{}c@{}} $\alpha\in-\mathbb N+\frac{\pi}{\beta}\mathbb Z$ or \\ $\{\alpha_1,\alpha_2\}\subset \mathbb Z\cup\left(-\mathbb N+\frac{\pi}{\beta}\mathbb Z\right)$ \end{tabular} & \begin{tabular}{@{}c@{}}$\alpha\in\mathbb{Z}+\frac{\pi}{\beta}\mathbb Z$ or \\  $\{ \gamma_1,\gamma_2\}\subset \mathbb{Z}+\frac{\pi}{\beta}\mathbb Z$\end{tabular}     \\ \hline
\begin{tabular}{@{}c@{}} \textbf{nondegenerate} \\ and $\beta\in\pi\mathbb Q$  \end{tabular} & $\alpha\in-\mathbb N$ & \multicolumn{2}{c|}{\begin{tabular}{@{}c@{}}$\alpha\in\mathbb{Z}+\frac{\pi}{\beta}\mathbb Z$ or \\  $\{ \gamma_1,\gamma_2\}\subset \mathbb{Z}+\frac{\pi}{\beta}\mathbb Z$\end{tabular}} & always     \\ 
\noalign{\hrule height 1pt}
\end{tabular} 
\caption{Comparison with the classification in the nondegenerate case established in~\cite{BoMe-El-Fr-Ha-Ra}.}
\label{tab:comparaison}
\end{table}
\newpage
\section{Index of notations}\label{sec:notations}
In this appendix, we list the notations most frequently used in this article, along with a short description and a direct reference to where each first appears.\\
{
\renewcommand{\arraystretch}{1.4}
\setlength{\extrarowheight}{3pt}
\begin{tabular}{m{0.09\linewidth}|p{0.32\linewidth}|m{0.40\linewidth}|m{0.09\linewidth}}
\textbf{Symbol} & \textbf{Description} & \textbf{Formula} & \textbf{Ref.} \\
\hline
$\mathcal{A}$ &Covariance matrix of the reflected Brownion motion (degenerate) &\raisebox{-0.6\height}{\renewcommand{\arraystretch}{1}
\setlength{\extrarowheight}{0pt}$\mathcal{A}:=\left(\begin{array}{cc}
\sigma_1 & -\sqrt{\sigma_1\sigma_2}\\
-\sqrt{\sigma_1\sigma_2} & \sigma_2
\end{array}\right)$} &Eq.\eqref{eq:defparam}\\ \hline
$\mathfrak{B}_1$ &Fundamental domain of the canonical Type I invariant $\wb$ &\raisebox{-0.4\height}{$\mathfrak{B}_1:=\{s\in\mathbb C : -\mu_2<\mathfrak{Re}(s)<\mu_1\}$} &Eq.\eqref{eq3}\\ \hline
$\mathfrak{B}_2$ &Fundamental domain of the canonical Type II invariant $\w$& \raisebox{-0.4\height}{$\mathfrak{B}_2:=\{s\in\mathbb C : -\mu_2<2\mathfrak{Re}(s)<\mu_1\}$}& Eq.\eqref{eq3}\\ \hline
$\delta$ &Intersection of $\Delta_{\mathrm{x}}$ with $\Delta_{\mathrm{y}}$& $\delta:=\Delta_{\mathrm{x}}\cap\Delta_{\mathrm{y}}$&Eq.\eqref{eq:Delta}\\ \hline
$\Delta$ &Uninon of $\Delta_{\mathrm{x}}$ with $\Delta_{\mathrm{y}}$& $\Delta:=\Delta_{\mathrm{x}}\cup\Delta_{\mathrm{y}}$ &Eq.\eqref{eq:Delta}\\ \hline
$\widetilde{\Delta}$&Uninon of $\Delta$, $\eta\Delta$ and $\zeta \Delta$& $\widetilde{\Delta}:=\Delta\cup \eta \Delta\cup \zeta \Delta$ & Eq.\eqref{eq:Delta}\\ \hline
$\Delta_{\mathrm{x}}$&Initial domain for $\varphi_2$& $\Delta_{\mathrm{x}}:=\{s\in\mathbb C: \mathfrak{Re}(\x(s))<0\}$ & Eq.\eqref{eq:domains}\\ \hline
$\Delta_{\mathrm{y}}$&Initial domain for $\varphi_1$&$\Delta_{\mathrm{y}}:=\{s\in\mathbb C: \mathfrak{Re}(\y(s))<0\}$& Eq.\eqref{eq:domains}\\ \hline
$D$&Decoupling function for $G$&\raisebox{-0\height}{$\displaystyle D(s):=\frac{\Gamma(s-s_1)\Gamma(s+s_2+\mu_2)}{\Gamma(s-s_2)\Gamma(s+s_1+\mu_2)}$} & Eq.\eqref{eq:decouplageGamma}\\ \hline
$d_1$, $d_2$ &Boundary lines of the strip $\mathfrak{B}_1$& $\begin{array}{l}
d_1:=\{s\in\mathbb C : \mathfrak{Re}(s)=\mu_1\}\\[-0.1cm]
d_2:=\{s\in\mathbb C : \mathfrak{Re}(s)=-\mu_2\}
\end{array}$ & Eq.\eqref{eq2}\\ \hline
$d_-$, $d_+$&Boundary lines of the strip $\mathfrak{B}_2$& $d_\pm := \{s\in\mathbb C : \mathfrak{Re}(s)=s_\pm\}$ & Eq.\eqref{eq2}\\ \hline
$\phi(x,y)$&Laplace transform of the invariant measure $\boldsymbol{\pi}$& \raisebox{-0.3\height}{$\phi(x,y):=\displaystyle\int\!\!\!\!\!\int_{\mathbb R_+^2} \pi(u,v)e^{xu+yv}\mathrm{d}u\mathrm{d}v$} & Eq.\eqref{eq:defphi}      \\ \hline
$\phi_1(y)$ & Laplace transform of the lateral measure $\boldsymbol{\nu}_1$& \raisebox{-0.2\height}{$\phi_1(y):=\displaystyle\int_0^{+\infty} \nu_1(v)e^{yv}\mathrm{d}v$} & Eq.\eqref{eq:defphi1}\\ \hline
$\phi_2(x)$ & Laplace transform of the lateral measure $\boldsymbol{\nu}_2$& \raisebox{-0.2\height}{$\phi_2(x):=\displaystyle\int_0^{+\infty} \nu_2(u)e^{xu}\mathrm{d}u$} & Eq.\eqref{eq:defphi2}\\ \hline
$\varphi_k(s)$ & Laplace transform $\phi_k$ through the uniformization $(\x,\y)$& \raisebox{-0.2\height}{$\varphi_1(s)=\phi_1(\y(s))$, $\varphi_2(s)=\phi_2(\x(s))$} & Eq.\eqref{eq:defvarphi}\\ \hline
$f^\pm$&Functions defining the boundary of the domain $\Delta_{\mathrm{x}}$& \raisebox{-0.2\height}{$f^{\pm}(b):=\displaystyle -\frac{\mu_2}{2}\pm \sqrt{\left(\frac{\mu_2}{2}\right)^2+b^2}$} & Eq.\eqref{eq:fgpm}\\ \hline
$\gamma$&Key parameter in the degenerate case& \raisebox{-0.2\height}{$\gamma:=s_2-s_1=\displaystyle \frac{1-r_1r_2}{(1+r_1)(1+r_2)}$} &Eq. \eqref{eq:gammai}\\ \hline
$\gamma_1$, $\gamma_2$&Additional parameters in the degenerate case& \raisebox{-0.2\height}{$\gamma_1:=\mu_1-2s_1$, $\gamma_2:=\mu_2+2s_2$}& Eq.\eqref{eq:gammai}\\ \hline
$g^\pm$&Functions defining the boundary of the domain $\Delta_{\mathrm{y}}$& \raisebox{-0.2\height}{$g^{\pm}(b):=\displaystyle\frac{\mu_1}{2}\pm \sqrt{\left(\frac{\mu_1}{2}\right)^2+b^2}$}&Eq.\eqref{eq:fgpm}\\ \hline
$\mathbf{G}$&Gap process& $\mathbf{G}=(G_1,G_2)$ &Eq.\eqref{def:G}\\ \hline
$G$&Rational coefficient in the difference equation& \raisebox{-0.2\height}{$G(s):=\displaystyle\frac{(s-s_1)(s+s_2+\mu_2)}{(s-s_2)(s+s_1+\mu_2)}$} & Eq.\eqref{eq:Gdef}
\end{tabular}
\noindent
\begin{tabular}{m{0.09\linewidth}|p{0.32\linewidth}|m{0.40\linewidth}|m{0.09\linewidth}}
\textbf{Symbol} & \textbf{Description} & \textbf{Formula} & \textbf{Ref.} \\
\hline
$\zeta$&Galois automorphism of $\mathcal{S}$ fixing the first coordinate& \raisebox{-0.3\height}{$\zeta(s):=-s+\mu_1$} &Eq.\eqref{eq:etazeta}\\ \hline
$\eta$&Galois automorphism of $\mathcal{S}$ fixing the second coordinate& \raisebox{-0.2\height}{$\eta(s):=-s-\mu_2$} & Eq.\eqref{eq:etazeta}\\ \hline
$K$&Polynomial coefficient of $\phi$ in the functional equation, kernel&\raisebox{-0.1\height}{$K(x,y):=( \sigma_1x-\sigma_2y)^2-2\mu_1 x-2\mu_2 y$} & Eq.\eqref{eq:noyau}\\ \hline
$k_i(x,y)$&Polynomial coefficients in the functional equation& $\!\!\!\begin{array}{l}
k_1(x,y):=x+r_1y \\[-0.1cm]
k_2(x,y):=y+r_2 x
\end{array}$ & Eq.\eqref{eq:k12}\\ \hline
$k_i(s)$&Short for $k_i(\x(s),\y(s))$& $k_i(s)=2(1+r_i)s(s-s_i)$ & Eq.\eqref{eq:defk12s}\\ \hline
$\mathbf{L}$&Local time of $\mathbf{G}$ at $0$& $\mathbf{L}=(L^{G_1},L^{G_2})$ &Eq.\eqref{def:G}\\ \hline
$-\bm\mu$&Drift of the RBM&$-\bm\mu=-(\mu_1,\mu_2)$&Eq.\eqref{eq:defparam}\\ \hline
$\boldsymbol{\nu}_1$, $\boldsymbol{\nu}_2$&Lateral measures& $\displaystyle\boldsymbol{\nu}_i(A):=\mathbb E_{\boldsymbol{\pi}}\left[\int_0^2\mathds{1}_A({G}_j(t))\mathrm{d}L^{{G}_i}(t)\right]$ & Eq.\eqref{eq:deflateral}\\ \hline
$\nu_i$ &Densities of $\boldsymbol{\nu}_i$ with respect to the Lebesgue measure& \raisebox{-0.2\height}{$\displaystyle\nu_i:=\frac{\mathrm{d}\boldsymbol{\nu}_i}{\mathrm{d}\lambda^{(1)}}$} & Eq.\eqref{eq:densitedef}\\ \hline
$\pi$ &Density of $\boldsymbol{\pi}$ with respect to the Lebesgue measure& \raisebox{-0.2\height}{$\displaystyle\pi:=\frac{\mathrm{d}\boldsymbol{\pi}}{\mathrm{d}\lambda^{(2)}}$} & Eq.\eqref{eq:densitedef}\\ \hline
$\theta_{\mathtt{a}}$& First Jacobi Theta-like function& $\theta_{\mathtt{a}}(q):=\displaystyle\sum_{n\in\mathbb Z} \left(n+\frac{\gamma_1}{2}\right)q^{\frac{(2n+\gamma_1)^2-\mu_1^2}{2}}$&Eq.\eqref{eq:thetaa}\\ \hline
$\theta_{\mathtt{b}}$&Second Jacobi Theta-like function& $\theta_{\mathtt{b}}(q):=\displaystyle\sum_{n\in\mathbb Z} (-1)^nn^2q^{\frac{n^2-\mu_1^2}{2}}$ & Eq.\eqref{eq:thetab}\\ \hline
$\mathcal{R}$&Reflection matrix& \raisebox{-0\height}{\renewcommand{\arraystretch}{1}
\setlength{\extrarowheight}{0pt}$\mathcal{R}:=\left(\begin{array}{cc}
1 & r_2 \\
r_1 & 1
\end{array}\right)$}   & Eq.\eqref{eq:defparam}\\ \hline
$\mathcal{S}$&Vanishing set of $K$&  $\mathcal{S}:=\{(x,y)\in\mathbb C^2 : K(x,y)=0\}$ &Eq.\eqref{eq:parabola}\\ \hline
$s_i$& Nonzero root of $k_i(s)$ under the condition $r_i\ne -1$& \raisebox{-0.3\height}{$s_1:=\displaystyle\frac{r_1\mu_1-\mu_2}{1+r_1}$, $s_2:=\displaystyle\frac{\mu_1-r_2\mu_2}{1+r_2}$} & Eq.\eqref{eq:k2}\\ \hline
$s_+$&Fixed point of $\eta$&  $s_+:=\mu_1/2$  & Eq.\eqref{eq:s+s-}\\ \hline
$s_-$& Fixed point of $\zeta$&  $s_-:=-\mu_2/2$ & Eq.\eqref{eq:s+s-}\\ \hline
$\wb$&Canonical Type I invariant& $\wb(s):=\tan\left(\pi(s+\mu_2-\frac{1}{2})\right)$  &Eq.\eqref{eq:w1}\\ \hline
$\w$&Canonical Type II invariant&  $\w(s):=\cos\left(2\pi(s-s_-)\right)$ &Eq.\eqref{eq:w}\\ \hline
$(\x,\y)$&Uniformization of the surface $\mathcal{S}$& $\left(\x(s),\y(s)\right):=\left(2s(s+\mu_2),2s(s-\mu_1)\right)$& Eq.\eqref{eq:unif}
\end{tabular}}

\newpage
\setcounter{section}{0}

\subsection*{Funding} 
This project has received funding from Agence Nationale de la Recherche, ANR JCJC programme under the Grant Agreement ANR-22-CE40-0002 (ANR RESYST).  The IMB receives support from the EIPHI Graduate School (contract ANR-17-EURE-0002)

\bibliographystyle{apalike}
\bibliography{biblio}

\end{document}